\documentclass[reqno]{amsart}
\usepackage{hyperref}
\usepackage{amssymb}
\usepackage{mathrsfs}
\usepackage{esint}
\usepackage{graphicx}
\usepackage{wrapfig}
\usepackage{pifont}
\usepackage{float}
\usepackage[numbers]{natbib}
\usepackage{inputenc}
\usepackage{amsmath}
\numberwithin{equation}{section}
\newtheorem{theorem}{Theorem}[section]
\newtheorem{lemma}[theorem]{Lemma}
\newtheorem{proposition}[theorem]{Proposition}
\newtheorem{remark}[theorem]{Remark}

\pagestyle{plain} \topmargin 0 mm \oddsidemargin 8mm
\evensidemargin 8mm \textheight 210mm \textwidth 150mm
\pagestyle{plain}
\parindent 8mm
\parskip 0mm

\hyphenpenalty=1500 \flushbottom
\numberwithin{equation}{section}
\newtheorem{thm}{Theorem}[section]

\title[]
{Hydrodynamic limit of the Vlasov-Poisson-Boltzmann system for gas mixture}

\author[
]{Yeping Li, Gaofeng Wang  and Tianfang Wu}

\address{Y.P. Li. School of Mathematics and Statistics,
    Nantong University, Nantong 226019, P. R. China.}
\email{ypleemei@aliyun.com}

\address{G.F. Wang. (corresponding author). School of Mathematics and Statistics,
    Nantong University, Nantong 226019, P. R. China.}
\email{gfwang@ntu.edu.cn}

\address{T.F. Wu. Wenzhou Business College, Wenzhou 325000, P. R. China.}
\email{20249237@wzbc.edu.cn}


\thanks{}

\linespread{1.1}

\begin{document}

    \maketitle
\begin{abstract}
In this paper, we study the hydrodynamic limit of the
Vlasov-Poisson-Boltzmann system for a gas mixture in the whole space
$(x \in \mathbb{R}^3)$ with the potential range of $\gamma \in
\left(-3, 1\right]$. Using the method of Hilbert expansion, we first
derive a bi-Maxwellian determined by the Euler-Poisson system of two
fluids. To justify the convergence of the solution rigorously as the
Knudsen number tends to zero, we sequentially calculate the first
$2k-1$ terms of the expansion series $(k \geq 6)$, and then truncate
it, and express the solution as the sum of these first $2k-1$ terms
and a remainder term. Within the framework of the
$L_{x,v}^2-W_{x,v}^{1,\infty}$ interplay established by Guo and Jang
\cite{[ininp]Guo2010CMP}, we construct a new weight function to
estimate the remainder term in four different cases regarding the
potential $\gamma$. Here, the particle masses $m^A, m^B > 0$
and their charges $e^A, e^B$ can be given arbitrarily. This causes
the collision operator to exhibit asymmetric effects ($m^A \neq m^B$), rendering the
system of equations impossible to decouple. So, it adds difficulties
to both $L^2$, $L^{\infty}$ estimates for the remainder. Therefore,
we adopt the framework of vector-valued functions and analyze the velocity decay rate of the operator $K_{M,2,w}^{\alpha,c}$  to
eliminate the singularity induced by small parameters in
characteristic line iterations.   Our results  show that the
validity time of the solution is $O(\varepsilon^{-y})$, where $y$ is
$-\frac{2k-3}{2(2k-1)}$ when $-1 \leq \gamma \leq 1$, and it becomes
$-\frac{2k-3}{(1-\gamma)(2k-1)}$, when $-3 < \gamma < -1$. These results possess strong physical realism and can be applied to analyze gas flow dynamics in the daytime ionosphere at high altitudes above the Earth.\\

        \noindent{\bf Keywords}. {Hydrodynamic limit; Vlasov-Poisson-Boltzmann system; Gas mixture;
           Particle masses; Hilbert expansion; Asymmetric effects; Soft potential;  Validity time; Ionosphere.} \vspace{5pt}\\
        \noindent{\bf AMS subject classifications:} {35B38, 35J47}
    \end{abstract}

    \allowdisplaybreaks

    \section{Introduction and main result}

    \subsection{Introduction}
This paper studies the hydrodynamic limit of the
Vlasov-Poisson-Boltzmann (denoted as VPB in the sequel) system for a
gas mixture containing two types of molecules with different
molecular masses. We work with the following non-dimensionalized
form of the VPB system, which describes the evolution of two species
of particles interacting under the influence of a self-consistent
electric field:
    \begin{equation}\label{MAMAMAMAzhuyaotuimox}
        \left\{
        \begin{gathered}
            St \, \partial_tF^A+v\cdot \nabla_xF^A+\frac{e^{A}}{m^A}\nabla_{x}\phi \cdot \nabla_{v}F^A=\frac{1}{Kn}\Big[Q^{AA}(F^A,F^A)+Q^{AB}(F^A,F^B)\Big],
            \\
            St \, \partial_tF^B+v\cdot \nabla_xF^B+\frac{e_{B}}{m^B}\nabla_{x}\phi \cdot \nabla_{v}F^B=\frac{1}{Kn}\Big[Q^{BA}(F^B,F^A)+Q^{BB}(F^B,F^B)\Big],\\
            \Delta \phi = \int_{\mathbb{R}^3} e^{A}F^A+ e^{B}F^Bdv-\bar{n}_e.
        \end{gathered}
        \right.
    \end{equation}
Here, we denote particle species by the Greek letters
$\alpha$ and $\beta$ ($\alpha,\beta\in\{A,B\}$). The density
distribution function for particle species $\alpha$ is given by
$F^\alpha(t,x,v)$, defined at time $t>0$, position
$x=(x_1,x_2,x_3)\in\mathbb{R}^3$, and velocity
$v=(v_1,v_2,v_3)\in\mathbb{R}^3$. Particle mass, charge and particle
diameter are denoted by $m^\alpha>0$, $e^\alpha$  and
$\sigma_{\alpha}>0$, respectively. The self-consistent electric
potential $\phi(t,x)$ is coupled with the distribution function $F^\alpha(t,x,v)$ through the
Poisson equation$\eqref{MAMAMAMAzhuyaotuimox}_3$. The constant ion background charge is denoted by $\bar{n}_e$ (defined in \eqref{CCXXTJ25}). $St$ and $Kn$ are two dimensionless parameters. More precisely, the Strouhal number $St$ characterizes the frequency of oscillatory flow phenomena, such as vortex shedding, in relation to flow velocity and a characteristic length. The Knudsen number $Kn$ is defined as the ratio of the gas molecular mean free path to a characteristic physical length scale. Interactions are modeled by the Boltzmann collision operator for
$\alpha$-$\beta$ particle pairs
    $$ Q^{\alpha \beta}(F^{\alpha},F^{\beta})=\frac{(\sigma_{\alpha}+\sigma_{\beta})^2}{4}\int_{\mathbb{R}^3}\int_{\mathbb{S}^2}B^{\alpha \beta}(|v-v_*|,\theta) \Big[F^{\alpha}{'} F_*^{\beta }{'}-F^{\alpha}F_*^{\beta} \Big] d\omega dv_*, $$
    where $\omega \in \mathbb{S}^2$ and we use the shorthands  $F^{\alpha} = F^{\alpha}(v)$, $F_*^{\alpha'} = F^{\alpha}(v'_*)$, $F^{\alpha'} = F^{\alpha}(v')$, $F^{\alpha}_* = F^{\alpha}(v_*)$. The velocity pairs $(v_*,v)$ and $(v'_*,v')$ represent the pre- and post-collision states of particles belonging to species $\alpha$ and $\beta$, respectively. They obey the laws of conservation of momentum and energy:
    \begin{equation*}
        m^{\alpha} v_*' +m^{\beta}v' =m^{\alpha} v_*+m^{\beta} v, \quad
        m^{\alpha}|v_*'|^{2}+m^{\beta}|v'|^2 =m^{\alpha}|v_*|^2+m^{\beta}|v|^2.
    \end{equation*}
    The velocity $ v', v'_* $ after the collision are expressed by
    \begin{equation*}
        \left\{
        \begin{gathered}
            v'=v-\frac{2m^{\beta}}{m^\alpha+m^\beta}[(v-v_*)\cdot \omega]\omega,
            \\
            v'_{*}=v_{*}+\frac{2m^{\alpha}}{m^\alpha+m^\beta}[(v-v_*)\cdot \omega]\omega.
        \end{gathered}
        \right.
    \end{equation*}

    Throughout this paper, we denote the Knudsen number by $Kn= \varepsilon$, where
    $\varepsilon$ is assumed to be small.
    The collision kernels satisfy the following assumptions:

    \noindent (1) The symmetry of collision kernels:
    \begin{equation*}
        B^{\alpha \beta}(|v-v_*|,\theta) =B^{\beta \alpha}(|v-v_*|,\theta) \,\quad  \alpha,  \beta\in \{A,B\}.
    \end{equation*}

    \noindent (2) The collision kernels could be decomposed into
    \begin{equation*}
        B^{\alpha \beta}(|v-v_*|,\theta) =\Phi^{\alpha \beta} (|v-v_*|) b^{\alpha \beta}(\cos \theta)\,\quad  \alpha,  \beta\in \{A,B\},
    \end{equation*}
    \begin{itemize}
        \item  The kinetic part has the following form
        \begin{equation*}
            \Phi^{\alpha \beta}(|v-v_*|) =C_{\alpha \beta}^{\Phi} |v-v_*|^{\gamma} , \,  C_{\alpha \beta}^{\Phi} >0 , \, \gamma \in
            ( -3,1 ].
        \end{equation*}
        \item Angular part has a strong form of Grad's angular cutoff \cite{[61]Grad1958TG}: For $\cos\theta=\frac{\omega\cdot(v-v_*)}{|v-v_*|}$, there exist a constant $C_{b}  >0 $, for $\forall \alpha,  \beta\in \{A,B\}$ and $\theta \in [0,\pi]$, such that
        \begin{equation*}
            0< b^{\alpha \beta}(\cos \theta)\leq C_{b}  |\cos \theta |.
        \end{equation*}
    \end{itemize}

The rigorous derivation of hydrodynamic limits from kinetic theory is a fundamental challenge in mathematical physics, directly addressing Hilbert's sixth problem \cite{[23]Hilbert} concerning the mathematical unification of microscopic and macroscopic descriptions. In the asymptotic limit where the Knudsen number $\varepsilon \to 0$, the system transitions from kinetic to hydrodynamic regimes, with characteristic macroscopic scales by far exceeding the molecular mean free path.

Maxwell \cite{[37]Maxwell1867JCM} and Boltzmann
\cite{[9]Boltzmann1872AWW} first showed that the limiting solution of the
Boltzmann equation approaches a local Maxwellian, and its density,
momentum, and temperature correspond to a fluid system. These
contributions laid the foundation for further investigations into
the Boltzmann equation. Justifying the hydrodynamic limits that
arise from the Boltzmann equations presents significant challenges
and has been a topic of research for over five decades.

Caflisch \cite{[12]Caflisch1980CPAM} validated the compressible Euler limit via truncated Hilbert expansion but imposed constraints on initial data. Guo, Jang, and Jiang \cite{[21]Guo2009KRM} refined this using the $L_{x,v}^2 - L_{x,v}^{\infty}$ technique \cite{[19]Guo2010ARMA}, later extending to the acoustic limit \cite{[22]Guo2010CPAM}. Note that Hilbert expansion requires initial values to match its form, however, for general cases, initial layers are needed \cite{[33]Lachowicz1987MMAS}. The VPB system's various scalings yield different fluid models, e.g., incompressible Navier-Stokes-Fourier-Poisson \cite{[32]Jiang2018IUM}, \cite{[42]Raymond2009JMPA}. For further details, see \cite{[4]BGLJSP 1991}, \cite{[5]BGLCPAM1993}, and \cite{[40]Raymond2009BOOK}.
Boundary effects complicate Euler limits: Sone \cite{[DA]Sobook} demonstrated viscous/Knudsen layers' necessity. Boundary treatments include: specular reflection in \cite{[20]Guo2021ARMA}, diffuse/Maxwell conditions in \cite{[29]Jiang2021} and \cite{[30]Jiang2021}.

While monatomic gases dominate hydrodynamic limit studies, multi-species Boltzmann systems receive less attention. Aoki, Bardos, and Takata \cite{[1]Aoki2003JSP} proved the existence of a Knudsen layer for Boltzmann equations in gas mixtures with hard sphere interactions (zero bulk velocity assumption), later extended by Bardos and Yang \cite{[7]Bardos2012CPDE} to include drifting velocities. Wang \cite{YJWangSIMA} studied the VPB system's diffusive limit, which decouples for equal molecular collision masses $(m^A=m^B)$ in periodic domains, and later analyzed its decay rate \cite{[49]WangJDE2013}. Fang and Qi \cite{[FQ]ARC} examined the hydrodynamic transition from Boltzmann equations for gas mixtures $(m^A=m^B)$ to two-fluid systems. Guo \cite{[17]Guo2003Invention} proved the Euler-Maxwell limit for hard spheres, while Jiang, Lei, and Zhao \cite{[2025X]Jiang} investigated this limit, ($m_{\pm}=e_{\pm}=1$). Duan and Liu \cite{[DL]VPB} discussed the Euler-Poisson limit for 1D gas mixtures. 

However, in the real world, collisions between gas molecules often occur between two particles with different molecular masses. For example, the mass ratio between oxygen molecules and nitrogen molecules in the atmosphere is 8 to 7.

When considering collisions involving charged particles, the study of ionized gas mixtures is of paramount importance in atmospheric physics, especially in the ionosphere situated 60 km above Earth's surface. This atmospheric layer exhibits a highly ionized state due to intense ultraviolet radiation and cosmic ray bombardment. ‌For example‌, in the atmospheric E layer  or F1 layer , some nitrogen and oxygen are  converted into nitrosyl ions (NO$^+$, $m_{NO^+}$ $\approx$ 30$m_p$, $q_{NO^+}$ $= +1e$) and oxygen ions (O$^+$, $m_{O^+}$ $\approx$ 16$m_p$, $q_{O^+}$ $= +1e$) under strong solar radiation during the daytime, leading to collisions between these ion species. In addition, with increasing high-altitude radiation, the atmosphere exhibits ions of different species, with their primary distribution as follows \cite{Appleton1933},\cite{Hanson172}:
\\
$\bullet$\hspace{0.06cm} D layer (60--90 km):\, $NO^+$, $O_{2^+}$, $H_2O^+$, $H_3O^+$, $NO_{2^-}$, $O_{2^-}$. \\
$\bullet$\hspace{0.06cm} E layer (90--140 km): \,  $N_{2^+}$, $O_{2^+}$, $NO^+$, $O^+$. \\
$\bullet$ \hspace{0.07cm}F layer = F1 layer + F2 layer (140--1000 km):
\begin{itemize}
    \item F1 layer (140--210 km): \,  $O^+$,  $NO^+$, $O_{2^+}$.
    \item F2 layer (210--1000 km): \, $O^+$, $NO^+$,  $He^+$, $H^+$, $N^+$.
\end{itemize}
The particle masses and mass-to-charge ratios of these ion species exhibit variations. The dominant ions in both the D and E layers are NO$^+$, with concentration ranges of $10^5$\text{--}$10^6$~\text{cm}$^{-3}$ for the D layer and $10^6$\text{--}$10^7$~\text{cm}$^{-3}$ for the E layer. In contrast, O$^+$ is the primary ion in both the F1 and F2 layers, with concentrations ranging approximately from $10^6$\text{--}$10^7$~\text{cm}$^{-3}$.

Current research exhibits a notable gap in mathematically formalizing these collision dynamics within the Boltzmann equation framework, especially when the assumption of equal masses or the normalization of both mass and charge  becomes invalid. Our work systematically examines scenarios involving arbitrary mass and charge configurations, thereby establishing a generalized theoretical framework for particle collision phenomena in the ionosphere. Additionally, we construct appropriate weightings that enable both soft and hard potentials to be addressed within a unified framework.

\begin{figure}{}
    \centering
    \includegraphics[width=11.5cm,height=7cm]{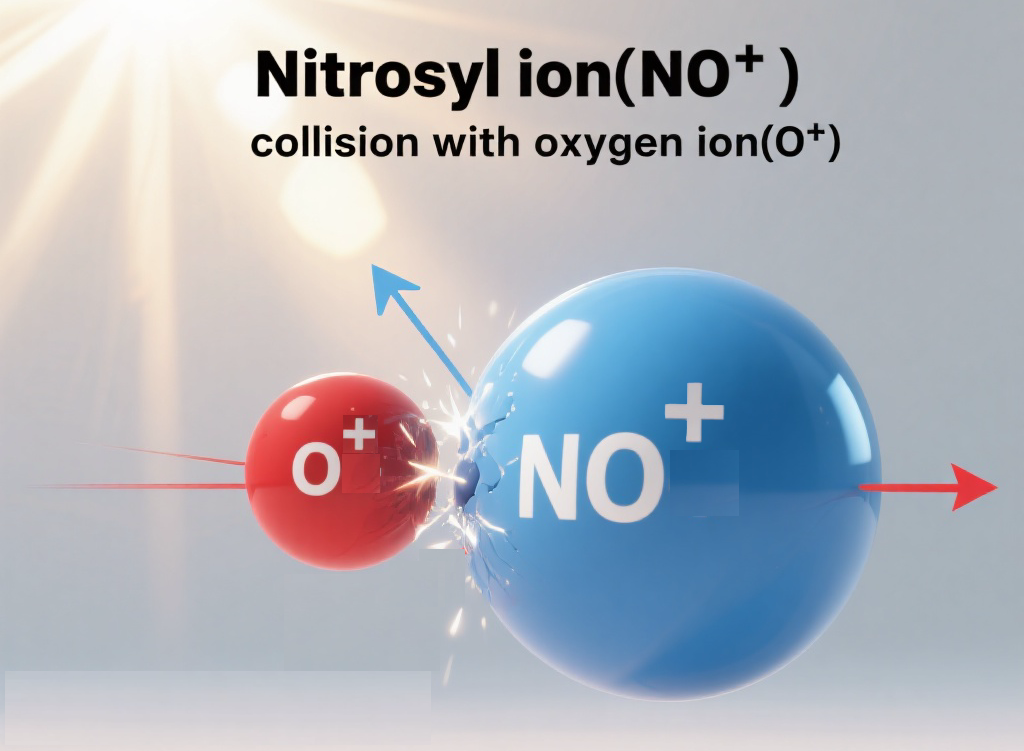}
    \caption{The collision between the nitrosyl ion and the oxygen ion }
\end{figure}

This paper investigates the hydrodynamic limit of a VPB system for binary gas mixtures with:
\begin{itemize}
    \item [\ding{72}]Potential range: $\gamma \in (-3, 1]$,
    \item [\ding{72}]Non-identical molecular masses: $m^A \neq m^B$,
    \item [\ding{72}]Arbitrary charge configurations: $e^A, e^B \in \mathbb{R}$,
    \item[\ding{72}] Characteristic timescale $O(1)$,
    requiring Strouhal number $St$
    to be $O(1)$.
\end{itemize}
The inherent coupling of this system necessitates analysis within a vector-valued function framework. Building on the $L_{x,v}^2 - W_{x,v}^{1,\infty}$ technique developed by \cite{[ininp]Guo2010CMP}, we establish remainder estimates through weighted function analysis across four regimes:
\begin{align*}
    \gamma &= 1 \text{ (hard sphere)}, \\
    0 &\leq \gamma < 1 \text{ (hard potentials \& Maxwell molecules)}, \\
    -1 &\leq \gamma < 0 \text{ (moderately soft potentials)}, \\
    -3 &< \gamma < -1 \text{ (very soft potentials)}.
\end{align*}
Key technical innovations include:
\begin{enumerate}
    \item Well-designed weight function \eqref{WWWEIGHTFUC}-\eqref{WWDWEIGHTFUC}.
    \item  The velocity decay rate analysis of the operator $K_{M,2,w}^{\alpha,c}$ ($m^A \neq m^B$ case, see Lemma \ref{LeMK2ker4444}).
    \item Resolution of the $(1+|v|)^\gamma$ degeneracy as $|v|\to\infty$ (soft potential case, see \eqref{resollowerbbd}).
    \item Control of remainder support propagation (soft potential case, see \eqref{SOFSUPGT59},\eqref{SOFSUPGT510}).
\end{enumerate}
 According to the Hilbert expansion
 \begin{equation*}
    F_{\varepsilon}^\alpha= \sum_{i=0}^{2k-1}\varepsilon^i F^\alpha_i + \varepsilon^kF_R^{\alpha}, \qquad \nabla_{x}\phi_\epsilon= \sum_{i=0}^{2k-1}\varepsilon^i \nabla_{x}\phi_i + \varepsilon^k\nabla_{x}\phi_R,\quad k\geq6,
 \end{equation*}
our results also compare the effects of different potential parameters $\gamma$ on the weight function within the same theoretical framework. The analysis reveals that the solution's validity time scales as $O(\varepsilon^{-y})$ (see Figure~2), where the exponent $y$ is determined by:
\[
y =
\begin{cases}
    -\dfrac{2k-3}{2(2k-1)} & \text{for } -1 \leq \gamma \leq 1, \\[10pt]
    -\dfrac{2k-3}{(1-\gamma)(2k-1)} & \text{for } -3 < \gamma < -1.
\end{cases}
\]
\begin{figure}{}
    \centering
    \includegraphics[width=12cm,height=5.4cm]{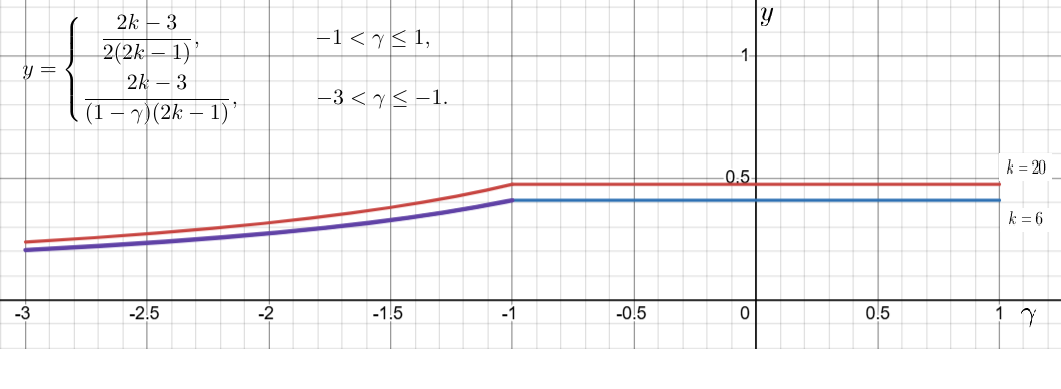}
    \caption{The validity time of the solution is $O(\varepsilon^{-y})$}
\end{figure}
\subsection{Basic properties of collision operator (see\cite{[60]Briant2016ARM})}
    Firstly, we  present some properties of the collision operators for later use. For the vector field $\mathbf{F} = (F^A, F^B)^T$,  the vector-form bilinear Boltzmann collision operator $\mathscr{C}$ is  defined as:
    \begin{equation}\label{equation1.4}
        \mathscr{C} \mathbf{F} =
        \left(
        \begin{array}{cccc}
            Q^{AA}(F^A,F^A)+Q^{AB}(F^A,F^B)\vspace{3pt}\\
            Q^{BA}(F^B,F^A)+Q^{BB}(F^B,F^B)
        \end{array}
        \right).
    \end{equation}
    \begin{itemize}
        \item (Collision invariants) A function $\mathbf{\Psi} = (\Psi^{A}, \Psi^{B})^{T}$
        is called a collision invariant of the operator $\mathscr{C}$ with respect to the inner product in $(L^2_v(\mathbb{R}^3))^2$,
        if it satisfies
        \begin{equation}\label{Collision invariants} \langle\mathscr{C}\mathbf{F}, \mathbf{\Psi} \rangle_{(L_v^2(\mathbb{R}^3))^2} = \sum_{\alpha,\beta \in {A,B}} \left\langle Q^{\alpha \beta}(F^{\alpha}, F^{\beta}), \Psi^{\alpha} \right\rangle_{L_v^2} = 0 \quad \text{for any vector } \mathbf{F}.
        \end{equation}
        The equality $\langle\mathscr{C}\mathbf{F}, \mathbf{\Psi} \rangle_{(L_v^2(\mathbb{R}^3))^2} = 0$
        holds, if and only if   $$\mathbf{\Psi} \in \text{Span} \{\mathbf{e}_1, \mathbf{e}_2, v_1 \mathbf{m}, v_2 \mathbf{m}, v_3 \mathbf{m}, |v|^2 \mathbf{m}\},$$where $\mathbf{m} = (m^A, m^B)^{T}$
        and $\mathbf{e}_j$
        is the $j^{\text{th}}$
        unit vector in $\mathbb{R}^2$.
        \item  (H-theorem) 
        The entropy function of the Boltzmann equations for binary gas mixtures satisfies
        \begin{equation}\label{equation1.5}
            \left \langle
            \mathscr{C}\mathbf{F},  \log \mathbf{F}
            \right \rangle_{(L_v^2(\mathbb{R}^3))^2}
            = \sum_{\alpha,\beta\in \{A,B\}}\left\langle Q^{\alpha  \beta}(F^{\alpha},F^{\beta}) , ~~\log F^{\alpha} \right\rangle_{L_v^2(\mathbb{R}^3)}   \leq 0,
        \end{equation}
        and this equality holds, if and only if
        \begin{equation}\label{HHHHCCCRM}
            \log \mathbf{F} \in {\rm Span} \{\mathbf{e}_1, \mathbf{e}_2, v_1 \mathbf{m}, v_2 \mathbf{m}, v_3 \mathbf{m}, |v|^2 \mathbf{m}\}.
        \end{equation}
    \end{itemize}
    \subsection{Hilbert Expansion}
    We take $St = 1$ to rewrite the VPB system \eqref{MAMAMAMAzhuyaotuimox}:
    \begin{equation}\label{Meqeps}
        \left\{
        \begin{gathered}
            \partial_{t} F_\varepsilon^A+v\cdot \nabla_xF_\varepsilon^A+\frac{e^{A}}{m^A}\nabla_{x}\phi_\epsilon \cdot \nabla_{v}F_\epsilon^A=\frac{1}{\varepsilon}\Big[Q^{AA}(F_\varepsilon^A,F_\epsilon^A)+Q^{AB}(F_\varepsilon^A,F_\varepsilon^B)\Big],
            \\
            \partial_{t} F_\varepsilon^B+v\cdot \nabla_xF_\varepsilon^B+\frac{e_{B}}{m^B}\nabla_{x}\phi_\varepsilon \cdot \nabla_{v}F_\varepsilon^B=\frac{1}{\varepsilon}\Big[Q^{BA}(F_\varepsilon^B,F_\varepsilon^A)+Q^{BB}(F_\varepsilon^B,F_\epsilon^B)\Big],\\
            \Delta \phi_\varepsilon = \int_{\mathbb{R}^3} e^{A}F_\varepsilon^A+ e^{B}F_\varepsilon^Bdv-\bar{n}_e.
        \end{gathered}
        \right.
    \end{equation}
    We assume the solutions of \eqref{Meqeps} have the form of Hilbert expansion:
    \begin{equation}\label{MAINEXPFPHIKKK}
        F_{\varepsilon}^\alpha= \sum_{i=0}^{2k-1}\varepsilon^i F^\alpha_i + \varepsilon^kF_R^{\alpha}, \qquad \nabla_{x}\phi_\epsilon= \sum_{i=0}^{2k-1}\varepsilon^i \nabla_{x}\phi_i + \varepsilon^k\nabla_{x}\phi_R,\quad k\geq6.
    \end{equation}
    Substituting \eqref{MAINEXPFPHIKKK} into the system \eqref{Meqeps}, and compare the coefficients about order of $\varepsilon$, one derives that
    \begin{equation}\label{ORDERZONG}
        \begin{aligned}
            & \text{Order ~$O(\varepsilon^{-1})$:} \qquad\quad ~~ \qquad\qquad ~\,~~\, 0= \displaystyle{\sum_{\beta=A,B}}Q^{ \alpha \beta}(F_0^{\alpha}, F_0^{\beta}), \\
            & \text{Order ~$O(1)$:} \qquad\quad ~~~~( \partial_t +v \cdot \nabla_x+\nabla_x\phi_0\cdot\nabla_v)F_0^\alpha =\displaystyle{\sum_{i'+i''=1}\sum_{\beta=A,B}}Q^{ \alpha \beta}(F_{i'}^{\alpha},F_{i''}^{\beta}),\\
            & \qquad \qquad\quad \qquad\quad\qquad~ \qquad\quad ~   \Delta \phi_0 = \int_{\mathbb{R}^3} e^{A}F_0^A+ e^{B}F_0^Bdv-\bar{n}_e,\\
            & \text{Order ~$O(\varepsilon)$:} \quad \quad\qquad ~~~~    (\partial_t +v \cdot \nabla_x+\nabla_x\phi_0\cdot\nabla_v)F_1^\alpha +\nabla_x\phi_1\cdot\nabla_vF_0^\alpha\\
            &\quad \qquad\qquad \qquad ~~~~~\quad \quad\qquad ~~~~\quad \quad\qquad ~~~~=\displaystyle{\sum_{i'+i''=2} \sum_{\beta=A,B}} Q^{\alpha \beta}(F_{i'}^{\alpha},F_{i''}^{\beta}),\\
            & \qquad \qquad\quad \qquad\quad\qquad~ \qquad\quad ~   \Delta \phi_1 = \int_{\mathbb{R}^3} e^{A}F_1^A+ e^{B}F_1^Bdv,\\
            &.\,.\,.\\
            & \text{Order ~$O(\varepsilon^{i})$:} \,\quad\quad ~~~~~~
            (\partial_t +v \cdot \nabla_{x}+\nabla_x\phi_0\cdot\nabla_v)F_{j}^{\alpha}+\nabla_x\phi_{j}\cdot\nabla_vF_0^\alpha\\
            &\quad \qquad \qquad\qquad ~~~~~\quad \quad\qquad ~~~~\quad \quad\qquad ~~~~=\displaystyle{\mathop{\sum}_{i'+i''=i+1\atop i',i''\geq0} \sum_{\beta=A,B}} Q^{ \alpha\beta}(F_{i'}^{\alpha},F_{i''}^{\beta})-\mathop{\sum}_{i'+i''=i\atop i',i''\geq1}\nabla_x\phi_{i'}\cdot\nabla_vF_{i''}^\alpha,  \\
            & \qquad \qquad\quad \qquad\quad\qquad~ \qquad\quad ~   \Delta \phi_{i} = \int_{\mathbb{R}^3} e^{A}F_{i}^A+ e^{B}F_{i}^Bdv,  \qquad{i}\in \mathbf{Z}^+,\quad {i}\geq2.
        \end{aligned}
    \end{equation}
    Then, the equations governing the remainder term can be expressed as
    \begin{equation}\label{reeqmain}
        \left\{
        \begin{gathered}
            (\partial_t +v \cdot \nabla_{x}+\nabla_x\phi_0\cdot\nabla_v)F_R^{\alpha}+\nabla_x\phi_R\cdot\nabla_vF_0^\alpha-\frac{1}{\varepsilon}\displaystyle{\sum_{\beta=A,B}}(Q^{ \alpha \beta}(F_0^{\alpha}, F_R^{\beta})+Q^{ \alpha \beta}(F_R^{\alpha}, F_0^{\beta}))\\
            \quad \hspace{1.3cm}=\varepsilon^{k-1}\displaystyle{\sum_{\beta=A,B}}Q^{ \alpha \beta}(F_R^{\alpha}, F_R^{\beta})+\displaystyle{\sum^{2k-1}_{i=1}}\displaystyle{\sum_{\beta=A,B}}\varepsilon^{i-1}(Q^{ \alpha \beta}(F_i^{\alpha}, F_R^{\beta})+Q^{ \alpha \beta}(F_R^{\alpha}, F_i^{\beta}))\\
            \hspace{1.8cm}-\varepsilon^k \nabla_x\phi_R\cdot\nabla_vF_R^\alpha-\displaystyle{\sum^{2k-1}_{i=1}}\varepsilon^{i}(\nabla_x\phi_R\cdot\nabla_vF_i^\alpha+\nabla_x\phi_i\cdot\nabla_vF_R^\alpha)+\varepsilon^{k-1}E^{\alpha}, \\
            \Delta \phi_R = \int_{\mathbb{R}^3} e^{A}F_R^A+ e^{B}F_R^Bdv,
        \end{gathered}
        \right.
    \end{equation}
    where
    \begin{equation}\label{dyjjieshuzkep}
        \begin{aligned}
            E^{\alpha}:=&-(\partial_t +v \cdot \nabla_{x})F_{2k-1}^{\alpha}+\mathop{\sum}_{i'+i'' \geq 2k\atop 2k-1 \geq i',i'' \geq1}\displaystyle{\sum_{\beta=A,B}} \varepsilon^{i'+i''-2k} Q^{ \alpha \beta}(F_{i'}^{\alpha},F_{i''}^{\beta})\\
            &-\mathop{\sum}_{i'+i'' \geq 2k-1\atop 2k-1 \geq i',i'' \geq0}\varepsilon^{i'+i''-2k+1}\nabla_x\phi_{i'}\cdot\nabla_vF_{i''}^\alpha.
        \end{aligned}
    \end{equation}
    Finally, from the equations $\eqref{ORDERZONG}_1$ and \eqref{HHHHCCCRM}, we deduce that $\mathbf{F}_0$ is a local bi-Maxwellian:
    \begin{equation}\label{dyjhlibsxF0}
        \mathbf{F}_0 =  \left(
        \begin{array}{cccc}
            F_0^{A}\\
            F_0^{B}
        \end{array}
        \right)
        =
        \left(
        \begin{array}{cccc}
            \frac{n^A(m^A)^{3/2} }{(2\pi \theta)^{3/2}}e^{-\frac{m^A |v-\mathbf{u}|^2}{2\theta}} \vspace{3pt}\\
            \frac{n^B(m^B)^{3/2} }{(2\pi \theta)^{3/2}}e^{-\frac{m^B |v-\mathbf{u}|^2}{2\theta}}
        \end{array}
        \right)= \left(
        \begin{array}{cccc}  \mu^A \\ \mu^B\end{array}
        \right),
    \end{equation}
    where $ n^{\alpha}, \mathbf{u}, \theta$ are the mass, velocity and the temperature,
    which will be determined later.
    \begin{remark}
        The reasons for the truncated Hilbert expansion taking the form of \eqref{MAINEXPFPHIKKK} are as follows. Based on the Hilbert expansion
        \begin{equation*}
            F_{\varepsilon}^\alpha= \sum_{i=0}^{+\infty}\varepsilon^i F^\alpha_i,
        \end{equation*}
        the truncated expansion is assumed to be
        \begin{equation}
            F_{\varepsilon}^\alpha= \sum_{i=0}^{n}\varepsilon^i F^\alpha_i + \varepsilon^{r} F_R^{\alpha},\quad n\geq r\geq 1.
        \end{equation}
        It can be seen from Sections 5 and 6 that the validity time of the solution increases as $\frac{r}{n}$
        decreases, as noted in Remark \ref{FXPINGSHIJIAN} . However, the decrease in $\frac{r}{n}$
        necessitates the expansion of additional terms, requiring $r \geq 6$, see Remark \ref{GYKGET6RE}. We consider both the validity time and term expansion requirements.  To balance their influences, we choose $r = \frac{n+1}{2}$
        , which implies $n = 2k - 1$ and $r = k$. This also establishes a balance between the nonlinear term $\sum_{\beta=A,B} Q^{\alpha \beta}(F_R^{\alpha}, F_R^{\beta})$
        and \eqref{dyjjieshuzkep}, as they share the same coefficient $\varepsilon^{k-1}$. For other coefficients in the remainder expansion, corresponding conclusions can be drawn by following the analysis presented in this paper.
    \end{remark}

    \subsection{Notations}  Let $1\leq p \leq \infty$, we use notation $\Vert \cdot\Vert_{L^{p}_{x,v} } $ to denote the norm of $L^p$ space with respect to the variables $(x,v) \in \mathbb{R}^3  \times \mathbb{R}^3$. $| \cdot|_{L^p_v} $ and $| \cdot|_{L^p_x} $ are the $L^p$ norms   w.r.t. the variable $v$ and $x$ respectively. For vector functions $\mathbf{f}(t,x,v)=(f^A,f^B)^T$, its $L^p$ norms are denoted by
    \begin{equation*}
        \Vert \mathbf{f} \Vert_{L^{p}_{x,v}}=\Vert f^A\Vert_{L^{p}_{x,v}}+\Vert f^B \Vert_{L^{p}_{x,v}},\qquad | \mathbf{f} |_{L^{p}_{v}}=| f^A|_{L^{p}_{v}}+| f^B|_{L^{p}_{v}}.
    \end{equation*}
    The notations  $\left \langle \cdot, \cdot  \right \rangle_{x,v}$ and $\left\langle \cdot, \cdot\right \rangle_v$ are the inner product of the function space $L^2_{x,v}$ and $L^2_{v}$ respectively. For vector functions $\mathbf{f}$ and $\mathbf{g}$ ,  their inner product in $L^2_v$ and $L^2_{x,v}$ are denoted as
    \begin{gather*}
        \left \langle \mathbf{f}, \mathbf{g}\right \rangle_v = \left \langle f^A, g^A   \right \rangle_v + \left \langle f^B, g^B   \right \rangle_v,
        ~~~~
        \left \langle \mathbf{f}, \mathbf{g}\right \rangle_{x,v} = \left \langle f^A, g^A   \right \rangle_{x,v} + \left \langle f^B, g^B   \right \rangle_{x,v}.
    \end{gather*}
Let $\ell=(\ell_1,\ell_2,\ell_3)$ be a multi-index. The
$s-$th order derivative is $\partial_{x}^{\ell}
=\partial_{x_1}^{\ell_1}
\partial_{x_2}^{\ell_2} \partial_{x_3}^{\ell_3}$, with $|\ell|=s $.
$W_x^{s,p}$ ‌represents the standard Sobolev space with corresponding
norm $\Vert \cdot\Vert_{W_x^{s,p}} $. Moreover, when $p=2$, we denote
$W_x^{s,2}$ as $H_x^s$. The letter $C>1$, which may vary from line to line is used to
denote a constant. $J \lesssim K$ means $J \leq C K$. If both hold
$J \lesssim K$ and $K \lesssim J$, we say
$J\thicksim K$.
    We define the polynomial weight function $\left \langle v \right\rangle= 1+|v|$, and  the collision frequency function
    \begin{equation*}
        \upsilon^{\alpha}=   \sum_{\beta=A,B} \int_{\mathbb{R}^3\times \mathbb{S}^2}
        B^{ \alpha \beta}(|v-v_*|, \cos \theta) \mu^{\beta}(v_*) d\sigma dv_*.
    \end{equation*}
   Then for $-3<\gamma\leq1$, it can be proved that $\upsilon^{\alpha} \thicksim \left \langle v \right\rangle^{\gamma}$, so we introduce the following weighted $L^2$ norm
    \begin{gather*}
        \Vert f \Vert_{\nu}^2 = \iint_{\mathbb{R}^3 \times \mathbb{R}^3} |f|^2 \left \langle v \right\rangle^{\gamma} dxdv, \qquad  | f |_{\nu}^2 = \int_{\mathbb{R}^3 } |f|^2 \left \langle v \right\rangle^{\gamma}dv.
    \end{gather*}

   Next, we introduce a global bi-Maxwellian
    \begin{gather*}\label{def1.35}
        \mathbf{\mu}_{M}=
        \left(
        \begin{array}{cccc}
            \mu_{M}^A\\
            \mu_{M}^B
        \end{array}
        \right)
        =\frac{1}{(2\pi \theta_M)^{3/2}}
        \left(
        \begin{array}{cccc}
            e^{- \frac{m_A|v|^2}{2\theta_M}}\\
            e^{- \frac{m_B|v|^2}{2\theta_M}}
        \end{array}
        \right).
    \end{gather*}
    From \eqref{CCXXTJ25} and \eqref{quedwenducc}, the constant $\bar{\theta}$ is defined by
     \begin{equation}
        \bar{\theta}=[c_1+c_2(C_p)^{\frac{2}{3}}](\bar{n}_1)^{\frac{2}{3}}.
    \end{equation}
     Without loss of generality, we assume $m^A\geq m^B$. There exists
    $\theta_M=\frac{2m^A}{2m^A+m^B}\bar{\theta}$
    such that
    \begin{equation*}\label{globalMaxwellian}
        \theta_M\leq\mathop{\rm min}_{(t,x)\in [0,\tau]\times \mathbb{R}^3} \theta(t,x)\leq\mathop{\rm max}_{(t,x)\in [0,\tau]\times \mathbb{R}^3} \theta(t,x)\leq \frac{m^A+m^B}{m^A} \theta_M.
    \end{equation*}
   Moreover, based on the bound \eqref{PTWDAY}, there exist constants $C>0 $ and some
    $\frac{m^A}{m^A+m^B}<\tilde{q}<1$, such that for each $(t,x,v)\in \mathbb{R}^{+} \times \mathbb{R}^3\times \mathbb{R}^3$, it holds
    \begin{equation*}\label{1.36}
        C^{-1} \mu^{\alpha}_M \leq \mu^{\alpha}  \leq C  (\mu^{\alpha}_M)^{\tilde{q}}.
    \end{equation*}
     Then, we can propose the following weight function:
    \begin{equation}\label{WWWEIGHTFUC}
        w_{\gamma}(v)=
        \begin{cases}
            &\left\langle v\right\rangle^{l}   \qquad \,\, \,\gamma=1, \\
            &\omega_{\gamma} \,\,\,\quad -1\leq \gamma<1, \\
            &\omega_{-1} \quad  -3<\gamma<-1 .
        \end{cases}
    \end{equation}
     With fixed $k$ from \eqref{MAINEXPFPHIKKK}, the weight function satisfies:
    \begin{equation}\label{weightFUVS}
        \omega_{\gamma}=e^{\tilde{\kappa}\left \langle v \right\rangle^{\frac{3-\gamma}{2}}},\quad \tilde{\kappa}=\kappa_0 (1+(1+t)^{-\frac{2}{2k-1}}), \quad 0<\kappa_0=\kappa_0(m^A,m^B)\ll 1.
    \end{equation}
    The rationale for constructing the weight function \eqref{weightFUVS} is detailed in Section 5.

    Finally, from the remainder term equations \eqref{reeqmain}, the $L^{\infty}$ weighted functions are given by:
    \begin{equation}\label{WWDWEIGHTFUC}
        \mathbf{R}_{\gamma}=(R_{\gamma}^A,R_{\gamma}^B)^T=\begin{cases}
            &\mathbf{h}=(h^A,h^B)^T     \qquad \,\qquad  \,\gamma=1, \\
            &\mathbf{S}_{\gamma}=(S^A_{\gamma},S^B_{\gamma})^T \,\quad \quad -1\leq \gamma<1, \\
            &\mathbf{S}_{-1}=(S^A_{-1},S^B_{-1})^T \quad  -3<\gamma<-1,
        \end{cases}
    \end{equation}
    here
    \begin{equation}\label{Threedefweight1}
        h^\alpha=  \frac{\left \langle v \right\rangle^{l} }{\sqrt{\mu^\alpha_M}} F_R^\alpha, \quad l\geq7, \qquad  S^\alpha_{\gamma}=  \frac{\omega_{\gamma} }{\sqrt{\mu^\alpha_M}} F_R^\alpha, \quad -1\leq \gamma<1, \quad {\rm for} ~~\alpha=A,B.
    \end{equation}
    The validity time is defined by
    \begin{equation}\label{TTMSJ}
    T_\gamma=
    \begin{cases}
        &T_L \,\,\,\quad -1\leq \gamma \leq 1, \\
        &T_S \quad  -3<\gamma<-1,
    \end{cases}
\end{equation}
where
\begin{equation}
    T_L=O(\varepsilon^{-\frac{2k-3}{2(2k-1)}}),\qquad T_S=O(\varepsilon^{-\frac{2k-3}{(1-\gamma)(2k-1)}}).
\end{equation}
The initial total energy is defined as
\begin{equation*}
    \mathcal{E}^{\rm in}=\sum_{\alpha=A,B} \Big(\Big\|\frac{F_{R}^{\rm \alpha, in}}{\sqrt{ \mu^{\rm \alpha, in}}} \Big\|_{L^{2}_{x,v} } +|\nabla_{x}\phi^{\rm in}_R|_{{L^{2}_{x} }}+ \varepsilon^{\frac{3}{2}} \Big\Vert \frac{\left \langle v \right\rangle^{2-\gamma}w_{\gamma} F_{R}^{\rm \alpha,in}}{\sqrt{\mu^{\rm \alpha, in}_M}} \Big\Vert_{L^{\infty}_{x,v} } +\varepsilon^5 \Big\Vert \nabla_{x,v}(\frac{w_{\gamma} F_{R}^{\rm \alpha,in}}{\sqrt{\mu^{\rm \alpha, in}_M}}) \Big\Vert_{L^{\infty}_{x,v} }\Big).
\end{equation*}
    \subsection{The main result} Now, we state the main result of this article.
    \begin{thm}\label{maintheorem}
        Let $\mathbf{F}_0$
        be defined in \eqref{dyjhlibsxF0}. $(n^A, n^B, \mathbf{u}, \theta)$
        is the smooth solution to the hyperbolic system \eqref{EQF0EPSION}-\eqref{IID24} established in Lemma \ref{CPCO}. We denote $F_j^{\alpha} = f_j^{\alpha} \sqrt{\mu}$
        for $j \geq 1$ and $\alpha = A, B$, as obtained in Section 2.2. Suppose that the initial data are given as
        \begin{equation}\label{initialdatatogether}
            \left(
            \begin{array}{ccc}
                F_\varepsilon^{ A}(0,x,v)\\[2mm]
                F_\varepsilon^{ B}(0,x,v)
            \end{array}
            \right)=
            \sum_{i=0}^{2k-1}\varepsilon^{i}
            \left(
            \begin{array}{ccc}
                F_{k}^{A, \rm in}(x,v)\\[2mm]
                F_{k}^{B, \rm in}(x,v)
            \end{array}
            \right)  +\varepsilon^k
            \left(
            \begin{array}{ccc}
                F_{R}^{A, \rm in}(x,v)\\[2mm]
                F_{R}^{B, \rm in}(x,v)
            \end{array}
            \right) \geq 0.
        \end{equation}
        Then, there exists a small $\varepsilon_0>0$ such that for all $0<\varepsilon \leq \varepsilon_0$,  the initial value problem \eqref{Meqeps}-\eqref{initialdatatogether} admits a unique solution. The remainder term satisfies:
        \begin{gather}\label{mainthemSOFTCASD}
            \sup_{0\leq t \leq T_\gamma} \Big(  \Big\|\frac{F_{R}^{\rm \alpha }}{\sqrt{ \mu^{\rm \alpha}}} \Big\|_{L^{2}_{x,v} } + |\nabla_{x}\phi^{\rm }_R|_{L^{2}_{x} } +\varepsilon^{\frac{3}{2}} \Big\Vert \frac{\left \langle v \right\rangle^{2-\gamma}w_{\gamma} F_{R}^{\rm \alpha}}{\sqrt{\mu^{\rm \alpha}_M}} \Big\Vert_{L^{\infty}_{x,v} } +\varepsilon^5 \big\Vert \nabla_{x,v}(\frac{w_{\gamma} F_{R}^{\rm \alpha}}{\sqrt{\mu^{\rm \alpha}_M}}) \Big\Vert_{L^{\infty}_{x,v} }\Big) \leq \mathcal{E}^{\rm in}.
        \end{gather}
    \end{thm}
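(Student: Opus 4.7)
The plan is to combine a local-in-time existence argument with a continuation principle driven by a priori bounds on the remainder $(\mathbf{F}_R,\nabla_x\phi_R)$. Writing $\mathbf{f}_R=\mathbf{F}_R/\sqrt{\boldsymbol{\mu}}$ and using the remainder system \eqref{reeqmain}, I would linearize around the bi-Maxwellian $\mathbf{F}_0$ from \eqref{dyjhlibsxF0} to obtain a symmetric linearized Boltzmann operator $\mathbf{L}=-\nu+\mathbf{K}$ acting on the vector $\mathbf{f}_R$, together with the Poisson coupling through $\phi_R$. The whole argument then runs via the $L^2_{x,v}$-$W^{1,\infty}_{x,v}$ interplay of Guo and Jang \cite{[ininp]Guo2010CMP}: the $L^2$ piece controls the hydrodynamic (macroscopic) components through energy estimates and the coercivity of $\mathbf{L}$, while the weighted $L^\infty$ piece absorbs the $\varepsilon^{-1}$ singularity of the collision term through the exponential-type weight $w_\gamma$ from \eqref{WWWEIGHTFUC}-\eqref{weightFUVS}. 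A bootstrap argument with the a priori smallness assumption
\begin{equation*}
\sup_{0\le t\le T_\gamma}\Bigl(\|\mathbf{f}_R\|_{L^2_{x,v}}+|\nabla_x\phi_R|_{L^2_x}+\varepsilon^{3/2}\|\langle v\rangle^{2-\gamma}\mathbf{R}_\gamma\|_{L^\infty_{x,v}}+\varepsilon^5\|\nabla_{x,v}\mathbf{R}_\gamma\|_{L^\infty_{x,v}}\Bigr)\le 2\mathcal{E}^{\rm in}
\end{equation*}
will be propagated on $[0,T_\gamma]$, and the continuation principle then yields uniqueness and the claimed validity time.

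For the $L^2$ estimate, I would test the equation for $\mathbf{f}_R$ against itself and combine with the Poisson equation to recover control of $|\nabla_x\phi_R|_{L^2_x}$. The coercivity $\langle\mathbf{L}\mathbf{f},\mathbf{f}\rangle_v\gtrsim |(\mathbf{I}-\mathbf{P})\mathbf{f}|_\nu^2$ gives microscopic dissipation of order $\varepsilon^{-1}$, which must be used to close the various $\varepsilon^{k-1}$-weighted trilinear and streaming-derivative terms appearing on the right-hand side of \eqref{reeqmain}; here the choice $r=k$ balancing the nonlinearity $\varepsilon^{k-1}Q(F_R,F_R)$ with the source term $\varepsilon^{k-1}E^\alpha$ is essential, as is the boundedness of the lower-order Hilbert terms $F_i^\alpha,\nabla_x\phi_i$ from the expansion system \eqref{ORDERZONG}. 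The macroscopic part $\mathbf{P}\mathbf{f}_R$ is controlled through the conservation-law structure of the Euler-Poisson solution and standard hydrodynamic-variable elliptic-type estimates.

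For the weighted $L^\infty$ estimate, I would write the equation satisfied by $\mathbf{R}_\gamma$ (i.e.\ $\mathbf{h}$ or $\mathbf{S}_\gamma$) and integrate along backward characteristics of the transport-plus-field operator $\partial_t+v\cdot\nabla_x+\nabla_x\phi_0\cdot\nabla_v$, iterating twice the Duhamel formula. Each pass contributes a factor $\varepsilon^{-1}\mathbf{K}_{M,2,w}^{\alpha,c}$ along the trajectory, and the key step is to invoke the velocity-decay analysis of $\mathbf{K}_{M,2,w}^{\alpha,c}$ (Lemma \ref{LeMK2ker4444}) together with the exponential weight $\omega_\gamma=e^{\tilde\kappa\langle v\rangle^{(3-\gamma)/2}}$ to turn the double characteristic integral into a contraction modulo an $L^2$ error. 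The $L^2$ error is then controlled by the first step, closing the loop. Differentiating the equation once in $(x,v)$ produces additional commutator terms with the streaming field $\nabla_x\phi_0\cdot\nabla_v$ and the weight $w_\gamma$; these are absorbed using the $\varepsilon^5$ a priori smallness together with the polynomial loss $\langle v\rangle^{2-\gamma}$, which exactly compensates the $\langle v\rangle^\gamma$-degeneracy of $\nu^\alpha$.

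The principal obstacle is the very soft potential regime $-3<\gamma<-1$, where $\nu^\alpha\sim\langle v\rangle^\gamma\to 0$ as $|v|\to\infty$ and the standard exponential weight no longer delivers enough dissipation. Here I would freeze the weight exponent at $\tilde\kappa\langle v\rangle^{(3-\gamma)/2}$ with the time-dependent factor $1+(1+t)^{-2/(2k-1)}$ from \eqref{weightFUVS}, which forces the large-velocity tail to shrink just fast enough that the support-propagation estimates \eqref{SOFSUPGT59}-\eqref{SOFSUPGT510} and the resolution \eqref{resollowerbbd} of the $\langle v\rangle^\gamma$-degeneracy can be combined to extract a usable lower bound on $\nu^\alpha$ on the relevant velocity region. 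This time-decay in the weight is what forces the validity time to shrink from $T_L=O(\varepsilon^{-(2k-3)/(2(2k-1))})$ down to $T_S=O(\varepsilon^{-(2k-3)/((1-\gamma)(2k-1))})$; tracking this loss explicitly across the $L^2$-$L^\infty$ coupling is, in my view, the technical heart of the argument, and the requirement $k\ge 6$ enters precisely to keep the resulting exponents admissible.
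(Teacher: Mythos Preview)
Your overall architecture is right---$L^2$ energy estimate plus a twice-iterated Duhamel formula in weighted $L^\infty$, closed by a bootstrap---and this is what the paper does. But two concrete points in your plan would cause the argument to fail, and both concern the role of the exponential weight.

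First, for the very soft range $-3<\gamma<-1$ you propose to ``freeze the weight exponent at $\tilde\kappa\langle v\rangle^{(3-\gamma)/2}$.'' This cannot work: once $(3-\gamma)/2>2$ the weight $\omega_\gamma$ grows faster than any Gaussian, so the source terms $\mathcal{N}^\alpha_2,\mathcal{N}^\alpha_4,\mathcal{N}^\alpha_5$ in \eqref{NLINEANNM1M2}, which involve $\omega_\gamma/\sqrt{\mu^\alpha_M}$ multiplied by quantities with only Maxwellian decay from the $F_i$, become unbounded (this is exactly Remark~\ref{WLIGHTERANALYSIS}). The paper therefore caps the exponent at $2$, i.e.\ uses $\omega_{-1}$ for all $\gamma\in(-3,-1)$, and the shortened validity time $T_S$ is a \emph{consequence} of this cap: with $\kappa_2=2$ fixed but $\gamma$ more negative, the Young-inequality balancing \eqref{ANLYYYEQIMP} between $\langle v\rangle^\gamma/\varepsilon$ and $\langle v\rangle^{\kappa_2}/(1+t)^{1+\kappa_1}$ forces $m_\gamma\le\frac{2k-3}{(1-\gamma)(2k-1)}$.

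Second, you describe the exponential weight as a device ``to turn the double characteristic integral into a contraction,'' but its real purpose is different and more delicate. The obstruction for $\gamma<1$ is the electric-field commutator $\mathcal{H}^\alpha_3\sim\langle v\rangle\|\mathbf{h}\|_{L^\infty}$ in \eqref{h3h3h3ddq}: along characteristics one only gains $\varepsilon\langle v\rangle^{-\gamma}$, so $\int\mathcal{F}_0\mathcal{H}^\alpha_3\,ds$ produces $\|\langle v\rangle^{1-\gamma}\mathbf{h}\|_{L^\infty}$ on the right, which cannot be absorbed. The paper's remedy is not to estimate this term but to \emph{move it into the linear part}: with $\omega_\gamma(t,v)$ time-dependent, the equation for $S^\alpha_\gamma=\omega_\gamma F^\alpha_R/\sqrt{\mu^\alpha_M}$ acquires the modified multiplier $\hat\nu^\alpha_\varepsilon=\nu^\alpha/\varepsilon+(\text{field term})-\partial_t\omega_\gamma/\omega_\gamma$, and the new dissipation $-\partial_t\omega_\gamma/\omega_\gamma\sim\langle v\rangle^{\kappa_2}(1+t)^{-(1+\kappa_1)}>0$ is what allows the $\langle v\rangle$-growth of the field term to be dominated via Young's inequality, yielding $\hat\nu^\alpha_\varepsilon\gtrsim\langle v\rangle^\gamma/\varepsilon$ as in \eqref{xxbfdmaisnlower1}. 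Without this mechanism your Duhamel iteration will not close for any $\gamma<1$. (Two smaller points: the characteristics in \eqref{chracterlinedef} use the full $\phi^\varepsilon$, not $\phi_0$; and $k\ge6$ is needed not for ``admissible exponents'' but because the integration-by-parts step converting $L^\infty$ to $L^2$ in Case~3b of Section~4.2 produces a factor $C_{\lambda,N}/\varepsilon^4$, so $\varepsilon^5$ is required in front of $\|\nabla_{x,v}\mathbf{R}_\gamma\|_{L^\infty}$---see Remark~\ref{GYKGET6RE}.)
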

    \begin{remark}[$\mathbf{Criteria\,\, for\,\, classification}$]
        This theorem is divided into three cases: the critical case $(\gamma=1)$, the moderate range $(-1 \leq \gamma < 1)$, and the strongly singular case $(-3 < \gamma < -1)$, with the soft potential analysis requiring the additional constraint of compact initial values.
    \end{remark}

    \begin{remark}[$\mathbf{Different \,\,weight \,\,functions\,\,for\,\,the \,\,remainder \,\,term}$]
        ‌The rationale for treating the hard sphere model separately stems from the following estimate:
        $$
        \Big|\int_{0}^{t}\mathcal{F}_0^{\alpha}(t,s) \mathcal{H}^\alpha_3(s,X(s),V(s))ds \Big|
        \lesssim \varepsilon(1+\varepsilon\mathcal{I}_1) e^{-\frac{\nu_0 t}{2\varepsilon}}
        \sup_{0\leq s \leq t}(e^{\frac{\nu_0 s}{2\varepsilon}}\Vert \langle v \rangle^{1-\gamma}\mathbf{h}_R (s)\Vert_{L^{\infty}_{x,v}}).
        $$
        Polynomial weights suffice for the $L_{x,v}^2$ estimation. However, for $-3 < \gamma < 1$, standard nonlinear estimation techniques fail to handle the linear term $\mathcal{H}^\alpha_3$ \eqref{h3h3h3ddq} adequately. This necessitates the construction of our new weight function $\omega_{\gamma}$ \eqref{weightFUVS} to transform and control this term.
    \end{remark}

    \begin{remark}[$\mathbf{Shortening \,\,of \,\,the\,\,validity\,\,time \,\,for \,-3<\gamma<-1}$]
        Section 5 initially establishes that the weight function
        $\omega_{\gamma} = e^{\tilde{\kappa} \left\langle v \right\rangle^{\frac{3-\gamma}{2}}}$
        preserves the time of validity
        $T_L = O(\varepsilon^{-\frac{2k-3}{2(2k-1)}})$. Notably, the specific case
       $\gamma = -1$ yields
        $e^{\tilde{\kappa} \left\langle v \right\rangle^{2}}$. As emphasized in Remark \ref{WLIGHTERANALYSIS}, the constraint
        $\frac{3-\gamma}{2}\leq 2$ implies that the solution's time of validity is necessarily reduced to
        $T_S = O(\varepsilon^{-\frac{2k-3}{(1-\gamma)(2k-1)}})$.
    \end{remark}

\noindent \textbf{The paper is structured as follows}.
  Section 2 derives the macroscopic components $\mathbf{F}_i$ ($i=0,\dots,2k-1$) evolution equations and establishes uniform a priori estimates.
Section 3 obtains the $L^2$-norm estimates of the remainder term through energy method analysis.
Section 4 provides $W_{x,v}^{1,\infty}$ estimates for $h^\alpha$ via Duhamel's principle,  addressing the $\gamma=1$ case.
 Section 5 extends the analysis to $-1\leq\gamma<1$ cases for $S^\alpha_\gamma$ in $W_{x,v}^{1,\infty}$ framework.
 Section 6 investigates the more singular $-3<\gamma<-1$ case, analyzing the shortening of the validity time‌.
Within this $L^2$--$W_{x,v}^{1,\infty}$ hybrid analytical framework, we ultimately prove Theorem~\ref{maintheorem}, which rigorously establishes the hydrodynamic limit for multi-component VPB systems.

    \subsection{Innovations of this article}
    The innovations and challenges discussed in this paper arise primarily from three aspects: the physical model, the technical aspects, and the conclusions.
    \\
    \noindent $\mathbf{1.\,\,The \,\,physical\,\,model.}\,$
    \begin{itemize}
\item [\ding{72}]The existing results for Vlasov-Poisson-Boltzmann (VPB) fluid equations concerning gas mixtures often adopt the normalization assumption where both mass and charge magnitude are set to unity, which inherently carries limitations. In contrast, the physical model considered in this work maintains greater fidelity by preserving the actual ionic masses and charge states. Consequently, our conclusions not only enable accurate modeling of ion collision dynamics in atmospheric thermosphere but also demonstrate broader applicability across various physical and chemical domains.
\\
\item [\ding{72}]The Euler-Poisson system \eqref{EQF0EPSION} derived from the VPB system for gas mixtures ($m^A \neq m^B$) includes $\rho = m^A n^A + m^B n^B$, $\tilde{n} = n^A + n^B$, and $n_e = e^A n^A + e^B n^B$. These quantities make it more difficult to prove the existence of solutions for this system.
Furthermore, the equations of the macroscopic part for higher-order systems are linear equations with a coefficient matrix of size $6 \times 6$, which is more complex than the equations derived from the VPB system for a single type of gas. It only has a $5 \times 5$ coefficient matrix. \\
\item [\ding{72}]Compared to a single-gas model, gas mixtures with unequal masses exhibit distinct remainder estimates, where the difference in the $L^2$
        estimate is caused by the mathematical structure of the terms $\frac{e^{\alpha}}{m^{\alpha}} \nabla_{x} \phi \cdot \nabla_{v} F^{\alpha}$
        and $\Delta \phi = \int_{\mathbb{R}^3} e^{A} F^A + e^{B} F^B \, dv$.  While, the structure of the characteristic lines in the $L^{\infty}$
        estimate is different (if $\frac{e^A}{m^A} \neq \frac{e^B}{m^B}$
        ), featuring a combination of two different types of characteristic lines in \eqref{doublelineqianT}.
    \end{itemize}

    \noindent $\mathbf{2.\,\,The \,\,technical \,\,aspects.}\,$
    \begin{itemize}
        \item [\ding{72}]  When conducting $L^{\infty}$ estimation, we will encounter the following nonlinear term:
        \begin{equation*}
            \mathcal{H}^\alpha_3 = -\frac{e^{\alpha}}{m^{\alpha}}\nabla_{x}\phi^{\varepsilon} \cdot  \frac{\left \langle v \right\rangle^{l}}{\sqrt{\mu^{\alpha}_M}} \nabla_{v}(\frac{\sqrt{\mu^{\alpha}_M}}{\left \langle v \right\rangle^{l}})h^\alpha, \, \quad  |\mathcal{H}^\alpha_3|  \thicksim \left \langle v \right\rangle \Vert\mathbf{h} \Vert_{L^{\infty}_{x,v}}.
        \end{equation*}
        If $\gamma \neq 1$, the decay provided by the characteristic lines will be insufficient to control its growth in velocity, since
        \begin{equation*}
            \left|\int_{0}^{t}\mathcal{F}_0^{\alpha}(t,s) \mathcal{H}^\alpha_3(s,X(s),V(s))ds \right|
            \lesssim \varepsilon(1+\varepsilon\mathcal{I}_1) e^{-\frac{\nu_ot}{2\varepsilon}} \displaystyle{\sup_{0\leq s \leq t}}(e^{\frac{\nu_os}{2\varepsilon}}\Vert \left \langle v \right\rangle^{1-\gamma}\mathbf{h} (s)\Vert_{L^{\infty}_{x,v}}).
        \end{equation*}
        To address this, we propose a  weight function $\omega_{\gamma}(t,v)$(see \eqref{WWWEIGHTFUC} and \eqref{weightFUVS}) to transform it via the equation:
        \begin{equation*}
            \begin{split}
                \hat{\upsilon_{\varepsilon}^\alpha}(t,x,v):&=   \frac{\upsilon^\alpha}{\varepsilon} +\frac{e^{\alpha}}{m^{\alpha}}\nabla_{x}\phi^{\varepsilon} \cdot  \frac{\omega_{\kappa}(t,v)}{\sqrt{\mu^{\alpha}_M}} \nabla_{v}(\frac{\sqrt{\mu^{\alpha}_M}}{\omega_{\kappa}(t,v)})-\frac{\partial_t\omega_{\kappa}(t,v)}{\omega_{\kappa}(t,v)}\\
                &\geq \frac{\left \langle v \right\rangle^{\gamma}}{C\varepsilon}- C\left \langle v \right\rangle \Vert\nabla_{x}\phi^{\varepsilon} \Vert_{L^{\infty}_{x,v} } +\frac{\kappa_0\kappa_1}{C}\frac{\left \langle v \right\rangle^{\kappa_2}}{(1+t)^{1+\kappa_1}}.
            \end{split}
        \end{equation*}
        Young's inequality is used to transform $\left \langle v \right\rangle$:
        \begin{equation*}
            \begin{split}
                \left \langle v \right\rangle &= \Big(\left \langle v \right\rangle^{\gamma}\varepsilon^{\iota-1}(1+t)^{\frac{(1+\kappa_1)(1-\gamma)}{\kappa_2-1}}\Big)^{\frac{\kappa_2-1}{\kappa_2-\gamma}}    \Big(\left \langle v \right\rangle^{\kappa_2}\varepsilon^{\frac{(1-\iota)(\kappa_2-1)}{1-\gamma}}(1+t)^{-(1+\kappa_1)}\Big)^{\frac{1-\gamma}{\kappa_2-\gamma}}\\
                & \leq \frac{1}{\varepsilon}\left \langle v \right\rangle^{\gamma}\varepsilon^{\iota}(1+t)^{\frac{(1+\kappa_1)(1-\gamma)}{\kappa_2-1}} + \left \langle v \right\rangle^{\kappa_2}\varepsilon^{\frac{(1-\iota)(\kappa_2-1)}{1-\gamma}}(1+t)^{-(1+\kappa_1)}.
            \end{split}
        \end{equation*}
        Through subsequent calculations, with appropriate selection of parameters $\kappa_1, \kappa_2$, and $\iota$ (detailed in Scetion 5 \eqref{xxbfANLmainlower1}-\eqref{gghk5sctm}), one obtains
        \begin{equation*}
            \begin{split}
                \hat{\upsilon_{\varepsilon}^\alpha}
                &\geq \frac{2\left \langle v \right\rangle^{\gamma}}{3C\varepsilon} +\frac{1}{4Ck}\frac{\left \langle v \right\rangle^{\frac{3-\gamma}{2}}}{(1+t)^{\frac{2k+1}{2k-1}}}> \frac{2\left \langle v \right\rangle^{\gamma}}{3C\varepsilon}.
            \end{split}
        \end{equation*}
        This effectively helps us overcome the difficulties caused by the electric field effect.‌‌

        \item [\ding{72}]Through a rigorous analysis of the velocity decay properties of the operator $K_{M,w}^{\alpha,c}$ (Lemma \ref{LeMK2ker4444}, under the condition $m^A \neq m^B$), in the local $L_{x,v}^{\infty}$ framework, we are able to establish the following estimate:
        \begin{equation*}
            \begin{split}
                &\int_{\mathbb{R}^3} \int_{\mathbb{R}^3} |k_{M,\omega_{\gamma}}^{\alpha\beta}(s,X^{\alpha}(s),V^{\alpha}(s),v_*)k_{M,\omega_{\gamma}}^{\beta\beta^{'}}(s_1,X^{\alpha,\beta}_1(s_1),V^{\alpha,\beta}_1(s_1),v_{**})|dv_{**}\, dv_*\\
                &\hspace{6.3cm}\leq \frac{C\left \langle V(s) \right\rangle^{\gamma}}{1+|V(s)|}  \times \left \langle V_1(s_1) \right\rangle^{\gamma} .
            \end{split}
        \end{equation*}
        When dealing with soft potentials, we need to ‌squeeze out an $\varepsilon$ from the certain nonlinear terms to eliminate‌ the intrinsic
        $\frac{1}{\varepsilon}$, (detailed in Scetion 5 \eqref{resollowerbbd}).
        \begin{gather*}
            \left|\int_{0}^{t} \mathcal{T}_{0}^{\alpha}(t,s) f(t,x,v) ds \right|\lesssim  \left|\int_{0}^{t} \left \langle v \right\rangle^{\gamma}\mathcal{T}_{0}^{\alpha}(t,s)  ds \right| \sup_{0\leq s \leq t} \|\frac{f(s)}{\left \langle v \right\rangle^{\gamma}}\|_{L^{\infty}_{x,v}}\lesssim \varepsilon \sup_{0\leq s \leq t} \|\frac{f(s)}{\left \langle v \right\rangle^{|\gamma|}}\|_{L^{\infty}_{x,v}}.
        \end{gather*}
    \end{itemize}

    \noindent $\mathbf{3.\,\,The \,\,conclusions.}\,$
    \begin{itemize}
        \item [\ding{72}]The potential range that we consider is $(-3, 1]$, which includes four different cases: $\gamma = 1$, $0 \leq \gamma < 1$, $-1 \leq \gamma < 0$, and $-3 < \gamma < -1$. Moreover, this approach provides a unification of methods for handling different potentials, as previously addressed by others using various techniques. Our findings further quantify the degree of change in polynomial-weighted initial data, which is always $\langle v \rangle^{2 - \gamma}$
        for $\gamma \in \left(-3, 1\right]$.\\
        \item[\ding{72}] We discuss the validity time that depends on the Knudsen number $\varepsilon$. The validity time is $O(\varepsilon^{-\frac{2k-3}{2(2k-1)}})$
        when $-1 \leq \gamma \leq 1$, and it can be $O(\varepsilon^{-\frac{2k-3}{(1-\gamma)(2k-1)}})$
        when $-3 < \gamma < -1$. This conclusion applies to all $k \geq 6$. We note that the validity time is also continuous with respect to the potential $\gamma \in (-3, 1]$, see Figure 2.
    \end{itemize}

    \section{The determination of $\mathbf{F_i}~~(i=0,\cdots,2k-1)$.}
    \subsection{The equations for the equilibrium $\mathbf{F}_0$}
    From \eqref{dyjhlibsxF0}, we know $\mathbf{F}_0$ is a local  bi-Maxwellian
    \begin{equation}\label{F0PHT2121}
        \mathbf{F}_0 =  \left(
        \begin{array}{cccc}
            F_0^{A}\\
            F_0^{B}
        \end{array}
        \right)
        =
        \left(
        \begin{array}{cccc}
            \frac{n^A(m^A)^{3/2} }{(2\pi \theta)^{3/2}}e^{-\frac{m^A |v-\mathbf{u}|^2}{2\theta}} \vspace{3pt}\\
            \frac{n^B(m^B)^{3/2} }{(2\pi \theta)^{3/2}}e^{-\frac{m^B |v-\mathbf{u}|^2}{2\theta}}
        \end{array}
        \right).
    \end{equation}
    By a direct calculation, $(n^A,n^B,\mathbf{u},\theta)$ satisfies the following relations:
    \begin{equation*}
        \int_{\mathbb{R}^3} F_0^{\alpha} dv = n^{\alpha}, \quad \int_{\mathbb{R}^3} v F_0^{\alpha} dv = n^{\alpha}\mathbf{u},\quad \int_{\mathbb{R}^3}|v|^2 F_0^{\alpha} dv = n^{\alpha} |\mathbf{u}|^2+3n^{\alpha}\theta,
    \end{equation*}
     and we denote
    \begin{equation*}
        \rho=m^An^A+m^Bn^B, \quad \tilde{n}=n^A+n^B, \quad n_e=e^An^A+e^Bn^B.
    \end{equation*}
    The first line of the equation $\eqref{ORDERZONG}_1$ can be written in a vector form
    \begin{equation*}
        \left(
        \begin{array}{cccc}
            ( \partial_t +v \cdot \nabla_x+\nabla_x\phi_0\cdot\nabla_v)F_0^A\\
            ( \partial_t +v \cdot \nabla_x+\nabla_x\phi_0\cdot\nabla_v)F_0^B
        \end{array}
        \right)=\sum_{i+j=1}
        \left(
        \begin{array}{cccc}
            Q^{AA}(F_i^A,F_j^A)+Q^{AB}(F_i^A,F_j^B)\vspace{3pt}\\
            Q^{BA}(F_i^B,F_j^A)+Q^{BB}(F_i^B,F_j^B)
        \end{array}
        \right).
    \end{equation*}

Moreover, taking inner product with six collision invariants $
\{\mathbf{e}_1, \mathbf{e}_2, v_1 \mathbf{m}, v_2 \mathbf{m}, v_3
\mathbf{m}, |v|^2 \mathbf{m}\}$(see \eqref{YZJVEPDB}), one can
obtain the system of $(n^{A},n^{B},\mathbf{u},\theta)$:
\begin{equation}\label{EQF0EPSION}
        \left \{
        \begin{array}{lll}
            \partial_t n^{A} +{\rm div}(n^{B}\mathbf{u})=0, \\
            \partial_t n^{B} +{\rm div}(n^{B}\mathbf{u})=0, \\
            \partial_t ( \rho \mathbf{u})
            +{\rm div}(\rho\mathbf{u}\otimes \mathbf{u})
            +\nabla (\tilde{n}\theta)-n_e\nabla\phi_0=0, \\
\partial_t[\frac{\rho}{2}|\mathbf{u}|^2+ \frac{3}{2}\tilde{n}\theta]
            +{\rm div}\left[(\frac{\rho}{2}|\mathbf{u}|^2+ \frac{3}{2}\tilde{n}\theta)\mathbf{u}+ \tilde{n}\theta\mathbf{u}\right]-n_e\nabla\phi_0\cdot\mathbf{u}=0,\\
            \Delta \phi_0 = n_e -\bar{n}_e,
        \end{array}
        \right.
    \end{equation}
    with initial data
    \begin{equation}\label{IID24}
        [n^{A}(x),n^{B}(x),\mathbf{u}(x),\theta(x)]|_{t=0}=[n^{A,in}(x),n^{B,in}(x),\mathbf{u}^{in}(x),\theta^{in}(x)].
    \end{equation}
Further, by replacing $\eqref{EQF0EPSION}_4$ with
$(\eqref{EQF0EPSION}_4-\eqref{EQF0EPSION}_3 \cdot \mathbf{u})$, and
utilizing $\eqref{EQF0EPSION}_1$ and $\eqref{EQF0EPSION}_2$, the
system $\eqref{EQF0EPSION}$ can be written as
    \begin{equation}\label{REWEUPSIO}
        \left \{
        \begin{array}{lll}
            \partial_t n^{A} +{\rm div}(n^{A}\mathbf{u})=0, \\[2mm]
            \partial_t n^{B} +{\rm div}(n^{B}\mathbf{u})=0, \\[2mm]
            \rho \partial_t  \mathbf{u}
            +\rho (\mathbf{u}\cdot\nabla)\mathbf{u}
            +\nabla (\tilde{n}\theta)-n_e\nabla\phi_0=0, \\[2mm]
            \partial_t \theta +(\mathbf{u}\cdot\nabla)\theta+\frac{2}{3}\theta {\rm div} \mathbf{u}=0,\\
            \Delta \phi_0 =  n_e-\bar{n}_e.
        \end{array}
        \right.
    \end{equation}
Denote $ \mathbf{U}_0 =( n^A, n^B, \mathbf{u}, \theta )^T$, and $
\mathbf{V}_0 =( 0, 0, \nabla\phi_0, 0 )^T$. The system
\eqref{REWEUPSIO} could be changed into the following form
    \begin{gather}\label{leadingordervector}
        \partial_t \mathbf{U}_0-\frac{n_e}{\rho}\mathbf{V}_0 + \sum_{j=1}^{3}   \mathbf{A}_j(\mathbf{U}_0) \partial_{x_j}  \mathbf{U}_0 =  0,
    \end{gather}
    where
    \begin{equation*}
        \mathbf{A}_j(\mathbf{U}_0) =    \begin{pmatrix}
            u^j  & 0 & n^A\mathbf{e}_j^{T} & 0  \\
            0 & u^j & n^B\mathbf{e}_j^{T}  & 0 \\
            \frac{\theta}{\rho}\mathbf{e}_j  & \frac{\theta}{\rho}\mathbf{e}_j &  u^j \mathbf{I}_3 &  \frac{\tilde{n}}{\rho}\mathbf{e}_j \\
            0  & 0 & \frac{2}{3}\theta\mathbf{e}_j^{T}  & u^j
        \end{pmatrix},
    \end{equation*}
    with the standard vectors $e_1=(1,0,0)^T,~e_2=(0,1,0)^T,~e_3=(0,0,1)^T$ and $\mathbf{I}_3={\rm diag}(1\,1\,1)$. As stated in \cite{[62]A.Majda},  the system \eqref{leadingordervector} is well-posedness in local time if it is a symmetrizable hyperbolic system. We can find a positive defined symmetric matrix:
    \begin{equation*}
        \mathbf{A}_0(\mathbf{U}_0) =    \begin{pmatrix}
            \frac{\theta}{n^A}  & 0 & 0 & 0  \\
            0 & \frac{\theta}{n^B} & 0  & 0 \\
            0  & 0 & \rho \mathbf{I}_3 & 0 \\
            0  & 0 & 0  & \frac{3}{2}\frac{\tilde{n}}{\theta}
        \end{pmatrix},
    \end{equation*}
    and multiply \eqref{leadingordervector} by $\mathbf{A}_0(\mathbf{U}_0)$ to obtain
    \begin{gather}\label{leadisssngordervector}
        \mathbf{A}_0(\mathbf{U}_0)(\partial_t \mathbf{U}_0-\frac{n_e}{\rho}\mathbf{V}_0) + \sum_{j=1}^{3} \mathbf{A}_0(\mathbf{U}_0)\mathbf{A}_j(\mathbf{U}_0) \partial_{x_j}  \mathbf{U}_0 =  0.
    \end{gather}
    It can be verified that
    \begin{equation*}
        \begin{gathered}
            \mathbf{A}_0(\mathbf{U}_0)\mathbf{A}_j(\mathbf{U}_0) =  \begin{pmatrix}
                \frac{u^j\theta}{n^{A}}  & 0 & \theta\mathbf{e}_j^{T}  & 0  \\
                0 & \frac{u^j\theta}{n^{B}} & \theta\mathbf{e}_j^{T}  & 0 \\
                \theta\mathbf{e}_j  & \theta\mathbf{e}_j & \rho u^j \mathbf{I}_3 & \tilde{n} \mathbf{e}_j \\
                0  & 0 & \tilde{n} \mathbf{e}_j^{T}  & \frac{3}{2}\frac{\tilde{n}}{\theta}u^j \\
            \end{pmatrix}
        \end{gathered} \quad j=1,2,3.
    \end{equation*}

Since the matrices in \eqref{leadisssngordervector} are symmetric,
according to the classical mathematical theory of hyperbolic systems
(see \cite{[62]A.Majda}), there exists a local in-time solution in
the corresponding Sobolev space. However, finding a global solution
to the Euler-Poisson system is a very difficult task. Even for the
fluid system derived from the VPB equations of a single gas particle
remains a challenging issue. In \cite{[lc]TM1986}, T. Makino worked
out a local solution with compact support for the initial data.
However, what we need is a global-in-time solution. The equilibrium
cannot be vacuum. The existing conclusions regarding global-in-time
solutions without compact support of initial data, including the
case of $p = \rho^{\frac{5}{3}}$, always impose additional
structural conditions, such as the irrotational condition in
\cite{[ivR]Guo1998} and the spherical symmetry condition in
\cite{[8]CGQSSD}. Unfortunately, due to the quality and arbitrary
selection of the charge $e^{\alpha}$, the non-homogeneous system of
equations cannot maintain these additional structural conditions
like a single equation. Thus, the existing relevant results cannot
be utilized. Currently, the only feasible method is to consider the
initial data for $n^A$ and $n^B$ in a certain proportion. This
allows us to transform the full Euler-Poisson system into the
isentropic Euler-Poisson system, thereby utilizing the results in
\cite{[ivR]Guo1998}.
    \begin{lemma}\label{CPCO}
        Let initial data $n^{\rm B,in}=C_p n^{\rm A,in}$ and $n^{\rm A,in}-\bar{n}_1,n^{\rm B,in}-\bar{n}_2, \mathbf{u}^{in}$  be sufficiently small, where $\bar{n}_1, \bar{n}_2=C_p\bar{n}_1, \bar{n}_e=e^A\bar{n}_1+e^B\bar{n}_2,$ are constants. They satisfy following conditions:
        \begin{equation}\label{CCXXTJ25}
            \nabla \times \mathbf{u}^{in}=0, \qquad \quad \int_{\mathbb{R}^3}(n_e^{in}-\bar{n}_e)dx=0.
        \end{equation}
        Then, there exist a unique global solution
        $(n^{A},n^{B},\mathbf{u},\theta)$, where $\frac{1}{C}<\theta<C$, and $n^{A},n^{B},\mathbf{u},\phi_0$ satisfy the following pointwise decay
        \begin{equation}\label{PTWDAY}
            \Vert n^{A}-\bar{n}_1\Vert_{W^{s,\infty}}+\Vert n^{B}-\bar{n}_2\Vert_{W^{s,\infty}}+\Vert \mathbf{u} \Vert_{W^{s,\infty}}+\Vert \nabla\phi_0 \Vert_{W^{s,\infty}}\leq\frac{C}{(1+t)^q},
        \end{equation}
        for each $s\geq0$ and $1<q<\frac{1}{3}$.
    \end{lemma}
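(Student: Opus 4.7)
The plan is to reduce the full Euler-Poisson system \eqref{REWEUPSIO} to a (rescaled) single-species isentropic Euler-Poisson system, for which Guo's global existence and decay theorem \cite{[ivR]Guo1998} for irrotational initial data is directly applicable. This reduction hinges on two conservation laws that are implicit in \eqref{REWEUPSIO}: the propagation of the proportionality $n^{B}=C_{p}n^{A}$ and the propagation of the polytropic relation $\theta = c_{\theta}(n^{A})^{2/3}$.

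First I would show that the ratio $n^{B}/n^{A}$ is preserved along the flow. Setting $w:=n^{B}-C_{p}n^{A}$ and taking the obvious linear combination of $\eqref{REWEUPSIO}_{1}$ and $\eqref{REWEUPSIO}_{2}$ yields
\begin{equation*}
\partial_{t}w+\mathrm{div}(w\mathbf{u})=0,
\end{equation*}
so the hypothesis $w^{in}=0$ gives $w(t,x)\equiv0$ throughout the life-span of any classical solution, and hence $n^{B}=C_{p}n^{A}$ everywhere. Next, eliminating $\mathrm{div}\,\mathbf{u}$ between $\eqref{REWEUPSIO}_{1}$ and $\eqref{REWEUPSIO}_{4}$ shows that $\theta(n^{A})^{-2/3}$ is constant along particle trajectories, so if the initial temperature is prescribed in the polytropic form $\theta^{in}=c_{\theta}(n^{A,in})^{2/3}$ (compatible with the definition $\bar{\theta}=[c_{1}+c_{2}C_{p}^{2/3}]\bar{n}_{1}^{2/3}$), then $\theta = c_{\theta}(n^{A})^{2/3}$ globally, and the uniform two-sided bound $1/C<\theta<C$ follows at once from the smallness of $n^{A}-\bar{n}_{1}$.

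With these two identifications, the momentum equation $\eqref{REWEUPSIO}_{3}$ closes to a system for $(n^{A},\mathbf{u},\phi_{0})$ alone:
\begin{equation*}
\begin{gathered}
\partial_{t}n^{A}+\mathrm{div}(n^{A}\mathbf{u})=0,\\
(m^{A}+C_{p}m^{B})\,n^{A}\bigl(\partial_{t}\mathbf{u}+(\mathbf{u}\cdot\nabla)\mathbf{u}\bigr)+(1+C_{p})c_{\theta}\nabla(n^{A})^{5/3}-(e^{A}+C_{p}e^{B})n^{A}\nabla\phi_{0}=0,\\
\Delta\phi_{0}=(e^{A}+C_{p}e^{B})(n^{A}-\bar{n}_{1}).
\end{gathered}
\end{equation*}
After absorbing the constants by scaling, this is exactly the single-species isentropic Euler-Poisson system with pressure law $p=A\rho^{5/3}$ around the constant background $\bar{n}_{1}$. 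The irrotationality condition $\nabla\times\mathbf{u}^{in}=0$ is propagated in time because the body force is the gradient $\nabla\phi_{0}$, and the neutrality condition $\int(n_{e}^{in}-\bar{n}_{e})\,dx=0$ collapses, via $n^{B,in}=C_{p}n^{A,in}$, to $\int(n^{A,in}-\bar{n}_{1})\,dx=0$, which is exactly the hypothesis required for solvability of the Poisson equation in the reduced system. Applying Guo's theorem produces a unique global smooth solution with the pointwise bound
\begin{equation*}
\Vert n^{A}-\bar{n}_{1}\Vert_{W^{s,\infty}}+\Vert\mathbf{u}\Vert_{W^{s,\infty}}+\Vert\nabla\phi_{0}\Vert_{W^{s,\infty}}\lesssim(1+t)^{-q},
\end{equation*}
and the inherited decay for $n^{B}-\bar{n}_{2}=C_{p}(n^{A}-\bar{n}_{1})$ and for $\theta-\bar{\theta}=c_{\theta}((n^{A})^{2/3}-\bar{n}_{1}^{2/3})$ then yields \eqref{PTWDAY}.

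The main obstacle will be verifying that the non-symmetric electric coupling coefficient $(e^{A}+C_{p}e^{B})$ and the distinct mass coefficients $(m^{A}+C_{p}m^{B})$ appearing in the reduced system are fully consistent with the canonically normalized isentropic Euler-Poisson system treated in \cite{[ivR]Guo1998}; that is, one must check that a rescaling $(n^{A},\mathbf{u},\phi_{0})\mapsto(\alpha_{1}n^{A},\alpha_{2}\mathbf{u},\alpha_{3}\phi_{0})$ with appropriately chosen constants $\alpha_{i}=\alpha_{i}(m^{A},m^{B},e^{A},e^{B},C_{p},c_{\theta})$ brings the reduced system into Guo's exact framework while preserving both the irrotationality and the neutrality conditions. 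A secondary bookkeeping issue is ensuring that the implicit choice of initial temperature $\theta^{in}=c_{\theta}(n^{A,in})^{2/3}$ is compatible with the constant $\bar{\theta}$ used to define the global bi-Maxwellian $\mu_{M}$, so that the pointwise decay in \eqref{PTWDAY} translates faithfully into the two-sided bounds on $\theta$ relative to $\theta_{M}$ used throughout Sections 3--6.
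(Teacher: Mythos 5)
Your proof is correct and follows essentially the same route as the paper: the paper also propagates the proportionality $n^B = C_p n^A$ (via $P = n^B/n^A$ satisfying a transport equation, whereas you use $w = n^B - C_p n^A$ satisfying a continuity equation — these are equivalent), closes the system via the polytropic relation $\theta = c_\theta (n^A)^{2/3}$, observes that irrotationality and neutrality are preserved so that the reduced system is a single-species isentropic Euler--Poisson system, and then invokes Guo's theorem \cite{[ivR]Guo1998}. Your explicit identification of the constants and the secondary bookkeeping issues you flag (rescaling into Guo's normalized form, compatibility of $\theta^{\rm in}$ with $\bar{\theta}$) are sensible and are likewise glossed over in the paper's proof.
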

    \begin{proof}
        Let $n^B(t,x)=P(t,x)n^A(t,x)$. From $\eqref{REWEUPSIO}_2$, one can get
        \begin{equation*}\label{PPFq}
            \partial_t P + u \cdot \nabla_{x}P=0, \\[2mm]
        \end{equation*}
        with initial data  $P(0,x)=C_p$. It is easy to solve $P(t,x)\equiv C_p$.
        We note that
        the continuity equation for $n^A,n^B$ and the temperature equation for $\theta$ are equivalent when
        \begin{equation}\label{quedwenducc}
            \theta=c_1(n^A)^{\frac{2}{3}}+c_2(n^B)^{\frac{2}{3}}=[c_1+c_2(C_p)^{\frac{2}{3}}](n^A)^{\frac{2}{3}},
        \end{equation}
        where $c_1$ and $c_2$ are two constants. We plunge \eqref{quedwenducc} into \eqref{REWEUPSIO},
        then they satisfy the following isentropic Euler-Poisson system:
        \begin{equation*}\label{REWISPeq}
            \left \{
            \begin{array}{lll}
                \partial_t n^{A} +{\rm div}(n^{A}\mathbf{u})=0, \\[2mm]
                \partial_t  \mathbf{u}
                + (\mathbf{u}\cdot\nabla)\mathbf{u}
                +\frac{\nabla[(1+C_p)(c_1+c_2C_p^{\frac{2}{3}})(n^{A})^{\frac{5}{3}}] }{(m^A+C_pm^B) n^{A}}-\frac{e^A+e^BC_p}{m^A+C_pm^B }\nabla\phi_0=0, \\[2mm]
                \Delta \phi_0 =  (e^{A}+ e^{B}C_p)n^A-(e^{A}+ e^{B}C_p)\bar{n}_1.
            \end{array}
            \right.
        \end{equation*}
        Since $c_1,c_2$ and $C_p$ are constants, it holds that
        \begin{equation*}\label{WXJGHDSBXCL}
            \nabla\times\Big\{\frac{\nabla[(1+C_p)(c_1+c_2C_p^{\frac{2}{3}})(n^{A})^{\frac{5}{3}}] }{(m^A+C_pm^B) n^{A}}-\frac{e^A+e^BC_p}{m^A+C_pm^B }\nabla\phi_0\Big\} \equiv 0.
        \end{equation*}
        Therefore, we can use the proof in \cite{[ivR]Guo1998}. \end{proof}

    \begin{remark}
        There are also some results concerning the Euler-Poisson equations for mixed fluids, such as \cite{[iJMP]Guo}. However, these fluid equations are not directly derived from Vlasov–Poisson-Boltzmann equations. The fundamental reason lies in the fact that the equilibrium state of gas mixture derived from momentum conservation and energy conservation, which determines special forms of collision invariants. So it is hard to ‌decouple‌ the momentum equations and energy equations for the two different fluids.
    \end{remark}
    To accommodate additional symbols in the following section, we temporarily repurpose the letter $l$, distinct from its usage in \eqref{Threedefweight1}.
    \subsection{The construction of  $F^{\alpha}_{l}$ ($\alpha\in\{A,B\}$, $ l = 1, 2, \cdots, 2k-1$)}
    First, we introduce the linearized collision operator and discuss some of its necessary properties.

    Let $\mathbf{F}_{l}=(F^A_{l},F^B_{l})^{T}=(\sqrt{\mu^A}f^A_l,\sqrt{\mu^B}f^B_l)^{T}$, $\mathbf{f}_l=(f^A_l,f^B_l)^{T}$, and we can define the linearized bi-collision operator $\mathbf{L}$  as
    \begin{gather}\label{LNOTATIONV}
        \mathbf{L} \mathbf{f}_l=\left(
        \begin{array}{lll}
            \mathbf{L}^{A} \mathbf{f}_l
            \\
            \mathbf{L}^{B} \mathbf{f}_l
        \end{array}
        \right)=\left(
        \begin{array}{lll}
            -\frac{1}{\sqrt{\mu^A}}  \Big(Q^{A A}(\mu^A,\sqrt{\mu^A}f_l^A)+ Q^{A A}(\sqrt{\mu^A}f_l^A, \mu^A)\\ \hspace{1cm}+Q^{A B}(\sqrt{\mu^A} f_l^A, \mu^B)+ Q^{A B}(\mu^A,  \sqrt{\mu^B} f_l^B) \Big)
            \\
            -\frac{1}{\sqrt{\mu^B}}  \Big(Q^{B A}(\mu^B,  \sqrt{\mu^A} f_l^A)+Q^{B A}(\sqrt{\mu^B} f_l^B, \mu^A) \\
            \hspace{1cm} + Q^{B B}(\mu^B,\sqrt{\mu^B}f_l^B)+ Q^{B B}(\sqrt{\mu^B}f_l^B, \mu^B) \Big)
        \end{array}
        \right).
    \end{gather}
    Here $\mathbf{L}$ is a self-adjoint operator \cite{[60]Briant2016ARM} in $(L^2_v(\mathbb{R})^3)^2$. It could be decomposed as $\mathbf{L}=\mathbf{\nu} + K$,
    \begin{gather*}
        \mathbf{\nu} \mathbf{f}_l = \left(\begin{array}{ccc}\nu^A(v) f_l^A  \\ \nu^B(v) f_l^B  \end{array}\right), \quad K\mathbf{f}_l =\left(\begin{array}{ccc}K^{A} \mathbf{f}_l \\ K^{B} \mathbf{f}_l \end{array}\right),
    \end{gather*}
    where the collision frequency $\nu$ and the $K$ operator are defined by
    \begin{equation*}
        \begin{split}
            &\nu^{\alpha}=  \sum_{\beta=A,B} \int_{\mathbb{R}^3\times \mathbb{S}^2}
            B^{ \alpha \beta}(|v-v_*|, \cos \theta) \mu^{\beta}_* d\sigma dv_*,
            \\
            &                \mathcal{K}^{\alpha} \mathbf{f}_l = - \frac{1}{\sqrt{\mu^\alpha}} \sum_{\beta=A,B}\int_{\mathbb{R}^3 \times \mathbb{S}^2}B^{\alpha\beta}(|v-v_*|, \cos \theta)  \Big[\mu^\alpha(v')\sqrt{\mu^\beta(v_*')}f_l^\beta(v_*') \\
            &\hspace{2cm}  + \mu^{\beta}(v_*') \sqrt{\mu^\alpha(v')}\, f_l^\alpha(v')- \mu^\alpha(v)\sqrt{\mu^\beta(v_*)}f_l^\beta(v_*) \Big] d\sigma dv_*. \\
        \end{split}
    \end{equation*}
    (The kernel of $\mathbf{L}$ \cite{[60]Briant2016ARM}).\,  $\langle \mathbf{f}_l, \mathbf{L}\mathbf{f}_l\rangle_{(L^2_v(\mathbb{R})^3)^2}=0$ if and only if $\mathbf{f}_l$ belongs to the kernel of $ \mathbf{L}$, which is denoted as $ {\rm Ker}(\mathbf{L})$. It is spanned by the following six vector functions
    \begin{equation*}
        {\rm Ker}(\mathbf{L})={\rm span}\{ X_0,X_1,\cdots, X_5\},
    \end{equation*}
    with
    \begin{equation}\label{YZJVEPDB}
        \begin{split}
            &\mathbf{X}_0= \sqrt{\frac{\mu^A}{n^A }}  \mathbf{e}_1, \, \mathbf{X}_1= \sqrt{\frac{\mu^B}{n^B }} \mathbf{e}_2,\, \mathbf{X}_j=\frac{v_{j-1}-u^{j-1}}{\sqrt{\theta n}}
            \left(
            \begin{array}{ccc}
                m^A\sqrt{\mu^A}\\[2mm]
                m^B \sqrt{\mu^B}
            \end{array}
            \right) ~(j=2,3,4),\\
            & \mathbf{X}_5=\frac{1}{\sqrt{6n}}
            \left(
            \begin{array}{ccc}
                (\frac{m^A |v-\mathbf{u}|^2}{\theta}-3)\sqrt{\mu^A}\\[2mm]
                (\frac{m^B |v-\mathbf{u}|^2}{\theta}-3)\sqrt{\mu^B}
            \end{array}
            \right).
        \end{split}
    \end{equation}
    One can check that
    $
    \langle \mathbf{X}_i\cdot \mathbf{X}_j\rangle
    =\delta_{ij},\,\, 0\leq i,j \leq 5$, where $\delta_{ij}$ is the Kronecker symbol.
    The space can be decomposed by $(L^2_v(\mathbb{R}^3))^2 = {\rm Ker}\mathbf{L} \oplus \mathcal{N}(\mathbf{L})^{\bot}$, where $ \mathcal{N}(\mathbf{L})^{\bot}$ is the orthogonal set of ${\rm Ker}\mathbf{L}$.
    For any $ \mathbf{g}=(g^A,g^B)^T \in (L^2_v(\mathbb{R}^3))^2 $, it could be decomposed  as
    \begin{gather}\label{MACMICfenjie}
        \mathbf{g} = \mathcal{P} \mathbf{g} + (\mathbf{I}- \mathcal{P}) \mathbf{g}, \quad {\rm with}\quad \mathcal{P} \mathbf{g}= \sum_{j=0}^5 \langle \mathbf{g}, \mathbf{X}_j\rangle \mathbf{X}_j,
    \end{gather}
    where $\mathcal{P} \mathbf{g}$ and $(\mathbf{I}-\mathcal{P})\mathbf{g} = \mathbf{g}-\mathcal{P}\mathbf{g} $ are the macro-micro parts of $\mathbf{g}$ respectively.

    \noindent (The solvability of $\mathbf{L}$). For any ${\mathbf{\bar{R}}} =(\bar{R}^A,\bar{R}^B)^T \in (L^2_v(\mathbb{R}^3)^2)^2 $, there is a solution of
    $\mathbf{L}\mathbf{g}=\mathbf{\bar{R}}$
    if and only if $\mathbf{\bar{R}} \in \mathcal{N}(\mathbf{L})^{\bot}$, that is,
    \begin{gather}\label{SOBCD222}
        \langle\mathbf{\bar{R}}, \mathbf{X}_j\rangle_{(L^2_v(\mathbb{R})^3)^2} =0, \quad {\rm for~ all}\quad j=0,\cdots,5.
    \end{gather}
    The solution $\mathbf{g} \in \mathcal{N}(\mathbf{L})^{\bot}$ is formulated as
    \begin{eqnarray*}\label{1.23}
        \mathbf{g}= \mathbf{L}^{-1} \mathbf{\bar{R}}.
    \end{eqnarray*}

    Now, we can determine the terms  $\mathbf{F}_l (l=1,2,\cdots,2k-1)$ step by step.  By macro-micro decomposition in \eqref{MACMICfenjie}, we write $\mathbf{f}_l=(f^A_l,f^B_l)^{T}$ as follows
    \begin{eqnarray*}\label{solveklarge1deco}
        \mathbf{f}_l = \mathcal{P}\mathbf{f}_l + (\mathbf{I}-\mathcal{P})\mathbf{f}_l.
    \end{eqnarray*}
    The macroscopic part of $\mathbf{f}_l $ is  expressed as
    \begin{eqnarray*}\label{macropart1.23}
        \begin{split}
            \mathcal{P} \mathbf{f}_l
            = &\frac{n^{A}_{l}}{\sqrt{n^{A}}}\mathbf{X}_0+\frac{n^{B}_{l}}{\sqrt{n^{B}}}\mathbf{X}_1
            +\sum_{j=2,3,4}\sqrt{\frac{n}{\theta}} u_l^{j-1} \mathbf{X}_j
            +\sqrt{\frac{n}{6}}\frac{\theta_l}{\theta}\mathbf{X}_5 \\
            =& \left(\begin{array}{lll} \Big(\frac{n^A_{l}}{n^A} + \mathbf{u}_{l} \cdot \frac{m^A(v-\mathbf{u})}{\theta} +
                \frac{\theta_l}{6\theta}(\frac{m^A |v-\mathbf{u}|^2}{\theta} -3 )\Big) \sqrt{\mu^A}\vspace{3pt}\\
                \Big( \frac{n^B_{l}}{n^B} + \mathbf{u}_{l} \cdot \frac{m^B(v-\mathbf{u})}{\theta} +
                \frac{\theta_l}{6\theta}(\frac{m^B|v-\mathbf{u}|^2}{\theta} -3 )\Big) \sqrt{\mu^B}\end{array}\right),
        \end{split}
    \end{eqnarray*}
    where $(n_l^A, n_l^B, \mathbf{u}_l, \theta_l)$ are density of the mass, velocity and temperature, which satisfy

    \begin{eqnarray*}\label{fluid def}
        \begin{split}
            &n_l= n_l^A+n_l^B, \, n_l^{\alpha} =\int_{\mathbb{R}^3} F_l^{\alpha} dv = \int_{\mathbb{R}^3} f_l^{\alpha} \sqrt{\mu^\alpha} dv, \quad l\geq 1,\\
            &  \rho_l= m^A n_l^A+m^B n_l^B, \,\,n_l^{\alpha} u^j +n^{\alpha} u^j_l= \int_{\mathbb{R}^3} v_j F_l^\alpha  dv= \int_{\mathbb{R}^3} v_j f_l^\alpha \sqrt{\mu^\alpha} dv,\\
            & n^\alpha \theta_{l} + 3 \theta n^{\alpha}_l = \int_{\mathbb{R}^3}m^{\alpha} |v-\mathbf{u}|^2 F_l^\alpha dv= \int_{\mathbb{R}^3}m^{\alpha} |v-\mathbf{u}|^2 f_l^\alpha \sqrt{\mu^\alpha} dv.
        \end{split}
    \end{eqnarray*}
  Moreover, applying \eqref{LNOTATIONV} to $\eqref{ORDERZONG}_3$, one has
    \begin{eqnarray*}\label{RNDDYTH}
        \begin{aligned}
            \mathbf{L}^{\alpha} \mathbf{f}_{l+1}=&-(\partial_t +v \cdot \nabla_{x}+\nabla_x\phi_0\cdot\nabla_v)F_l^{\alpha}-\nabla_x\phi_l\cdot\nabla_vF_0^\alpha-\mathop{\sum}_{i+j=l\atop i,j\geq1}\nabla_x\phi_i\cdot\nabla_vF_j^\alpha\\
            &\hspace{1.5cm} +\displaystyle{\sum_{i+j=l+1\atop i,j\geq1} \sum_{\beta=A,B}} Q^{ \alpha\beta}(F_i^{\alpha},F_j^{\beta}):=\mathbf{\bar{R}}^{\alpha}_l,
        \end{aligned}
    \end{eqnarray*}
    where $\mathbf{\bar{R}}_l=(\mathbf{\bar{R}}^{A}_l,\mathbf{\bar{R}}^{B}_l)^{T}$, the solvability condition \eqref{SOBCD222} means that
    \begin{eqnarray}\label{solvabiliCDCD}
        \left\langle \mathbf{\bar{R}}_l, \mathbf{X}_{j} \right\rangle_v =0, \quad {\rm for~ all}\quad j=0,\cdots,5.
    \end{eqnarray}
    Then, these six equations in \eqref{solvabiliCDCD} lead to
    \begin{equation}\label{linarFLsys}
        \left\{
        \begin{array}{lllll}
            \partial_t n^{A}_{{l}} +{\rm div}_x(n^{A}_{l} \mathbf{u}+n^{A}\mathbf{u}_{l})=0,\vspace{3pt} \\
            \partial_t n^{B}_{l} +{\rm div}_x(n^{B}_{l}\mathbf{u}+n^{B}\mathbf{u}_{l})=0,
            \\
            \rho\left(\partial_t \mathbf{u}_{l} +\mathbf{u}_{l} \cdot \nabla_{x}\mathbf{u}+\mathbf{u} \cdot \nabla_{x}\mathbf{u}_{l}\right)
            -\Big[\frac{n^{A}_{l}}{n^{A}}\nabla_{x}(n^{A}\theta) + \frac{n^{B}_{l}}{n^{B}}\nabla_{x}(n^{B}\theta)\Big] \\
            \hspace{1.5cm}-n_e\nabla_{x}\phi_l+\nabla \Big(\frac{n^A \theta_l+3\theta n_l^A}{3}\Big)+\nabla \Big(\frac{n^B \theta_l+3\theta n_l^B}{3}\Big)=\mathbf{H}_{l-1},\vspace{3pt} \\
            n \Big(\partial_t \theta_{l} + \frac{2}{3}(\theta_{l} {\rm div} \mathbf{u} + 3\theta {\rm div} \mathbf{u}_{l})  +\mathbf{u}\cdot \nabla_{x} \theta_{l} + 3\mathbf{u}_{l} \cdot \nabla_{x} \theta \Big) = {\rm g}_{l-1},
        \end{array}
        \right.
    \end{equation}
    where
    \begin{eqnarray}\label{espq1.30}
        \begin{aligned}
            (\mathbf{H}_{l-1})_i= &- \sum_{\alpha=A,B}m^{\alpha}\sum_{j=1}^{3} \partial_{x_j}\Big(\frac{\theta}{m^\alpha} \int_{\mathbb{R}^3} \mathbf{A}^{\alpha}_{i,  j} f_l^{\alpha} dv\Big)+\mathop{\sum}_{i+j=l\atop i,j\geq1}\rho_j\nabla_x\phi_i,   \\
            {\rm g}_{l-1}=& - 2 \sum_{\alpha=A,B} m^{\alpha} \sum_{i=1}^{3} \partial_{x_i} \Big((\frac{\theta}{m^\alpha})^{\frac{3}{2}}\int_{\mathbb{R}^3} \mathbf{B}^{\alpha}_{i }f_k^{\alpha} dv + \sum_{j=1}^{3} u_j \frac{\theta}{m^\alpha} \int_{\mathbb{R}^3} \mathbf{A}^{\alpha}_{i,  j}f_l^{\alpha} dv\Big)\\
            &  -2\mathbf{u}_l \cdot \mathbf{H}_{l-1}+\mathop{\sum}_{i+j=l\atop i,j\geq1}(\rho_j\mathbf{u}+\rho \mathbf{u}_j)\nabla_x\phi_i.
        \end{aligned}
    \end{eqnarray}
    Here the the Burnett functions  $\mathbf{A}^{\alpha}_{i,  j},  \mathbf{B}^{\alpha}_{i } (\alpha=A, B) $ are defined as follows
    \begin{equation*}
        \begin{split}
            \mathbf{A}^{\alpha}_{i,  j}:= &\Big(\frac{m^{\alpha}(v_i-\mathbf{u}_i)(v_j-\mathbf{u}_j)}{\theta} -\delta_{ij} \frac{m^{\alpha}|v-\mathbf{u}|^2}{3\theta}\Big)\sqrt{\mu^\alpha}, \\
            \mathbf{B}^{\alpha}_{i } :=& \frac{v_i-\mathbf{u}_i}{2}\sqrt{\frac{m^\alpha}{\theta}}(\frac{m^{\alpha}|v-\mathbf{u}|^2}{\theta}-5)\sqrt{\mu^\alpha}.
        \end{split}
    \end{equation*}
    It should be pointed out that $\mathbf{A}^{\alpha}_{i,  j} \in \mathcal{N}^{\bot}$ and $\mathbf{B}^{\alpha}_{i }\in \mathcal{N}^{\bot}$, so the source term $\mathbf{H}_{l-1}$ and $\rm{g}_{l-1}$ in \eqref{espq1.30} depend only on the microscopic part of $(f_l^A, f_l^B)^T$.
    Finally, the micro-part of $\mathbf{f}_l$ could be expressed as
    \begin{eqnarray*}\label{micropart1.301}
        (\mathbf{I}-\mathcal{P})\mathbf{f}_{l+1} = \mathbf{L}^{-1} \mathbf{\bar{R}}_{l}.
    \end{eqnarray*}
    \begin{lemma} [\cite{[61]Jiang}, \cite{[50]Wu2023JDE}] \label{L-1estJ}
        Let $-3<\gamma\leq 1$ and $\mathbf{g} \in \mathcal{N}(\mathbf{L})^{\perp}$. \\
        (\uppercase\expandafter{\romannumeral1})\, $\mathbf{The \, \, cases}$ $-\frac{3}{2}<\gamma\leq 1:$
        Let $0<q<1$ be arbitrarily fixed. Define the positive number $k_{\gamma}$ by
        \begin{equation*}\label{macroseries2}
            k_{\gamma} = \left \{
            \begin{split}
                &   \frac{3-\gamma}{2}, \quad  if \quad 0 \leq \gamma \leq 1,\\
                &   \frac{3-2\gamma}{2}, \quad   if \quad -\frac{3}{2} \leq \gamma \leq 0.
            \end{split}
            \right.
        \end{equation*}
        Assume that $(1+|v|)^{  k_{\gamma}} \mu^{-\frac{q}{2}} \in L^{\infty}$. Then the following statements hold: \\
        ({\romannumeral1}) \,  For the hard potential cases $0 \leq \gamma \leq 1$,
        \begin{eqnarray}\label{247}
            |\mathbf{L}^{-1}\mathbf{g}(v)| \leq C \Vert  (1+|v|)^{k_{\gamma}} \mu^{-\frac{q}{2}} \mathbf{g} \Vert_{\mathbf{L}_{v}^{\infty}} \mu^{\frac{q}{2}}(v), \quad v \in \mathbb{{R}}^3.
        \end{eqnarray}
        ({\romannumeral2}) \, For the part of soft potential cases  $-\frac{3}{2} < \gamma < 0$,
        \begin{eqnarray}\label{248}
            |\mathbf{L}^{-1}\mathbf{g}(v)| \leq C \Vert  (1+|v|)^{k_{\gamma}} \mu^{-\frac{q}{2}} \mathbf{g} \Vert_{\mathbf{L}_{v}^{\infty}} \nu^{-1}(v) \mu^{\frac{q}{2}}(v), \quad v \in \mathbb{{R}}^3.
        \end{eqnarray}
        (\uppercase\expandafter{\romannumeral2})\, $\mathbf{The \, \, cases}$ $-3<\gamma\leq -\frac{3}{2}:$
        Assume that
        \begin{eqnarray*}
            \sum_{|\beta|\leq2}\Vert (1+|v|)^{\gamma(|\beta|-3)} \mu^{-q_0} \partial^{\beta}_{v}\mathbf{g}\Vert_{L_{v}^2}<\infty,
        \end{eqnarray*}
        for a sufficiently small $0<q_0<1$. Then
        \begin{eqnarray}\label{250}
            |\mathbf{L}^{-1}\mathbf{g}(v)| \leq C \sum_{|\beta|\leq2}\Vert (1+|v|)^{\gamma(|\beta|-3)} \mu^{-q_0} \partial^{\beta}_{v}\mathbf{g}\Vert_{L_{v}^2} \mu^{q_0}(v), \quad  v \in \mathbb{{R}}^3.
        \end{eqnarray}
        The positive constant $C>0$ above depends on the solution of the compressible Euler systems.
    \end{lemma}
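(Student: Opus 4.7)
The plan is to exploit the Grad-type splitting $\mathbf{L} = \nu + K$ together with the coercivity of $\mathbf{L}$ on $\mathcal{N}(\mathbf{L})^\perp$. Since $\mathbf{L}$ is self-adjoint with kernel spanned by $\{\mathbf{X}_j\}_{j=0}^{5}$, one has a spectral gap $\langle \mathbf{L}\mathbf{f}, \mathbf{f}\rangle_v \geq \delta_0 |\mathbf{f}|_\nu^2$ on the orthogonal complement, giving the baseline bound $|\mathbf{L}^{-1}\mathbf{g}|_\nu \lesssim |\mathbf{g}|_{\nu^{-1}}$ whenever $\mathbf{g} \in \mathcal{N}(\mathbf{L})^\perp$. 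To upgrade this to the desired pointwise estimate, I would rewrite $\mathbf{L}^{-1}\mathbf{g} = \nu^{-1}\mathbf{g} - \nu^{-1}K\mathbf{L}^{-1}\mathbf{g}$ and iterate once, obtaining
\[
\mathbf{L}^{-1}\mathbf{g}(v) = \nu^{-1}(v)\mathbf{g}(v) - \nu^{-1}(v)K(\nu^{-1}\mathbf{g})(v) + \nu^{-1}(v)\bigl(K\nu^{-1}K\mathbf{L}^{-1}\mathbf{g}\bigr)(v),
\]
so that the first two terms are read off pointwise from the prescribed weighted $L^\infty$ bound on $\mathbf{g}$, while the third is controlled by the $L^2_\nu$ estimate combined with the standard Vidav-type gain coming from the smoothing of $k \ast k$.

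For the hard potential range $0 \leq \gamma \leq 1$ (case I.i), $\nu(v) \sim \langle v\rangle^\gamma \gtrsim 1$, and Grad's pointwise kernel bound applied to the two-species operator, together with the weight $\langle v\rangle^{k_\gamma}\mu^{-q/2}$ for $k_\gamma = (3-\gamma)/2$, allows me to estimate each term by $\|\langle v\rangle^{k_\gamma}\mu^{-q/2}\mathbf{g}\|_{L^\infty_v}\mu^{q/2}(v)$, yielding \eqref{247}. For the moderately soft range $-3/2 < \gamma < 0$ (case I.ii), the same identity applies, but $\nu^{-1}(v) \sim \langle v\rangle^{-\gamma}$ now grows in $|v|$; this growth is absorbed by the larger weight $k_\gamma = (3-2\gamma)/2$ and is precisely what produces the extra $\nu^{-1}(v)$ factor on the right-hand side of \eqref{248}. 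The bookkeeping here is essentially mechanical once the Vidav gain is in place.

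For the very soft case $-3 < \gamma \leq -3/2$ (case II), the iterated kernel no longer provides enough decay for pointwise iteration to close, so I would switch tactics and use the Sobolev embedding $H^2_v(\mathbb{R}^3) \hookrightarrow L^\infty_v$. Multiplying $\mathbf{L}\mathbf{g} = \bar{\mathbf{R}}$ by $\mu^{-q_0}$ and applying $\partial_v^\beta$ for $|\beta|\leq 2$, one performs weighted $L^2$-energy estimates with the balancing weights $\langle v\rangle^{\gamma(|\beta|-3)}$, which simultaneously offset the degeneracy of $\nu^{-1}$ and the derivative losses produced by commuting $\partial_v^\beta$ with $K$. The pointwise bound \eqref{250} then follows by embedding. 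This last case is the main obstacle: the weights $\langle v\rangle^{\gamma(|\beta|-3)}$ must be chosen so precisely that the commutators $[\partial_v^\beta, K]$ remain absorbable, and the Maxwellian factor $\mu^{q_0}$ on the right must be extracted from the Gaussian decay already present in the two-species kernel without destroying coercivity. Verifying this balance against the off-diagonal structure created by $Q^{AB}$ and $Q^{BA}$ is the technical crux.
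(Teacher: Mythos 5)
The paper does not prove this lemma: it is imported verbatim, with citations to \cite{[61]Jiang} and \cite{[50]Wu2023JDE}, so there is no internal proof to compare against. Your sketch does follow the same broad strategy used in those references --- the Grad splitting $\mathbf{L}=\nu+K$, the $L^2_\nu$ coercivity bound $|\mathbf{L}^{-1}\mathbf{g}|_\nu\lesssim|\mathbf{g}|_{\nu^{-1}}$, a twice-iterated Duhamel identity $\mathbf{L}^{-1}\mathbf{g}=\nu^{-1}\mathbf{g}-\nu^{-1}K\nu^{-1}\mathbf{g}+\nu^{-1}K\nu^{-1}K\mathbf{L}^{-1}\mathbf{g}$ with a Vidav-type $L^2\to L^\infty$ gain for the iterated kernel when $\gamma>-\tfrac32$, and weighted $H^2_v$ estimates plus Sobolev embedding when $-3<\gamma\leq-\tfrac32$ --- and you correctly flag the two-species off-diagonal structure of $K$ as the place where the bookkeeping is not automatic.

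Two cautions. First, the distinction between the weights $k_\gamma=\tfrac{3-\gamma}{2}$ (hard) and $k_\gamma=\tfrac{3-2\gamma}{2}$ (moderately soft) is not merely a matter of "absorbing" the growth of $\nu^{-1}$: the extra factor of $-\gamma/2$ in the weight comes from the soft-potential decay of the kernel $k(v,v_*)$, which loses $\langle v\rangle^{\gamma-1}$ rather than $e^{-c|v|^2}$, so the Vidav gain is weaker and the weight must compensate. Your outline calls this "essentially mechanical," which underestimates it; an explicit pointwise bound on the two-species iterated kernel (of the type the paper records in Lemma \ref{LeMK2ker4444}) is required. Second, there is a notational slip in Part~II --- you write $\mathbf{L}\mathbf{g}=\bar{\mathbf{R}}$ where you mean to differentiate the equation $\mathbf{L}\mathbf{f}=\mathbf{g}$ with $\mathbf{f}=\mathbf{L}^{-1}\mathbf{g}$ --- and more substantively, the commutators $[\partial_v^\beta,K^{\alpha\beta}]$ in the two-species case generate terms mixing $g^A$ and $g^B$ with different Maxwellian widths ($m^A\neq m^B$), so extracting a common factor $\mu^{q_0}$ uses the bound $C^{-1}\mu_M^\alpha\leq\mu^\alpha\leq C(\mu_M^\alpha)^{\tilde q}$ rather than a straightforward Gaussian. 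You have identified this as the crux but not filled it in; the cited references do.
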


    \begin{remark}\label{278}
        If we take $\mathbf{\bar{R}}_{l}$ in \eqref{RNDDYTH} to replace $\mathbf{g}$ in \eqref{250}, then there exists a series of number $10<h_1 <  h_2 < \cdots <h_{2k-1}$, such that for $|\beta|<2$,
        \begin{eqnarray*}
            |   \partial^{\beta}_{v} \mathbf{\bar{R}}_{l} | \leq C \{1+|v|\}^{h_l} |\mathbf{\bar{R}}_{l}| \qquad  for \quad v \in \mathbb{{R}}^3.
        \end{eqnarray*}
        Since $\max\{\gamma(|\beta|-3),k_{\gamma}\}\leq 10  $, and $\nu^{-1} \leq C \{1+|v|\}^3$ from \eqref{247},\eqref{248} and \eqref{250}, there always holds for all the case  $-3<\gamma\leq1$
        \begin{eqnarray}\label{togetheresL-1}
            |\mathbf{L}^{-1}\mathbf{\bar{R}}_{l}(v)| \leq C \Vert  (1+|v|)^{    h_l} \mu^{-q_l} \mathbf{\bar{R}}_{l} \Vert_{L^{\infty}_v} \{1+|v|\}^3 \mu^{q_l}(v) \quad v \in \mathbb{{R}}^3,
        \end{eqnarray}
        where $0<q_0<1$ is small enough and $0<q_{2k-1}<q_4<...<q_1<q_0$.
    \end{remark}
    \begin{lemma}\label{FI2KM1EST}
        Let $(n^A, n^B, \mathbf{u}, \theta, \nabla\phi_0 )$ be the smooth solution of  established in Lemma \ref{CPCO} . $(\rho^{in}_l, \mathbf{u}^{in}_l, \theta^{in}_{l})$  is the initial data of the linear system \eqref{linarFLsys}, then it has a unique solution such that
        \begin{eqnarray}\label{decayLxvFf}
            \begin{split}
                |\mathbf{f}_{l}|\leq C (1+t)^{l-1}(1+|v|)^{h_l}\sqrt{\mu}, \quad |\nabla_{x}\phi_1|\leq C (1+t)^{l-1}, \qquad \qquad\\
                |\nabla_{v}\mathbf{f}_{l}|\leq C  (1+t)^{l-1}(1+|v|)^{h_l+1}\sqrt{\mu},\quad |\nabla_{x}\mathbf{f}_{l}|\leq C  (1+t)^{l-1}(1+|v|)^{h_l+2}\sqrt{\mu}, \\
                |\nabla^2_{v}\mathbf{f}_{l}|\leq C  (1+t)^{l-1}(1+|v|)^{h_l+2}\sqrt{\mu},\quad |\nabla_{x}\nabla_{v}\mathbf{f}_{l}|\leq C  (1+t)^{l-1}(1+|v|)^{h_l+3}\sqrt{\mu}.
            \end{split}
        \end{eqnarray}
    \end{lemma}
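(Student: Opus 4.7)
The plan is to prove \eqref{decayLxvFf} by induction on $l \in \{1, 2, \ldots, 2k-1\}$, determining at each step first the microscopic part $(\mathbf{I}-\mathcal{P})\mathbf{f}_l = \mathbf{L}^{-1}\bar{\mathbf{R}}_{l-1}$ via \eqref{togetheresL-1}, and then solving the linear symmetric hyperbolic system \eqref{linarFLsys} coupled with the Poisson equation $\Delta\phi_l = e^A n_l^A + e^B n_l^B$ for the macroscopic unknowns $(n_l^A, n_l^B, \mathbf{u}_l, \theta_l, \nabla_x\phi_l)$. For the base case $l = 1$, all source terms depend only on the background $(n^A, n^B, \mathbf{u}, \theta, \nabla_x\phi_0)$ whose pointwise control is supplied by Lemma \ref{CPCO}, so both $\bar{\mathbf{R}}_0$ and the macroscopic sources $\mathbf{H}_0, g_0$ are uniformly bounded in $t$, and the claim is immediate.

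For the inductive step I would first estimate $\bar{\mathbf{R}}_{l-1}$ in \eqref{RNDDYTH} term by term. Derivatives falling on $\sqrt{\mu^\alpha}=\sqrt{\mu^\alpha(v;n^\alpha,\mathbf{u},\theta)}$ produce a polynomial factor in $v$ multiplied by a bounded function of $(n^\alpha,\mathbf{u},\theta)$ by Lemma \ref{CPCO}, while derivatives of $f^\alpha_j$ for $j\le l-1$ are controlled by the inductive hypothesis with time growth $(1+t)^{j-1}$. The bilinear contributions $Q^{\alpha\beta}(F_i,F_j)$ and the electric-field terms $\nabla_x\phi_i\cdot\nabla_v F_j$ with $i+j\le l$, $i,j\ge 1$, are handled in the same way, the worst time factor being $(1+t)^{l-2}$. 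Invoking \eqref{togetheresL-1} then yields the desired pointwise bound for $(\mathbf{I}-\mathcal{P})\mathbf{f}_l$ with a slightly inflated but finite velocity exponent $h_l$, the extra $(1+|v|)^3$ and polynomial shifts being absorbed into $\mu^{q_l}$ with $q_l<q_{l-1}$. Bounds for $\nabla_{x,v}$ and second derivatives of $(\mathbf{I}-\mathcal{P})\mathbf{f}_l$ follow by differentiating \eqref{RNDDYTH} before inverting $\mathbf{L}$ and applying the same scheme, together with the higher-derivative hypotheses at level $l-1$.

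For the macroscopic part the source terms $\mathbf{H}_{l-1}, g_{l-1}$ in \eqref{espq1.30} depend on $(\mathbf{I}-\mathcal{P})\mathbf{f}_l$ (just estimated) integrated against the Gaussian-decaying Burnett functions $\mathbf{A}_{i,j}^\alpha,\mathbf{B}_i^\alpha$, together with products $\rho_j\nabla_x\phi_i$ and $(\rho_j\mathbf{u}+\rho\mathbf{u}_j)\nabla_x\phi_i$ with $i+j=l$, $i,j\ge 1$, that are all controlled by the inductive hypothesis, giving $\|\mathbf{H}_{l-1}\|_{H^s_x}+\|g_{l-1}\|_{H^s_x}\lesssim (1+t)^{l-2}$. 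The system \eqref{linarFLsys} is symmetrized by $\mathbf{A}_0(\mathbf{U}_0)$ exactly as in \eqref{leadisssngordervector}, so classical energy estimates for linear symmetric hyperbolic systems coupled with a Poisson equation in $\mathbb{R}^3$ (\cite{[62]A.Majda}) in a sufficiently high $H^s_x$ deliver a unique solution satisfying $\|(n_l^A,n_l^B,\mathbf{u}_l,\theta_l,\nabla_x\phi_l)\|_{H^s_x}\lesssim (1+t)^{l-1}$; Sobolev embedding gives the corresponding $L^\infty_x$ bound, and combining with the microscopic estimate produces $|\mathbf{f}_l|\le C(1+t)^{l-1}(1+|v|)^{h_l}\sqrt{\mu}$ and $|\nabla_x\phi_l|\le C(1+t)^{l-1}$.

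The main obstacle is the simultaneous bookkeeping of two sequences: the polynomial velocity exponents $h_l$, each raised by a finite amount at every iteration through $\bar{\mathbf{R}}_{l-1}$ and through the $(1+|v|)^3$ shift in \eqref{togetheresL-1}, and the Maxwellian exponents $q_l$, which must strictly decrease so that $\mu^{q_l}$ can still absorb the inflated polynomial factors. Because only the finite chain $l=1,\ldots,2k-1$ is needed and the initial $q_0<1$ may be chosen arbitrarily close to $1$, the decreasing sequence $0<q_{2k-1}<\cdots<q_0<1$ remains uniformly positive, so at every step $\mu^{q_l}$ dominates any polynomial growth in $v$ and the induction closes with finite $h_{2k-1}$.
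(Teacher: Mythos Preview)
Your approach is essentially the same as the paper's: induction on $l$, with the microscopic part controlled by \eqref{togetheresL-1} and the macroscopic part by energy estimates on a linear symmetric hyperbolic system coupled to the Poisson equation. Two points deserve sharpening.

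First, for $l=1$ you write that the sources are ``uniformly bounded in $t$, and the claim is immediate.'' Mere boundedness of the source in a linear hyperbolic system gives $(1+t)$ growth, not $(1+t)^0$; what actually closes the base case is that both the zeroth-order coefficients (derivatives of the background) and the sources $\mathbf{H}_0,g_0$ \emph{decay} like $(1+t)^{-q}$ with $q>1$ by Lemma~\ref{CPCO}, so the energy inequality reads $\tfrac{d}{dt}\|\mathbf{U}_1\|^2\lesssim (1+t)^{-q}(\|\mathbf{U}_1\|^2+\|\mathbf{U}_1\|)$ and Gronwall yields a uniform bound. The same integrable coefficient on the quadratic term is what, at level $l$, upgrades $(1+t)^{l-2}$ sources to $(1+t)^{l-1}$ growth rather than exponential growth; you should make this explicit.

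Second, the paper does not symmetrize \eqref{linarFLsys} with $\mathbf{A}_0(\mathbf{U}_0)$ directly. Because of the normalization of $\theta_l$ in \eqref{fluid def}, the principal part of \eqref{linarFLsys} differs from the linearization of \eqref{REWEUPSIO} by factors of $3$, so the paper first passes to the pressure variables $p_l^\alpha=\tfrac{1}{3}(n^\alpha\theta_l+3\theta n_l^\alpha)$ and introduces a different symmetrizer $\mathbf{B}_0$ (the $6\times 6$ matrix displayed after \eqref{DCHDJZFC}). Either route gives a symmetric hyperbolic system, so your conclusion stands, but the claim ``exactly as in \eqref{leadisssngordervector}'' is not literally correct.
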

    \noindent \proofname. The proof of the decay estimate could be divided into two parts: the estimate of the macro part $\mathcal{P}\mathbf{f}_{l}$ and the estimate of micro part $(\mathbf{I}-\mathcal{P})\mathbf{f}_{l}$. By Lemma \ref{L-1estJ} , if \eqref{decayLxvFf} holds for the case $l-1$, one gets
    \begin{equation*}
        |(\mathbf{I}-\mathcal{P})\mathbf{f}_{l+1}|\leq C (1+t)^{l}(1+|v|)^{h_{l+1}}\sqrt{\mu}.
    \end{equation*}
To estimate $\mathcal{P}\mathbf{f}_{l}$, noting that its coefficients $(n_l^A, n_l^B, \mathbf{u}_l, \theta_l,\nabla\phi_l )$ are solutions of system \eqref{linarFLsys}, and recalling the notations:
\begin{gather*}
    \rho=m^An^A+m^Bn^B , \quad \tilde{n}=n^A+n^B,\quad  n_e=e^An^A+e^Bn^B,
\end{gather*}
we can define
\begin{gather*}
    p^{\alpha}_k= \frac{n^{\alpha} \theta_k +3 \theta n^{\alpha}_k}{3}   , \quad p^{\alpha}=n^{\alpha}\theta.
\end{gather*}
    Then the system \eqref{linarFLsys} is equivalent to
    \begin{equation}\label{BBKLLL}
        \left \{
        \begin{split}
            &   \partial_t p^{A}_l +\mathbf{u} \cdot \nabla_x p^{A}_l + \frac{5}{3} \Big(p^{A} {\rm div}_x \mathbf{u}_l + p^{A}_l {\rm div}_x \mathbf{u} \Big)+  \nabla_x p^{A} \cdot \mathbf{u}_l =\frac{n^{A}}{3\tilde{n}} g_{l-1},\\
            &   \partial_t p^{B}_l +\mathbf{u} \cdot \nabla_x p^{B}_l + \frac{5}{3} \Big( p^{B}{\rm div}_x \mathbf{u}_l + p^{B}_k{\rm div}_x \mathbf{u}\Big) +  \nabla_x p^{B} \cdot \mathbf{u}_k=\frac{n^{B}}{3\tilde{n}} g_{l-1},\\
            &   \partial_t \mathbf{u}_l +\mathbf{u} \cdot \nabla_{x}\mathbf{u}_l +\mathbf{u}_l \cdot \nabla_{x}\mathbf{u} -
            \frac{\nabla_{x}p^{A}}{\rho p^{A}}p^{A}_l  - \frac{\nabla_{x}p^{B}}{\rho p^{B}}p^{B}_l \\
            &\hspace{3cm}+ \frac{1}{\rho}\nabla_{x}(p^{A}_l +p^{B}_l) -\frac{n_{e}}{\rho}\nabla_{x}\phi_l  - \frac{\nabla_{x}\tilde{p} }{3\rho \theta}\theta_k=\frac{\mathbf{H}_{l-1}}{\rho}, \\
            &   \partial_t \theta_l + \frac{2}{3}(\theta_l {\rm div}_x \mathbf{u} + 3\theta {\rm div}_x\mathbf{u}_l)
            +\mathbf{u}\cdot \nabla_{x}\theta_k + 3\mathbf{u}_l \cdot \nabla_{x} \theta = \frac{g_{l-1}}{\tilde{n}}.
        \end{split}
        \right.
    \end{equation}
    Further, setting  the two vectors $\mathbf{U}_l=(p_l^A, p_l^B, \mathbf{u}_l, \theta_l)^T$, $\mathbf{V}_l=(0,0,\nabla_x\phi_l,0)^T$, we symmetrize the linear system \eqref{BBKLLL} with the matrix $B_0$:
    \begin{eqnarray}\label{DCHDJZFC}
        \mathbf{B}_0 (\partial_t \mathbf{U}_l-\frac{n_e}{\rho} \mathbf{V}_l)+ \sum_{i=1}^{3}  \mathbf{B}_i \partial_i \mathbf{U}_l + \mathbf{F}_{l-1} \mathbf{U}_l=  \mathbf{G}_{l-1},
    \end{eqnarray}
    where these matrixes are
    \begin{equation*}
        \begin{gathered}
            \mathbf{B}_0 =  \begin{pmatrix}
                \frac{9}{5 n^{A}}  & 0 & 0 & 0 & 0 & -1 \\
                0 & \frac{9}{5 n^{B}} & 0 & 0 & 0 & -1\\
                0 & 0 & p & 0 & 0 & 0\\
                0  & 0 & 0 & p & 0 & 0\\
                0 & 0 & 0 & 0 & p & 0\\
                -1 & -1 & 0 & 0 & 0 & \frac{5}{6}\tilde{n}
            \end{pmatrix} ,  \quad
            \mathbf{B}_1 =  \begin{pmatrix}
                \frac{9u^1}{5 n^{A}}  & 0 & \theta & 0 & 0 & -u^1\\
                0 & \frac{9u^1}{5 n^{B}} & \theta & 0 & 0 & -u^1\\
                \theta & \theta & pu^1 & 0 & 0 & 0\\
                0  & 0 & 0 & pu^1 & 0 & 0\\
                0 & 0 & 0 & 0 & pu^1& 0\\
                -u^1 & -u^1 & 0 & 0 & 0 & \frac{5}{6} \tilde{n} u^1
            \end{pmatrix},
        \end{gathered}
    \end{equation*}
    \begin{equation*}
        \begin{gathered}
            \mathbf{B}_2 =  \begin{pmatrix}
                \frac{9u^2}{5 n^{A}}  & 0 & 0 & \theta & 0 & -u^2\\
                0 & \frac{9u^2}{5 n^{B}} & 0 & \theta & 0 & -u^2\\
                0 & 0 & pu^2 & 0 & 0 & 0\\
                \theta  & \theta & 0 & pu^2 & 0 & 0\\
                0 & 0 & 0 & 0 & pu^2& 0\\
                -u^2 & -u^2 & 0 & 0 & 0 & \frac{5}{6} \tilde{n} u^2
            \end{pmatrix} ,  \quad
            \mathbf{B}_3 =  \begin{pmatrix}
                \frac{9u^3}{5 n^{A}}  & 0 & 0 & 0 & \theta & -u^3\\
                0 & \frac{9u^3}{5 n^{B}} & 0 & 0 & \theta & -u^3\\
                0 & 0 & pu^3 & 0 & 0 & 0\\
                0  & 0 & 0 & pu^3 & 0 & 0\\
                \theta & \theta & 0 & 0 & pu^3& 0\\
                -u^3 & -u^3 & 0 & 0 & 0 & \frac{5}{6} \tilde{n} u^3
            \end{pmatrix}.
        \end{gathered}
    \end{equation*}

    Since $B_0, B_1, B_2$ and $B_3$
    are symmetrized matrices, and $B_0$
    is positive definite, its well-posedness follows from the linear theory in \cite{[62]A.Majda}. First, we consider the case $l = 1$. The matrix $\mathbf{F}_{0}$
    and the source term $\mathbf{G}_{0}$
    consist of $(n^{A}, n^{B}, \mathbf{u}, \theta)$
    and their first derivatives. In particular, for any $1 < q < \frac{3}{2}$
    and $s > 0$, there exist constants $C_s$
    such that:
    \begin{equation*}
        \Vert \mathbf{F}_{0} \Vert_{W^{s_0,\infty}}+\Vert \mathbf{G}_{0} \Vert_{W^{s_0,\infty}}\leq\frac{C_{s_0}}{(1+t)^{q}}.
    \end{equation*}
    Due to $\Delta\phi_1=e^An_1^A+e^Bn_1^B$,
    we can apply the standard energy method of the linear symmetric hyperbolic system to system \eqref{DCHDJZFC}. Thus, the following energy inequality yields:
    \begin{eqnarray}
        \frac{d}{dt}(\Vert \mathbf{U}_{1} \Vert^2_{H^{s_1}}+\Vert \mathbf{V}_{1} \Vert^2_{H^{s_1}})\leq \frac{C_{s_1}}{(1+t)^{q}} (\Vert \mathbf{U}_{1} \Vert^2_{H^{s_1}}+\Vert \mathbf{V}_{1} \Vert^2_{H^{s_1}}+\Vert \mathbf{U}_{1} \Vert_{H^{s_1}} +\Vert \mathbf{V}_{1} \Vert_{H^{s_1}}),
    \end{eqnarray}
    where we have used the fact that $\frac{1}{C}<\theta<C$ to deduce $\Vert \mathbf{U}_{1} \Vert_{H^{s_1}} +\Vert \mathbf{V}_{1} \Vert_{H^{s_1}}\leq C$. Therefore, the inequality \eqref{decayLxvFf} has been proved for $l=1$. Next, we suppose that \eqref{decayLxvFf} holds for $1\leq l\leq n$, and consider the case $l=n+1$. By Lemma \ref{L-1estJ} and the induction hypothesis, the microscopic part of $\mathbf{f}_{n+1}$ is bounded by
    \begin{equation*}
        |(\mathbf{I}-\mathcal{P})\mathbf{f}_{n+1}|\leq C (1+t)^{n}(1+|v|)^{h_{n+1}}\sqrt{\mu}.
    \end{equation*}

While for the macroscopic part $\mathcal{P} \mathbf{f}_{n+1}$, we
 need to obtain $p_{n+1}^A, p_{n+1}^B, \mathbf{u}_{n+1}$ and
$\theta_{n+1}$ in equation \eqref{BBKLLL}. The structure of the
left-hand side of equation \eqref{BBKLLL} is same as that of
$p_{1}^A, p_{1}^B, \mathbf{u}_{1}, \theta_{1}$, except for the extra
term caused by $\mathop{\sum}_{i+j=l\atop i,j\geq1} \nabla_x \phi_i
\cdot \nabla_v F_j^\alpha$. This term is part of the source term
consisting of $\mathbf{f}_{n+1}$ for $1 \leq l \leq n$. By the
induction hypothesis, one obtains the following energy inequality
for $\mathbf{U}_{n+1}$ and $\mathbf{V}_{n+1}$:
    \begin{eqnarray*}
        \begin{split}
            \frac{d}{dt}(\Vert \mathbf{U}_{n+1} \Vert^2_{H^{s_{n+1}}}+\Vert \mathbf{V}_{n+1} \Vert^2_{H^{s_{n+1}}})\leq & \frac{C_{s_{n+1}}}{(1+t)^{q}} (\Vert \mathbf{U}_{n+1} \Vert^2_{H^{s_{n+1}}}+\Vert \mathbf{V}_{n+1} \Vert^2_{H^{s_{n+1}}})\\
            &+C_{s_{n+1}}(1+t)^{n-1}(\Vert \mathbf{U}_{n+1} \Vert_{H^{s_{n+1}}} +\Vert \mathbf{V}_{n+1} \Vert_{H^{s_{n+1}}}).
        \end{split}
    \end{eqnarray*}
    By the Gronwall inequality, one gets
    \begin{equation*}
        \Vert \mathbf{U}_{n+1} \Vert_{H^{s_{n+1}}} +\Vert \mathbf{V}_{n+1} \Vert_{H^{s_{n+1}}} \leq C_{s_{n+1}}(1+t)^{n}.
    \end{equation*}
   Therefore, the case for $l=n+1$ has been proved.

    \section{ The $L^2$ estimate of the remainder term}
In this section, we derive $L^2$-energy estimates for the remainder term in both hard and soft potential cases. Before presenting the proof, the following two lemmas are necessary.
    \begin{lemma}[The coercivity of  $\mathbf{L}$]\label{COVWEIGHTED}
       For the linearized bi-collision operator $\mathbf{L}$ defined in \eqref{LNOTATIONV},
        there exist $c_0>0$, such that
        \begin{equation}\label{WWCoercivity}
            \sum_{\alpha=A,B}   \left \langle \mathbf{L}^{\alpha}\mathbf{f}_i, f^{\alpha}_i \right \rangle_v \geq c_0 |
            (1+|v|)^{\frac{\gamma}{2}}(\mathbf{I}-\mathcal{P})\mathbf{f}_i |_{\nu}^2.
        \end{equation}
    \end{lemma}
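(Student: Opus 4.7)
The plan is to reduce the estimate to the classical spectral gap of the linearized bi-species Boltzmann operator, established in \cite{[60]Briant2016ARM}, by exploiting self-adjointness of $\mathbf{L}$ and the characterization of $\mathrm{Ker}(\mathbf{L})$ given in \eqref{YZJVEPDB}. Since $\mathcal{P}\mathbf{f}_i \in \mathrm{Ker}(\mathbf{L})$ and $\mathbf{L}$ is self-adjoint in $(L^2_v(\mathbb{R}^3))^2$, one immediately obtains
$$\sum_{\alpha=A,B} \langle \mathbf{L}^\alpha \mathbf{f}_i, f_i^\alpha\rangle_v \;=\; \langle \mathbf{L}(\mathbf{I}-\mathcal{P})\mathbf{f}_i,\,(\mathbf{I}-\mathcal{P})\mathbf{f}_i\rangle_{(L^2_v)^2}.$$
Hence, setting $\mathbf{g} := (\mathbf{I}-\mathcal{P})\mathbf{f}_i \in \mathcal{N}(\mathbf{L})^\perp$, it suffices to prove a weighted coercivity bound for $\langle \mathbf{L}\mathbf{g}, \mathbf{g}\rangle_v$ on the orthogonal complement of the kernel.

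Next I would invoke the splitting $\mathbf{L} = \nu + K$ introduced after \eqref{LNOTATIONV}. The multiplicative part yields the basic lower bound
$$\langle \nu\mathbf{g}, \mathbf{g}\rangle_v \;=\; \sum_{\alpha}\int_{\mathbb{R}^3} \nu^\alpha |g^\alpha|^2 \, dv \;\sim\; |\mathbf{g}|_\nu^2,$$
using the equivalence $\nu^\alpha \sim \langle v\rangle^\gamma$ already recorded in the paper. The compact remainder $K$ is treated through the Briant--Daus spectral gap: on $\mathcal{N}(\mathbf{L})^\perp$ one has $\langle \mathbf{L}\mathbf{g},\mathbf{g}\rangle_v \geq \lambda_0 |\mathbf{g}|_\nu^2$ for some $\lambda_0>0$. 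To upgrade this bound with the additional weight $(1+|v|)^{\gamma/2}$, I would split $\mathbf{g}$ by a cutoff $\chi_{\{|v|\leq R\}}$: on the high-velocity region $|v|>R$, the pointwise bound $\nu^\alpha(v)\geq c\langle v\rangle^\gamma$ directly produces the extra $\langle v\rangle^\gamma$ factor; on the bounded region, the spectral gap applied to $\mathbf{g}$ already dominates any finite-weight modification after choosing $R$ large and absorbing a small fraction of $|\mathbf{g}|_\nu^2$.

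The main obstacle will be the soft-potential regime $-3<\gamma<0$, where $\nu^\alpha$ degenerates as $|v|\to\infty$ and the naive truncation argument above does not immediately close. In that case I would rely on a more refined decay estimate for the kernel of $K$ in the spirit of the later Lemma \ref{LeMK2ker4444}, which controls the contribution of $K\mathbf{g}$ against $(1+|v|)^\gamma \mathbf{g}$, thereby preventing the compact part from polluting the weighted $\nu$-norm bound. The asymmetry $m^A\neq m^B$ enters only through the explicit form of the collision invariants in \eqref{YZJVEPDB}, which has already been incorporated into $\mathcal{P}$, so the abstract argument is not affected. Combining these ingredients yields the claimed inequality with a constant $c_0>0$ depending only on $m^A,m^B$ and the collision kernel parameters.
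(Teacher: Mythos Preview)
The paper does not actually prove this lemma: its entire argument is to cite Briant--Daus \cite{[60]Briant2016ARM} for the hard-potential range and the authors' own preprint \cite{Wu20252pp} for the soft-potential range. Your proposal is compatible with the first citation (you also reduce to the Briant--Daus spectral gap) and is more explicit for the soft case, where you sketch a velocity-cutoff argument backed by the kernel estimates of Lemma~\ref{LeMK2ker4444}. Since the paper offers no in-house argument, there is little to compare beyond noting that your plan follows the standard route.

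There is, however, a concrete gap in your ``upgrade to the extra weight $(1+|v|)^{\gamma/2}$'' step. You claim that on $\{|v|>R\}$ the pointwise bound $\nu^\alpha(v)\geq c\langle v\rangle^\gamma$ ``directly produces the extra $\langle v\rangle^\gamma$ factor'', but it does not: the multiplicative part yields only $\int \langle v\rangle^\gamma |g|^2\,dv = |g|_\nu^2$, not $\int \langle v\rangle^{2\gamma}|g|^2\,dv$, and no further factor of $\langle v\rangle^\gamma$ emerges from $\nu$ alone. For $\gamma\leq 0$ this is harmless, because the extra weight weakens the right-hand side and the standard spectral gap already suffices; but for $0<\gamma\leq 1$ the stated inequality with the double weight is strictly stronger than what Briant--Daus gives, and your truncation does not bridge the gap. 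In fact Proposition~\ref{L2estimate} only ever uses the lemma in the form $\langle \mathbf{L}\mathbf{f},\mathbf{f}\rangle_v\geq c_0|(\mathbf{I}-\mathcal{P})\mathbf{f}|_\nu^2$, which strongly suggests the factor $(1+|v|)^{\gamma/2}$ in the displayed statement is a typo; under that reading your plan is sound and matches the paper's references.
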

    \begin{proof}
    	‌The proof of this theorem needs to be divided into two cases: hard potential and soft potential. The proof for the hard potential case is in Paper \cite{[60]Briant2016ARM}, while the proof for the soft potential case can be found in my latest Paper \cite{Wu20252pp}.
    \end{proof}
    \noindent The (non-symmetric) bilinear form  $\Gamma^{\alpha \beta}$
    is given by:
    \begin{eqnarray*}\label{gain loss part}
        \begin{split}
            \Gamma^{\alpha \beta }(g_1,g_2)\triangleq & \frac{1}{\sqrt{\mu^{\alpha }}}Q^{ \alpha \beta}(\sqrt{\mu^{\alpha }}g_1,  \sqrt{\mu^{\beta }}g_2)\\
            = &  \frac{1}{\sqrt{\mu^{\alpha }}}\int_{\mathbb{R}^3\times \mathbb{S}^2}|v-v_*|^{\gamma}b^{\alpha \beta}(\theta)\\
            &  \times (\sqrt{\mu^{\alpha }(v') \mu^{\beta }(v_*')}g_1(v')g_2(v_*')
            -\sqrt{\mu^{\alpha }(v)\mu^{\beta }(v_*)}g_1(v)g_2(v_*))d\sigma dv_*.
        \end{split}
    \end{eqnarray*}
    \begin{lemma}\label{L2estimatesofnonlinears}
        Let $-3 < \gamma \leq 1$. For the weight function $w_{\gamma}(v)$ defined in \eqref{WWWEIGHTFUC}, there exists a constant $C$ such that the operator $\Gamma^{\alpha \beta}$
        can be estimated as follows:
        \begin{gather}\label{Nonlineartermfullp2}
            \begin{split}
                \left|\left \langle \Gamma^{\alpha \beta}(g_1,g_2),  g_3    \right \rangle_{x,v} \right|\leq& C  \| w g_i\|_{L^{\infty}_{x,v} }\, \|g_{3-i}\|_{\nu} \,\Vert g_3\Vert_{\nu}, \quad {\rm for}~~i=1,2.
            \end{split}
        \end{gather}
        Moreover, there exists a constant $C$ such that
        \begin{gather*}\label{Nonlineartermhardp}
            \begin{split}
                \left|\left \langle \Gamma^{\beta\alpha }(g_1,g_2),  g_3    \right \rangle_{x,v} \right|\leq& C  (\|w g_1\|_{L^{\infty}_{x,v} }\, \|g_2\|_{L^{2}_{x,v} } + \| g_1\|_{L^{2}_{x,v} }\, \|w g_2\|_{L^{\infty}_{x,v} } )\,\Vert g_3\Vert_{L^{2}_{x,v} }.
            \end{split}
        \end{gather*}
    \end{lemma}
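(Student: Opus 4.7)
The plan is to follow the classical Grad-type trilinear estimate strategy, adapted to the binary mixture setting. I would first split the bilinear form into its gain and loss parts,
\[
\Gamma^{\alpha\beta}(g_1,g_2) = \Gamma^{\alpha\beta}_+(g_1,g_2) - \Gamma^{\alpha\beta}_-(g_1,g_2),
\]
and exploit the key conservation identity $\mu^{\alpha}(v)\mu^{\beta}(v_*) = \mu^{\alpha}(v')\mu^{\beta}(v_*')$, which follows from the $\alpha,\beta$-mass weighted momentum/energy conservation verified in the derivation of \eqref{F0PHT2121}. This identity lets me rewrite $\Gamma^{\alpha\beta}_+$ so that $\sqrt{\mu^{\beta}(v_*)}$ appears as the Maxwellian factor in both gain and loss parts, treating them on equal footing. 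For whichever of $g_1,g_2$ carries the $L^\infty$-norm (the case $i=1,2$ is symmetric), I would extract it pointwise via $|g_i(\cdot)| \leq \|wg_i\|_{L^\infty_{x,v}}/w(\cdot)$, reducing the problem to controlling a kernel that pairs the remaining factor $g_{3-i}$ against $g_3$ in the $\nu$-norm.

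\textbf{Loss term.} For $i=1$, the loss part takes the pointwise form
\[
|\Gamma^{\alpha\beta}_-(g_1,g_2)(v)| \leq \frac{\|wg_1\|_{L^\infty_{x,v}}}{w(v)}\int_{\mathbb{R}^3\times\mathbb{S}^2} |v-v_*|^\gamma b^{\alpha\beta}(\cos\theta) \sqrt{\mu^{\beta}(v_*)}\,|g_2(v_*)|\, d\sigma\, dv_*.
\]
Bounding $w(v)^{-1}\leq 1$ and applying Cauchy--Schwarz in $v_*$ against $\sqrt{\mu^{\beta}(v_*)}$, the inner integral is controlled by $\sqrt{\nu^{\alpha}(v)}\,(\int|v-v_*|^\gamma b^{\alpha\beta}\sqrt{\mu^{\beta}(v_*)}|g_2(v_*)|^2d\sigma dv_*)^{1/2}$. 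Integrating over $(x,v)$, another Cauchy--Schwarz against $|g_3|\sqrt{\langle v\rangle^\gamma}$ combined with the elementary bound $\int|v-v_*|^\gamma\sqrt{\mu^\beta(v_*)}dv_* \lesssim \langle v\rangle^\gamma$ (valid on the full range $-3<\gamma\leq 1$) closes this piece with the desired $\|g_2\|_\nu\|g_3\|_\nu$ structure.

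\textbf{Gain term.} The gain contribution, after the conservation identity, is
\[
\Gamma^{\alpha\beta}_+(g_1,g_2)(v)=\int|v-v_*|^\gamma b^{\alpha\beta}\sqrt{\mu^{\beta}(v_*)}\,g_1(v')\,g_2(v_*')\,d\sigma\,dv_*.
\]
Inserting the $L^\infty$ bound for $g_1$ at $v'$ and using that $w(v')\geq 1$ (polynomial regime) or that $w(v')^{-1}\leq 1$ pointwise (exponential regimes of \eqref{WWWEIGHTFUC}--\eqref{weightFUVS}), I would estimate the remaining integral by a Cauchy--Schwarz in the collision measure $|v-v_*|^\gamma b^{\alpha\beta}\sqrt{\mu^\beta(v_*)}d\sigma dv_*$. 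One factor becomes $\nu^\alpha(v)^{1/2}$, while the other involves $\int|v-v_*|^\gamma b^{\alpha\beta}\sqrt{\mu^\beta(v_*)}|g_2(v_*')|^2d\sigma dv_*$. A change of variables $(v_*,\sigma)\mapsto(v_*',\sigma)$, whose Jacobian for the asymmetric binary collision formula is a bounded function of the mass ratio $m^\alpha/m^\beta$, transfers the $|g_2|^2$-integrand onto the natural post-collision variable. After an outer Cauchy--Schwarz in $(x,v)$, we recover $\|g_2\|_\nu\|g_3\|_\nu$ with a constant depending only on $m^A,m^B$ and the Maxwellian parameters controlled in Lemma~\ref{CPCO}.

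\textbf{Main obstacle and the second estimate.} The principal technical point is ensuring that the weight-ratio $w(v)/w(v')$ remains harmless across the three regimes of \eqref{WWWEIGHTFUC}: for the polynomial weight $\langle v\rangle^l$ a direct comparison using $|v|^2+|v_*|^2=|v'|^2+|v_*'|^2$ (adapted to the asymmetric binary formula) suffices, but for the exponential weights $\omega_\gamma=e^{\tilde\kappa\langle v\rangle^{(3-\gamma)/2}}$ one must verify that $e^{-\tilde\kappa\langle v'\rangle^{(3-\gamma)/2}}\sqrt{\mu^\beta(v_*)}$ is still integrable against the kernel with the correct $\langle v\rangle^\gamma$ decay; this relies on the smallness condition $\kappa_0\ll 1$ and the lower bound $\mu\leq C\mu_M^{\tilde q}$. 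The case $i=2$ follows by symmetry after using the gain-loss change of variables $(v,v_*)\to(v_*,v)$ together with the symmetry $b^{\alpha\beta}=b^{\beta\alpha}$. Finally, the second inequality for $\Gamma^{\beta\alpha}$ with $L^2$ norms on the right-hand side is a coarser variant: exactly the same splitting applies, but instead of pairing the kernel with $\langle v\rangle^\gamma$ to produce $\nu$-norms, I would absorb the collision frequency factor directly into the exponential decay of $\sqrt{\mu^\beta}$, which is uniformly bounded and hence yields plain $L^2$ norms on $g_2$ and $g_3$.
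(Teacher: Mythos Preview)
Your proposal is correct and follows the standard Grad-type trilinear estimate strategy. The paper itself does not give a proof: it only remarks that since $C\omega_\gamma > \langle v\rangle^l$ for $l\geq 7$ and $-1\leq\gamma<1$, the exponential-weight case reduces to the polynomial-weight case, and then cites Lemma~6.1 of \cite{[50]Wu2023JDE} for the details. Your outline is precisely the argument one expects to find in that reference, so you have filled in what the paper omits rather than diverged from it.

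One small remark: in your discussion of the weight-ratio obstacle you write the conservation law as $|v|^2+|v_*|^2=|v'|^2+|v_*'|^2$, but in the binary mixture setting it is the mass-weighted version $m^\alpha|v|^2+m^\beta|v_*|^2=m^\alpha|v'|^2+m^\beta|v_*'|^2$ that holds (you do flag ``adapted to the asymmetric binary formula'', so you are aware of this). For the present lemma this is harmless because you only need $w(v')^{-1}\leq 1$, which is trivial; the mass-weighted conservation only matters in the sharper weighted $L^\infty$ estimates of Lemma~\ref{DDFXXEST}, where the paper uses it to show $w(v)\leq Cw(v')w(v_*')$.
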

    \begin{proof}
        Since for any $l\geq 7$, and $-1\leq\gamma<1$, there exists some constant $C$ such that $C\omega_{\gamma}>\left \langle v \right\rangle^{l}$, the proof is same as Lemma 6.1 in \cite{[50]Wu2023JDE}, and can be omitted.
    \end{proof}
     From \eqref{reeqmain}, $\mathbf{f}_R=(f_R^A, f_R^B)^T$ satisfies the following equations:
    \begin{equation}\label{L2dequationoff}
        \begin{split}
            &(\partial_t+v\cdot\nabla_{x}+\frac{e^\alpha}{m^\alpha}\nabla_{x}\phi_0 \cdot\nabla_{v})f^\alpha_R+ \frac{1}{\varepsilon} \mathbf{L}^\alpha \mathbf{f}_R
            =  \sum_{i=1}^{7}E^\alpha_i.
        \end{split}
    \end{equation}
    Here $\mathbf{E}_i= (E_i^A,E_i^B)^T (i=1,2,3,...,7.)$,
    \begin{align*}
            E_1^{\alpha} = & -\frac{e^\alpha}{m^\alpha}\nabla_{x}\phi_R \cdot\frac{\nabla_{v}\mu^{\alpha}}{\sqrt{\mu^{\alpha}}},\quad
            E_2^{\alpha} =  -\frac{(\partial_t+v\cdot\nabla_{x}+\frac{e^\alpha}{m^\alpha}\nabla_{x}\phi_0 \cdot\nabla_{v} )\sqrt{\mu^\alpha}}{\sqrt{\mu^\alpha}}f_R^\alpha,\\
            E_3^{\alpha}=& \sum_{i=1}^{2k-1}\sum_{\beta=A,B}\varepsilon^{i-1}\Big[\Gamma^{ \alpha \beta}(f^\alpha_i,f^\beta_R)+\Gamma^{\alpha\beta }(f^\alpha_R, f^\beta_i)\Big], \quad E_4^{\alpha}= \varepsilon^{k-1} \displaystyle{\sum_{\beta=A,B} \Gamma^{ \alpha \beta}( f_R^{\alpha},  f_R^\beta ) },\\
            E_5^{\alpha}= & -\varepsilon^k \frac{e^\alpha}{m^\alpha} \Big(\nabla_{x}\phi_R\cdot\frac{\nabla_{v}\sqrt{\mu^{\alpha}}}{\sqrt{\mu^{\alpha}}}f_R^{\alpha}+\nabla_{x}\phi_R\cdot\nabla_{v}f_R^{\alpha}\Big), \\
            E_6^{\alpha}= & -\sum_{i=1}^{2k-1} \varepsilon^i \frac{e^\alpha}{m^\alpha} \Big[\frac{\nabla_{v}F_i^{\alpha}}{\sqrt{\mu^\alpha}}\cdot\nabla_{x}\phi_R+\nabla_{x}\phi_i\cdot(\frac{\nabla_{v}\sqrt{\mu^{\alpha}}}{\sqrt{\mu^{\alpha}}}f_R^{\alpha}+\nabla_{v}f_R^{\alpha})\Big],\\
            E_7^{\alpha}= & -\frac{(\partial_t+v\cdot \nabla_{x})}{\sqrt{\mu^\alpha}}(\sqrt{\mu^\alpha}f_{2k-1}^\alpha)-\mathop{\sum}_{i+j \geq 2k-1\atop 2k-1 \geq i,j \geq1} \frac{\varepsilon^{i+j-6}}{\sqrt{\mu^\alpha}}\frac{e^\alpha}{m^\alpha}\nabla_{x}\phi_i\cdot\nabla_{v}F_j^{\alpha}\\
            &\hspace{1.7cm}+\mathop{\sum}_{i+j \geq 2k\atop 2k-1 \geq i,j \geq1} \sum_{\beta=A,B}\varepsilon^{i+j-6}\Big[\Gamma^{\alpha\beta}(f^\alpha_i,f^\beta_j)
            +\Gamma^{ \alpha \beta}(f^\alpha_j,f^\beta_i)\Big].
        \end{align*}
Then we have the following proposition.
    \begin{proposition}\label{L2estimate}
Let $(n^{A},n^{B},\mathbf{u},\theta,\phi_0)$ be the solution of the
Euler-Poisson systems \eqref{REWEUPSIO} obtained in Lemma
\ref{CPCO}.  $\mathbf{f}_R=(f^A_R,f^B_R)^T$
satisfies \eqref{L2dequationoff}. Then, there exist constants
$\varepsilon_0 >0$  such that for all $0<\varepsilon <
\varepsilon_0$, it holds that
        \begin{equation}\label{MainresultL22}
            \begin{split}
                \frac{d}{dt}&\Big(\sum_{\alpha=A,B}\Vert \sqrt{\theta}f^{\alpha}_R\Vert_{L^{2}_{x,v} }^{2}+| \nabla_x\mathbf{\phi}_R|_{L^{2}_{x} }^{2}\Big)+\frac{c_0}{2\varepsilon}\Vert(\mathbf{I}-\mathcal{P})\mathbf{f}_R\Vert_{\nu}^2 \\
                &\lesssim
                \varepsilon^2\Vert\mathbf{R}_{\gamma}\Vert_{{L^{\infty}_{x,v} }}
                \Vert \mathbf{f}_R\Vert_{{L^{2}_{x,v} }}
                +\varepsilon^{k-1}\Vert\mathbf{R}_{\gamma}\Vert_{L^{\infty}_{x,v} }
                \Vert \mathbf{f}_R\Vert^2_{{L^{2}_{x,v} }}+\varepsilon^k\Vert\mathbf{R}_{\gamma}\Vert_{L^{\infty}_{x,v} }
                \Vert \mathbf{f}_R\Vert_{{L^{2}_{x,v} }}| \nabla_x\mathbf{\phi}_R|_{{L^{2}_{x} }}\\
                &\,\,\,+\frac{1}{(1+t)^q}(\Vert \mathbf{f}_R\Vert^2_{{L^{2}_{x,v} }}+| \nabla_x\mathbf{\phi}_R|^2_{{L^{2}_{x} }})+\varepsilon\mathcal{I}_1(\Vert \mathbf{f}_R\Vert^2_{{L^{2}_{x,v} }}+| \nabla_x\mathbf{\phi}_R|^2_{{L^{2}_{x} }})+\varepsilon^{k-1}\mathcal{I}_2\Vert \mathbf{f}_R\Vert_{{L^{2}_{x,v} }},
            \end{split}
        \end{equation}
        where $\mathcal{I}_1$, $\mathcal{I}_2$ are given by the following:
        \begin{gather}
            \begin{split}
                \mathcal{I}_1&=\sum_{i=1}^{2k-1}[\varepsilon(1+t)]^{i-1}+(\sum_{i=1}^{2k-1}[\varepsilon(1+t)]^{i-1})^2, \\
                \mathcal{I}_2&=\sum_{2k\leq i+j\leq 4k}\varepsilon^{i+j-2k}(1+t)^{i+j-2}.
            \end{split}
        \end{gather}
    \end{proposition}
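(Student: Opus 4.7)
The plan is to carry out a weighted $L^2$ energy estimate on \eqref{L2dequationoff} by testing against $\theta f_R^\alpha$ and summing over $\alpha \in \{A,B\}$, then to couple this with a separate identity for $|\nabla_x\phi_R|^2_{L^2_x}$ obtained from the Poisson equation so that the two leading-order Vlasov–Poisson coupling contributions cancel. Testing against $\theta f_R^\alpha$ produces $\tfrac{1}{2}\tfrac{d}{dt}\|\sqrt{\theta}f_R^\alpha\|^2$ plus a $\partial_t\theta$ correction; integrating by parts in the streaming term $v\cdot\nabla_x f_R^\alpha$ generates a $\nabla_x\theta$ correction; and the background Vlasov transport $\tfrac{e^\alpha}{m^\alpha}\nabla_x\phi_0\cdot\nabla_v f_R^\alpha$ vanishes upon integration because $\theta$ is $v$-independent and $\nabla_x\phi_0$ is $v$-divergence-free. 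All three commutator-type errors are absorbed into the $\tfrac{1}{(1+t)^q}\|\mathbf{f}_R\|^2$ term on the right using the pointwise decay of $(\mathbf{u},\theta,\nabla_x\phi_0)$ from Lemma \ref{CPCO}. The linearized operator produces the dissipation $\tfrac{c_0}{\varepsilon}\|(\mathbf{I}-\mathcal{P})\mathbf{f}_R\|^2_\nu$ via the coercivity Lemma \ref{COVWEIGHTED}, with the bounded positive factor $\theta$ absorbed into the constant.

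The Poisson coupling is the delicate point. Taking $\sum_\alpha e^\alpha \int\!\cdot\,dv$ in the remainder equation \eqref{reeqmain}, the collision bilinears cancel by per-species mass conservation, every $\nabla_v$ term vanishes by integration by parts in $v$, and one obtains a continuity-like identity of the form $\partial_t\Delta\phi_R + \nabla_x\cdot J_R = \varepsilon^{k-1}\sum_\alpha e^\alpha \int E^\alpha\,dv$ with $J_R = \sum_\alpha e^\alpha\int v F_R^\alpha\,dv$. Multiplying by $\phi_R$ and integrating gives $\tfrac{d}{dt}|\nabla_x\phi_R|^2_{L^2_x} = -2\int\nabla_x\phi_R\cdot J_R\,dx + O(\varepsilon^{k-1})$. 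On the other hand, using the identity $\nabla_v\mu^\alpha/\sqrt{\mu^\alpha} = -m^\alpha(v-\mathbf{u})\sqrt{\mu^\alpha}/\theta$, the $E_1$ pairing collapses to $\sum_\alpha\langle E_1^\alpha,\theta f_R^\alpha\rangle_{x,v} = \int \nabla_x\phi_R\cdot J_R\,dx - \int\mathbf{u}\cdot\nabla_x\phi_R\,\Delta\phi_R\,dx$, with the $\theta$ factors cancelling exactly. Adding both identities eliminates the $J_R$ terms, leaving only the residual $\mathbf{u}$-term, which is controlled by Young's inequality and $\|\mathbf{u}\|_{L^\infty}\lesssim (1+t)^{-q}$ and therefore absorbed by the $(1+t)^{-q}(\|\mathbf{f}_R\|^2 + |\nabla_x\phi_R|^2)$ contribution on the right.

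The source terms $E_2$–$E_7$ are then estimated individually. $E_2$ is a $\sqrt{\mu^\alpha}$-commutator bounded pointwise by $(1+t)^{-q}$ times a polynomial-Gaussian factor, and is absorbed into the $(1+t)^{-q}$ bound. For $E_3$ the decisive fact is $\Gamma^{\alpha\beta}(\cdot,\cdot)\in\mathcal{N}(\mathbf{L})^\perp$, so $\langle\Gamma,\theta f_R\rangle_v = \theta\langle\Gamma,(\mathbf{I}-\mathcal{P})f_R\rangle_v$; combining Lemma \ref{L2estimatesofnonlinears} with the decay $\|w f_i\|_{L^\infty}\lesssim (1+t)^{i-1}$ from Lemma \ref{FI2KM1EST} and Young's inequality absorbs half the estimate into the dissipation and leaves the $\varepsilon\mathcal{I}_1\|\mathbf{f}_R\|^2$ contribution. $E_4 = \varepsilon^{k-1}\Gamma(\mathbf{f}_R,\mathbf{f}_R)$ is handled directly by Lemma \ref{L2estimatesofnonlinears} to yield $\varepsilon^{k-1}\|\mathbf{R}_\gamma\|_\infty\|\mathbf{f}_R\|^2$; for $E_5$ the $\nabla_v f_R^\alpha$ piece vanishes by $v$-integration by parts, and the surviving drift piece is bounded by $\varepsilon^k\|\mathbf{R}_\gamma\|_\infty\|\mathbf{f}_R\|_{L^2}|\nabla_x\phi_R|_{L^2}$ using the $L^\infty$-weight $\mathbf{R}_\gamma$ to control the linear factor $(v-\mathbf{u})$; $E_6$ produces the $\varepsilon\mathcal{I}_1(\|\mathbf{f}_R\|^2 + |\nabla_x\phi_R|^2)$ contribution after Young's inequality; and $E_7$ gives $\varepsilon^{k-1}\mathcal{I}_2\|\mathbf{f}_R\|_{L^2}$ by direct Cauchy–Schwarz together with Lemma \ref{FI2KM1EST}. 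Any residual linear-in-$\|\mathbf{f}_R\|$ term arising from the $\varepsilon^{k-1}$ Hilbert remainder in the continuity identity falls into the same template and contributes to the $\varepsilon^2\|\mathbf{R}_\gamma\|_\infty\|\mathbf{f}_R\|_{L^2}$ bound (after re-expressing $\Delta\phi_R$ in terms of $\mathbf{R}_\gamma$ and invoking the $L^\infty$ weight). The main obstacle is engineering the precise cancellation between the $E_1$ pairing and the Poisson-energy time derivative, together with the systematic use of the orthogonality $\mathcal{P}\Gamma = 0$ to force every $E_3$ bilinear into the microscopic projection; without this, the leading-order $i=1$ contribution $\Gamma^{\alpha\beta}(f_1,f_R)$ would produce an $O(1)\|\mathbf{f}_R\|^2_\nu$ term with no small prefactor that cannot be absorbed into $\tfrac{c_0}{\varepsilon}\|(\mathbf{I}-\mathcal{P})\mathbf{f}_R\|^2_\nu$ and the estimate would not close.
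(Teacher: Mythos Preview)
Your overall architecture is right and matches the paper: test \eqref{L2dequationoff} against $\theta f_R^\alpha$, use the Poisson equation to produce $\tfrac{d}{dt}|\nabla_x\phi_R|^2_{L^2}$, exploit the identity $\tfrac{e^\alpha}{m^\alpha}\nabla_v\mu^\alpha/\sqrt{\mu^\alpha}=-e^\alpha(v-\mathbf u)\sqrt{\mu^\alpha}/\theta$ so that the $E_1$ pairing and the Poisson time derivative cancel up to a $\mathbf u$-residual, and invoke coercivity for the dissipation. The handling of $E_3$--$E_7$ is also essentially what the paper does.

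The genuine gap is your treatment of $E_2$ together with the streaming commutators $\tfrac12\langle(\partial_t+v\cdot\nabla_x)\theta\, f_R^\alpha,f_R^\alpha\rangle$. You claim these are ``bounded pointwise by $(1+t)^{-q}$ times a polynomial--Gaussian factor'' and absorbed into $(1+t)^{-q}\|\mathbf f_R\|^2_{L^2}$. That is not correct: the prefactor $\tfrac{(\partial_t+v\cdot\nabla_x+\ldots)\sqrt{\mu^\alpha}}{\sqrt{\mu^\alpha}}$ is a cubic polynomial in $v$ with \emph{no} Gaussian decay, so the pairing is $\int\langle v\rangle^3|f_R^\alpha|^2\,dv\,dx$, which is not controlled by $\|\mathbf f_R\|^2_{L^2}$. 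This is exactly the term that produces the $\varepsilon^2\|\mathbf R_\gamma\|_{L^\infty}\|\mathbf f_R\|_{L^2}$ contribution on the right of \eqref{MainresultL22}, not the continuity-identity remainder you point to (that remainder in fact vanishes by species mass conservation). The paper's mechanism is a velocity splitting at scale $|v|\sim\kappa\varepsilon^{-1/(3-\gamma)}$: on $\{|v|\ge\kappa\varepsilon^{-a_r}\}$ one trades one copy of $f_R^\alpha$ for $\langle v\rangle^{-l}\sqrt{\mu_M^\alpha/\mu^\alpha}\,R_\gamma^\alpha$ and the remaining negative weight makes the $v$-integral finite, yielding $C_\kappa\varepsilon^2\|\mathbf R_\gamma\|_{L^\infty}\|\mathbf f_R\|_{L^2}$; on $\{|v|\le\kappa\varepsilon^{-a_r}\}$ one splits $\mathbf f_R=\mathcal P\mathbf f_R+(\mathbf I-\mathcal P)\mathbf f_R$, the macroscopic piece has built-in Maxwellian decay and goes into $(1+t)^{-q}\|\mathbf f_R\|^2$, and the microscopic piece picks up $\langle v\rangle^{3-\gamma}\mathbf 1_{|v|\le\kappa\varepsilon^{-a_r}}\le\kappa^{3-\gamma}\varepsilon^{-1}$ and is absorbed into the dissipation for $\kappa$ small. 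Without this splitting the estimate does not close, and in the soft-potential range the same device is needed again for the $I_{h2}$ part of $E_6$.
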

    \noindent\begin{proof}
        Taking $L^2$ inner product with $\theta f^{\alpha}_R$ on both sides of equation \eqref{L2dequationoff}, one obtains
        \begin{eqnarray}\label{L2ESmaineq}
            \begin{split}
                \sum_{\alpha=A,B}&\Big[\frac{1}{2}\frac{d}{dt}\Vert \sqrt{\theta}f^{\alpha}_R\Vert_{L^2_{x,v}}^{2}-\frac{1}{2}\left \langle (\partial_t+v\cdot\nabla_{x})\theta f^{\alpha}_R, f^{\alpha}_R   \right \rangle_{x,v} + \frac{1}{\varepsilon}\left \langle L^{\alpha} \mathbf{f}_R, \theta f^{\alpha}_R \right \rangle_{x,v}\Big]\\
                =\sum_{\alpha=A,B}&\sum_{i=1}^{7}\left \langle E^{\alpha}_{i}, \theta f^{\alpha}_R \right \rangle_{x,v}.
            \end{split}
        \end{eqnarray}
        In the following, we shall show \eqref{MainresultL22} for the hard potential case and the soft potential with $-1\leq \gamma \leq 0$, respectively.

        First, let us focus on the hard sphere case. To begin with, from the
         identity
        $
        \frac{e^\alpha}{m^\alpha} \frac{\nabla_v \mu^\alpha}{\sqrt{\mu^\alpha}} = -e^\alpha \frac{v - \mathbf{u}}{\theta} \sqrt{\mu^\alpha}
        $,
        we see
        \begin{equation*}
            \sum_{\alpha=A,B}\left \langle E^{\alpha}_{1}, \theta f^{\alpha}_R \right \rangle_{x,v}=\sum_{\alpha=A,B}e^{\alpha}\displaystyle{\iint_{\mathbb{R}^3\times\mathbb{R}^3}}(v-\mathbf{u})\sqrt{\mu^\alpha}f^{\alpha}_R\cdot\nabla_{x}\phi_R \,dv \, dx.
        \end{equation*}
        Due to
        $
        \Delta \phi_R = \int_{\mathbb{R}^3} \left( e^{A}F_R^A + e^{B}F_R^B \right) dv
        $
        which implies
        \begin{equation*}
            \begin{split}
                -\Delta \partial_{t}\phi_R = \int_{\mathbb{R}^3} e^{A}\partial_{t}F_R^A+ e^{B}\partial_{t}F_R^Bdv=\sum_{\alpha=A,B}e^{\alpha}\int_{\mathbb{R}^3}v\cdot\nabla_{x}(\sqrt{\mu^\alpha}f^{\alpha}_R) dv.
            \end{split}
        \end{equation*}
        So, it yields that
        \begin{eqnarray*}
            \begin{split}
                -\frac{1}{2}\frac{d}{dt}| \nabla_x\mathbf{\phi}_R |_{L^{2}_{x}}^{2}=\int_{\mathbb{R}^3}     \Delta \partial_{t}\phi_R \, \phi_R\,dx&=-\sum_{\alpha=A,B}e^{\alpha}\int_{\mathbb{R}^3}\int_{\mathbb{R}^3}v\cdot\nabla_{x}(\sqrt{\mu^\alpha}f^{\alpha}_R) \phi_R \,dv\, dx\\
                &=\sum_{\alpha=A,B}e^{\alpha}\int_{\mathbb{R}^3}(\int_{\mathbb{R}^3}v\sqrt{\mu^{\alpha}}f^{\alpha}_Rdv)\cdot \nabla_{x}\phi_R dx.
            \end{split}
        \end{eqnarray*}
        The second part is bounded by
        \begin{eqnarray*}
            \begin{split}
                \Big|\sum_{\alpha=A,B}e^{\alpha}\int_{\mathbb{R}^3}\mathbf{u}(\int_{\mathbb{R}^3}\sqrt{\mu^{\alpha}}f^{\alpha}_Rdv)\cdot \nabla_{x}\phi_R dx\Big| &= \Big|\int_{\mathbb{R}^3}   \Delta \partial_{t}\phi_R \mathbf{u}\cdot \nabla_{x}\phi_Rdx\Big|
                \lesssim |\nabla_{x}\mathbf{u}|_{L^{\infty}_{x} }|\nabla_{x}\phi^{\rm }_R|^2_{L^{2}_{x} }\\
                & \lesssim \frac{1}{(1+t)^q} |\nabla_{x}\phi^{\rm }_R|^2_{L^{2}_{x}}.
            \end{split}
        \end{eqnarray*}

        Next, the term $\frac{1}{2}\left \langle (\partial_t+v\cdot\nabla_{x}) f^{\alpha}_R, \theta f^{\alpha}_R    \right  \rangle_{x,v} + \left \langle E^{\alpha}_{2}, \theta f^{\alpha}_R \right \rangle_{x,v}$ is a cubic polynomial in
        $v$, which increase as $\{1+|v|\}^3$. Let $a_r=(3-\gamma)^{-1}$, for any small
        constant $\kappa>0$, we devide $\mathbb{R}^3_v$ into $\{|v|<\frac{\kappa}{\varepsilon^{a_r}} \}$ and $\{ |v|\geq\frac{\kappa}{\varepsilon^{a_r}}\}$, and notice that
        \begin{equation*}
            (1+|v|^2)^{\frac{3}{2}}|f_R^{\alpha}| \leq  (1+|v|)^{l-3} |h^{\alpha}|, \qquad l\geq7.
        \end{equation*}
        It holds that
        \begin{align*}
            & \sum_{\alpha=A,B}\Big(|\frac{1}{2}\left \langle \{\partial_t+v\cdot\nabla_{x}\}\theta f^{\alpha}_R,  f^{\alpha}_R    \right  \rangle_{x,v}| + |\left \langle E^{\alpha}_{2}, \theta f^{\alpha}_R \right \rangle_{x,v}|\Big)\\
            \lesssim & | ( n^A,n^B, \mathbf{u}, \theta) |_{W_x^{1,\infty}}\Vert \mathbf{h} (1+|v|^2)^{\gamma-3}\mathbf{1}_{|v|\geq \frac{\kappa}{\varepsilon^{a_r}}}\Vert_{L^{\infty}_{x,v}}
            \Vert \mathbf{f}_R \Vert_{L^{2}_{x,v}} \Big(\int_{\mathbb{R}^3}  (1+|v|^2)^{-(l+\gamma-3)} dv\Big)^{1/2} \\
            & \hspace{1.5cm}+ |(n^A,n^B,\mathbf{u}, \theta) |_{W_x^{1,\infty}} \, \Vert(1+|v|^2)^{\frac{3}{4}}\mathbf{f}_R \mathbf{1}_{|v|\leq \frac{\kappa}{\varepsilon^{a_r}}}\Vert_{L^{2}_{x,v}}^2 \\
            \lesssim & C_{\kappa}\varepsilon^2 \Vert \mathbf{h} \Vert_{L^{\infty}_{x,v}}\Vert \mathbf{f}_R \Vert_{L^{2}_{x,v}} +
             \Vert(1+|v|^2)^{\frac{3}{4}}\mathcal{P}\mathbf{f}_R\, \mathbf{1}_{|v|\leq \frac{\kappa}{\varepsilon^{a_r}}}\Vert_{L^{2}_{x,v}}^2\\
             &\hspace{1.5cm} +\Vert(1+|v|^2)^{\frac{3}{4}}(\mathbf{I}-\mathcal{P})\mathbf{f}_R\, \mathbf{1}_{|v|\leq \frac{\kappa}{\varepsilon^{a_r}}}\Vert_{L^{2}_{x,v}}^2\\
            \lesssim & C_{\kappa}\varepsilon^2 \Vert \mathbf{h} \Vert_{L^{\infty}_{x,v}}\Vert \mathbf{f}_R \Vert_{L^{2}_{x,v}} + \Vert(1+|v|^2)^{\frac{3}{4}}\mathcal{P}\mathbf{f}_R\, \mathbf{1}_{|v|\leq \frac{\kappa}{\varepsilon^{a_r}}}\Vert_{L^{2}_{x,v}}^2 \\ &\hspace{1.5cm}+\int_{\mathbb{R}^3}\int_{\mathbb{R}^3}\left \langle v \right\rangle^{\gamma}|(\mathbf{I}-\mathcal{P})\mathbf{f}_R|^2\, \left \langle v \right\rangle^{3-\gamma}\mathbf{1}_{|v|\leq \frac{\kappa}{\varepsilon^{a_r}}} dv \, dx\\
            \lesssim & C_{\kappa}\varepsilon^2 \Vert \mathbf{h} \Vert_{L^{\infty}_{x,v}}\Vert \mathbf{f}_R \Vert_{L^{2}_{x,v}} +\Vert(1+|v|^2)^{\frac{3}{4}}\mathcal{P}\mathbf{f}_R\, \mathbf{1}_{|v|\leq \frac{\kappa}{\varepsilon^{a_r}}}\Vert_{L^{2}_{x,v}}^2+\frac{\kappa^{3-\gamma}}{\varepsilon}\Vert(\mathbf{I}-\mathcal{P})\mathbf{f}_R \Vert_{\nu}^2 \\
            \lesssim & C_{\kappa}\varepsilon^2 \Vert \mathbf{h} \Vert_{L^{\infty}_{x,v}}\Vert \mathbf{f}_R \Vert_{L^{2}_{x,v}} +\frac{1}{(1+t)^q}\Vert \mathbf{f}_R \Vert_{L^{2}_{x,v}}+\frac{\kappa^{3-\gamma}}{\varepsilon}\Vert(\mathbf{I}-\mathcal{P})\mathbf{f}_R \Vert_{\nu}^2.
        \end{align*}
        For $\gamma=1$ (hard sphere), \eqref{YZJVEPDB} shows that
        \begin{equation*}
            \Vert(1+|v|^2)^{\frac{3}{4}}\mathcal{P}\mathbf{f}_R\Vert_{L^{2}_{x,v}}^2 \leq \Vert \mathbf{f}_R \Vert_{L^{2}_{x,v}},
        \end{equation*}
        where $\frac{1}{(1+t)^q}$ comes from the decay estimates of $| ( n^A,n^B, \mathbf{u}, \theta) |_{W_x^{1,\infty}}$ in \eqref{PTWDAY}.\\

        Moreover, using Lemma \ref{L2estimatesofnonlinears}, it is easy to get
        \begin{eqnarray*}
            \begin{split}
                \sum_{\alpha=A,B}\Big|\left \langle E^{\alpha}_{3}, \theta f^{\alpha}_R \right \rangle_{x,v}\Big|
                =& \Big|\sum_{i=1}^{2k-1}\sum_{\alpha=A,B}\sum_{\beta=A,B}\varepsilon^{i-1}  \left \langle\Big(\Gamma^{ \alpha \beta}(f^\alpha_i,f^\beta_R)+\Gamma^{\alpha\beta }(f^\alpha_R, f^\beta_i)\Big), \theta f^{\alpha}_R\right \rangle_{x,v}\Big|\\
                \lesssim & \sum_{i=1}^{2k-1}[\varepsilon(1+t)]^{i-1}
                \Vert \mathbf{f}_R \Vert_{L^{2}_{x,v}}\Vert(\mathbf{I}-\mathcal{P})f^{\alpha}_R \Vert_{\nu} \\
                \lesssim & C_{\kappa}\varepsilon \Vert \mathbf{f}_R \Vert^2_{L^{2}_{x,v}} +\frac{\kappa}{\varepsilon}\Vert(\mathbf{I}-\mathcal{P})\mathbf{f}_R \Vert_{\nu}^2,
            \end{split}
        \end{eqnarray*}
        and
        \begin{eqnarray*}
            \begin{split}
                \sum_{\alpha=A,B}\Big|\left \langle E^{\alpha}_{4}, \theta f^{\alpha}_R \right \rangle_{x,v}\Big|
                =& \varepsilon^{k-1} \sum_{\alpha=A,B}\sum_{\beta=A,B}  \Big|\left \langle \Gamma^{ \alpha \beta}( f_R^{\alpha},  f_R^\beta ) , \theta f^{\alpha}_R \right \rangle_{x,v}\Big|\\
                \lesssim & \varepsilon^{k-1}\Vert \mathbf{h} \Vert_{L^{\infty}_{x,v}} \Vert \mathbf{f}_R \Vert^2_{L^{2}_{x,v}}.
            \end{split}
        \end{eqnarray*}
        Noting the orthogonality relation
        $
        \bigl\langle \nabla_x \phi_R \cdot \nabla_v f_R^\alpha, \theta f_R^\alpha \bigr\rangle_{x,v} = 0
        $,
        one obtains
        \begin{align*}
            \sum_{\alpha=A,B}\Big|\left \langle E^{\alpha}_{5}, \theta f^{\alpha}_R \right \rangle_{x,v}\Big|
            =& \,\varepsilon^{k} \sum_{\alpha=A,B} e^{\alpha}\iint_{\mathbb{R}^3\times\mathbb{R}^3} \nabla_{x}\phi_R\cdot\frac{v-\mathbf{u}}{2}(f_R^{\alpha})^2 dv dx \\
            \lesssim & \,\varepsilon^{k}\sum_{\alpha=A,B} |\nabla_{x}\phi^{\rm }_R|_{L^{2}_{x} }\Big(\int_{\mathbb{R}^3}|\int_{\mathbb{R}^3}|v-\mathbf{u}|f^{\alpha}_R\frac{\sqrt{\mu_{M}^{\alpha}}}{\left \langle v \right\rangle^{l}\sqrt{\mu^{\alpha}}}h^{\alpha} dv|^2 dx\Big)^{\frac{1}{2}}\\
            \lesssim & \,\varepsilon^{k}\sum_{\alpha=A,B}
            \Vert (\int_{\mathbb{R}^3}\frac{|v-\mathbf{u}|^2\mu_{M}^{\alpha}}{\left \langle v \right\rangle^{2l}\mu^{\alpha}}dv)^{\frac{1}{2}} \Vert_{{L^{\infty}_{x,v} }}\Vert h^{\alpha} \Vert_{{L^{\infty}_{x,v}}} \Vert f^{\alpha}_R \Vert_{L^{2}_{x,v}}|\nabla_{x}\phi^{\rm }_R|_{L^{2}_{x} } \\
            \lesssim & \,\varepsilon^{k}\Vert h^{\alpha} \Vert_{L^{\infty}_{x,v}} \Vert f^{\alpha}_R \Vert_{L^{2}_{x,v}}|\nabla_{x}\phi^{\rm }_R|_{L^{2}_{x}}.
        \end{align*}
        The decomposition
        $
        \displaystyle{\sum_{\alpha=A,B}} \bigl\langle E_6^\alpha, \theta f_R^\alpha \bigr\rangle_{x,v} =: I_{h1} + I_{h2}
        $
        leads to
        \begin{align*}
                I_{h1}  &=\sum_{i=1}^{2k-1} \varepsilon^i \sum_{\alpha=A,B} \iint_{\mathbb{R}^3\times\mathbb{R}^3} \frac{\nabla_{v}F_i^{\alpha}}{\sqrt{\mu^\alpha}}\cdot\nabla_{x}\phi^{\alpha}_R \theta f_R^{\alpha}\, dv\, dx \\
                & \lesssim \sum_{i=1}^{2k-1} \varepsilon^i \sum_{\alpha=A,B} \|\theta (\int_{\mathbb{R}^3}\frac{|\nabla_{v}F_i|^2}{\mu^{\alpha}}dv)^{\frac{1}{2}} \|_{{L^{\infty}_{x,v} }}\Vert f^{\alpha}_R \Vert_{L^{2}_{x,v}}|\nabla_{x}\phi^{\rm }_R|_{L^{2}_{x} }\\
                & \lesssim (\sum_{i=1}^{2k-1}[\varepsilon(1+t)]^{i-1})(|\nabla_{x}\phi^{\rm }_R|_{L^{2}_{x} }+\Vert \mathbf{f}_R  \Vert_{L^{2}_{x,v}}) \\
                & \lesssim \mathcal{I}_1 (|\nabla_{x}\phi^{\rm }_R|_{L^{2}_{x} }+\Vert \mathbf{f}_R \Vert_{L^{2}_{x,v}}),
        \end{align*}
        and
        \begin{align*}
                I_{h2}  &=\sum_{i=1}^{2k-1} \varepsilon^i\sum_{\alpha=A,B}\frac{e^{\alpha}}{m^{\alpha}} \iint_{\mathbb{R}^3\times\mathbb{R}^3} \nabla_{x}\phi_i\cdot(\frac{\nabla_{v}\sqrt{\mu^{\alpha}}}{\sqrt{\mu^{\alpha}}}f_R^{\alpha}-\nabla_{v}f_R^{\alpha}) \theta f_R^{\alpha} \, dv \,dx \\
                &=\sum_{i=1}^{2k-1} \varepsilon^i\sum_{\alpha=A,B}e^{\alpha} \iint_{\mathbb{R}^3\times\mathbb{R}^3} \nabla_{x}\phi_i\cdot(\frac{v-\mathbf{u}}{2}f_R^{\alpha}) f_R^{\alpha} \, dv \,dx \\
                & \lesssim (\sum_{i=1}^{2k-1} \varepsilon^i|\nabla_{x}\phi^{\rm }_R|_{L^{2}_{x} })^2 \frac{\varepsilon}{\kappa^2}\Vert f^{\alpha}_R \Vert^2_{L^{2}_{x,v}}+\frac{\kappa^2}{\varepsilon}\Vert(\mathbf{I}-\mathcal{P})f^A_R \Vert_{\nu}^2\\
                & \lesssim C_{\kappa}\varepsilon\mathcal{I}_1\Vert \mathbf{f}_R \Vert^2_{L^{2}_{x,v}}+\frac{\kappa^2}{\varepsilon}\Vert(\mathbf{I}-\mathcal{P})f^A_R \Vert_{\nu}^2.
        \end{align*}
          Considering the decomposition
        $
        \displaystyle{\sum_{\alpha=A,B}}  \bigl\langle E_7^\alpha, \theta f_R^\alpha \bigr\rangle_{x,v} =: I_{h3} + I_{h4} + I_{h5},
        $
        Lemma~\ref{FI2KM1EST} establishes the boundedness of the following norms:

        \begin{eqnarray*}
            \begin{split}
                &\Vert  \frac{\left \langle v \right\rangle^{l}F_i^{\alpha} }{\sqrt{\mu^{\alpha}}}\Vert_{L^{\infty}_{v}L^{\frac{6}{5}}_{x}},\quad   \Vert \left \langle v \right\rangle^{l} \frac{\nabla_{x,v}F_i^{\alpha} }{\sqrt{\mu^{\alpha}}}\Vert_{L^{\infty}_{v}L^{\frac{6}{5}}_{x}},\quad    \Vert \nabla_{x}\phi^{\rm }_i \Vert_{p} \lesssim (1+t)^{i-1}, \\
                &\Vert  \frac{\left \langle v \right\rangle^{l}F_i^{\alpha} }{\sqrt{\mu^{\alpha}}}\Vert_{L^{\infty}_{v}L^{\frac{6}{5}}_{x}},\quad   \Vert \left \langle v \right\rangle^{l} \frac{\nabla_{x,v}F_i^{\alpha} }{\sqrt{\mu^{\alpha}}}\Vert_{L^{\infty}_{v}L^{\frac{6}{5}}_{x}},\quad for\quad  1<p\leq \infty.
            \end{split}
        \end{eqnarray*}
        Consequently,
        \begin{align*}
            I_{h3}  &=\sum_{\alpha=A,B} \iint_{\mathbb{R}^3\times\mathbb{R}^3} -\frac{(\partial_t+v\cdot \nabla_{x})}{\sqrt{\mu^\alpha}}F_{2k-1}^{\alpha} \theta f_R^\alpha \, dv  \,dx
            \lesssim (1+t)^{i-1} \Vert \mathbf{f}_R \Vert_{L^{2}_{x,v}}, \\
            I_{h4}  &= \sum_{\alpha=A,B}\mathop{\sum}_{i+j \geq 2k-1\atop 0 \leq i,j \leq 2k-1}\frac{e^{\alpha}}{m^{\alpha}}\iint_{\mathbb{R}^3\times\mathbb{R}^3} \frac{\varepsilon^{i+j-2k+1}}{\sqrt{\mu^\alpha}}\nabla_{x}\phi^{\alpha}_i\cdot\nabla_{v}F_j^{\alpha}\theta f_R^\alpha \, dv  \,dx \\
            &\lesssim (1+t)^{i-1}\Vert \mathbf{f}_R \Vert_{L^{2}_{x,v}},\\
            I_{h5}  &=\sum_{\alpha=A,B}\mathop{\sum}_{i+j \geq 2k\atop 1 \leq i,j \leq 2k-1} \sum_{\beta=A,B}\iint_{\mathbb{R}^3\times\mathbb{R}^3}\frac{\varepsilon^{i+j-2k}}{\sqrt{\mu^\beta}}\Big[\Gamma^{\alpha\beta}(f^\alpha_i,f^\beta_j)
            +\Gamma^{ \alpha \beta}(f^\alpha_j,f^\beta_i)\Big] \theta f_R^\alpha \, dv \, dx\\
            & \lesssim \sum_{2k \leq i+j \leq 4k-2}\varepsilon^{i+j-k+1}(1+t)^{i+j-2}\Vert\mathbf{f}_R \Vert_{L^{2}_{x,v} }
            \lesssim \varepsilon^{k-1} \mathcal{I}_2 \Vert\mathbf{f}_R \Vert_{L^{2}_{x,v}}.
        \end{align*}

       Hence, insert the above estimates into \eqref{L2ESmaineq}, and using Lemma~\ref{COVWEIGHTED} we obtain
        \begin{align}
                \frac{d}{dt}&\Big(\sum_{\alpha=A,B}\Vert \sqrt{\theta}f^{\alpha}_R\Vert_{L^{2}_{x,v} }^{2}+| \nabla_x\mathbf{\phi}_R|_{L^{2}_{x} }^{2}\Big)+\frac{c_1}{\varepsilon}\Vert(\mathbf{I}-\mathcal{P})\mathbf{f}_R\Vert_{\nu}^2 \label{MainresultL2}\\
                &\lesssim
                \varepsilon^2\Vert\mathbf{h}\Vert_{{L^{\infty}_{x,v} }}
                \Vert \mathbf{f}_R\Vert_{{L^{2}_{x,v} }}
                +\varepsilon^{k-1}\Vert\mathbf{h}\Vert_{L^{\infty}_{x,v} }
                \Vert \mathbf{f}_R\Vert^2_{{L^{2}_{x,v} }}+\varepsilon^k\Vert\mathbf{h}\Vert_{L^{\infty}_{x,v} }
                \Vert \mathbf{f}_R\Vert_{{L^{2}_{x,v} }}| \nabla_x\mathbf{\phi}_R|_{{L^{2}_{x} }}\notag\\
                &\,\,\,+\frac{1}{(1+t)^q}(\Vert \mathbf{f}_R\Vert^2_{{L^{2}_{x,v} }}+| \nabla_x\mathbf{\phi}_R|^2_{{L^{2}_{x} }})+\varepsilon\mathcal{I}_1(\Vert \mathbf{f}_R\Vert^2_{{L^{2}_{x,v} }}+| \nabla_x\mathbf{\phi}_R|^2_{{L^{2}_{x} }})\notag\\
                &\,\,\,+\varepsilon^{k-1}\mathcal{I}_2\Vert \mathbf{f}_R\Vert_{L^{2}_{x,v} }+\frac{\kappa}{\varepsilon}\Vert(\mathbf{I}-\mathcal{P})\mathbf{f}_R\Vert_{\nu}^2.\notag
            \end{align}
        Choosing $\kappa<\frac{c_1}{2}$  small enough,  the last term on the right-hand side could be absorbed into the
        dissipation term, so that \eqref{MainresultL22} yields for $\gamma=1$.\\

        Next, let us turn to  potential $-1\leq\gamma<1$, in this case, the weight function takes the form
        $
        e^{\tilde{\kappa}\langle v \rangle^{(3-\gamma)/2}}.
        $
        The estimates
        $
        \displaystyle{\sum_{\alpha=A,B}}\bigl|\langle E_i^\alpha, \theta f_R^\alpha \rangle_{x,v}\bigr|, \quad i\in\{1,3,7\},
        $
        together with the $I_{h1}$ component of
        $
        \displaystyle{\sum_{\alpha=A,B}}\bigl|\langle E_6^\alpha, \theta f_R^\alpha \rangle_{x,v}\bigr|,
        $
        coincide with those in the hard sphere case.
        Following analogous arguments yields
        \begin{align*}
                \sum_{\alpha=A,B}\Big|\left \langle E^{\alpha}_{4}, \theta f^{\alpha}_R \right \rangle_{x,v}\Big|
                =& \varepsilon^{k-1} \sum_{\alpha=A,B}\sum_{\beta=A,B}  \Big|\left \langle \Gamma^{ \alpha \beta}( f_R^{\alpha},  f_R^\beta ) , \theta f^{\alpha}_R \right \rangle_{x,v}\Big|
                \lesssim  \varepsilon^{k-1}\Vert \mathbf{S}_{\gamma} \Vert_{L^{\infty}_{x,v}} \Vert \mathbf{f}_R \Vert^2_{L^{2}_{x,v}},\\
                \sum_{\alpha=A,B}\Big|\left \langle E^{\alpha}_{5}, \theta f^{\alpha}_R \right \rangle_{x,v}\Big|
                \lesssim & \varepsilon^{k}\sum_{\alpha=A,B} |\nabla_{x}\phi^{\rm }_R|_{L^{2}_{x} }\Big(\int_{\mathbb{R}^3}|\int_{\mathbb{R}^3}|v-\mathbf{u}|f^{\alpha}_R\frac{\sqrt{\mu_{M}^{\alpha}}}{\omega_{\gamma}\sqrt{\mu^{\alpha}}}S^{\alpha}_{\gamma} dv|^2 dx\Big)^{\frac{1}{2}}\\
                \lesssim & \,\varepsilon^{k}\Vert \mathbf{S}_{\gamma} \Vert_{L^{\infty}_{x,v}} \Vert \mathbf{f}_R \Vert_{L^{2}_{x,v}}|\nabla_{x}\phi^{\rm }_R|_{L^{2}_{x} }.
        \end{align*}
        The terms containing differences are analyzed as follows.

        Given $a_r=(3-\gamma)^{-1}$, the velocity space $\mathbb{R}^3_v$ is partitioned into
        $
        \left\{|v|<\frac{\kappa}{\varepsilon^{a_r}}\right\} \,\, \text{and} \,\, \left\{|v|\geq\frac{\kappa}{\varepsilon^{a_r}}\right\},
        $
        \begin{align*}
                & \sum_{\alpha=A,B} \Big|\left \langle E^{\alpha}_{2}, e^{\alpha}\theta f^{\alpha}_R \right \rangle_{x,v}\Big|\\
                \lesssim &\Vert ( n^A,n^B,\mathbf{u}, \theta) \Vert_{H^1}\Vert \left \langle v  \right \rangle^{\frac{3}{2}} \mathbf{f}_R\mathbf{1}_{|v|\geq \frac{\kappa}{\varepsilon^a}}\Vert_{L^{\infty}_{x,v}}
                \Vert \mathbf{f}_R \Vert_{L^{2}_{x,v}}
                + \Vert(n^A,n^B,\mathbf{u}, \theta) \Vert_{W_{x}^{1,\infty}} \, \Vert\left \langle v\right \rangle^{\frac{3}{4}}\mathbf{f}_R \mathbf{1}_{|v|\leq \frac{\kappa}{\varepsilon^a}}\Vert_{L^{2}_{x,v}}^2 \\
                \lesssim & \Vert  \frac{\omega_{\gamma}}{\left \langle v    \right \rangle^{3-\gamma}}\mathbf{1}_{|v|\geq \frac{\kappa}{\varepsilon^a}}\mathbf{f}_R\Vert_{L^{\infty}_{x,v}}\Vert \mathbf{f}_R \Vert_{L^{2}_{x,v}} + \frac{1}{(1+t)^q}\Vert\left \langle v   \right \rangle^{\frac{3}{4}}\mathcal{P}\mathbf{f}_R \Vert_{L^{2}_{x,v}}^2 +\Vert|v|^{\frac{3}{2}}\mathbf{1}_{|v|\leq \frac{\kappa}{\varepsilon^a}}(\mathbf{I}-\mathcal{P})\mathbf{f}_R \Vert_{L^{2}_{x,v}}^2\\
                \lesssim & C_{\kappa}\varepsilon^2 \Vert \mathbf{S}_{\gamma} \Vert_{L^{\infty}_{x,v}}\Vert \mathbf{f}_R \Vert_{L^{2}_{x,v}} +\frac{1}{(1+t)^q}\Vert \mathbf{f}_R \Vert^2_{L^{2}_{x,v}}+ \int_{\mathbb{R}^3} dx \int_{\mathbb{R}^3} \left \langle v  \right \rangle^{\frac{\gamma}{2}} |(\mathbf{I}-\mathcal{P})\mathbf{f}_R|^2\left \langle v   \right \rangle^{\frac{3-\gamma}{2}} \mathbf{1}_{|v|\leq \frac{\kappa}{\varepsilon^a}} dv\\
                \lesssim & C_{\kappa}\varepsilon^2 \Vert \mathbf{S}_{\gamma} \Vert_{L^{\infty}_{x,v}}\Vert \mathbf{f}_R \Vert_{L^{2}_{x,v}} +\frac{1}{(1+t)^q}\Vert \mathbf{f}_R \Vert^2_{L^{2}_{x,v}}+\frac{\kappa^{3-\gamma}}{\varepsilon}\Vert(\mathbf{I}-\mathcal{P})\mathbf{f}_R\Vert_{\nu}^2.
            \end{align*}
       Moreover, by \eqref{ORDERZONG} and Gagliardo-Nirenberg inequality, the potential function $\phi^{\rm }_i$ satisfies
        \begin{eqnarray*}
            \begin{split}
                |\nabla_{x}\phi^{\rm }_i|^2_{L^{2}_{x} }    &=-\int_{\mathbb{R}^3}  \Delta \phi_i  \phi_i\,dx=-\sum_{\alpha=A,B}e^{\alpha}\iint_{\mathbb{R}^3\times\mathbb{R}^3} F_i^{\alpha} \phi_i  \,dv \,dx \\
                &\lesssim \sum_{\alpha=A,B}\iint_{\mathbb{R}^3\times\mathbb{R}^3} \frac{|F_i^{\alpha}|}{\sqrt{\mu^{\alpha}}} |\phi_i|\sqrt{\mu^{\alpha}}  \,dv \,dx
                \lesssim |\phi^{\rm }_i|_{L^{6}_{x} }
                \Vert \frac{F_i^{\alpha}}{\sqrt{\mu^{\alpha}}}\Vert_{L^{\infty}_{v}L^{\frac{6}{5}}_{x}} \lesssim (1+t)^{i-1}|\nabla_{x}\phi^{\rm
                }_i|_{L^{2}_{x}},
            \end{split}
        \end{eqnarray*}
  which implies $|\nabla_{x}\phi^{\rm }_i|_{L^{2}_{x} }\lesssim (1+t)^{i-1}$, then the $I_{32}$ part of $\displaystyle{\sum_{\alpha=A,B}}\Big|\left \langle E^{\alpha}_{6}, \theta f^{\alpha}_R \right \rangle_{x,v}\Big|$ is bounded by
    \begin{align*}
                I_{h2}
                =&\sum_{i=1}^{2k-1} \varepsilon^i\sum_{\alpha=A,B}e^{\alpha} \iint_{\mathbb{R}^3\times\mathbb{R}^3} \nabla_{x}\phi_i\cdot(\frac{v-\mathbf{u}}{2}f_R^{\alpha}) f_R^{\alpha} \, dv \,dx \\
                \lesssim & \sum_{i=1}^{2k-1} \varepsilon^i \Big(\iint_{|v|\geq\frac{1}{\frac{\kappa}{1-\gamma}}}+\iint_{|v|\leq\frac{1}{\frac{\kappa}{1-\gamma}}}\Big)|\nabla_{x}\phi^{\rm }_i|\left \langle v    \right \rangle \mathbf{f}_R^2dv \, dx\\
                \lesssim &\sum_{i=1}^{2k-1} \varepsilon^i \Big(\Vert \left \langle v   \right \rangle \omega_{\gamma} \mathbf{f}_R \mathbf{1}_{|v|\geq\frac{1}{\frac{\kappa}{1-\gamma}}}\Vert_{L^{\infty}_{x,v}}|\nabla_{x}\phi^{\rm }_i|_{L^{2}_{x} }\Vert \mathbf{f}_R \Vert_{L^{2}_{x,v}}+ |\nabla_{x}\phi^{\rm }_i|_{L^{\infty}_{x} }\Vert \mathbf{P}\mathbf{f}_R \Vert^2_{L^{2}_{x,v}}\\
                & \hspace{1cm}+ |\nabla_{x}\phi^{\rm }_i|_{L^{\infty}_{x} }\iint_{\mathbb{R}^3\times\mathbb{R}^3}\left \langle v    \right \rangle^{\gamma}|(\mathbf{I}-\mathcal{P})\mathbf{f}_R|^2 \left \langle v \right \rangle^{1-\gamma}\mathbf{1}_{|v|\leq\frac{1}{\frac{\kappa}{1-\gamma}}} dv\, dx\Big)\\
                \lesssim & \sum_{i=1}^{2k-1} \varepsilon^i(1+t)^{i-1}\Big(\varepsilon^2\Vert \mathbf{S}_{\gamma} \Vert_{L^{\infty}_{x,v}} \Vert \mathbf{f}_R \Vert_{L^{2}_{x,v}}+\Vert \mathbf{f}_R \Vert^2_{L^{2}_{x,v}}+\frac{\kappa^{1-\gamma}}{\varepsilon}\Vert(\mathbf{I}-\mathcal{P})\mathbf{f}_R \Vert_{\nu}^2\Big)\\
                \lesssim & \,\varepsilon^2\Vert \mathbf{S}_{\gamma} \Vert_{L^{\infty}_{x,v}} \Vert \mathbf{f}_R \Vert_{L^{2}_{x,v}}+\varepsilon \mathcal{I}_1\Vert \mathbf{f}_R \Vert^2_{L^{2}_{x,v}}+\frac{\kappa^{1-\gamma}}{\varepsilon}\Vert(\mathbf{I}-\mathcal{P})\mathbf{f}_R \Vert_{\nu}^2.
        \end{align*}
        Consequently, whenever  $-1\leq\gamma<1$,  it follows that:
        \begin{equation}\label{MainresultL2}
            \begin{split}
                \frac{d}{dt}&\Big(\sum_{\alpha=A,B}\Vert \sqrt{\theta}f^{\alpha}_R\Vert_{L^{2}_{x,v} }^{2}+| \nabla_x\mathbf{\phi}_R|_{L^{2}_{x} }^{2}\Big)+\frac{c_0}{2\varepsilon}\Vert(\mathbf{I}-\mathcal{P})\mathbf{f}_R\Vert_{\nu}^2 \\
                &\lesssim
                \varepsilon^2\Vert\mathbf{S}_{\gamma}\Vert_{{L^{\infty}_{x,v} }}
                \Vert \mathbf{f}_R\Vert_{{L^{2}_{x,v} }}
                +\varepsilon^{k-1}\Vert\mathbf{S}_{\gamma}\Vert_{L^{\infty}_{x,v} }
                \Vert \mathbf{f}_R\Vert^2_{{L^{2}_{x,v} }}+\varepsilon^k\Vert\mathbf{S}_{\gamma}\Vert_{L^{\infty}_{x,v} }
                \Vert \mathbf{f}_R\Vert_{{L^{2}_{x,v} }}| \nabla_x\mathbf{\phi}_R|_{{L^{2}_{x} }}\\
                &\,\,\,+\frac{1}{(1+t)^q}(\Vert \mathbf{f}_R\Vert^2_{{L^{2}_{x,v} }}+| \nabla_x\mathbf{\phi}_R|^2_{{L^{2}_{x} }})+\varepsilon\mathcal{I}_1(\Vert \mathbf{f}_R\Vert^2_{{L^{2}_{x,v} }}+| \nabla_x\mathbf{\phi}_R|^2_{{L^{2}_{x} }})+\varepsilon^{k-1}\mathcal{I}_2\Vert \mathbf{f}_R\Vert_{{L^{2}_{x,v} }}.
            \end{split}
        \end{equation}
    \end{proof}
    \begin{remark}
        For the potential range $-3<\gamma<-1$, the weight function $\omega_{\gamma}$ should be replaced by $\omega_{-1}$. The remaining procedure follows the case of $-1\leq\gamma<1$, with $\mathbf{S}_{\gamma}$ replaced by $\mathbf{S}_{-1}$.
    \end{remark}

    \section{ The  $W^{1,\infty}_{x,v}$ estimate of the remainder term for hard sphere}
    To establish energy closure in the $L^2$ estimates \eqref{MainresultL22}, weighted $L^{\infty}$ estimates are required for either $\mathbf{h}$ or $\mathbf{S}_{\gamma}$, which is obtained through Duhamel's principle along the VPB system trajectory. Unlike the compressible Euler limit \cite{[22]Guo2010CPAM}, the interaction between $L^{2}$ and $L^{\infty}$ norms fails to achieve complete energy closure, owing to the self-consistent electric field $\frac{e^{\alpha}}{m^\alpha} \nabla_{x} \phi \cdot \nabla_{v} F^{\alpha}$. This analytical challenge necessitates additional estimation of the derivatives' $L^{\infty}$ norms.
    The subsequent analysis proceeds as follows: Section 4 presents essential lemmas and proves the hard sphere case ($\gamma=1$), while Section 5 addresses the range $-1 \leq \gamma < 1$. For $-3 < \gamma < -1$, Section 6 examines the corresponding reduction in the validity time.

First, recalling the global Maxwellian defined in \eqref{def1.35},
for the vector function $\mathbf{g}=(g^{A},g^{B})^{T}$, we introduce
the  linear operator
$\mathbf{L}_{M}=(\mathbf{L}_M^{A},\mathbf{L}_M^{B})^{T}$
    as following
    \begin{gather*}\label{4.2}
        \begin{split}
            \mathbf{L}^\alpha_M \mathbf{g}=- \frac{1}{\sqrt{\mu^{\alpha}_M}} \sum_{\beta=A,B} \left[Q^{ \alpha\beta}(\mu^\alpha,\sqrt{\mu^{\beta}_M} g^{\beta})+Q^{\alpha \beta}(\sqrt{\mu^\alpha_M}g^{\alpha},\mu^\beta)\right].
        \end{split}
    \end{gather*}
    It can be decomposed to
    \begin{gather*}\label{4.3}
        \begin{split}
            \left(\begin{array} {lll} \mathbf{L}_M^{A} \mathbf{g} \\  \mathbf{L}_M^{B} \mathbf{g} \end{array}\right) = \left(\begin{array} {lll} \upsilon^{A} g^A + K_M^A \mathbf{g}\\ \upsilon^{B} g^B +  K_M^B \mathbf{g}\end{array}\right),
        \end{split}
    \end{gather*}
    where
        \begin{align}
            &\upsilon^{\alpha}=   \sum_{\beta=A,B} \int_{\mathbb{R}^3\times \mathbb{S}^2}
            B^{ \alpha \beta}(|v-v_*|, \cos \theta) \mu^{\beta}(v_*) d\sigma dv_*, \quad  K_{M}^{\alpha} \mathbf{g}= K_{M,1}^{\alpha} \mathbf{g}+K_{M,2}^{\alpha} \mathbf{g},
            \notag\\
            &    K_{M,1}^{\alpha} \mathbf{g}=  \frac{1}{\sqrt{\mu_M^\alpha}} \sum_{\beta=A,B} \int_{\mathbb{R}^3 \times \mathbb{S}^2}
            B^{\alpha\beta}(|v-v_*|, \cos \theta)  \mu^\alpha(v) \sqrt{\mu_M^\beta(v_*)}g^\beta(v_*)   d\sigma dv_*,\notag\\
            &    K_{M,2}^{\alpha} \mathbf{g}= - \frac{1}{\sqrt{\mu_M^\alpha}} \sum_{\beta=A,B} \int_{\mathbb{R}^3 \times \mathbb{S}^2}
            B^{\alpha\beta}(|v-v_*|, \cos \theta)  \Big[\mu^\alpha(v')\sqrt{\mu_M^\beta(v_*')}g^\beta(v_*')\label{nuKsuanzidef}\\
            & \hspace{6.8cm} +\mu^{\beta}(v_*') \sqrt{\mu_{M}^{\alpha}(v')}\, g^\alpha(v')\Big] d\sigma dv_*\notag.
        \end{align}
    Moreover, there exists a constant $C$ such that
    \begin{eqnarray*}
        C^{-1} \left \langle v \right\rangle^{\gamma} \leq \upsilon^{\alpha} \leq C \left \langle v \right\rangle^{\gamma}, \qquad  {\rm for}\qquad \alpha=A,B.
    \end{eqnarray*}
A smooth cutoff function $\chi_{\delta}$ with $0 \leq \chi_{\delta} \leq 1$ is introduced to regularize the singular part of the operators $K^{\alpha \beta}_M$. That is for any $\delta>0$, the function $\chi_{\delta}(s)$ satisfies:
    \begin{equation}\label{Cutoffunctiondef}
        \chi_{\delta}(s) \equiv 1 \quad \textnormal{for}  \, s \leq \delta, \quad {\rm and} \quad   \chi_{\delta}(s) \equiv 0,  \quad \textnormal{for}\, s \geq 2\delta.
    \end{equation}
    Finally, let $B^{\alpha \beta}_{\chi}(|v-v_*|,\cos \theta) =: B^{\alpha \beta}(|v-v_*|,\cos \theta) \chi_{\delta}(|v-v_*|) $, and we define the following operators
    \begin{gather*}\label{4.51}
        \begin{split}
            &    K_{M,1}^{\alpha,\chi} \mathbf{g}=  \frac{1}{\sqrt{\mu_M^\alpha}} \sum_{\beta=A,B} \int_{\mathbb{R}^3 \times \mathbb{S}^2}
            B_{\chi}^{\alpha\beta}(|v-v_*|, \cos \theta)  \mu^\alpha(v) \sqrt{\mu_M^\beta(v_*)}g^\beta(v_*)   d\sigma dv_*,\\
            &    K_{M,2}^{\alpha,\chi} \mathbf{g}= - \frac{1}{\sqrt{\mu_M^\alpha}} \sum_{\beta=A,B} \int_{\mathbb{R}^3 \times \mathbb{S}^2}
            B_{\chi}^{\alpha\beta}(|v-v_*|, \cos \theta)  \Big[\mu^\alpha(v')\sqrt{\mu_M^\beta(v_*')}g^\beta(v_*')\\
            & \hspace{6.8cm} +\mu^{\beta}(v_*') \sqrt{\mu_{M}^{\alpha}(v')}\, g^\alpha(v')\Big] d\sigma dv_*,
        \end{split}
    \end{gather*}
    and
    \begin{eqnarray*}\label{ccccKCdef}
        \begin{split}
            &K_{M,1}^{\alpha,c} \mathbf{g} = ( K_{M,1}^{\alpha}-  K_{M,1}^{\alpha,\chi}) \mathbf{g},\qquad K_{M,2}^{\alpha,c} \mathbf{g} = ( K_{M,2}^{\alpha}-  K_{M,2}^{\alpha,\chi}) \mathbf{g},\\
            &K_{M}^{\alpha,\chi} \mathbf{g}= K_{M,1}^{\alpha,\chi} \mathbf{g}+K_{M,2}^{\alpha,\chi} \mathbf{g}, \qquad K_{M}^{\alpha,c} \mathbf{g}= K_{M,1}^{\alpha,c} \mathbf{g}+K_{M,2}^{\alpha,c} \mathbf{g}.
        \end{split}
    \end{eqnarray*}
Using the weight function $w_{\gamma}(v)$ defined in
\eqref{WWWEIGHTFUC}, we denote
    \begin{eqnarray}\label{MWMWMWimpmain}
        \begin{split}
            &K_{M,w}^{\alpha} (\mathbf{g})=wK_{M}^{\alpha} (\frac{\mathbf{g}}{w}),\quad K_{M,w}^{\alpha,\chi} (\mathbf{g})=wK_{M}^{\alpha,\chi} (\frac{\mathbf{g}}{w}), \quad K_{M,w}^{\alpha,c} (\mathbf{g})=wK_{M}^{\alpha,c} (\frac{\mathbf{g}}{w}),\\
            &K_{M,i,w}^{\alpha,c} (\mathbf{g})=wK_{M,i}^{\alpha,c} (\frac{\mathbf{g}}{w}),\quad K_{M,i,w}^{\alpha,\chi} (\mathbf{g})=wK_{M,i}^{\alpha,\chi} (\frac{\mathbf{g}}{w}),\quad \quad  \quad i=1,2.
        \end{split}
    \end{eqnarray}

    \begin{lemma}\label{kchiestmainle}
        For the weight function $w_{\gamma}(v)$ defined in \eqref{WWWEIGHTFUC}, $-3 < \gamma \leq 1$ and sufficiently small $\delta > 0$, there exist constants $C > 1$ (large) and $c > 0$ (small) such that the singular operator $K_{M,w}^{\alpha,\chi}$ satisfies the estimate:
        \begin{gather}\label{kchiestmain}
            | K_{M,w}^{\alpha,\chi}\mathbf{g}(v) |+| D_x(K_{M,w}^{\alpha,\chi})\mathbf{g}(v) |+| D_v(K_{M,w}^{\alpha,\chi})\mathbf{g}(v) | \leq C {\delta}^{3+\gamma} \left \langle v \right\rangle^{\gamma} e^{-c|v|^2} |\mathbf{g} |_{L^\infty_v},
        \end{gather}
        where
        \begin{eqnarray}
            \begin{split}
                (D_x  K_{M,w}^{\alpha,\chi})\mathbf{g}:=D_x (K_{M,w}^{\alpha,\chi}\mathbf{g})-    K_{M,w}^{\alpha,\chi}(D_x\mathbf{g}),\\
                (D_v  K_{M,w}^{\alpha,\chi})\mathbf{g}:=D_v(K_{M,w}^{\alpha,\chi}\mathbf{g})-     K_{M,w}^{\alpha,\chi}(D_v\mathbf{g} ).
            \end{split}
        \end{eqnarray}
    \end{lemma}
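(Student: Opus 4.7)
The point of the estimate is that the cutoff $\chi_\delta$ localises every $v_*$--integral to the ball $\{|v-v_*|\leq 2\delta\}$, on which the kinetic factor $|v-v_*|^\gamma$ is integrable (since $\gamma>-3$) and the Maxwellian factors present in the kernel supply the Gaussian decay in $v$. The weight-ratios $w(v)/w(v_*)$ and the pulled-out factor $\sqrt{\mu_M^\alpha(v)}$ together produce the correct exponential decay, and the restriction to the small ball gives the $\delta^{3+\gamma}$ gain.

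\textbf{Step 1: bound on $K_{M,w}^{\alpha,\chi}\mathbf g$.} Rewrite the kernel in the form $w(v)/w(v_*)$ times the un-weighted kernel of $K_M^{\alpha,\chi}$. For the $K_{M,1,w}^{\alpha,\chi}$ piece, the prefactor $\mu^\alpha(v)/\sqrt{\mu_M^\alpha(v)}$ gives $e^{-c|v|^2}$ uniformly in $v_*$, using $\mu^\alpha\lesssim (\mu_M^\alpha)^{\tilde q}$ from the temperature bound below \eqref{globalMaxwellian}. Pulling $\mathbf g/w$ out in $L^\infty_v$ leaves
\[
\int_{|v-v_*|\leq 2\delta}\!\!\int_{\mathbb{S}^2} |v-v_*|^{\gamma}\,b(\cos\theta)\,\sqrt{\mu_M^\beta(v_*)}\,\frac{w(v)}{w(v_*)}\,d\omega\,dv_*,
\]
in which the angular integral is $O(1)$, the $v_*$ integral yields $\int_0^{2\delta}r^{2+\gamma}dr\simeq\delta^{3+\gamma}$, and $w(v)/w(v_*)$ stays bounded on the ball. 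For $K_{M,2,w}^{\alpha,\chi}$ the collision formulas give $|v'-v|,|v_*'-v|\leq C|v-v_*|\leq C\delta$, so $\mu^\alpha(v')$ and $\sqrt{\mu_M^\beta(v_*')}$ inherit the same $e^{-c|v|^2}$ decay; the rest of the calculation is identical. Finally, one uses $e^{-c|v|^2}\lesssim \langle v\rangle^\gamma e^{-c'|v|^2}$ to match the stated form of the bound.

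\textbf{Step 2: the commutators $D_xK$ and $D_vK$.} By the definitions, $D_x$ and $D_v$ act only on the kernel factors, not on $\mathbf g/w$. In the $x$-commutator, $\partial_x$ lands on the fluid coefficients $(n^\alpha,\mathbf u,\theta)$ inside $\mu^\alpha,\mu^\beta$; these are uniformly bounded in $W^{s,\infty}_x$ by \eqref{PTWDAY}, so the integrand is of the same shape and the same $\delta^{3+\gamma}\langle v\rangle^\gamma e^{-c|v|^2}$ bound follows. In the $v$-commutator, $\partial_v$ distributes onto (i) the Gaussian factors $\mu^\alpha(v)$, $\mu^\alpha(v')$, producing a polynomial times a Gaussian that is absorbed into $e^{-c'|v|^2}$; (ii) the weight, producing the multiplier $\tilde\kappa\,\langle v\rangle^{(1-\gamma)/2}$ from $\partial_v e^{\tilde\kappa\langle v\rangle^{(3-\gamma)/2}}$; and (iii) the cutoff $\chi_\delta$, producing $\chi_\delta'$ of size $1/\delta$ supported on the annulus $\delta\leq |v-v_*|\leq 2\delta$. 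The contributions of (i) and (ii) integrate to $\delta^{3+\gamma}$ by the same volume calculation as in Step 1; the contribution of (iii), after using $|v-v_*|^\gamma\leq(2\delta)^\gamma$ on the annulus and bounding the angular integral by a constant, produces $O(\delta^{2+\gamma})$ which is further absorbed into the $O(1)$ part of the kernel by a simple integration-by-parts in $v_*$ of $\chi_\delta'\cdot\frac{v-v_*}{|v-v_*|}$, trading a $\delta$ for the radial integration to recover $\delta^{3+\gamma}$.

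\textbf{Main obstacle.} The delicate point is controlling the weight in the soft-potential case $-3<\gamma\leq-1$, where $w_\gamma=e^{\tilde\kappa\langle v\rangle^{(3-\gamma)/2}}$ with exponent $(3-\gamma)/2$ as large as $2$. Two compatibility checks are required. First, boundedness of $w(v)/w(v_*)$ on $|v-v_*|\leq 2\delta$ via the mean value theorem gives an error of size $\tilde\kappa\,\delta\,\langle v\rangle^{(1-\gamma)/2}$, which must be swallowed by $e^{-c|v|^2}$; this works because $\tilde\kappa\leq 2\kappa_0\ll 1$ from \eqref{weightFUVS} and $(1-\gamma)/2\leq 2$. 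Second, the multiplier $\tilde\kappa\,\langle v\rangle^{(1-\gamma)/2}$ from $\partial_v w$ in the $v$-commutator must likewise be dominated by the Gaussian, which again requires smallness of $\kappa_0$ relative to the Gaussian rate of $\mu^\alpha$; one only loses a small fraction of that rate, so the $e^{-c'|v|^2}$ decay with some $c'>0$ survives. Once these two points are secured, collecting everything yields the claimed uniform bound for all $-3<\gamma\leq 1$.
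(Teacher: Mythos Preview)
Your Step~1 and the $D_x$ half of Step~2 are correct and follow the paper; the ``Main obstacle'' discussion of the weight ratio is in fact more careful than the paper's own one-line bound $w(v)/w(v_*)\le C$.

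The gap is in the $D_v$ commutator. Your list (i)--(iii) comes from differentiating the kernel directly in the $(v,v_*)$-coordinates, but that is not what the commutator contains. The key device --- recorded in the paper as Remark~\ref{kejixingsuodaom2} --- is to substitute $v_*\mapsto v+u$ \emph{before} differentiating: then $v'=v+f_1(u,\omega)$, $v_*'=v+f_2(u,\omega)$, $v_*=v+u$ all satisfy $\partial_v(\cdot)=I$, so the $g$-derivatives coming from $D_v(K\mathbf g)$ match $K(D_v\mathbf g)$ exactly and cancel, while the collision kernel $B_\chi=|u|^\gamma\,b(\omega\cdot u/|u|)\,\chi_\delta(|u|)$ becomes \emph{independent of $v$} and is never differentiated. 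What survives in $(D_vK)\mathbf g$ are only derivatives of the Maxwellians and of the weight --- polynomials in $\langle v\rangle$ absorbed by the Gaussian --- with the same $\delta^{3+\gamma}$ volume factor as in Step~1. Consequently your item (iii) is a phantom: no $\chi_\delta'$ term appears in the commutator, and the integration-by-parts trick you propose to upgrade $\delta^{2+\gamma}$ to $\delta^{3+\gamma}$ is both unnecessary and unsound (carried out literally it transfers the $v_*$-derivative onto $|v-v_*|^\gamma$, producing $|v-v_*|^{\gamma-1}$, which is non-integrable on $\{|v-v_*|\le 2\delta\}$ when $-3<\gamma\le-2$). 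Your list also omits the companions $\partial_v|v-v_*|^\gamma$ and $\partial_v b(\cos\theta)$ that a direct differentiation would equally generate, and these carry the same fatal singularity in the very-soft range. Without the shift, the argument does not close for $\gamma\le-2$.
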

    \begin{proof} Without loss of generality, we take the  weight function $w(v)=\omega_{\gamma}$ to give the proof, since  it increases the fastest with respect to the velocity variable $v$. By \eqref{1.36}, (c.f \cite{[22]Guo2010CPAM}), we have
        \begin{eqnarray*}\label{Qwanmambum}
            c_1 \mu_M^{\alpha} \leq   \mu^{\alpha} \leq c_2  (\mu_M^{\alpha})^{\tilde{q}} , \quad {\rm for \,\, some }\quad  1>\tilde{q} > \frac{\max\{m^A,m^B\}}{m^A+m^B}\geq \frac{1}{2}.
        \end{eqnarray*}
        From the conservation law $m^{\alpha}|v'|^{2}+m^{\beta}|v_*'|^2 =m^{\alpha}|v|^2+m^{\beta}|v_*|^2$  for $\alpha , \beta \in \{A,B\}$, one can get
        \begin{gather*}\label{4.12}
            \begin{split}
                \mu^{\alpha}(v')\frac{\sqrt{\mu_M^{\beta}(v_*')}}{\sqrt{\mu_M^{\alpha}(v)}}\leq& c_2  [\mu_M^{\alpha}(v') ]^{\tilde{q}} \frac{\sqrt{\mu_M^{\beta}(v_*')}}{\sqrt{\mu_M^{\alpha}(v)}} \leq C [\mu_M^{\alpha}(v') ]^{\tilde{q}-\frac{1}{2}} \sqrt{\mu_M^{\beta}(v_*)}, \\
                \mu^{\beta}(v_*')\frac{\sqrt{\mu_M^{\alpha}(v')}}{\sqrt{\mu_M^{\alpha}(v)}}\leq& c_2  [\mu_M^{\beta}(v_*') ]^{\tilde{q}} \frac{\sqrt{\mu_M^{\alpha}(v')}}{\sqrt{\mu_M^{\alpha}(v)}} \leq C [\mu_M^{\beta}(v_*') ]^{\tilde{q}-\frac{1}{2}} \sqrt{\mu_M^{\beta}(v_*)},\\
                \mu^\alpha (v)\frac{\sqrt{\mu_M^{\beta}(v_*)}}{\sqrt{\mu_M^{\alpha}(v)}} \leq& c_2 [\mu_M^{\alpha}(v)]^{\tilde{q}-\frac{1}{2}}\sqrt{\mu_M^{\beta}(v_*)} .
            \end{split}
        \end{gather*}
        Since $|v-v_*| \leq 2\delta < 2$, then one has
        \begin{gather*}
            \begin{split}
                \mu_M^{\beta}(v_*) &=(2\pi \theta_M)^{-\frac{3}{2}} e^{-\frac{m^{\beta}|v_*|^2}{2\theta_M}} \leq (2\pi \theta_M)^{-\frac{3}{2}} e^{-\frac{m^{\beta}\{|v|^2-|v-v_*|^2\}}{2\theta_M}}  \leq C \mu_M^{\beta}(v),\\
                \frac{w(v)}{w(v_*)}&=e^{\tilde{\kappa}[\left \langle v \right\rangle^{2}-\left \langle v_* \right\rangle^{2}]} \leq C, \qquad   \{\mu_M^{\beta}(v)\}^{\frac{1}{4}} \leq C\left \langle v \right\rangle^{\gamma}.
            \end{split}
        \end{gather*}
        Thus, there exist a small constant $c>0$, such that for all $-3<\gamma\leq1$, we have
        \begin{gather*}
            \begin{split}
                |K_{M,w}^{\alpha,\chi}\mathbf{g}(v)|
                \leq C {\delta}^{3+\gamma} \left \langle v \right\rangle^{\gamma} e^{-c|v|^2} |\mathbf{g} |_{L^\infty_v}.
            \end{split}
        \end{gather*}
        Moreover, it can be directly calculated that
        \begin{equation*}
            | (D_x  K_{M,w}^{\alpha,\chi}) \mathbf{g}(v) |\leq C \left \langle v \right\rangle^{2}| K_{M,w}^{\alpha,\chi}\mathbf{g}(v) |,\quad | (D_v K_{M,w}^{\alpha,\chi}) \mathbf{g}(v) |\leq C \left \langle v \right\rangle | K_{M,w}^{\alpha,\chi}\mathbf{g}(v) |,
        \end{equation*}
        where we have used the fact that $\left \langle v_* \right\rangle^{2}\leq C \left \langle v \right\rangle^{2}$. Since $\{\mu_M^{\beta}(v)\}^{\frac{1}{4}} \leq C\left \langle v \right\rangle^{\gamma-2}$, we obtain
        \begin{gather*}
            \begin{split}
                | D_x(K_{M,w}^{\alpha,\chi})\mathbf{g}(v) |+| D_v ( K_{M,w}^{\alpha,\chi})\mathbf{g}(v) |
                \leq C {\delta}^{2+\gamma} \left \langle v \right\rangle^{\gamma} e^{-c|v|^2} |\mathbf{g} |_{L^\infty_v}.
            \end{split}
        \end{gather*}
    \end{proof}
    \begin{remark}\label{kejixingsuodaom2}
        The term $| (D_v K_{M,w}^{\alpha,\chi}) \mathbf{g}(v) |$ involves differentiating the collision kernel, yielding $\frac{d}{dv}|v-v_*|^{\gamma} = \gamma|v-v_*|^{\gamma-1}$ for $|v-v_*| \leq 2\delta$. This derivative introduces a stronger singularity near $v = v_*$, and seems to make the expression not integrable when $-3<\gamma \leq -2$. However, we can remove the singularity through ‌a change of variable `` $v+v_*\rightarrow v_*$". Consequently, the validity range of the conclusion remains  $-3<\gamma \leq 1$.
    \end{remark}

    \begin{lemma}\label{LeMK2ker4444}
        Under the hypotheses of Lemma \ref{kchiestmainle} and using the notation from \eqref{MWMWMWimpmain}, the regular component $K_{M,w}^{\alpha,c}$ admits the decomposition:
        $$
        K_{M,w}^{\alpha,c} = K_{M,1,w}^{\alpha,c} + K_{M,2,w}^{\alpha,c}.
        $$
        Both parts $K_{M,1,w}^{\alpha,\chi}$ and $K_{M,2,w}^{\alpha,\chi}$ can be represented as integral operators of the form:
        \begin{eqnarray}\label{lem4311}
            \begin{split}
                K_{M,1,w}^{\alpha,c} \mathbf{g}(v)=\sum_{\beta=A,B}\int_{\mathbb{R}^3} k_{M,1}^{\alpha \beta}(v,v_*)\frac{w(v)}{w(v_*)}g^{\beta}(v_*) d v_{*},
            \end{split}
        \end{eqnarray}
        \begin{eqnarray}\label{lem43122}
            \begin{split}
                K_{M,2,w}^{\alpha,c} \mathbf{g}(v)&=K_{M,2,w}^{\alpha,c,(1)} \mathbf{g}(v)+K_{M,2,w}^{\alpha,c,(2)} \mathbf{g}(v) \\
                &=\sum_{\beta=A,B} \Big[\int_{\mathbb{R}^3}k_{M,2}^{\alpha \beta(1)}(v,u_{\parallel})\frac{w(v)}{w(v+u_{\parallel})}g^{\beta}(v+u_{\parallel}) d u_{\parallel}\\
                &\hspace{1cm}+\int_{\mathbb{R}^3}\int_{\mathbb{R}^2}k_{M,2}^{\alpha \beta(2)}(v,u_{\perp},u_{\parallel})\frac{w(v)}{w(v+u_{\perp}+u_{\parallel})}g^{\beta}(v+u_{\perp}+u_{\parallel}) d u_{\perp} d u_{\parallel} \Big].
            \end{split}
        \end{eqnarray}
        Here, we adopt the shorthand $w = w_{\gamma}$. There exists a small constant $c>0$ such that
        \begin{eqnarray}\label{ZZZzhuanghuagezV}
            \begin{split}
                \frac{w(v)}{w(v_*)}|k_{M,1}^{\alpha \beta}(v,v_*)|&\leq C_{\delta} \,e^{-c\{|v|^2+|v_*|^2\}},\\
                \frac{w(v)}{w(v+u_{\parallel})}|k_{M,2}^{\alpha \beta(1)}(v,u_{\parallel})|&\leq C_{\delta} \frac{(1+|v|+|u_{\parallel}|)^{\gamma-1}}{|u_{\parallel}|} \, e^{-c\{|u_{\parallel}|^2+|v_{\parallel}|^2\}} \\
                \frac{w(v)}{w(v+u_{\perp}+u_{\parallel})}|k_{M,2}^{\alpha \beta(2)}(v,u_{\perp},u_{\parallel})|&\leq C_{\delta}\,
                e^{-c[|v|^2+|u_{\parallel}|^2+|u_{\perp}|^2]},
            \end{split}
        \end{eqnarray}
        where
        \begin{equation*}
            v=v_{\parallel}+v_{\perp},\quad   |v|^2=|v_{\parallel}|^2+|v_{\perp}|^2, \quad  |u|^2 \thicksim (|u_{\parallel}|^2+|u_{\perp}|^2).
        \end{equation*}
       These kernel operators exhibit distinct decay properties:
        \begin{eqnarray}\label{sec4inL1decayK12}
            \begin{split}
                &\int_{\mathbb{R}^3}|k_{M,2}^{\alpha \beta(1)}(v,u_{\parallel})|\frac{w(v)}{w(v+u_{\parallel})} d u_{\parallel} \leq C_{\delta} \left \langle v \right\rangle^{\gamma-2},\\
                &\int_{\mathbb{R}^3}\int_{\mathbb{R}^2}|k_{M,2}^{\alpha \beta(2)}(v,u_{\perp},u_{\parallel})|\frac{w(v)}{w(v+u_{\perp}+u_{\parallel})} d u_{\perp} d u_{\parallel}\leq C_{\delta}\,
                e^{-c|v|^2}.
            \end{split}
        \end{eqnarray}
       The preceding analysis yields the following estimates:
        \begin{eqnarray}\label{K1111XVE}
            | K_{M,1,w}^{\alpha,c}\mathbf{g}(v) |+| (D_x  K_{M,1,w}^{\alpha,c}) \mathbf{g}(v) |+| (D_v K_{M,1,w}^{\alpha,c}) \mathbf{g}(v) | \leq C_{\delta} \left \langle v \right\rangle^{\gamma-2}e^{-\frac{c}{2}|v|^2}  |\mathbf{g} |_{L^\infty_v},
        \end{eqnarray}
        and
        \begin{eqnarray}\label{K222XVE}
            \begin{split}
                | K_{M,2,w}^{\alpha,c}\mathbf{g}(v) |+| (D_x  K_{M,2,w}^{\alpha,c}) \mathbf{g}(v) | &\leq C_{\delta}  \left \langle v \right\rangle^{\gamma-2}  |\mathbf{g} |_{L^\infty_v},\\
                | (D_v  K_{M,2,w}^{\alpha,c} )\mathbf{g}(v) | &\leq C_{\delta} \left \langle v \right\rangle^{\gamma-1}  | \left \langle v \right\rangle\mathbf{g} |_{L^\infty_v}.
            \end{split}
        \end{eqnarray}
    \end{lemma}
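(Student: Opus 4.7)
The plan is to obtain explicit kernel representations for the two components of $K_{M,w}^{\alpha,c}$ via Carleman-type changes of variables adapted to the mass-asymmetric collision rule
\[
v' = v - \tfrac{2m^\beta}{m^\alpha+m^\beta}[(v-v_*)\cdot\omega]\omega, \qquad v_*' = v_* + \tfrac{2m^\alpha}{m^\alpha+m^\beta}[(v-v_*)\cdot\omega]\omega,
\]
and then to deduce the pointwise bounds in \eqref{K1111XVE} and \eqref{K222XVE} by integrating the resulting kernels against standard Gaussian estimates.

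The component $K_{M,1,w}^{\alpha,c}$ is the easier one. Its integrand carries the explicit factor $\mu^\alpha(v)\sqrt{\mu_M^\beta(v_*)}$, and the cutoff $1-\chi_\delta(|v-v_*|)$ restricts the integration to $|v-v_*|\geq\delta$ so that no singularity arises. Exchanging the $\omega$- and $v_*$-integrations identifies the kernel
\[
k_{M,1}^{\alpha\beta}(v,v_*) = \mu^\alpha(v)\sqrt{\mu_M^\beta(v_*)}\int_{\mathbb{S}^2} B^{\alpha\beta}(|v-v_*|,\cos\theta)(1-\chi_\delta(|v-v_*|))\, d\omega.
\]
For sufficiently small $\tilde{\kappa}$, the weight ratio $w(v)/w(v_*) = e^{\tilde{\kappa}(\langle v\rangle^{(3-\gamma)/2}-\langle v_*\rangle^{(3-\gamma)/2})}$ is absorbed into a fraction of the Gaussians, yielding the first line of \eqref{ZZZzhuanghuagezV}. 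Integrating in $v_*$ and using that polynomial powers of $v$ are controlled by $e^{-c|v|^2/2}$ gives a bound much stronger than the required $\langle v\rangle^{\gamma-2}$ decay in \eqref{K1111XVE}. The commutators $D_x K_{M,1,w}^{\alpha,c}$ and $D_v K_{M,1,w}^{\alpha,c}$ are handled identically, with polynomial factors from differentiating $\mu^\alpha(v)$, $w(v)$, or the cross-section absorbed by the Gaussians.

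The main technical work lies in representing $K_{M,2,w}^{\alpha,c}$. For the first gain contribution $\mu^\alpha(v')\sqrt{\mu_M^\beta(v_*')}g^\beta(v_*')$, I fix $v$ and apply the Carleman-type change of variables to the five-dimensional integration over $(v_*,\omega)$, using the explicit formulas above. The mass ratios $\frac{2m^\alpha}{m^\alpha+m^\beta}$ and $\frac{2m^\beta}{m^\alpha+m^\beta}$ enter the Jacobian but are uniformly bounded above and away from zero, so the single-species qualitative structure is preserved. The decomposition into $K_{M,2,w}^{\alpha,c,(1)}$ and $K_{M,2,w}^{\alpha,c,(2)}$ corresponds to separating a three-dimensional integral (in a variable $u_\parallel$ parametrizing the post-collision displacement) which inherits the $|u_\parallel|^{-1}(1+|v|+|u_\parallel|)^{\gamma-1}$ singular factor from the Jacobian and the cross-section $|v-v_*|^\gamma$, from a full five-dimensional integral in $(u_\parallel,u_\perp)$ carrying only Gaussian bounds. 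The second gain term $\mu^\beta(v_*')\sqrt{\mu_M^\alpha(v')}g^\alpha(v')$ is treated symmetrically by swapping $\alpha$ and $\beta$. The integrated bounds \eqref{sec4inL1decayK12} then follow from the one-dimensional lemma $\int_\mathbb{R}(1+|v_\parallel+r|)^{\gamma-1}e^{-c|v_\parallel+r|^2}\, dr \lesssim \langle v\rangle^{\gamma-2}$ applied along the parallel direction together with perpendicular Gaussian integration, and the first line of \eqref{K222XVE} is a direct consequence.

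The principal obstacle will be the derivative estimate $|(D_v K_{M,2,w}^{\alpha,c})\mathbf{g}(v)| \leq C_\delta \langle v\rangle^{\gamma-1}|\langle v\rangle\mathbf{g}|_{L^\infty_v}$ in the second line of \eqref{K222XVE}, which loses one power of $\langle v\rangle$ relative to the function itself. Differentiating the Carleman kernel in $v$ produces three types of contributions: from the weight ratio $w(v)/w(v+u)$, a factor of order $\tilde{\kappa}\langle v\rangle^{(1-\gamma)/2}$; from the Maxwellian arguments $v'$ and $v_*'$ which depend on $v$, polynomial factors in $v$; and from the collision kernel $|v-v_*|^\gamma$, whose $v$-derivative carries the more singular factor $\gamma|v-v_*|^{\gamma-1}$ (see Remark \ref{kejixingsuodaom2}). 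The latter singularity is removed by the shift-of-variable $v+v_*\mapsto v_*$, preserving integrability throughout the range $-3<\gamma\leq 1$. The three contributions together produce exactly the $\langle v\rangle^{\gamma-1}$ bound, compensated by the strengthened norm $|\langle v\rangle\mathbf{g}|_{L^\infty_v}$ on the right. Throughout, the mass asymmetry $m^A\neq m^B$ enters solely through bounded Jacobian coefficients in the Carleman formulas, leaving the qualitative Grad-type estimates intact.
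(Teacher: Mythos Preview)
Your overall strategy matches the paper's: derive the kernels via a Carleman-type change of variables, read off the pointwise bounds \eqref{ZZZzhuanghuagezV}, integrate to obtain \eqref{sec4inL1decayK12}, and conclude \eqref{K1111XVE}--\eqref{K222XVE}. The paper actually imports \eqref{lem4311}--\eqref{sec4inL1decayK12} and Remark~\ref{kernalcompwithsinglek} from a companion preprint and only argues \eqref{K1111XVE}--\eqref{K222XVE} in the text, so on the representation step your outline is more self-contained than what is printed.

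There is, however, a concrete error in your integrated bound. The ``one-dimensional lemma'' you invoke,
\[
\int_{\mathbb R}(1+|v_\parallel+r|)^{\gamma-1}e^{-c|v_\parallel+r|^2}\,dr\ \lesssim\ \langle v\rangle^{\gamma-2},
\]
is false: after the shift $s=v_\parallel+r$ the left side is a finite constant independent of $v$. The $\langle v\rangle^{\gamma-2}$ decay in \eqref{sec4inL1decayK12} has a different origin. In the bound \eqref{ZZZzhuanghuagezV} the Gaussian $e^{-c|v_\parallel|^2}$ carries $v_\parallel=(v\cdot\hat u_\parallel)\hat u_\parallel$, which depends on the \emph{direction} of $u_\parallel$; in spherical coordinates $u_\parallel=\rho\omega$ the radial integral $\int_0^\infty(1+|v|+\rho)^{\gamma-1}\rho\, e^{-c\rho^2}\,d\rho$ contributes $\langle v\rangle^{\gamma-1}$, while the angular integral $\int_{\mathbb S^2}e^{-c(v\cdot\omega)^2}\,d\omega\lesssim\langle v\rangle^{-1}$ supplies the missing power. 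Your stated mechanism should be replaced by this spherical splitting (or equivalently by the standard estimate on the Grad kernel of Remark~\ref{kernalcompwithsinglek}).

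On the $D_v$-estimate for $K_{M,2,w}^{\alpha,c}$ the paper takes a cleaner route than the one you sketch. You propose to differentiate the Carleman kernel in $v$, but then describe the contributions in terms of $v'$ and $v_*'$ --- objects that have already been integrated away in the Carleman form; moreover, differentiating the Carleman kernel means differentiating through a two-plane of integration that itself depends on $v$. The paper avoids this by differentiating in the \emph{original} $(v_*,\omega)$-representation, using only $\bigl|\tfrac{dv'}{dv}\bigr|+\bigl|\tfrac{dv_*'}{dv}\bigr|\le C$ and the conservation law to bound the differentiated Maxwellian factors, and then re-applying the Grad-type kernel estimate of Remark~\ref{kernalcompwithsinglek} to the resulting integrand. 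This order --- differentiate first, transform second --- makes the final shape $\langle v\rangle\cdot\langle v\rangle^{\gamma-2}\cdot|\langle v\rangle\mathbf g|_{L^\infty_v}=\langle v\rangle^{\gamma-1}|\langle v\rangle\mathbf g|_{L^\infty_v}$ transparent and sidesteps the technicalities your route would face.
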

    \begin{remark}\label{kernalcompwithsinglek}
        The linear operator $K_{M,2,w}^{\alpha,c}$ for  mixed gases $(m^A\neq m^B)$ consists of two distinct components:
         \begin{equation*}\label{mixed_kernel}
            K_{M,2,w}^{\alpha,c} = \underbrace{K_{M,2,w}^{\alpha,c,(1)}}_{\text{Partial decay part}} + \underbrace{K_{M,2,w}^{\alpha,c,(2)}}_{\text{Total decay part}}
        \end{equation*}
        where the second term is an additional part compared to the Boltzmann equation for single-species particles.‌ The first term can be reformulated through a variable substitution as
        \begin{equation*}
            \int_{\mathbb{R}^3}k_{M,2}^{\alpha \beta(1)}(v,u_{\parallel})\frac{w(v)}{w(v+u_{\parallel})}g^{\beta}(v+u_{\parallel}) d u_{\parallel}= \int_{\mathbb{R}^3}\tilde{k}_{M,2}^{\alpha \beta(1)}(v,v_*)\frac{w(v)}{w(v_*)}g^{\beta}(v_*) d v_*,
        \end{equation*}
      and satisfies the estimate identical to those for single-species particles:
        \begin{eqnarray}
            \begin{split}
                \frac{w(v)}{w(v_*)}|\tilde{k}_{M,2}^{\alpha \beta(1)}(v,v_*)|\leq C_{\delta} \frac{(1+|v|+|v_*|)^{\gamma-1}}{|v-v_*|} \, e^{-c(|v-v_*|^2+\frac{||v|^2-|v_*|^2|^2}{|v-v_*|^2})}.
            \end{split}
        \end{eqnarray}
    \end{remark}

\begin{proof}
The proof of \eqref{lem4311}-\eqref{sec4inL1decayK12} as well as
Remark \ref{kernalcompwithsinglek} can be found in my recent work
\cite{Wu2025pp}. We only prove \eqref{K1111XVE} and \eqref{K222XVE}
in the following. First, a direct calculation leads to
        \begin{equation*}
            | (D_x  K_{M,1,w}^{\alpha,c}) \mathbf{g}(v) |\leq C \left \langle v \right\rangle^{2}| K_{M,1,w}^{\alpha,c}\mathbf{g}(v) |,\quad | (D_v  K_{M,1,w}^{\alpha,c}) \mathbf{g}(v) |\leq C \left \langle v \right\rangle | K_{M,1,w}^{\alpha,c}\mathbf{g}(v) |.
        \end{equation*}
       The inequality estimation derived from \eqref{ZZZzhuanghuagezV} demonstrates:
        \begin{equation*}
            \begin{split}
                \left \langle v \right\rangle^{2}| K_{M,1,w}^{\alpha,c}\mathbf{g}(v) |\leq &C \sum_{\beta=A,B}\int_{\mathbb{R}^3} |v-v_*|^{\gamma}\,    \left \langle v \right\rangle^{2}\,e^{-c\{|v|^2+|v_*|^2\}} d v_* |\mathbf{g} |_{L^\infty_v}\\
                \leq &C \int_{\mathbb{R}^3} |v-v_*|^{\gamma}\,  e^{-\frac{3c}{4}\{|v|^2+|v_*|^2\}} d v_* |\mathbf{g} |_{L^\infty_v}
                \lesssim  \left \langle v \right\rangle^{\gamma-2}  e^{-\frac{c}{2}|v|^2} |\mathbf{g} |_{L^\infty_v}.
            \end{split}
        \end{equation*}
      Next, considering the definition in \eqref{MWMWMWimpmain}, we have
      \begin{equation*}
        \begin{split}
            & | (D_x  K_{M,2,w}^{\alpha,c})\mathbf{g}(v) |\leq C \frac{1}{\sqrt{\mu_M^\alpha}} \sum_{\beta=A,B} \int_{\mathbb{R}^3 \times \mathbb{S}^2}   \frac{w(v)}{w(v_*)} |v-v_*|^{\gamma} \Big[\left \langle v' \right\rangle^2 \mu^\alpha(v')\sqrt{\mu_M^\beta(v_*')}\\
            & \hspace{7cm}  +\left \langle v_*' \right\rangle^2\mu^{\beta}(v_*') \sqrt{\mu_{M}^{\alpha}(v')}\Big] d\sigma dv_*|\mathbf{g} |_{L^\infty_v}.
        \end{split}
      \end{equation*}
      From \eqref{Qwanmambum} with $\tilde{q}-\frac{1}{2}=2c_s>0$ and sufficiently small $\kappa_0$, it follows that
      \begin{eqnarray}
        \begin{split}
            | (D_x  K_{M,2,w}^{\alpha,c}) \mathbf{g}(v) |\leq C  \sum_{\beta=A,B} &\int_{\mathbb{R}^3 \times \mathbb{S}^2}  \frac{w(v)}{w(v_*)} |v-v_*|^{\gamma} \Big[\sqrt{\mu_M^{\beta}(v_*)} [\mu_M^{\alpha}(v') ]^{c_s}  \\
            &\hspace{1.1cm}  +  \sqrt{\mu_M^{\beta}(v_*)}[\mu_M^{\beta}(v_*') ]^{c_s}\Big]d\sigma dv_*|\mathbf{g} |_{L^\infty_v}.
        \end{split}
      \end{eqnarray}
    By utilizing Remark \ref{kernalcompwithsinglek}, it can be derived that
        \begin{eqnarray*}
            \begin{split}
                &| (D_x  K_{M,2,w}^{\alpha,c}) \mathbf{g}(v) |\\
                &\hspace{0.2cm}\leq C \sum_{\beta=A,B} \int_{\mathbb{R}^3 }       \frac{(1+|v|+|v_*|)^{\gamma-1}}{|v-v_*|} \,\, e^{-2c\{|v-v_*|^2+\frac{||v|^2-|v_*|^2|^2}{|v-v_*|^2}\}}  \frac{w(v)}{w(v_*)}  dv_*|\mathbf{g} |_{L^\infty_v}\\
                &\hspace{0.2cm}\leq C \left \langle v \right\rangle^{\gamma-2}  | \mathbf{g} |_{L^\infty_v}.
            \end{split}
        \end{eqnarray*}
        Since $|\frac{d v'}{dv}|+|\frac{d v_*'}{dv}|\leq C$, from the chain rule of differentiation, and the conservation law,  we follow the established reasoning‌ through a ‌parallel process‌ while ‌invoking Remark \ref{kernalcompwithsinglek}, the result emerges that
        \begin{eqnarray*}
            \begin{split}
                | (D_v  K_{M,2,w}^{\alpha,c}) \mathbf{g}(v) |
                \lesssim &\left \langle v \right\rangle\sum_{\beta=A,B} \int_{\mathbb{R}^3 }        \frac{w(v)(1+|v|+|v_*|)^{\gamma-1}}{w(v_*)|v-v_*|} \\
                &\hspace{3cm} \times e^{-2c\{|v-v_*|^2+\frac{||v|^2-|v_*|^2|^2}{|v-v_*|^2}\}}   dv_*|\left \langle v \right\rangle\mathbf{g} |_{L^\infty_v}\\
                \leq & C \left \langle v \right\rangle^{\gamma-1}  | \left \langle v \right\rangle\mathbf{g} |_{L^\infty_v}.
            \end{split}
        \end{eqnarray*}
        \end{proof}

    \begin{remark}
        The constant $C > 0$ and the small constant $c > 0$ depend on the particle masses $m^A$
        and $m^B$
        as well as the macroscopic quantities of the fluid $(n^{A}, n^{B}, \mathbf{u}, \theta)$, which are not only bounded but also possess positive lower bounds.
    \end{remark}
    \begin{remark}
        For the weight function $\omega_{\gamma} = e^{\tilde{\kappa} \left \langle v \right\rangle^{\frac{3 - \gamma}{2}}}, \tilde{\kappa} = \kappa_0 [1 + (1 + t)^{-\frac{2}{2k - 1}}]$. When $\frac{3 - \gamma}{2} = 2$, the parameter $\kappa_0$
        should be sufficiently small to ensure the inequalities \eqref{K1111XVE} and \eqref{K222XVE}.
    \end{remark}
    \noindent   The following lemma is used in the $W_{x,v}^{1,\infty}$ estimates for nonlinear terms.
    \begin{lemma}\label{DDFXXEST} We abbreviate $w_{\gamma}(v)$ in \eqref{WWWEIGHTFUC}  to $w$. $ For \,\,any~\alpha, \beta \in \{A,B\} $ and $-3<\gamma\leq1$, it holds that
        \begin{gather}\label{Lwuq000}
            \left|\frac{w}{\sqrt{\mu^{\alpha}_M}}\,Q^{\alpha \beta }(\sqrt{\mu^{\alpha}_M}\frac{g^\alpha}{w},\sqrt{\mu^{\beta}_M} \frac{g^\beta}{w})(v)\right|\lesssim \left \langle v \right\rangle^{\gamma} |  g^{\alpha} |_{L^\infty_v} \, |  g^{\beta} |_{L^\infty_v}.
        \end{gather}
       Furthermore, the following estimate holds
        \begin{gather}\label{Lwuqx100}
            \begin{split}
                \left| D_x\Big[\frac{w}{\sqrt{\mu^{\alpha}_M}}\,Q^{\alpha \beta }(\sqrt{\mu^{\alpha}_M}\frac{g^\alpha}{w},\sqrt{\mu^{\beta}_M} \frac{g^\beta}{w})\Big](v)\right|
                \lesssim \left \langle v \right\rangle^{\gamma} \Big(|  D_x g^{\alpha} |_{L^\infty_v} \, |  g^{\beta} |_{L^\infty_v}+ |  D_x g^{\beta} |_{L^\infty_v} \, |  g^{\alpha} |_{L^\infty_v}\Big),
            \end{split}
        \end{gather}
        and
        \begin{gather}\label{Lwuqv010}
            \begin{split}
                \left|D_v\Big[\frac{w}{\sqrt{\mu^{\alpha}_M}}\,Q^{\alpha \beta }(\sqrt{\mu^{\alpha}_M}\frac{g^\alpha}{w},\sqrt{\mu^{\beta}_M} \frac{g^\beta}{w})\Big](v)\right|
                \lesssim & \left \langle v \right\rangle^{\gamma} \Big(|D_v g^{\alpha} |_{L^\infty_v} \, |g^{\beta} |_{L^\infty_v}+ |D_v g^{\beta} |_{L^\infty_v} \, |  g^{\alpha} |_{L^\infty_v}\Big)\\
                &+\left \langle v \right\rangle^{\gamma} | \left \langle v \right\rangle g^{\alpha} |_{L^\infty_v} \, | \left \langle v \right\rangle g^{\beta} |_{L^\infty_v}.
            \end{split}
        \end{gather}
    \end{lemma}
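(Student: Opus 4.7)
The plan is to prove estimates \eqref{Lwuq000}--\eqref{Lwuqv010} by splitting $Q^{\alpha\beta}$ into its gain and loss parts and exploiting the conservation identity $m^\alpha|v'|^2+m^\beta|v_*'|^2=m^\alpha|v|^2+m^\beta|v_*|^2$ together with the Maxwellian domination $\mu^\alpha\leq C(\mu^\alpha_M)^{\tilde q}$ from \eqref{1.36}. All three bounds are direct corollaries of the structural calculations already used in the proof of Lemma~\ref{LeMK2ker4444}, so the roles of the factors $w(v)/w(v_*)$ and $\sqrt{\mu^\beta_M(v_*)}$ are analogous.

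For \eqref{Lwuq000}, the loss part equals $g^\alpha(v)\int B^{\alpha\beta}(|v-v_*|,\theta)\sqrt{\mu^\beta_M(v_*)}w(v)w(v_*)^{-1}g^\beta(v_*)\,d\omega\,dv_*$; because $\kappa_0$ is chosen so small that $\sqrt{\mu^\beta_M(v_*)}$ absorbs the growth of $w(v)w(v_*)^{-1}$ in the $v_*$ tail, the remaining integral is controlled by $\nu^\alpha(v)\sim\langle v\rangle^\gamma$ uniformly in $v$. For the gain part, energy conservation yields the exact identity $\sqrt{\mu^\alpha_M(v')\mu^\beta_M(v_*')}=\sqrt{\mu^\alpha_M(v)\mu^\beta_M(v_*)}$, so after cancellation the estimate reduces to an integral of the same type as the loss part; one uses $|v|\leq C(|v'|+|v_*'|)$ together with $(3-\gamma)/2\leq 2$ to obtain $\langle v\rangle^{(3-\gamma)/2}\leq C(\langle v'\rangle^{(3-\gamma)/2}+\langle v_*'\rangle^{(3-\gamma)/2})$, which makes $w(v)/[w(v')w(v_*')]$ controllable once the excess exponent is absorbed into the surplus Gaussian decay $\mu^\alpha_M(v')^{c_s}\mu^\beta_M(v_*')^{c_s}$ produced by \eqref{1.36}.

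The spatial derivative estimate \eqref{Lwuqx100} is immediate: $\mu_M$ is $x$-independent and $w(t,v)$ is also independent of $x$, so $D_x$ commutes past all non-$g$ factors. Leibniz then yields $D_x Q^{\alpha\beta}(\cdot,\cdot)=Q^{\alpha\beta}(D_x\cdot,\cdot)+Q^{\alpha\beta}(\cdot,D_x\cdot)$, and each term is bounded by applying \eqref{Lwuq000} with $g^\alpha$ or $g^\beta$ replaced by its $x$-derivative.

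For \eqref{Lwuqv010}, applying Leibniz to $D_v$ produces three classes of terms. Class (i): $D_v$ acts on $g^\alpha(v'),g^\beta(v_*'),g^\alpha(v)$ or $g^\beta(v_*)$; these are handled as in the $D_x$ case using $|dv'/dv|+|dv_*'/dv|\leq C$ and \eqref{Lwuq000}. Class (ii): $D_v$ acts on the prefactor $w(v)/\sqrt{\mu^\alpha_M(v)}$ or on $\sqrt{\mu^\alpha_M(v')},\sqrt{\mu^\beta_M(v_*')}$ via the chain rule, contributing an extra factor of order $\langle v\rangle$; distributing this factor between the two $g$ slots produces precisely the additional term $\langle v\rangle^\gamma|\langle v\rangle g^\alpha|_{L^\infty_v}|\langle v\rangle g^\beta|_{L^\infty_v}$ in the conclusion. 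Class (iii): $D_v$ differentiates the kinetic factor $|v-v_*|^\gamma$, producing the stronger singularity $|v-v_*|^{\gamma-1}$; this is the main obstacle because it is not locally integrable in $v_*$ near $v$ once $\gamma\leq -2$. Following the device signaled in Remark~\ref{kejixingsuodaom2}, I perform the change of variable $v+v_*\to v_*$, which removes the $v$-dependence of the singular factor and transfers it against the Gaussian in $v_*$; the resulting integral regains integrability in the full range $-3<\gamma\leq 1$. Assembling contributions from (i)--(iii) yields \eqref{Lwuqv010}.
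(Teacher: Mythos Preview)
Your approach is essentially the same as the paper's: gain/loss split, the identity $\sqrt{\mu_M^\alpha(v')\mu_M^\beta(v'_*)}=\sqrt{\mu_M^\alpha(v)\mu_M^\beta(v_*)}$ from energy conservation, and the weight factorization $w(v)\le Cw(v')w(v'_*)$ for the gain term; the $D_x$ case is identical, and your three-class Leibniz breakdown for $D_v$ (including the change of variable from Remark~\ref{kejixingsuodaom2} to avoid differentiating $|v-v_*|^\gamma$) is exactly what the paper's terse ``the following is derived'' is hiding. Two minor slips: in the loss term the prefactor $w(v)$ cancels against $g^\alpha(v)/w(v)$, so only $1/w(v_*)\le 1$ remains and there is nothing to absorb; and \eqref{1.36} relates the \emph{local} Maxwellian $\mu^\alpha$ to $\mu_M^\alpha$, which is irrelevant here since only $\mu_M$ appears---the weight control for the gain term comes directly from energy conservation and the smallness of $\kappa_0=\kappa_0(m^A,m^B)$, not from any surplus produced by \eqref{1.36}.
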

    \begin{proof}
        Without loss of generality, we take the  weight function $w_{\gamma}(v)=\omega_{\gamma}$ to give the proof, since  it increases the fastest with respect to the velocity variable $v$.  By the conservation of energy $m^{\alpha}|v'|^{2}+m^{\beta}|v_*'|^2 =m^{\alpha}|v|^2+m^{\beta}|v_*|^2$  for $\alpha , \beta \in \{A,B\}$, we obtain
        \begin{gather*}
            w(v)\leq Cw(v_*')w(v') \,,  \, \sqrt{\mu_M^{\alpha}(v')}\sqrt{\mu_M^{\beta} (v_*')} = \sqrt{\mu_M^{\alpha}(v)}\sqrt{\mu_M^{\beta}(v_*)}.
        \end{gather*}
        For the gain term,  one has
        \begin{gather*}
            \begin{split}
                & \left|\frac{w(v)}{\sqrt{\mu_M^{\alpha} (v)}}\iint_{\mathbb{R}^3\times \mathbb{S}^2}|v-v_*|^{\gamma}b^{\alpha \beta}(\theta)\,\sqrt{\mu_{M}^{\alpha}(v')}\frac{g^\alpha (v')}{w(v')}\sqrt{\mu_{M}^{\beta}(v_*')}\frac{g^\beta(v_*')}{w(v_*')}d\sigma dv_* \right|\\
                &\hspace{0.3cm}  \leq C \int_{\mathbb{R}^3}|v-v_*|^{\gamma}\sqrt{\mu_{M}^{\beta}(v_*)} \,| w(v')w(v_*')\frac{g^{\alpha}(v')}{w(v')}\frac{g^\beta(v_*') }{w(v_*')}|\, dv_* \\
                &\hspace{0.3cm} \leq C |  g^{\alpha} |_{L^\infty_v} | w g^\beta |_{L^\infty_v}
                \int_{\mathbb{R}^3}|v-v_*|^{\gamma}\sqrt{\mu_{M}^{\beta}(v_*)}dv_*\\
                 &\hspace{0.3cm}\lesssim  (1+|v|)^{\gamma} | w g_1 |_{L^\infty_v}\,| w g_2 |_{L^\infty_v}.
            \end{split}
        \end{gather*}
        Regarding the loss term, the following holds
        \begin{gather*}
            \begin{split}
                & \left| w(v) g^\alpha(v) \iint_{\mathbb{R}^3\times \mathbb{S}^2}|v-v_*|^{\gamma}b^{\alpha \beta}(\theta)\sqrt{\mu_{M }^{\beta}(v_*)}g^\beta(v_*)d\sigma dv_* \right|\\
                &\leq  |w(v) g^\alpha (v)|\,|g^\beta|_{L^{\infty}_v}  \left|  \int_{\mathbb{R}^3} |v-v_*|^{\gamma}\sqrt{\mu_{M}^{\beta}(v_*)} dv_* \right|
                \lesssim   (1+|v|)^{\gamma} \,| w g^\alpha |_{L^\infty_v} \, | w g^\beta |_{L^\infty_v},
            \end{split}
        \end{gather*}
        which yields
        \begin{gather*}
            \left|\frac{w}{\sqrt{\mu^{\alpha}_M}}\,Q^{\alpha \beta }(\sqrt{\mu^{\alpha}_M}\frac{g^\alpha}{w},\sqrt{\mu^{\beta}_M} \frac{g^\beta}{w})(v)\right|\lesssim \left \langle v \right\rangle^{\gamma} |  g^{\alpha} |_{L^\infty_v} \, |  g^{\beta} |_{L^\infty_v}.
        \end{gather*}
        Since $w$ and $\mu_{M}^{\alpha}$ are the functions of $t,v$, through a direct calculation, we obtain
        \begin{gather*}
            \begin{split}
                D_x\Big[\frac{w}{\sqrt{\mu^{\alpha}_M}}\,Q^{\alpha \beta }(\sqrt{\mu^{\alpha}_M}\frac{g^\alpha}{w},\sqrt{\mu^{\beta}_M} \frac{g^\beta}{w})\Big]
                \lesssim \left \langle v \right\rangle^{\gamma} (|  D_x g^{\alpha} |_{L^\infty_v} \, |  g^{\beta} |_{L^\infty_v}+ |  D_x g^{\alpha} |_{L^\infty_v} \, |  g^{\beta} |_{L^\infty_v}).
            \end{split}
        \end{gather*}
        Moreover, the following is derived:
        \begin{align*}
            &\left|D_v\Big[\frac{w}{\sqrt{\mu^{\alpha}_M}}\,Q^{\alpha \beta }(\sqrt{\mu^{\alpha}_M}\frac{g^\alpha}{w},\sqrt{\mu^{\beta}_M} \frac{g^\beta}{w})\Big]\right| \\
            & \hspace{0.3cm}\lesssim \left \langle v \right\rangle^{\gamma} (|D_v g^{\alpha} |_{L^\infty_v} \, |g^{\beta} |_{L^\infty_v}+ |D_v g^{\alpha} |_{L^\infty_v} \, |  g^{\beta} |_{L^\infty_v})+\left \langle v \right\rangle^{\gamma} | \left \langle v \right\rangle g^{\alpha} |_{L^\infty_v} \, | \left \langle v \right\rangle g^{\beta} |_{L^\infty_v}.
        \end{align*}
        In conclusion, \eqref{Lwuqx100},\eqref{Lwuqv010} hold for all $\alpha,\beta \in \{A, B\}$. This completes the proof of Lemma \ref{DDFXXEST}.
    \end{proof}

    For $\gamma=1$, we derive the equations for $\mathbf{h}=(h^A,h^B)^{T}$ from \eqref{reeqmain}. That is,
    \begin{gather}\label{HDSTLINFEQMAIN}
        \begin{split}
            \partial_t  h^\alpha & + v\cdot\nabla_{x} h^\alpha + \frac{e^{\alpha}}{m^{\alpha}} \nabla_{x}\phi^{\varepsilon}\cdot\nabla_{v}  h^\alpha + \frac{\upsilon^\alpha}{\varepsilon}  h^\alpha= \sum_{j=1}^{6} \mathcal{H}^\alpha_j+ \frac{1}{\varepsilon}(K_{M,\left \langle v \right\rangle^{l}}^{\alpha,\chi}+K_{M,\left \langle v \right\rangle^{l}}^{\alpha,c})\mathbf{h},
        \end{split}
    \end{gather}
    where
        \begin{align}
            &\mathcal{H}^\alpha_1= \varepsilon^{k-1}  \frac{ \left \langle v \right\rangle^{l}}{\sqrt{\mu^{\alpha}_M}}\sum_{\beta=A,B} Q^{ \alpha\beta }(\frac{\sqrt{\mu^{\alpha}_M}}{\left \langle v \right\rangle^{l}}h^\alpha, \frac{\sqrt{\mu^{\beta}_M}}{\left \langle v \right\rangle^{l}} h^\beta),\notag\\
            &\mathcal{H}_2^\alpha=  \frac{\left \langle v \right\rangle^{l}}{\sqrt{\mu^{\alpha}_M}} \sum_{\beta=A,B}\sum_{i=1}^{2k-1} \varepsilon^{i-1}\Big[ Q^{ \alpha \beta}(F_i^{\alpha},\frac{\sqrt{\mu^{\beta}_M}}{\left \langle v \right\rangle^{l}}h^\beta)+Q^{\alpha \beta}(\frac{\sqrt{\mu^{\alpha}_M}}{\left \langle v \right\rangle^{l}}h^\alpha, F_i^{\beta})\Big], \notag\\
            &\mathcal{H}^\alpha_3 = -\frac{e^{\alpha}}{m^{\alpha}}\nabla_{x}\phi^{\varepsilon} \cdot  \frac{\left \langle v \right\rangle^{l}}{\sqrt{\mu^{\alpha}_M}} \nabla_{v}(\frac{\sqrt{\mu^{\alpha}_M}}{\left \langle v \right\rangle^{l}})h^\alpha,\label{HD00HI6}\\
            &\mathcal{H}^\alpha_4 =  -\frac{e^{\alpha}}{m^{\alpha}}\nabla_{x}\phi_R \cdot  \frac{\left \langle v \right\rangle^{l}}{\sqrt{\mu^{\alpha}_M}} \nabla_{v}(\mu^{\alpha}+\sum_{i=1}^{2k-1}\varepsilon^iF_i^{\alpha}),\notag\\
            &\mathcal{H}^\alpha_5 = - \frac{\left \langle v \right\rangle^{l}}{\sqrt{\mu^{\alpha}_M}} \Big[\varepsilon^{k-1}( \partial_t +v\cdot \nabla_{x} ) F^\alpha_{2k-1}+\mathop{\sum}_{i+j \geq 2k-1\atop 0 \leq i,j \leq 2k-1}\varepsilon^{i+j-k}\nabla_x\phi_i\cdot\nabla_vF_j^\alpha\Big],\notag\\
            &\mathcal{H}^\alpha_6 =  \frac{\left \langle v \right\rangle^{l}}{\sqrt{\mu^{\alpha}_M}}  \Big\{\sum_{\beta=A,B} \mathop{\sum}_{i+j \geq 2k\atop 1 \leq i,j \leq 2k-1} \varepsilon^{i+j-k-1}\Big[Q^{\alpha \beta }(F_i^{\alpha},F_j^{\beta})+Q^{\alpha\beta }(F_j^{\alpha},F_i^{\beta})\Big] \Big\}.\notag
        \end{align}
    The characteristic curve of \eqref{HDSTLINFEQMAIN} passing through $(t,x,v)$ is defined as the solution of the equations
    \begin{eqnarray}\label{chracterlinedef}
        \begin{split}
            &\frac{d}{d\tau}X^{\alpha}(\tau;t,x,v)=V^{\alpha}(\tau;t,x,v), \quad X^{\alpha}(t;t,x,v)=x,\\
            &\frac{d}{d\tau}V^{\alpha}(\tau;t,x,v)=\frac{e^{\alpha}}{m^{\alpha}}\phi^{\varepsilon}(\tau,X^{\alpha}(\tau;t,x,v)), \quad V^{\alpha}(t;t,x,v)=v.
        \end{split}
    \end{eqnarray}
    One can verify that $\frac{d}{d\tau}h^\alpha(\tau,X^{\alpha}(\tau;t,x,v),V^{\alpha}(\tau;t,x,v))=\partial_t  h^\alpha + v\cdot\nabla_{x} h^\alpha + \frac{e^{\alpha}}{m^{\alpha}} \nabla_{x}\phi^{\varepsilon}\cdot\nabla_{v}  h^\alpha$.

    \begin{lemma}\label{lemofcharacter} Recall the weighted remainder $\mathbf{R}_{\gamma}$ defined in \eqref{WWDWEIGHTFUC}. For $0\leq T_1 \leq \frac{1}{\varepsilon}$, assuming that
        \begin{eqnarray}\label{SUPPOSELINFGXV}
            \begin{split}
                &\sup_{\tau\in [0,T_1]}\varepsilon^k\Vert \mathbf{R}_{\gamma}(t)\Vert_{W^{1,\infty}_{x} L^{\infty}_{v}}\leq\sqrt{\varepsilon}.
            \end{split}
        \end{eqnarray}
        Consider the vectors $X^{\alpha}(\tau;t,x,v)=(X_1^{\alpha},X_2^{\alpha},X_3^{\alpha})$, $V^{\alpha}(\tau;t,x,v)=(V_1^{\alpha},V_2^{\alpha},V_3^{\alpha})$, with spatial coordinates $x=(x_1,x_2,x_3)$ and velocity coordinates $v=(v_1,v_2,v_3)$. For each $1\leq i,j\leq3$, the assumption \eqref{SUPPOSELINFGXV} implies
        \begin{equation}\label{ASSPDREVYVxXx}
            \begin{split}
                &\sup_{\tau\in [0,T_1]}\Big(\Vert\frac{\partial X_i^{\alpha}}{\partial v_j}\Vert_{L_{x,v}^{\infty}}+\Vert\frac{\partial X_i^{\alpha}}{\partial x_j}\Vert_{L_{x,v}^{\infty}}+\Vert\frac{\partial V_i^{\alpha}}{\partial v_j}\Vert_{L_{x,v}^{\infty}}+\Vert\frac{\partial V_i^{\alpha}}{\partial x_j}\Vert_{L_{x,v}^{\infty}}\Big)\\
                &   \leq  C (1+\varepsilon\mathcal{I}_1) +C \varepsilon^k\Vert R^\alpha_{\gamma}(t)\Vert_{W^{1,\infty}_{x} L^{\infty}_{v}} \leq C.
            \end{split}
        \end{equation}
        Moreover, there exists a sufficiently small $T_0\in [0,T_1]$  such that for all $0\leq\tau \leq t \leq T_0$, $N \geq 1$ and $x_0 \in \mathbb{R}^3$, it holds that
        \begin{eqnarray}\label{shortXVDNN}
            \begin{split}
                &\frac{|t-\tau|^3}{2}\leq\Big|\rm{det}\Big(\frac{\partial X(\tau)}{\partial v}\Big)\Big|\leq2|t-\tau|^3, \qquad |\partial_{v}X(\tau)|\leq2|t-\tau|,\quad \frac{1}{2}\leq\Big|\rm{det}\Big(\frac{\partial V(\tau)}{\partial v}\Big)\Big|\leq2,\\
                &\frac{1}{2}\leq\Big|\rm{det}\Big(\frac{\partial X(\tau)}{\partial x}\Big)\Big|\leq2, \qquad\sup_{|v|\leq N}\Big[\int_{ |x-x_0| \leq CN }(|\partial_{x,v}X(\tau)|^2+|\partial_{v,v}X(\tau)|^2) dx \Big]^{\frac{1}{2}} \leq C_N.
            \end{split}
        \end{eqnarray}
       As the ratio $\frac{e^{\alpha}}{m^{\alpha}}$ remains bounded and the time interval $T_0 \in [0, T_1]$ is sufficiently small, the argument for Lemma \ref{lemofcharacter} follows directly from the proof of Lemma 4.1 in \cite{[ininp]Guo2010CMP}, hence the details are omitted.
    \end{lemma}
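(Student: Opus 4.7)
\noindent\emph{Proof sketch.} My plan has three steps: (i) control $\nabla_x\phi^{\varepsilon}$ and $\nabla_x^2\phi^{\varepsilon}$ uniformly on $[0,T_1]$; (ii) use the variational ODEs for the characteristic flow to obtain \eqref{ASSPDREVYVxXx}; (iii) read off \eqref{shortXVDNN} by comparing with free streaming on a short interval.

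For step (i), I use the Hilbert expansion $\nabla_x\phi^{\varepsilon}=\sum_{i=0}^{2k-1}\varepsilon^i\nabla_x\phi_i+\varepsilon^k\nabla_x\phi_R$. Lemma \ref{CPCO} gives $\|\nabla_x\phi_0\|_{W^{1,\infty}_x}\lesssim(1+t)^{-q}$, Lemma \ref{FI2KM1EST} yields $\|\nabla_x\phi_i\|_{W^{1,\infty}_x}\lesssim(1+t)^{i-1}$ for $1\le i\le 2k-1$, and the Poisson equation $\Delta\phi_R=\int(e^AF_R^A+e^BF_R^B)\,dv$ together with \eqref{SUPPOSELINFGXV} yields $\varepsilon^k\|\nabla_x\phi_R\|_{W^{1,\infty}_x}\lesssim\sqrt{\varepsilon}$ via standard elliptic/Riesz-potential bounds. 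Summing gives
\begin{equation*}
    \|\nabla_x\phi^{\varepsilon}(\tau)\|_{W^{1,\infty}_x}\lesssim 1+\varepsilon\mathcal{I}_1+\sqrt{\varepsilon},\qquad \tau\in[0,T_1],
\end{equation*}
and because $T_1\le 1/\varepsilon$ makes $\varepsilon\mathcal{I}_1=O(1)$, the right-hand side is $O(1)$.

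For step (ii), I differentiate \eqref{chracterlinedef} in $v_j$ to obtain the variational system
\begin{equation*}
    \tfrac{d}{d\tau}\partial_{v_j}X^{\alpha}=\partial_{v_j}V^{\alpha},\qquad \tfrac{d}{d\tau}\partial_{v_j}V^{\alpha}=\tfrac{e^{\alpha}}{m^{\alpha}}\bigl(\nabla_x^2\phi^{\varepsilon}\bigr)(\tau,X^{\alpha}(\tau))\,\partial_{v_j}X^{\alpha},
\end{equation*}
with terminal data $\partial_{v_j}X^{\alpha}(t)=0$, $\partial_{v_j}V^{\alpha}(t)=\mathbf{e}_j$. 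The boundedness of $e^{\alpha}/m^{\alpha}$ combined with the uniform control of $\|\nabla_x^2\phi^{\varepsilon}\|_{L^{\infty}}$ from step (i) and Gronwall's inequality yield $|\partial_{v}X^{\alpha}(\tau)|+|\partial_{v}V^{\alpha}(\tau)|\le C(1+\varepsilon\mathcal{I}_1)+C\varepsilon^k\|R^{\alpha}_{\gamma}\|_{W^{1,\infty}_xL^{\infty}_v}\le C$ on $[0,T_1]$; the $x$-derivative system is handled identically with swapped terminal data. For step (iii), I compare with free streaming $X^{\mathrm{free}}(\tau)=x+(\tau-t)v$, whose $v$-Jacobian has determinant $(\tau-t)^3$ exactly; integrating the variational ODEs gives $\partial_{v}X^{\alpha}(\tau)=(\tau-t)\mathbf{I}_3+O(|t-\tau|^2)$ and $\partial_{v}V^{\alpha}(\tau)=\mathbf{I}_3+O(|t-\tau|)$. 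Choosing $T_0$ small enough to absorb these perturbations into factors of $1/2$ yields the Jacobian bounds in \eqref{shortXVDNN}; the integrated second-derivative estimate on $\{|x-x_0|\le CN\}$ follows from the same variational system together with a change-of-variables count in $x$.

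The main obstacle I anticipate is keeping $\|\nabla_x\phi^{\varepsilon}\|_{W^{1,\infty}_x}$ uniformly bounded on an interval as long as $1/\varepsilon$: the polynomial-in-time factors $\varepsilon^i(1+t)^{i-1}$ from the higher-order Hilbert terms combine into precisely $\varepsilon\mathcal{I}_1$, which saturates at $O(1)$ exactly at $t\sim 1/\varepsilon$. Once this uniform bound is in hand the remainder of the argument is standard and reduces to the scalar analysis of Lemma 4.1 in \cite{[ininp]Guo2010CMP}, the only new feature being the harmless species-dependent coefficient $e^{\alpha}/m^{\alpha}$.
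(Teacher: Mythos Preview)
Your proposal is correct and follows essentially the same route as the paper, which simply defers to Lemma~4.1 of \cite{[ininp]Guo2010CMP} after noting that the only new feature is the bounded species-dependent coefficient $e^{\alpha}/m^{\alpha}$. Your step~(i) is precisely the content of \eqref{psideddddg} (stated just after this lemma in the paper), and your steps~(ii)--(iii) via the variational ODEs and comparison with free streaming are exactly the Guo--Jang argument the paper invokes.
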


    \noindent $\mathbf{The \,\,W_{x,v}^{1,\infty}\,\, estimates \,\,for\,\,the\,\, remainder\,\, term \,\,is \,\,in \,\,the\,\, following\,\, proposition}.$

    \begin{proposition}\label{Maintheo45HSLINF}
        Let $(n^{A},n^{B},\mathbf{u},\theta)  \in C([0,\tau] ; H^{s_0}(\mathbb{{R}}^3)) \cap C^{1}([0,\tau] ; H^{s_0-1}(\mathbb{{R}}^3))$  be the  solution of \eqref{EQF0EPSION}, and $\eqref{IID24}$.  $\mathbf{R}_{\gamma}$ is the weighted remainder term  defined in \eqref{Threedefweight1}.
        Then there exist $\varepsilon_0 >0$  such that for all $0<\varepsilon \leq \varepsilon_0$, it holds that
        \begin{gather}\label{sdpflinf0}
            \sup_{s\in [0,T_L]}(\varepsilon^{\frac{3}{2}}\Vert \mathbf{R}_{\gamma}(s)\Vert_{{L^{\infty}_{x,v} }})\lesssim \varepsilon^{\frac{3}{2}}\Vert \mathbf{R}^{\rm in}_{\gamma}\Vert_{{L^{\infty}_{x,v} }}+\sup_{s\in [0,T_L]}\Vert \mathbf{f}_{R}(s)\Vert_{{L^{2}_{x,v} }}+\varepsilon^{k+\frac{3}{2}},
        \end{gather}
        and
        \begin{gather}\label{sdpflinfd}
            \begin{split}
                &\sup_{s\in [0,T_L]}(\varepsilon^{\frac{3}{2}}\Vert \left \langle v \right\rangle^{2-\gamma}\mathbf{R}_{\gamma}(s)\Vert_{{L^{\infty}_{x,v} }}+\varepsilon^5\Vert\nabla_{x,v} \mathbf{R}_{\gamma}(s)\Vert_{{L^{\infty}_{x,v} }})\\
                \lesssim & \quad  \varepsilon^{\frac{3}{2}}\Vert \left \langle v \right\rangle^{2-\gamma}\mathbf{R}^{\rm in}_{\gamma}\Vert_{{L^{\infty}_{x,v} }}+\sup_{s\in [0,T_L]}\Vert \mathbf{f}_R(s)\Vert_{{L^{2}_{x,v} }}+\varepsilon^5\Vert\nabla_{x,v} \mathbf{R}^{\rm in}_{\gamma}\Vert_{{L^{\infty}_{x,v} }}+\varepsilon^{k+\frac{3}{2}}.
            \end{split}
        \end{gather}
    \end{proposition}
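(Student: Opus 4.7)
The plan is to invoke Duhamel's formula for \eqref{HDSTLINFEQMAIN} along the characteristics $(X^\alpha(\tau;t,x,v), V^\alpha(\tau;t,x,v))$ of \eqref{chracterlinedef}. Writing $\mathcal{F}_0^\alpha(t,s) := \exp\bigl(-\tfrac{1}{\varepsilon}\int_s^t \upsilon^\alpha(V^\alpha(\tau))\,d\tau\bigr)$, we obtain the representation
\begin{equation*}
h^\alpha(t,x,v) = \mathcal{F}_0^\alpha(t,0)\,h^{\alpha,\mathrm{in}} + \int_0^t \mathcal{F}_0^\alpha(t,s)\Bigl[\sum_{j=1}^{6}\mathcal{H}_j^\alpha + \tfrac{1}{\varepsilon}\bigl(K_{M,\langle v\rangle^l}^{\alpha,\chi}+K_{M,\langle v\rangle^l}^{\alpha,c}\bigr)\mathbf{h}\Bigr]\bigl(s,X^\alpha(s),V^\alpha(s)\bigr)\,ds.
\end{equation*}
For $\gamma=1$ one has $\upsilon^\alpha\thicksim\langle v\rangle\gtrsim 1$, so $\mathcal{F}_0^\alpha(t,s)\le e^{-\nu_0(t-s)/\varepsilon}$ and the crucial bound $\int_0^t\mathcal{F}_0^\alpha(t,s)\langle V^\alpha(s)\rangle\,ds\lesssim \varepsilon$ lets us absorb the $\langle v\rangle$ growth produced by the source terms. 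The characteristic estimates \eqref{ASSPDREVYVxXx}--\eqref{shortXVDNN} of Lemma \ref{lemofcharacter} will be closed a posteriori using the bootstrap assumption \eqref{SUPPOSELINFGXV}.

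The six source terms $\mathcal{H}_j^\alpha$ defined in \eqref{HD00HI6} are handled as follows. The quadratic and semi-linear pieces $\mathcal{H}_1^\alpha,\mathcal{H}_2^\alpha$ are controlled by Lemma \ref{DDFXXEST}; the electric-field pieces $\mathcal{H}_3^\alpha,\mathcal{H}_4^\alpha$ are bounded by combining \eqref{PTWDAY} for $\nabla_x\phi_\varepsilon$ with the Poisson-elliptic bound $|\nabla_x\phi_R|_{L^2_x}\lesssim \|\mathbf{f}_R\|_{L^2_{x,v}}$; the deterministic Hilbert pieces $\mathcal{H}_5^\alpha,\mathcal{H}_6^\alpha$ are estimated via Lemma \ref{FI2KM1EST}. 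After multiplication by $\mathcal{F}_0^\alpha$ and time integration, these respectively contribute $\varepsilon^{k}\|\mathbf{h}\|_{L^\infty}^2$, $\varepsilon\mathcal{I}_1\|\mathbf{h}\|_{L^\infty}$, terms controlled by $\|\mathbf{f}_R\|_{L^2}$, and $\varepsilon^{k}$-type residues; after multiplication by $\varepsilon^{3/2}$, these match the right-hand side of \eqref{sdpflinf0}. The singular kernel $\tfrac{1}{\varepsilon}K_{M,\langle v\rangle^l}^{\alpha,\chi}\mathbf{h}$ obeys Lemma \ref{kchiestmainle}, so the same $\langle v\rangle$-cancellation yields a contribution $C\delta^{3+\gamma}\sup_s\|\mathbf{h}(s)\|_{L^\infty_{x,v}}$ that is absorbed on the left by taking $\delta$ small.

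The principal step is the regular kernel $\tfrac{1}{\varepsilon}K_{M,\langle v\rangle^l}^{\alpha,c}\mathbf{h}$, handled by a second Duhamel iteration: substituting the representation for $\mathbf{h}(s,\cdot,\cdot)$ again into $K^c\mathbf{h}$ produces a double time-integral with kernel $k_{M}^{\alpha\beta}\cdot k_{M}^{\beta\beta'}$. Using the kernel decomposition \eqref{lem4311}--\eqref{lem43122}, the integrability \eqref{sec4inL1decayK12} of Lemma \ref{LeMK2ker4444}, and the Jacobian identities $|\det(\partial X/\partial v)|\thicksim(t-s)^3$, $|\det(\partial V/\partial v)|\thicksim 1$ of \eqref{shortXVDNN}, a change of variables $v_*\mapsto y=X^\alpha(s_1;\cdot)$ converts the iterated integral into $\|\mathbf{f}_R\|_{L^2_{x,v}}$ modulo a factor of size $\varepsilon^{-3/2}$, precisely balanced by the $\varepsilon^{3/2}$ prefactor in \eqref{sdpflinf0}. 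The mixed-species "total decay" piece $K_{M,2,w}^{\alpha,c,(2)}$ arising from $m^A\neq m^B$ (see Remark \ref{kernalcompwithsinglek}) decays exponentially in $v$ and contributes only lower-order terms.

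For the derivative estimates \eqref{sdpflinfd}, differentiating \eqref{HDSTLINFEQMAIN} in $x$ and $v$ yields transport equations for $\nabla_x h^\alpha, \nabla_v h^\alpha$ with additional commutator terms $(D_{x,v}K_{M,w}^{\alpha,\chi})\mathbf{h}$ and $(D_{x,v}K_{M,w}^{\alpha,c})\mathbf{h}$, controlled by \eqref{K1111XVE}--\eqref{K222XVE}. The worst bound $|(D_v K_{M,2,w}^{\alpha,c})\mathbf{h}|\lesssim \langle v\rangle^{\gamma-1}|\langle v\rangle\mathbf{h}|_{L^\infty_v}$ forces the velocity weight $\langle v\rangle^{2-\gamma}$ on the left of \eqref{sdpflinfd}; derivatives of the nonlinearities $\mathcal{H}_j^\alpha$ are handled by \eqref{Lwuqx100}--\eqref{Lwuqv010}, and \eqref{ASSPDREVYVxXx} controls $\nabla_{x,v}(X^\alpha,V^\alpha)$. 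The same Duhamel/double-iteration scheme applies; extra powers of $\varepsilon^{-1}$ incurred by derivatives falling on $\nabla_x\phi_R$ and on the characteristics degrade the prefactor from $\varepsilon^{3/2}$ to $\varepsilon^5$. The main obstacle throughout is the $K^c$ iteration: without the sharp product-of-kernels bound from Lemma \ref{LeMK2ker4444} and the careful treatment of the asymmetric piece $K_{M,2,w}^{\alpha,c,(2)}$, the $\varepsilon^{-1}$ standing in front of $K^c$ cannot be traded for an $L^2$ norm of $\mathbf{f}_R$, and the $L^2$-$L^\infty$ interplay fails to close.
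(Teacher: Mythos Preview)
Your plan matches the paper's structure: Duhamel along characteristics, source-term bounds via Lemmas \ref{DDFXXEST} and \ref{FI2KM1EST}, smallness of the singular kernel from Lemma \ref{kchiestmainle}, and a second Duhamel iteration for $K_{M,\langle v\rangle^l}^{\alpha,c}\mathbf{h}$ with the change of variables $v_*\mapsto X_1^{\alpha,\beta}(s_1)$ to reach $\|\mathbf{f}_R\|_{L^2_{x,v}}$. Two corrections are needed.

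First, the electric-field terms $\mathcal{H}_3^\alpha,\mathcal{H}_4^\alpha$ are not routed through $|\nabla_x\phi_R|_{L^2_x}$. Since they enter a pointwise Duhamel bound, one needs $|\nabla_x\phi_R|_{L^\infty_x}$, obtained in the paper from the Schauder-type estimate \eqref{psideddddg}, $|\nabla_x\phi_R|_{C^{1,\alpha_0}_x}\lesssim\|\mathbf{h}\|_{W^{1,\infty}_xL^\infty_v}$, which is finite under the bootstrap \eqref{SUPPOSELINFGXV}. This gives $|\mathcal{H}_3^\alpha|+|\mathcal{H}_4^\alpha|\lesssim\langle v\rangle\|\mathbf{h}\|_{L^\infty_{x,v}}$, and after time integration an $O(\varepsilon)$ prefactor lets it be absorbed on the left; it does \emph{not} contribute an $\|\mathbf{f}_R\|_{L^2}$ term. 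Also, the change-of-variables step requires a preliminary velocity splitting ($|v|\gtrless N$, $|v_*|\gtrless 2N$, $|v_{**}|\gtrless 3N$, $s-s_1\gtrless\lambda\varepsilon$) and a smooth kernel approximation $k_{M,N}^{\alpha\beta}$ before the Jacobian bound in \eqref{shortXVDNN} can be applied on the bounded-velocity set; the unbounded-velocity pieces are handled separately by the $\langle v\rangle^{\gamma-2}$ decay of Lemma \ref{LeMK2ker4444}.

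Second, and this is the genuine gap, your $W^{1,\infty}$ outline misses the integration-by-parts step that actually produces the $\varepsilon^5$ prefactor. After the double iteration, the inner integral in $I_{2x}=\tfrac{1}{\varepsilon}\int\mathcal{F}_0^\alpha K_{M,w}^{\alpha,c}(D_x\mathbf{h})$ carries $(D_x h^{\beta'})(s_1,\zeta,v_{**})$; applying the change of variables directly would yield $\|\nabla_x\mathbf{f}_R\|_{L^2}$, which you do not control. The paper instead integrates by parts in $\zeta$ to transfer $D_x$ onto $k_{M,N}^{\alpha\beta}$, $k_{M,N}^{\beta\beta'}$, $\mathcal{F}_1^\beta$, and the Jacobian $\det(\tfrac{dv_*}{d\zeta})$; the last of these obeys $|D_x\det(\tfrac{dv_*}{d\zeta})|\lesssim(\lambda\varepsilon)^{-4}$ by the second-order characteristic bound in \eqref{shortXVDNN}, and \emph{this} is what forces the weight $\varepsilon^5$ and the constraint $k\ge 6$ (Remark \ref{GYKGET6RE}), not derivatives falling on $\nabla_x\phi_R$. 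For $I_{2v}$ the analogous integration by parts is in $v_{**}$; since the Jacobian is independent of $v_{**}$ the loss there is only $\varepsilon^{-3}$. Without this mechanism the $L^2$--$W^{1,\infty}$ loop does not close.
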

    \begin{proof}[$\mathbf{Proof \,\,for\,\, hard\,\, sphere\,\, case} (\gamma=1)$]
        By Duhamel's
        Principle, the equation \eqref{HDSTLINFEQMAIN} gives the expression of $h^\alpha$ along the characteristics \eqref{chracterlinedef} as
        \begin{gather}\label{linfHADTOT}
            \begin{split}
                h^\alpha(t,x,v) = & \exp \left(-\frac{1}{\varepsilon}\int_{0}^{t}\upsilon^{\alpha}(\tau,  X^{\alpha}(\tau),V^{\alpha}(\tau)) d\tau\right)h^\alpha(0,X^{\alpha}(0;t,x,v),V^{\alpha}(0;t,x,v)) \\
                & + \int_{0}^{t}\exp \left(-\frac{1}{\varepsilon}\int_{s}^{t}\upsilon^{\alpha}(\tau,  X^{\alpha}(\tau),V^{\alpha}(\tau))   d\tau \right) \sum_{i=1}^{6}\mathcal{H}^\alpha_j(s, X^{\alpha}(s),V^{\alpha}(s))ds\\
                &+ \frac{1}{\varepsilon} \int_{0}^{t}\exp \left(-\frac{1}{\varepsilon}\int_{s}^{t}\upsilon^{\alpha}(\tau,  X^{\alpha}(\tau),V^{\alpha}(\tau))  d\tau \right)  K_{M,\left \langle v \right\rangle^{l}}^{\alpha,\chi} \mathbf{h} (s, X^{\alpha}(s),V^{\alpha}(s))ds\\
                &+ \frac{1}{\varepsilon} \int_{0}^{t}\exp \left(-\frac{1}{\varepsilon}\int_{s}^{t}\upsilon^{\alpha}(\tau,  X^{\alpha}(\tau),V^{\alpha}(\tau))   d\tau \right)  K_{M,\left \langle v \right\rangle^{l}}^{\alpha,c} \mathbf{h} (s, X^{\alpha}(s),V^{\alpha}(s))ds.
            \end{split}
        \end{gather}
       Introduce the notation:
        \begin{gather*}\label{TZXSCTUREF0}
            \mathcal{F}^\alpha_0(t,s) =: \exp \left(-\frac{1}{\varepsilon}\int_{s}^{t}\upsilon^{\alpha}(\tau,  X^{\alpha}(\tau),V^{\alpha}(\tau)) d\tau\right).
        \end{gather*}
        Since $\frac{1}{C}\left \langle v \right\rangle^{\gamma}  \leq\upsilon^{\alpha}(t,x,v)\leq C \left \langle v \right\rangle^{\gamma} $, there exists a small constant $\nu_o$ such that
        \begin{equation}\label{nudetjH}
            \inf_{t,x,v} \{\upsilon^{\alpha}(t,x,v)\}\geq 2 \nu_o >0.
        \end{equation}
       Moreover, when $|v|\leq 1$, for sufficiently small $T_0>0$, the relation \eqref{ASSPDREVYVxXx} implies
        \begin{equation*}\label{les1estTZX}
            \sup_{s\in [0,T_0]}|V^{\alpha}(s)|\leq C.
        \end{equation*}
        Then, for $0\leq s\leq t \leq T_0$, the following estimate holds
        \begin{gather*}
            \left|\int_{0}^{t} \mathcal{F}_0^{\alpha}(t,s) \left \langle V^{\alpha}(s) \right\rangle e^{-\frac{\nu_o s}{\varepsilon}} ds \right|\lesssim e^{-\frac{\nu_o t}{\varepsilon}} \int_{0}^{t}\exp \left(-\frac{ \left \langle v \right\rangle (t-s)}{ C\varepsilon}\right)\left \langle v \right\rangle ds \lesssim \varepsilon e^{-\frac{\nu_o t}{\varepsilon}}.
        \end{gather*}

        When $|v|>1$ and $T_0>0$ is sufficiently small, the characteristic velocity satisfies the following uniform bounds
        \begin{equation*}\label{ges1estTZX}
            \frac{|v|}{2}   \leq    |V^{\alpha}(s)|\leq 2 |v|.
        \end{equation*}
       These velocity bounds lead to the refined estimate
        \begin{gather}\label{YDZDDJG}
            \left|\int_{0}^{t} \mathcal{F}_0^{\alpha}(t,s) \left \langle V^{\alpha}(s) \right\rangle e^{-\frac{\nu_o s}{\varepsilon}} ds \right|\lesssim e^{-\frac{\nu_o t}{\varepsilon}} \int_{0}^{t}\exp \left(-\frac{ \left \langle v \right\rangle (t-s)}{ C\varepsilon}\right)\left \langle v \right\rangle ds \lesssim \varepsilon e^{-\frac{\nu_o t}{\varepsilon}}.
        \end{gather}
        \subsection{$L^{\infty}$estimates}
        In the following, we give the estimates in right hand side of \eqref{linfHADTOT} line to line.
        First, the initial data contribution yields the exponential decay estimate
        \begin{gather*}
            \left|\mathcal{F}_0^{\alpha}(t,0) h^\alpha(0,\,X^{\alpha}(0),\,V^{\alpha}(0))\right| \lesssim \varepsilon  e^{-\frac{\nu_o t}{\varepsilon}} \|\mathbf{h}^{in}\|_{L^\infty_{x,v}}.
        \end{gather*}
        Then, from Lemma \ref{DDFXXEST} ($\gamma=1$), we have
        \begin{gather}\label{HHHD111}
            \begin{split}
                |\mathcal{H}_1^\alpha| = & \varepsilon^{k-1} \left| \sum_{\beta=A,B}\frac{\left \langle v \right\rangle^{l}}{\sqrt{\mu^{\alpha}_M}} Q^{\alpha\beta }(\sqrt{\mu^{\alpha}_M}\frac{h^\alpha}{\left \langle v \right\rangle^{l}}, \sqrt{\mu^{\beta}_M} \frac{h^\beta}{\left \langle v \right\rangle^{l}})\right|
                \lesssim  \varepsilon^{k-1} \left \langle v \right\rangle \, \Vert \mathbf{h} \Vert_{L^{\infty}_{x,v}}^2,
            \end{split}
        \end{gather}
        which together with \eqref{YDZDDJG} leads to
        \begin{gather*}
            \begin{split}
                \left|\int_{0}^{t}\mathcal{F}_0^{\alpha}(t,s)  \mathcal{H}^\alpha_1(s, X^{\alpha}(s),V^{\alpha}(s))ds \right|
                \lesssim & \varepsilon^{k-1}  \int_{0}^{t}\mathcal{F}_0^{\alpha}(t,s)  e^{-\frac{\nu_o s}{\varepsilon}}\left \langle v \right\rangle  ds [\sup_{0\leq s \leq t}(e^{\frac{\nu_o s}{2\varepsilon}}\Vert \mathbf{h} (s)\Vert_{L^{\infty}_{x,v}})]^2
                \\
                \lesssim & \varepsilon^k  e^{-\frac{\nu_o t}{\varepsilon}} \sup_{0\leq s \leq t}(e^{\frac{\nu_o s}{2\varepsilon}}\Vert \mathbf{h} (s)\Vert_{L^{\infty}_{x,v}})^2.
            \end{split}
        \end{gather*}
       Next, the construction of $\mathbf{F}_i$ in Lemma \ref{FI2KM1EST} gives
        \begin{gather*}
            \begin{split}
                |\mathcal{H}_2^{\alpha}|
                \lesssim & \, \left \langle v \right\rangle \sum_{\beta=A,B} \sum_{i=1}^{2k-1}\varepsilon^{i-1} \Big(\Big\Vert\frac{\left \langle v \right\rangle^{l}}{\sqrt{\mu^{\beta}_M}} F_i^{\beta} \Big\Vert_{L^\infty_{x,v}}\, \Big\Vert h^\alpha \Big\Vert_{L^\infty_{x,v}}
                +\Big\Vert h^\beta \Big\Vert_{L^\infty_{x,v}} \, \Big\Vert\frac{\left \langle v \right\rangle^{l}}{\sqrt{\mu^{\alpha}_M}} F_i^{\alpha} \Big\Vert_{L^\infty_{x,v}}\Big)
                \\
                \lesssim &  \, \left \langle v \right\rangle \mathcal{I}_1 \,\Vert\mathbf{h} \Vert_{L^\infty_{x,v}}.
            \end{split}
        \end{gather*}
        For this reason, we see
        \begin{gather*}\label{4.19}
            \begin{split}
                \left|\int_{0}^{t}\mathcal{F}_0^{\alpha}(t,s) \mathcal{H}^\alpha_2(s,X^{\alpha}(s),V^{\alpha}(s))ds \right|
                \lesssim & \, \mathcal{I}_1 \int_{0}^{t}\mathcal{F}_0^{\alpha}(t,s) \left \langle v \right\rangle e^{-\frac{\nu_o s}{2\varepsilon}} (e^{\frac{\nu_o s}{2\varepsilon}}\Vert \mathbf{h} (s)\Vert_{L^{\infty}_{x,v}})ds\\
                \lesssim &  \mathcal{I}_1 \int_{0}^{t}\mathcal{F}_0^{\alpha}(t,s) \left \langle v \right\rangle e^{-\frac{\nu_o s}{2\varepsilon}} ds \sup_{0\leq s \leq t}(e^{\frac{\nu_o s}{2\varepsilon}}\Vert \mathbf{h} (s)\Vert_{L^{\infty}_{x,v}})
                \\
                \lesssim & \varepsilon \mathcal{I}_1 e^{-\frac{\nu_o t}{2\varepsilon}} \sup_{0\leq s \leq t}(e^{\frac{\nu_o s}{2\varepsilon}}\Vert \mathbf{h} (s)\Vert_{L^{\infty}_{x,v}}).
            \end{split}
        \end{gather*}
      Moreover, the estimates from $\eqref{ORDERZONG}_1$ and $\eqref{ORDERZONG}_3$ combine to produce
        \begin{eqnarray}\label{phiimpus}
            \begin{split}
                \Delta \phi_0 =&\sum_{\alpha=A,B} \int_{\mathbb{R}^3} e^{\alpha} F^{\alpha}_0 dv-\bar{n}_e, \quad
                \Delta \phi_i =\sum_{\alpha=A,B}  \int_{\mathbb{R}^3} e^{\alpha} F^{\alpha}_i dv, \qquad i\geq 1,\\
                \Delta \phi_R =&\sum_{\alpha=A,B} \int_{\mathbb{R}^3} e^{\alpha}\frac{\sqrt{\mu^{\alpha}_M}}{\left \langle v \right\rangle^{l}} h^{\alpha} \,dv, \\
                \Delta \phi^\varepsilon=&\sum_{\alpha=A,B}\Big[\sum_{i=0}^{2k-1}\varepsilon^{i}\int_{\mathbb{R}^3} e^{\alpha}F^{\alpha}_i dv +\varepsilon^k\int_{\mathbb{R}^3} e^{\alpha} \frac{\sqrt{\mu^{\alpha}_M}}{\left \langle v \right\rangle^{l}}h^{\alpha} \, dv\Big]-\bar{n}_e.
            \end{split}
        \end{eqnarray}
        Then for any $0<\alpha_{0}<1$, it holds that
        \begin{equation}\label{psideddddg}
            | \nabla_{x}\phi_R|_{C^{1,\alpha_{0}}_{x} } \lesssim \Vert\mathbf{h} \Vert_{W^{1,\infty}_{x} L^{\infty}_{v}}, \qquad | \nabla_{x}\phi^{\varepsilon}|_{C^{1,\alpha_{0}}_{x} }    \lesssim 1+\varepsilon \mathcal{I}_1+\varepsilon^k\Vert\mathbf{h} \Vert_{W^{1,\infty}_{x} L^{\infty}_{v}}\leq C.
        \end{equation}
       Together with
       $
       \frac{\langle v \rangle^{l}}{\sqrt{\mu^{\alpha}_M}} \nabla_v ( \frac{\sqrt{\mu^{\alpha}_M}}{\langle v \rangle^{l}} ) \lesssim \langle v \rangle
       $
       and
       $
       \frac{\langle v \rangle^{l}}{\sqrt{\mu^{\alpha}_M}} \nabla_v ( \mu^{\alpha} + \sum_{i=1}^{2k-1} \varepsilon^i F_i^{\alpha} ) \lesssim (1 + \varepsilon \mathcal{I}_1) \langle v \rangle,
       $
       we have
        \begin{align}
                |\mathcal{H}^\alpha_3| =& |\frac{e^{\alpha}}{m^{\alpha}}\nabla_{x}\phi^{\varepsilon} \cdot  \frac{\left \langle v \right\rangle^{l}}{\sqrt{\mu^{\alpha}_M}} \nabla_{v}(\frac{\sqrt{\mu^{\alpha}_M}}{\left \langle v \right\rangle^{l}})h^\alpha|
                \lesssim \left \langle v \right\rangle \Vert\mathbf{h} \Vert_{L^{\infty}_{x,v} },\label{h3h3h3ddq}\\
                |\mathcal{H}^\alpha_4| =& |\frac{e^{\alpha}}{m^{\alpha}}\nabla_{x}\phi_R \cdot  \frac{\left \langle v \right\rangle^{l}}{\sqrt{\mu^{\alpha}_M}} \nabla_{v}(\mu^{\alpha}+\sum_{i=1}^{2k-1}\varepsilon^iF_i^{\alpha})|
                \lesssim (1+\varepsilon\mathcal{I}_1)\left \langle v \right\rangle \Vert\mathbf{h} \Vert_{L^{\infty}_{x,v}}.\notag
        \end{align}
      Consequently, one obtains
        \begin{align*}
            &\left|\int_{0}^{t}\mathcal{F}_0^{\alpha}(t,s) (\mathcal{H}^\alpha_3+\mathcal{H}^\alpha_4)(s,X^{\alpha}(s),V^{\alpha}(s))ds \right|\\
                &\hspace{0.4cm}\lesssim (1+\varepsilon\mathcal{I}_1)\int_{0}^{t}\mathcal{F}_0^{\alpha}(t,s)\left \langle v \right\rangle e^{-\frac{\nu_os}{2\varepsilon}} (e^{\frac{\nu_os}{2\varepsilon}}\Vert \mathbf{h} (s)\Vert_{L^{\infty}_{x,v}})ds\\
                &\hspace{0.4cm}\lesssim \varepsilon(1+\varepsilon\mathcal{I}_1) e^{-\frac{\nu_ot}{2\varepsilon}} \sup_{0\leq s \leq t}(e^{\frac{\nu_os}{2\varepsilon}}\Vert \mathbf{h} (s)\Vert_{L^{\infty}_{x,v}}).
        \end{align*}
        Finally, the estimate of $\mathbf{F}_i$ in Lemma \ref{FI2KM1EST} further implies
        \begin{gather*}
            \begin{split}
                |\mathcal{H}^\alpha_5|
                \lesssim &  \left \langle v \right\rangle \Big[\varepsilon^{k-1} \Big\|\frac{\left \langle v \right\rangle^{l-1}( \partial_t +v\cdot \nabla_{x} ) F^\alpha_{2k-1} }{ \sqrt{\mu^{\alpha}_M}}\Big\|_{L^\infty_{x,v}}\\
                &\hspace{3.5cm}+  \mathop{\sum}_{i+j \geq 2k-1\atop 0 \leq i,j \leq 2k-1}\varepsilon^{i+j-k} | \nabla_{x}\phi_i|_{L^{\infty}_{x}} \Big\Vert \frac{\nabla_vF_j^\alpha}{\left \langle v \right\rangle} \Big\Vert_{L^{\infty}_{x,v} } \Big] \\
                \lesssim &  \varepsilon^{k-1} \left \langle v \right\rangle \mathcal{I}_2,
            \end{split}
        \end{gather*}
        and
        \begin{gather*}
            \begin{split}
                |\mathcal{H}^\alpha_6| = & \Big| \sum_{\beta=A,B} \mathop{\sum}_{i+j \geq 2k\atop 1 \leq i,j \leq 2k-1} \varepsilon^{i+j-k-1}\frac{\left \langle v \right\rangle^{l}}{\sqrt{\mu^{\alpha}_M}} \Big[Q^{\alpha \beta }(F_i^{\alpha},F_j^{\beta})+Q^{\alpha\beta }(F_j^{\alpha},F_i^{\beta})\Big] \Big|\\
                \lesssim &  \mathop{\sum}_{i+j \geq 2k-1\atop 0 \leq i,j \leq 2k-1}\varepsilon^{i+j-k} \left \langle v \right\rangle \Big\Vert\frac{\left \langle v \right\rangle^{l}}{\sqrt{\mu^{\alpha}_M}} \mathbf{F}_i\Big\|_{L^\infty_{x,v}}  \, \Big\|\frac{\left \langle v \right\rangle^{l}}{\sqrt{\mu^{\beta}_M}} \mathbf{F}_j\Big\Vert_{L^\infty_{x,v}}\\
                \lesssim & \varepsilon^{k-1} \left \langle v \right\rangle \mathcal{I}_2.
            \end{split}
        \end{gather*}
        Combining these estimates yields the integral bound
        \begin{gather}\label{HHHDDB}
            \begin{split}
                \left|\int_{0}^{t}\mathcal{F}_0^{\alpha}(t,s)(\mathcal{H}^\alpha_5+\mathcal{H}^\alpha_6)(s,X^{\alpha}(s),V^{\alpha}(s))ds \right|
                \lesssim \varepsilon^{k-1}\mathcal{I}_2 \int_{0}^{t}\mathcal{F}_0^{\alpha}(t,s)\left \langle v \right\rangle ds
                \lesssim \varepsilon^k\mathcal{I}_2.
            \end{split}
        \end{gather}
        By Lemma \ref{kchiestmainle}, we have
        \begin{gather*}\label{4.21}
            \begin{split}
                \frac{1}{\varepsilon} \left|\int_{0}^{t}\mathcal{F}_0^{\alpha}(t,s) K_{M,\left \langle v \right\rangle^{l}}^{\alpha,\chi}  \mathbf{h}(s,X^{\alpha}(s),V^{\alpha}(s))ds \right|
                \lesssim {\delta}^{3+\gamma}e^{-\frac{\nu_ot}{2\varepsilon}} \sup_{0\leq s \leq t}(e^{\frac{\nu_os}{2\varepsilon}}\Vert \mathbf{h} (s)\Vert_{L^{\infty}_{x,v}}).
            \end{split}
        \end{gather*}

        The analysis focuses primarily on the final term in \eqref{linfHADTOT}'s right-hand side. Lemma \ref{LeMK2ker4444} establishes that $K_{M,2,w}^{\alpha,c,(2)}$ demonstrates superior decay properties compared to $K_{M,2,w}^{\alpha,c,(1)}$. Consequently, the decay estimates require consideration only for $K_{M,2,w}^{\alpha,c,(1)}$, that is
        \begin{eqnarray}\label{HDKCLITER1}
            \begin{split}
                &K_{M,\left \langle v \right\rangle^{l}}^{\alpha,c} \mathbf{h}(s,X^{\alpha}(s),V^{\alpha}(s))\\
                =&\sum_{\beta=A,B}\int_{\mathbb{R}^3} [k_{M,1}^{\alpha \beta}+k_{M,2}^{\alpha \beta(1)}+k_{M,2}^{\alpha \beta(2)}] (s,X^{\alpha}(s),V^{\alpha}(s),v_*)\frac{\left \langle V^{\alpha}(s) \right\rangle^{l}}{\left \langle v_* \right\rangle^{l}}h^{\beta}(s,X^{\alpha}(s),v_*) d v_{*}.
            \end{split}
        \end{eqnarray}
      For brevity, we denote:
        \begin{equation}\label{kkd1212He}
            k_{M,w}^{\alpha\beta}:=[k_{M,1}^{\alpha \beta}+k_{M,2}^{\alpha \beta(1)}+k_{M,2}^{\alpha \beta(2)}]\frac{w(v)}{w(v_*)},
        \end{equation}
        where $w(v)=\left \langle v \right\rangle^{l}$ for $\gamma=1$. The ``double characteristic line'' is defined as
        \begin{equation}\label{doublelineqianT}
            \Big[X^{\alpha,\beta}_1(s_1),V^{\alpha,\beta}_1(s_1)\Big]=\Big[X^{\beta}(s_1;s,X^{\alpha}(s;t,x,v),v_*),V^{\beta}(s_1;s,X^{\alpha}(s;t,x,v),v_*)\Big],
        \end{equation}
        and
        \begin{gather*}
            \begin{split}
                \mathcal{F}^\beta_1 (s,s_1)  =\exp \left(-\frac{1}{\varepsilon}\int_{s_1}^s\upsilon^{\beta}(\tau, X^{\alpha,\beta}_1(\tau),V^{\alpha,\beta}_1(\tau))   d\tau \right).
            \end{split}
        \end{gather*}
         $h^{\beta}(s,X^{\alpha}(s),v_*)$ could be formulated as
        \begin{gather}\label{HDliterative1111}
            \begin{split}
                h^\beta(s,X^{\alpha}(s),v_*)
                = & \mathcal{F}^\beta_1(s, 0) h^\beta(0,X^{\alpha,\beta}_1(0),V^{\alpha,\beta}_1(0)) \\
                &+ \sum_{j=1}^6 \int_{0}^s \mathcal{F}_1^\beta(s,s_1) \mathcal{H}^\beta_j(s_1,X^{\alpha,\beta}_1(s_1),V^{\alpha,\beta}_1(s_1))ds_1\\
                & + \frac{1}{\varepsilon} \int_{0}^s \mathcal{F}_1^\beta(s,s_1)  K_{M,\left \langle v \right\rangle^{l}}^{\alpha,\chi}  \mathbf{h} (s_1,X^{\alpha,\beta}_1(s_1),V^{\alpha,\beta}_1(s_1))ds_1\\
                & + \frac{1}{\varepsilon} \int_{0}^s \mathcal{F}_1^\beta(s,s_1)  K_{M,\left \langle v \right\rangle^{l}}^{\alpha,c} \mathbf{h} (s_1,X^{\alpha,\beta}_1(s_1),V^{\alpha,\beta}_1(s_1))ds_1.
            \end{split}
        \end{gather}
      Then, substituting \eqref{HDliterative1111} into \eqref{HDKCLITER1} yields
        \begin{equation}\label{HD00Z222ZZ00}
            \frac{1}{\varepsilon} \left|\int_{0}^{t}\mathcal{F}_0^{\alpha}(t,s) K_{M,\left \langle v \right\rangle^{l}}^{\alpha,c}  \mathbf{h}(s,X^{\alpha}(s),V^{\alpha}(s))ds \right| \leq \sum_{i=0}^{2} \mathcal{J}_i(t) + \mathcal{O}^\alpha(t),
        \end{equation}
        where
        \begin{eqnarray*}
            \begin{split}
                &\mathcal{J}_0(t)=\frac{1}{\varepsilon} \int_{0}^{t} \mathcal{F}_0^\alpha(t,s) \sum_{\beta=A,B}\int_{\mathbb{R}^3}  k_{M,w}^{\alpha\beta}(s,X^{\alpha}(s),V^{\alpha}(s),v_*) \\  &\hspace{5cm}\times\mathcal{F}_1^\beta(s,0) h^\beta(0,X^{\alpha,\beta}_1(0),V^{\alpha,\beta}_1(0))  dv_*ds \\
                &\hspace{0.8cm}\lesssim \frac{t}{\varepsilon}e^{-\frac{\nu_ot}{\varepsilon}} \|\mathbf{h}^{in}\|_{L^{\infty}_{x,v}},
            \end{split}
        \end{eqnarray*}
        \begin{eqnarray*}
            \begin{split}
                &\mathcal{J}_1(t)=\frac{1}{\varepsilon} \int_{0}^{t} \mathcal{F}_0^\alpha(t,s) \sum_{\beta=A,B}\int_{\mathbb{R}^3} k_{M,w}^{\alpha\beta}(s,X^{\alpha}(s), V^{\alpha}(s), v_*)\\
                &\hspace{4.5cm}\times\sum_{j=1}^6 \int_{0}^s \mathcal{F}_1^\beta(s,s_1) \mathcal{H}^\beta_j(s_1,X_1^{\alpha,\beta}(s_1),V_1^{\alpha,\beta}(s_1))ds_1  dv_*ds \\
                & \hspace{0.8cm}\lesssim  \varepsilon^k e^{-\frac{\nu_ot}{\varepsilon}}(\sup_{0\leq s \leq t}e^{\frac{\nu_os}{2\varepsilon}}\Vert \mathbf{h}(s) \Vert_{L^{\infty}_{x,v}})^2+({1+\mathcal{I}_1}) \varepsilon e^{-\frac{\nu_ot}{2\varepsilon}}\sup_{0\leq s \leq t}(e^{\frac{\nu_os}{2\varepsilon}}\Vert  \mathbf{h}(s) \Vert_{L^{\infty}_{x,v}})+\mathcal{I}_2 \varepsilon^k,
            \end{split}
        \end{eqnarray*}
        \begin{eqnarray*}
            \begin{split}
                &\mathcal{J}_2(t)=\frac{1}{\varepsilon^2} \int_{0}^{t} \mathcal{F}_0^\alpha(t,s) \sum_{\beta=A,B}\int_{\mathbb{R}^3} k_{M,w}^{\alpha\beta}(s,X^{\alpha}(s), V^{\alpha}(s), v_*)\\
                &\hspace{3.5cm} \times \int_{0}^s \mathcal{F}_1^\beta(s,s_1) K_{M,\left \langle v \right\rangle^{l}}^{\alpha,\chi}  \mathbf{h} (s_1,X^{\alpha,\beta}_1(s_1),V^{\alpha,\beta}_1(s_1))ds_1 dv_* ds \\
                & \hspace{0.5cm} \lesssim  \delta^{3+\gamma}e^{-\frac{\nu_ot}{2\varepsilon}}\sup_{0\leq s \leq t}( e^{\frac{\nu_os}{2\varepsilon}}\Vert \mathbf{h}(s) \Vert_{L^{\infty}_{x,v}}).
            \end{split}
        \end{eqnarray*}
     The last term in \eqref{HD00Z222ZZ00} can be expressed as
        \begin{gather*}
            \begin{split}
                \mathcal{O}^\alpha(t) = & \frac{1}{\varepsilon^2} \int_{0}^{t} \mathcal{F}^{\alpha}_0(t,s) \sum_{\beta=A,B}\int_{\mathbb{R}^3} k_{M,\left \langle v \right\rangle^{l}}^{\alpha\beta}(s,X^{\alpha}(s),V^{\alpha}(s),v_*)  \int_{0}^s \mathcal{F}_1^\beta(s,s_1) \\
                &\hspace{4cm} \times (K_{M,\left \langle v \right\rangle^{l}}^{\beta,c} \mathbf{h}) (s_1,X^{\alpha,\beta}_1(s_1),V^{\alpha,\beta}_1(s_1))ds_1 dv_* ds\\
                \leq & \frac{1}{\varepsilon^2} \int_{0}^{t} \mathcal{F}^{\alpha}_0(t,s) \sum_{\beta=A,B}\int_{\mathbb{R}^3} k_{M,\left \langle v \right\rangle^{l}}^{\alpha\beta}(s,X^{\alpha}(s),V^{\alpha}(s),v_*) \sum_{\beta'=A,B}\int_{0}^s \mathcal{F}_1^\beta(s,s_1) \\
                &\hspace{0.5cm} \times \int_{\mathbb{R}^3}  k_{M,\left \langle v \right\rangle^{l}}^{\beta\beta^{'}}(s_1,X^{\alpha,\beta}_1(s_1),V^{\alpha,\beta}_1(s_1),v_{**}) |h^{\beta'}| (s_1,X^{\alpha,\beta}_1(s_1),v_{**}) dv_{**}ds_1 dv_* ds.
            \end{split}
        \end{gather*}
        From \eqref{chracterlinedef} and \eqref{ASSPDREVYVxXx}, for sufficiently large $N>0$ and  small $T_0$, the following holds:
        \begin{equation}\label{TZXVvCHA}
            \sup_{0\leq s,t \leq T_0}|V^{\alpha}(s)-v|\leq\frac{N}{2}, \quad \alpha \in \{A,B\}.
        \end{equation}
        By Lemma \ref{LeMK2ker4444}\,($\gamma=1$), it yields that
        \begin{equation*}
            \begin{split}
                &\int_{\mathbb{R}^3} \int_{\mathbb{R}^3} |k_{M,\left \langle v \right\rangle^{l}}^{\alpha\beta}(s,X^{\alpha}(s),V^{\alpha}(s),v_*)k_{M,\left \langle v \right\rangle^{l}}^{\beta\beta^{'}}(s_1,X^{\alpha,\beta}_1(s_1),V^{\alpha,\beta}_1(s_1),v_{**})|dv_{**}\, dv_*\\
                &\hspace{2cm}\leq \frac{C\left \langle V(s) \right\rangle}{1+|V(s)|}  \times \left \langle V_1(s_1) \right\rangle .
            \end{split}
        \end{equation*}

        We estimate  $\mathcal{O}^{\alpha}$ in the following cases.

        \noindent{\bf Case 1}. For $|v| \geq N $.  From \eqref{TZXVvCHA} we get $|V^{\alpha}(s)|\geq \frac{N}{2}$, which leads to
        \begin{equation*}
            \begin{split}
                & \sum_{\beta'=A,B} \int_{\mathbb{R}^3} \int_{\mathbb{R}^3} |k_{M,\left \langle v \right\rangle^{l}}^{\alpha\beta}(s,X^{\alpha}(s),V^{\alpha}(s),v_*)k_{M,\left \langle v \right\rangle^{l}}^{\beta\beta^{'}}(s_1,X^{\alpha,\beta}_1(s_1),V^{\alpha,\beta}_1(s_1),v_{**})|dv_{**} \, dv_* \\
                & \hspace{2cm}\leq  \frac{C}{N} \times \left \langle V^{\alpha}(s) \right\rangle \times \left \langle V^{\alpha,\beta}_1(s_1) \right\rangle .
            \end{split}
        \end{equation*}
        Thus, one obtains the following bound
        \begin{align}
            \mathcal{O}^\alpha(t)\mathbf{1}_{|v|\geq N}
            \leq&   \frac{C}{\varepsilon^2N} \int_{0}^{t} \mathcal{F}^{\alpha}_0(t,s) \left \langle V^{\alpha}(s) \right\rangle \sum_{\beta=A,B}\int_{0}^{s} \mathcal{F}^{\beta}_1(s,s_1)\left \langle V^{\alpha,\beta}_1(s_1) \right\rangle e^{-\frac{\nu_os_1}{2\varepsilon}}ds_1\notag\\
            &\hspace{3.5cm}\times \sup_{0\leq s_1 \leq t}(e^{\frac{\nu_os_1}{2\varepsilon}}\|\mathbf{h}(s_1)\|_{L^{\infty}_{x,v}}) \, \,ds \label{4.25}\\
            \leq & \frac{C}{N }\, e^{-\frac{\nu_ot}{2\varepsilon}}\sup_{0\leq s \leq t}(e^{\frac{\nu_os}{2\varepsilon}}\Vert  \mathbf{h}(s) \Vert_{L^{\infty}_{x,v}})\notag.
        \end{align}

        \noindent{\bf Case 2}.  For either $|v| \leq N,  |v_*| \geq 2N $ or $|v_*| \leq 2 N,  |v_{**}| \geq 3 N $. We note that
        \begin{gather*}\label{Case2FENHUAV}
            \begin{split}
                |V^{\alpha}(s)-v_*|\geq|v_*-v|-|V^{\alpha}(s)-v|\geq |v_*|-|v|-|V^{\alpha}(s)-v|, \\
                |V_1^{\alpha,\beta}(s_1)-v_{**}|\geq|v_{**}-v_*|-|V^{\alpha,\beta}_1(s_1)-v_*|\geq |v_{**}|-|v_*|-|V^{\alpha,\beta}_1(s_1)-v_*|.
            \end{split}
        \end{gather*}
        For sufficiently small $0 < \eta \leq c$, either $|V^{\alpha}(s)-v_*| \geq \frac{N}{2}$ or $|V^{\alpha,\beta}_1(s_1)-v_{**}| \geq \frac{N}{2}$ holds. Consequently, it follows that
        \begin{gather*}\label{CASE2jizhishu}
            \begin{split}
                k_{M,\left \langle v \right\rangle^{l}}^{\alpha\beta}(V^{\alpha}(s),v_*)  &\leq  e^{-\frac{\eta}{8} N^2} k_{M,\left \langle v \right\rangle^{l}}^{\alpha\beta}(V^{\alpha}(s),v_*) e^{\frac{\eta}{8} |V^{\alpha}(s)-v_*|^2}  ,\\
                k_{M,\left \langle v \right\rangle^{l}}^{\beta\beta^{'}}(V^{\alpha,\beta}_1(s_1),v_{**})  &\leq  e^{-\frac{\eta}{8} N^2} k_{M,\left \langle v \right\rangle^{l}}^{\beta\beta^{'}}(V^{\alpha,\beta}_1(s_1),v_{**}) e^{\frac{\eta}{8} |V^{\alpha,\beta}_1(s_1)-v_{**}|^2}.
            \end{split}
        \end{gather*}
        Further, from Lemma \ref{LeMK2ker4444},  since $0\leq \eta \leq c$ is sufficiently small, we obtain
        \begin{eqnarray*}
            \begin{split}
                \int_{\mathbb{R}^3} k_{M,\left \langle v \right\rangle^{l}}^{\alpha\beta}(V^{\alpha}(s),v_*) e^{\frac{\eta}{8} |V^{\alpha}(s)-v_*|^2}d v_* &\leq C \left \langle V^{\alpha}(s) \right\rangle, \\
                \int_{\mathbb{R}^3} k_{M,\left \langle v \right\rangle^{l}}^{\beta\beta^{'}}(V^{\alpha,\beta}_1(s_1),v_{**}) e^{\frac{\eta}{8} |V^{\alpha,\beta}_1(s_1)-v_{**}|^2}d v_{**} &\leq C \left \langle V^{\alpha,\beta}_1(s_1) \right\rangle .
            \end{split}
        \end{eqnarray*}
        This leads to the following bound
        \begin{gather*}
            \begin{split}
                & \frac{1}{\varepsilon^2} \int_{0}^{t} \mathcal{F}^{\alpha}_0(t,s) \sum_{\beta=A,B}\int_{|v| \leq N,  |v_*| \geq 2N } k_{M,\left \langle v \right\rangle^{l}}^{\alpha\beta}(s,X^{\alpha}(s),V^{\alpha}(s),v_*) \sum_{\beta'=A,B}\int_{0}^s \mathcal{F}_1^\beta(s,s_1) \\
                &\hspace{0.3cm} \times \int_{|v_*| \leq 2 N,  |v_{**}| \geq 3 N }  k_{M,\left \langle v \right\rangle^{l}}^{\beta\beta^{'}}(s_1,X^{\alpha,\beta}_1(s_1),V^{\alpha,\beta}_1(s_1),v_{**}) |h^{\beta'}| (s_1,X^{\alpha,\beta}_1(s_1),v_{**}) dv_{**}ds_1 dv_* ds \\
                & \leq \frac{C_{\eta}}{\varepsilon^2}e^{-\frac{\eta}{8} N^2}\int_{0}^{t} \mathcal{F}^{\alpha}_0(t,s) \left \langle V^{\alpha}(s) \right\rangle \sum_{\beta=A,B}\int_{0}^{s} \mathcal{F}^{\beta}_1(s,s_1)\left \langle V^{\alpha,\beta}_1(s_1) \right\rangle e^{-\frac{\nu_os_1}{2\varepsilon}}e^{\frac{\nu_os_1}{2\varepsilon}} \|\mathbf{h}(s_1)\|_{L^{\infty}_{x,v}} \, ds_1\,ds \\
                & \leq  C_{\eta}\,e^{-\frac{\eta}{8} N^2}\, e^{-\frac{\nu_ot}{2\varepsilon}}\sup_{0\leq s \leq t}(e^{\frac{\nu_os}{2\varepsilon}}\Vert  \mathbf{h}(s) \Vert_{L^{\infty}_{x,v}}).
            \end{split}
        \end{gather*}

        \noindent{\bf Case 3}. If  $ |v|\leq N,  |v_*| > 2N$,  it is also included in Case 2; while if  $|v_{**}| > 3N $, either $|v_*| \leq 2N $  or $|v_*| \geq 2N $ is also included in Case 2.
        The remaining case is that $|v| \leq N,   |v_*| \leq 2N ,   |v_{**}| \leq 3N $, that is, $v, v_*,v_{**}$ are all bounded. Then we
        assume that $s-s_1 \leq \varepsilon \lambda $ and  $s-s_1 \geq \varepsilon \lambda $  where the $\lambda$ is small enough.
        \quad \\

        \noindent{\it Case 3a}.  In this case, $|v| \leq N,   |v_{*}| \leq 2N ,   |v_{**}| \leq 3N $ and  $s-s_1 \leq \varepsilon \lambda $, we have
        \begin{gather*}
            \begin{split}
                & \frac{1}{\varepsilon^2}\Big| \int_{0}^{t} \mathcal{F}^{\alpha}_0(t,s) \sum_{\beta=A,B}\int_{\mathbb{R}^3} k_{M,\left \langle v \right\rangle^{l}}^{\alpha\beta}(s,X^{\alpha}(s),V^{\alpha}(s),v_*) \sum_{\beta'=A,B}\int_{s-\lambda\varepsilon}^s \mathcal{F}_1^\beta(s,s_1) \\
                &\hspace{1cm} \times \int_{\mathbb{R}^3}  k_{M,\left \langle v \right\rangle^{l}}^{\beta\beta^{'}}(s_1,X^{\alpha,\beta}_1(s_1),V^{\alpha,\beta}_1(s_1),v_{**}) |h^{\beta'}| (s_1,X^{\alpha,\beta}_1(s_1),v_{**}) dv_{**}ds_1 dv_* ds\Big|\\
                &\leq \frac{C}{\varepsilon^2}\int_{0}^{t} \mathcal{F}^{\alpha}_0(t,s)\left \langle V^{\alpha}(s) \right\rangle\sum_{\beta=A,B}\int_{s-\lambda\varepsilon}^s \mathcal{F}_1^\beta(s,s_1)\left \langle V^{\alpha,\beta}_1(s_1) \right\rangle e^{-\frac{\nu_os_1}{2\varepsilon}} \, ds_1\,ds \sup_{0\leq s \leq t}(e^{\frac{\nu_os}{2\varepsilon}}\Vert  \mathbf{h}(s) \Vert_{L^{\infty}_{x,v}})\\
                &\leq C \lambda  e^{-\frac{\nu_ot}{2\varepsilon}}\sup_{0\leq s \leq t}(e^{\frac{\nu_os}{2\varepsilon}}\Vert  \mathbf{h}(s) \Vert_{L^{\infty}_{x,v}}).
            \end{split}
        \end{gather*}
        \quad \\
        \noindent{\it Case 3b}. In the case $|v|\leq N,  |v_*|\leq 2N,  |v_{**}|\leq 3N, $ and $s-s_1 \geq \varepsilon \lambda $ where $\lambda$ is small enough. It could be bounded by
        \begin{gather}\label{4.30}
            \begin{split}
                &\frac{C}{\varepsilon^2}\int_{0}^{t} \mathcal{F}^{\alpha}_0(t,s) \int_{|v^*|\leq 2N,  |v^{**}| \leq 3N } \sum_{\beta = A,B}\sum_{\beta' = A,B}\int_{0}^{s-\varepsilon\lambda}\mathcal{F}^{\beta}_1(s,s_1) k_{M,\left \langle v \right\rangle^{l}}^{\alpha\beta}(s,X^{\alpha}(s),V^{\alpha}(s),v_*)\\
                & \hspace{2.4cm}\times   k_{M,\left \langle v \right\rangle^{l}}^{\beta \beta'}(s_1,X^{\alpha,\beta}_1(s_1),V^{\alpha,\beta}_1(s_1),v_{**}) \mathbf{h} (s_1,X^{\alpha,\beta}_1(s_1),v_{**})ds_1 dv_{**}dv_* ds .
            \end{split}
        \end{gather}
        Moreover, there exists some smooth  operators $k_{M,N}^{\alpha\beta}(V^{\alpha},v^*),k_{M,N}^{\beta \beta'}(V^{\alpha,\beta}_1,v_{**})\,\alpha ,\beta \in \{A,B\}$  with compact support, such that
        \begin{gather}\label{jingkerbijingg1}
            \begin{split}
                \int_{|V^{\alpha}(s)|\leq 3N,|v_*|\leq 3N}|k_{M,\left \langle v \right\rangle^{l}}^{\alpha\beta}- k_{M,N}^{\alpha\beta}|(s,X^{\alpha}(s),V^{\alpha}(s),v^*) dv_* \leq & \frac{1}{N}, \\
                \int_{V^{\alpha,\beta}_1(s_1)|\leq 3N,|v_{**}|\leq 3N} |k_{M,\left \langle v \right\rangle^{l}}^{\beta \beta'} - k_{M,N}^{\beta \beta'}| (s_1,X^{\alpha,\beta}_1(s_1), V^{\alpha,\beta}_1(s_1),v_{**})dv_{**} \leq & \frac{1}{N}.
            \end{split}
        \end{gather}
       Then, we split $k_{M,\left \langle v \right\rangle^{l}}^{\alpha\beta}(s,X^{\alpha}(s),V^{\alpha}(s),v_*)  k_{M,\left \langle v \right\rangle^{l}}^{\beta \beta'}(s_1,X^{\alpha,\beta}_1(s_1),V^{\alpha,\beta}_1(s_1),v_{**}) $ as
        \begin{gather}\label{SEC4MAINSPLIT}
            \begin{split}
                &   k_{M,\left \langle v \right\rangle^{l}}^{\alpha\beta}(s,X^{\alpha}(s),V^{\alpha}(s),v_*)  k_{M,\left \langle v \right\rangle^{l}}^{\beta \beta'}(s_1,X^{\alpha,\beta}_1(s_1),V^{\alpha,\beta}_1(s_1),v_{**}) \\
                = &    \Big[k_{M,\left \langle v \right\rangle^{l}}^{\beta \beta'}(s_1,X^{\alpha,\beta}_1(s_1),V^{\alpha,\beta}_1(s_1),v_{**}) - k_{M,N}^{\beta \beta'}(s_1,X^{\alpha,\beta}_1(s_1),V^{\alpha,\beta}_1(s_1),v_{**})\Big]\\
                & \hspace{8cm}\times k_{M,N}^{\alpha\beta}(s,X^{\alpha}(s),V^{\alpha}(s),v_*) \\
                & + \Big[k_{M,\left \langle v \right\rangle^{l}}^{\alpha\beta}(s,X^{\alpha}(s),V^{\alpha}(s),v_*)- k_{M,N}^{\alpha\beta}(s,X^{\alpha}(s),V^{\alpha}(s),v_*)\Big]  \\
                &\hspace{7cm}\times k_{M,\left \langle v \right\rangle^{l}}^{\beta \beta'} (s_1,X^{\alpha,\beta}_1(s_1),V^{\alpha,\beta}_1(s_1),v_{**}) \\
                & + k_{M,N}^{\alpha\beta}(s,X^{\alpha}(s),V^{\alpha}(s),v_*)  k_{M,N}^{\beta \beta'}(s_1,X^{\alpha,\beta}_1(s_1),V^{\alpha,\beta}_1(s_1),v_{**}).
            \end{split}
        \end{gather}
       The term can be bounded by combining estimates from \eqref{4.30}, \eqref{jingkerbijingg1}, and \eqref{SEC4MAINSPLIT}. That is
        \begin{align}
                & \frac{C}{ N\varepsilon^2}  \Big(\int_{0}^{t} \mathcal{F}^{\alpha}_0(t,s) \sum_{\beta = A,B} \int_{|v_{*}|\leq 2N} |k_{M,N}^{\alpha\beta }(s,X^{\alpha}(s),V^{\alpha}(s),v_{*})|dv_{*}\int_{0}^{s-\varepsilon \lambda} \mathcal{F}^{\beta}_1(s,s_1)  ds_1 ds \notag\\
                & +  \int_{0}^{t}\mathcal{F}^{\alpha}_0(t,s) \sum_{\beta' = A,B} \int_{|v_{**}|\leq 3N} |k_{M,\left \langle v \right\rangle^{l}}^{\beta \beta'}(s_1,X^{\alpha,\beta }_1(s_1),V^{\alpha,\beta }_1(s_1),v_{**})| \int_{0}^{s-\varepsilon\lambda} \mathcal{F}^{\beta}_1(s,s_1) ds_1 dv_{**} ds \Big)\notag\\
                &\hspace{9cm} \times \sup_{0\leq s \leq t} \|h^{\beta'}(s)\|_{L^{\infty}_{x,v}}\\
                &+\frac{C}{\varepsilon^2} \int_{0}^{t} \mathcal{F}^{\alpha}_0(t,s) \int_{|v^*|\leq 2N,  |v^{**}| \leq 3N }\sum_{\beta = A,B} \int_{0}^{s-\varepsilon\lambda}\mathcal{F}^{\beta}_1(s,s_1) k_{M,N}^{\alpha \beta}(s,X^{\alpha}(s),V^{\alpha}(s),v_*) \notag\\
                &\hspace{2cm} \times \sum_{\beta' = A,B} k^{\beta \beta'}_{M,N}(s_1,X^{\alpha,\beta}_1(s_1),V^{\alpha,\beta}_1(s_1),v_{**}) |h^{\beta'} (s_1,X^{\alpha,\beta}_1(s_1),v_{**})| ds_1 dv_{**}dv_* ds  \notag\\
                &\leq  \frac{C}{ N} e^{-\frac{\nu_ot}{2\varepsilon}}\sup_{0\leq s \leq t}(e^{\frac{\nu_os}{2\varepsilon}}\Vert  \mathbf{h}(s) \Vert_{L^{\infty}_{x,v}}) + \frac{C}{\varepsilon^2} \int_{0}^{t} \mathcal{F}^{\alpha}_0(t,s) \int_{|v^*|\leq 2N,  |v^{**}| \leq 3N }\sum_{\beta = A,B} \int_{0}^{s-\varepsilon\lambda}\mathcal{F}^{\beta}_1(s,s_1) \notag\\
                & \hspace{3.5cm}\times k_{M,N}^{\alpha \beta}(s,X^{\alpha}(s),V^{\alpha}(s),v_*) \sum_{\beta' = A,B} k^{\beta \notag \beta'}_{M,N}(s_1,X^{\alpha,\beta}_1(s_1),V^{\alpha,\beta}_1(s_1),v_{**}) \\
                &\hspace{7.8cm}\times| \mathbf{h} (s_1,X^{\alpha,\beta}_1(s_1),v_{**})| ds_1 dv_{**}dv_* ds. \label{HD44400465F}
            \end{align}
        Let $\zeta_{\alpha\beta} = X^{\alpha,\beta}_1(s_1) = X^{\beta}(s_1; s, X^{\alpha}(s; t, x, v), v_*)$.
            By Lemma \ref{lemofcharacter}, it follows that
        \begin{equation*}
            |\zeta_{\beta\alpha}-X^{\alpha}(s)|=|X^{\beta,\alpha}_1(s_1)-X^{\alpha}(s)|\leq C (s-s_1).
        \end{equation*}
        For $s-s_1 \geq \lambda \varepsilon$, the derivative bound holds:
        $
        \left|\frac{d \zeta_{\beta\alpha}}{d v_*}\right| \geq \frac{\lambda^3\varepsilon^3}{2}.
        $
        Since
        \begin{equation*}
            \Big|k_{M,N}^{\alpha \beta}(s,X^{\alpha}(s),V^{\alpha}(s),v_*)  k^{\beta \beta'}_{M,N}(s_1,X^{\alpha,\beta}_1(s_1),V^{\alpha,\beta}_1(s_1),v_{**})\Big| \leq C_N,
        \end{equation*}
       integrating $v_*$ over the domain $|v_*| \leq 2N$ implies
        \begin{gather*}
            \begin{split}
                &\int_{|v_*| \leq 2 N} |h^{\beta'}(s_1,X^{\alpha,\beta}_1(s_1),v_{**})|dv_*
                \leq  C_N \Big(\int_{|v_*|\leq 2N}  |h^{\beta'}(s_1,X^{\alpha,\beta}_1(s_1),v_{**})|^2 dv_*\Big)^{\frac{1}{2}} \\
                &\hspace{0.3cm}\leq  \frac{C_N}{\lambda^{
                        \frac{3}{2}}\varepsilon^{\frac{3}{2}}} \Big(\int_{|\zeta_{\alpha\beta}-X^{\alpha}(s)|\leq C (s-s_1)N}  |h^{\beta'} (s_1,\zeta_{\alpha\beta},v_{**})|^2 d\zeta_{\alpha\beta} \Big)^{\frac{1}{2}}\\
                &\hspace{0.3cm}\leq  \frac{C_N}{\lambda^{
                        \frac{3}{2}}\varepsilon^{\frac{3}{2}}} \Big(\int_{\mathbb{R}^3}  |h^{\beta'} (s_1,\zeta_{\alpha\beta},v_{**})|^2 d\zeta_{\alpha\beta}\Big)^{\frac{1}{2}}.
            \end{split}
        \end{gather*}
      Then the last term in \eqref{HD44400465F} is further controlled by
        \begin{align}
            & \frac{C_{N,\lambda}}{\varepsilon^{7/2}} \int_{0}^{t} \mathcal{F}^{\alpha}_0(t,s) \sum_{\beta=A,B}\int_{0}^{s-\varepsilon\lambda}\mathcal{F}^{\beta}_1(s,s_1)\notag\\
            &\hspace{3.5cm}\times\sum_{\beta'=A,B} \int_{ |v_{**}|\leq 3N } \Big( \int_{\mathbb{R}^3_{\zeta_{\alpha\beta}}}|h^{\beta'}(s_1,\zeta_{\alpha\beta},v_{**})|^2 d\zeta_{\alpha\beta}\Big)^{\frac{1}{2}}dv_{**}  ds_1 ds\notag
            \\
            \leq & \frac{C_{N,\lambda}}{\varepsilon^{7/2}} \int_0^t \sum_{\beta'=A,B} \int_{0}^{s-\varepsilon \kappa}
            e^{-\frac{\nu(N)(t-s)}{\varepsilon}}e^{-\frac{\nu(2N)(s-s_1)}{\varepsilon}}   \| h^{\beta'}(s_1)\|_{L^{2}_{x,v}}  ds_1 ds \notag.
        \end{align}
       For $|v|\leq N$, $|v_*|\leq 2N$, and $|v_{**}|\leq 3N$,
       $
        |h^{\beta'}| = w \frac{\sqrt{\mu^{\beta'}}}{\sqrt{\mu^{\beta'}_M}} |f^{\beta'}_R| \leq C_N |f^{\beta'}_R|
       $
       implies
       \begin{equation*}
        \begin{split}
            &\frac{C_{N,\lambda}}{\varepsilon^{7/2}} \int_0^{T_0} \sum_{\beta'=A,B} \int_{0}^{s-\varepsilon \lambda}
            e^{-\frac{\nu(N)(t-s)}{\varepsilon}}e^{-\frac{\nu(2N)(s-s_1)}{\varepsilon}} \| h^{\beta'}(s_1)\|_{L^{2}_{x,v}} ds_1 ds \\
            &\leq \frac{C_{N,\lambda}}{\varepsilon^{7/2}} \int_0^{T_0} \sum_{\beta'=A,B} \int_{0}^{s-\varepsilon \lambda}
            e^{-\frac{\nu(N)(t-s)}{\varepsilon}}e^{-\frac{\nu(2N)(s-s_1)}{\varepsilon}} \sup_{0\leq s_1 \leq s} \| f^{\beta'}_R(s_1)\|_{L^{2}_{x,v}} ds_1 ds \\
            &\leq \frac{C_{N,\lambda,T_0}}{\varepsilon^{3/2}} \sup_{0\leq s \leq t}\| \mathbf{f}_R(s)\|_{L^{2}_{x,v}}.
        \end{split}
       \end{equation*}

      Hence, the preceding arguments establish that
        \begin{gather}\label{HSC465}
            \begin{split}
                \sup_{0\leq s \leq T_0}&(e^{\frac{\nu_os}{2\varepsilon}} \Vert  h^{\alpha}(s) \Vert_{L^{\infty}_{x,v}})
                \leq C \sup_{0\leq s \leq T_0}[(1+\frac{s}{\varepsilon})e^{-\frac{\nu_os}{2\varepsilon}}]\|\mathbf{h}^{in}\|_{L^\infty_{x,v}}\\
                &+[C(1+\mathcal{I}_1(T_0))\varepsilon+C\lambda+\frac{C_\lambda}{N}+\delta^{3+1}]\sup_{0\leq s \leq T_0}(e^{\frac{\nu_os}{2\varepsilon}}\Vert  \mathbf{h}(s) \Vert_{L^{\infty}_{x,v}})\\
                &+C\varepsilon^k \sup_{0\leq s \leq T_0}(e^{\frac{\nu_os}{2\varepsilon}}\Vert  \mathbf{h}(s) \Vert_{L^{\infty}_{x,v}})^2 +C\varepsilon^k\mathcal{I}_2(T_0)e^{\frac{\nu_oT_0}{2\varepsilon}}
                +\frac{C_{N,\lambda}}{\varepsilon^{\frac{3}{2}}}e^{\frac{\nu_oT_0}{2\varepsilon}}\sup_{0\leq s \leq T_0}\| \mathbf{f}_R(s)\|_{L^{2}_{x,v}}.
            \end{split}
        \end{gather}
        For  small $\delta >0$, we choose $\lambda$ small and $N$  sufficiently large such that $$ C(1+\mathcal{I}_1(T_0))\varepsilon+C{\lambda}+\frac{C_\lambda}{N}+\delta^4<\frac{1}{2}.$$
        Noticing that $(1+\frac{s}{\varepsilon})e^{-\frac{\nu_os}{2\varepsilon}}$, $\mathcal{I}_1$ and $\mathcal{I}_2$ are uniformly bounded, with assumption \eqref{SUPPOSELINFGXV}, one obtains
        \begin{gather}\label{HD00LAST2}
            \begin{split}
                \sup_{0\leq s \leq T_0}(e^{\frac{\nu_os}{2\varepsilon}}\Vert  h^{\alpha}(s) \Vert_{L^{\infty}_{x,v}})
                \leq & C \|\mathbf{h}^{in}\|_{L^\infty_{x,v}}+\frac{C_{N,\lambda}}{\varepsilon^{\frac{3}{2}}}e^{\frac{\nu_oT_0}{2\varepsilon}}\sup_{0\leq s \leq T_0}\| \mathbf{f}_R(s)\|_{L^{2}_{x,v}}+C\varepsilon^k\mathcal{I}_2(T_0)e^{\frac{\nu_oT_0}{2\varepsilon}}.
            \end{split}
        \end{gather}
       Then multiplying \eqref{HD00LAST2} by $\varepsilon^{3/2} e^{-\nu_0 T_0/(2\varepsilon)}$
       with $s = T_0$ and summing over $\alpha \in \{A,B\}$ yields
        \begin{equation}\label{HSC467}
            \varepsilon^{\frac{3}{2}}\Vert  \mathbf{h}(T_0) \Vert_{L^{\infty}_{x,v}}\leq \frac{1}{4}\Vert \varepsilon^{\frac{3}{2}} \mathbf{h}^{\rm in} \Vert_{L^{\infty}_{x,v}}+C \sup_{0\leq s \leq T_0}\| \mathbf{f}_R(s)\|_{L^{2}_{x,v}}+C\varepsilon^{\frac{2k+3}{2}}.
        \end{equation}
    \end{proof}

    \subsection{$W_x^{1,\infty}$ estimate }Verification of the key hypothesis \eqref{SUPPOSELINFGXV} requires establishing $W_x^{1,\infty}$ estimates for energy closure. Applying $D_x$ to \eqref{HDSTLINFEQMAIN} gives
    \begin{eqnarray}\label{DXLINFT4HS}
        \begin{split}
            &\hspace{0.5cm}\partial_{t}(D_xh^{\alpha})+v\cdot\nabla_{x}(D_xh^{\alpha})+\frac{e^{\alpha}}{m^{\alpha}} \nabla_{x}\phi^{\varepsilon}\cdot\nabla_{v}  (D_xh^{\alpha}) + \frac{\upsilon^\alpha}{\varepsilon}  (D_xh^{\alpha}) \\
            &=-\nabla_{x}(D_x\phi^{\varepsilon})\cdot\nabla_{v}h^{\alpha}-\frac{D_x\upsilon^\alpha}{\varepsilon}h^{\alpha}-D_x\Big[\frac{1}{\varepsilon}(K_{M,\left \langle v \right\rangle^{l}}^{\alpha,\chi}+K_{M,\left \langle v \right\rangle^{l}}^{\alpha,c})\mathbf{h}\Big]
            + \sum_{i=1}^{6}D_x (\mathcal{H}^\alpha_i),
        \end{split}
    \end{eqnarray}
    where $\mathcal{H}^\alpha_i$ is described in \eqref{HD00HI6}. Along the trajectory, the solution $D_xh^{\alpha}$ of the equation \eqref{DXLINFT4HS} is expressed as
    \begin{align}
            D_xh^\alpha(t,x,v) =\, &\mathcal{F}^\alpha_0(t,0)\,(D_xh^\alpha)(0,X^{\alpha}(0;t,x,v),V^{\alpha}(0;t,x,v)) \notag\\
            &-\int_{0}^{t}\mathcal{F}^\alpha_0(t,s) [\nabla_{x}(D_x\phi^{\varepsilon})\cdot\nabla_{v}h^{\alpha}](s, X^{\alpha}(s),V^{\alpha}(s))ds\notag\\
            &- \frac{1}{\varepsilon} \int_{0}^{t}\mathcal{F}^\alpha_0(t,s)(D_x\upsilon^\alpha) \, h^{\alpha} (s, X^{\alpha}(s),V^{\alpha}(s))ds\label{linfHDDXXXADTODXT46}\\
            & + \int_{0}^{t}\mathcal{F}^\alpha_0(t,s) \sum_{i=1}^{6}(D_x\mathcal{H}^\alpha_j)(s, X^{\alpha}(s),V^{\alpha}(s))ds\notag\\
            &- \frac{1}{\varepsilon} \int_{0}^{t}\mathcal{F}^\alpha_0(t,s)  D_x(K_{M,\left \langle v \right\rangle^{l}}^{\alpha,\chi} \mathbf{h}) (s, X^{\alpha}(s),V^{\alpha}(s))ds\notag\\
            &- \frac{1}{\varepsilon} \int_{0}^{t}\mathcal{F}^\alpha_0(t,s)  D_x(K_{M,\left \langle v \right\rangle^{l}}^{\alpha,c} \mathbf{h}) (s, X^{\alpha}(s),V^{\alpha}(s))ds.\notag
    \end{align}
   First, the leading term in \eqref{linfHDDXXXADTODXT46} satisfies the following bound:
    \begin{equation*}
        \left|\mathcal{F}_0^{\alpha}(t,0) D_xh^\alpha(0,\,X(0),\,v)\right| \lesssim \varepsilon  e^{-\frac{\nu_ot}{\varepsilon}} \|D_x\mathbf{h}^{in}\|_{L^\infty_{x,v}}.
    \end{equation*}
    Moreover, through direct computation, \eqref{psideddddg} immediately gives
    \begin{equation*}
        |\nabla_{x}(D_x\phi^{\varepsilon})\cdot\nabla_{v}h^{\alpha}|\lesssim 1+ \varepsilon\mathcal{I}_1+\varepsilon^k(\Vert \mathbf{h}\Vert_{L^{\infty}_{x,v}}+\Vert D_x\mathbf{h}\Vert_{L^{\infty}_{x,v}}).
    \end{equation*}
  which together with \eqref{SUPPOSELINFGXV} implies
    \begin{gather*}
        \begin{split}
            &\left|\int_{0}^{t}\mathcal{F}^\alpha_0(t,s) [\nabla_{x}(D_x\phi^{\varepsilon})\cdot\nabla_{v}h^{\alpha}](s, X^{\alpha}(s),V^{\alpha}(s))ds\right| \\
            &\hspace{0.3cm}\lesssim \, \varepsilon  e^{-\frac{\nu_ot}{2\varepsilon}} [1+\varepsilon\mathcal{I}_1+\varepsilon^k(\Vert \mathbf{h}\Vert_{L^{\infty}_{x,v}}+\Vert D_x\mathbf{h}\Vert_{L^{\infty}_{x,v}})]\sup_{0\leq s \leq t}(e^{\frac{\nu_os}{2\varepsilon}}\Vert \nabla_{v}\mathbf{h}\Vert_{L^{\infty}_{x,v}})\\
            &\hspace{0.3cm} \lesssim \, \varepsilon  e^{-\frac{\nu_ot}{2\varepsilon}} \sup_{0\leq s \leq t}(e^{\frac{\nu_os}{2\varepsilon}}\Vert \nabla_{v}\mathbf{h}\Vert_{L^{\infty}_{x,v}}).
        \end{split}
    \end{gather*}
    Furthermore, \eqref{nuKsuanzidef} shows
    \begin{gather*}
        \begin{split}
            |D_x\upsilon^\alpha|
            \lesssim  \sum_{\beta=A,B} \int_{\mathbb{R}^3\times \mathbb{S}^2}
            |B^{ \alpha \beta}(|v-v_*|, \cos \theta)  \sqrt{\mu^{\beta}(v_*)}| d\sigma dv_*
         \lesssim  \left \langle v \right\rangle.
        \end{split}
    \end{gather*}
   Next, it is easy to get
    \begin{gather*}
        \begin{split}
            \frac{1}{\varepsilon}\left| \int_{0}^{t}\mathcal{F}^\alpha_0(t,s)(D_x\upsilon^\alpha) \, h^{\alpha} (s, X^{\alpha}(s),V^{\alpha}(s))ds\right|
            \lesssim \,  e^{-\frac{\nu_ot}{2\varepsilon}} \sup_{0\leq s \leq t}(e^{\frac{\nu_os}{2\varepsilon}}\Vert \mathbf{h}\Vert_{L^{\infty}_{x,v}}).
        \end{split}
    \end{gather*}

  Now, let us focus on the fourth term in \eqref{linfHDDXXXADTODXT46}. First, applying Lemma~\eqref{DDFXXEST}, we see
    \begin{gather*}
        \begin{split}
            |D_x\mathcal{H}_1^\alpha|
            \lesssim \, \varepsilon^{k-1} \left \langle v \right\rangle(\Vert \mathbf{h}\Vert^2_{L^{\infty}_{x,v}}+\Vert D_x\mathbf{h}\Vert^2_{L^{\infty}_{x,v}}).
        \end{split}
    \end{gather*}
    Consequently, we derive the bound
    \begin{gather*}
        \begin{split}
            &\left| \int_{0}^{t}\mathcal{F}^\alpha_0(t,s)(D_x\mathcal{H}^\alpha_1)(s, X^{\alpha}(s),V^{\alpha}(s))ds\right|
            \lesssim \,\varepsilon^k  e^{-\frac{\nu_ot}{\varepsilon}} \sup_{0\leq s \leq t}[(e^{\frac{\nu_os}{2\varepsilon}}\|  \mathbf{h}\|_{L^{\infty}_{x,v}})^2+(e^{\frac{\nu_os}{2\varepsilon}}\|  D_x\mathbf{h}\|_{L^{\infty}_{x,v}})^2].
        \end{split}
    \end{gather*}
    Moreover, utilizing the exponential decay of $F_i$, and Lemma \ref{FI2KM1EST},
    \begin{gather*}
        \begin{split}
            |D_x\mathcal{H}_2^{\alpha}|
            \lesssim   \, \left \langle v \right\rangle \mathcal{I}_1 \,(\Vert\mathbf{h} \Vert_{L^\infty_{x,v}}+\Vert D_x\mathbf{h} \Vert_{L^\infty_{x,v}}),
        \end{split}
    \end{gather*}
    which leads to
    \begin{gather*}
        \begin{split}
            &\left| \int_{0}^{t}\mathcal{F}^\alpha_0(t,s) (D_x\mathcal{H}^\alpha_2)(s, X^{\alpha}(s),V^{\alpha}(s))ds\right|
            \lesssim \,\varepsilon \mathcal{I}_1 e^{-\frac{\nu_ot}{2\varepsilon}} \sup_{0\leq s \leq t}[e^{\frac{\nu_os}{2\varepsilon}}(\Vert\mathbf{h} \Vert_{L^{\infty}_{x,v}}+\Vert D_x\mathbf{h} \Vert_{L^{\infty}_{x,v}})].
        \end{split}
    \end{gather*}
    From \eqref{psideddddg}, and the assumption \eqref{SUPPOSELINFGXV}, it follows that
    \begin{eqnarray*}
        \begin{split}
            |D_x\mathcal{H}^\alpha_3| =& \frac{e^{\alpha}}{m^{\alpha}}\Big(\Big|\nabla_{x}(D_x\phi^{\varepsilon}) \cdot  \frac{\left \langle v \right\rangle^{l}}{\sqrt{\mu^{\alpha}_M}} \nabla_{v}(\frac{\sqrt{\mu^{\alpha}_M}}{\left \langle v \right\rangle^{l}})h^\alpha\Big|
            +\Big|\nabla_{x}\phi^{\varepsilon} \cdot  \frac{\left \langle v \right\rangle^{l}}{\sqrt{\mu^{\alpha}_M}} \nabla_{v}(\frac{\sqrt{\mu^{\alpha}_M}}{\left \langle v \right\rangle^{l}})(D_xh^\alpha)\Big|\Big)\\
            \lesssim &\left \langle v \right\rangle (\Vert\mathbf{h} \Vert_{L^{\infty}_{x,v}}+\Vert D_x\mathbf{h} \Vert_{L^{\infty}_{x,v}}),\\
            |D_x\mathcal{H}^\alpha_4| =& \frac{e^{\alpha}}{m^{\alpha}}\Big(\Big|\nabla_{x}(D_x\phi_R) \cdot  \frac{\left \langle v \right\rangle^{l}}{\sqrt{\mu^{\alpha}_M}} \nabla_{v}(\mu^{\alpha}+\sum_{i=1}^{2k-1}\varepsilon^iF_i^{\alpha})\Big|\\
            &\hspace{5cm}
            +\Big|\nabla_{x}\phi_R \cdot  \frac{\left \langle v \right\rangle^{l}}{\sqrt{\mu^{\alpha}_M}} \nabla_{v}D_x(\mu^{\alpha}+\sum_{i=1}^{2k-1}\varepsilon^iF_i^{\alpha})\Big|\Big)\\
            \lesssim &(1+\varepsilon\mathcal{I}_1)\left \langle v \right\rangle (\Vert\mathbf{h} \Vert_{L^{\infty}_{x,v}}+\Vert D_x\mathbf{h} \Vert_{L^{\infty}_{x,v}}).
        \end{split}
    \end{eqnarray*}
    Collecting these estimates yields
    \begin{gather*}
        \begin{split}
            &\left| \int_{0}^{t}\mathcal{F}^\alpha_0(t,s) (D_x\mathcal{H}^\alpha_3+D_x\mathcal{H}^\alpha_4)(s, X^{\alpha}(s),V^{\alpha}(s))ds\right| \\
            &\hspace{0.5cm}\lesssim \,\, \varepsilon (1+\varepsilon\mathcal{I}_1) e^{-\frac{\nu_ot}{2\varepsilon}} \sup_{0\leq s \leq t}[e^{\frac{\nu_os}{2\varepsilon}}(\Vert\mathbf{h} \Vert_{L^{\infty}_{x,v}}+\Vert D_x\mathbf{h} \Vert_{L^{\infty}_{x,v}})].
        \end{split}
    \end{gather*}
    \noindent  The bounds of $\mathbf{F}_i$ in Lemma \ref{FI2KM1EST} rigorously proves
    \begin{gather*}
        \begin{split}
            |D_x\mathcal{H}^\alpha_5|
            \lesssim &  \left \langle v \right\rangle \Big(\varepsilon^{k-1} \Big|\frac{\left \langle v \right\rangle^{l-1}( \partial_t +v\cdot \nabla_{x} ) (D_xF^\alpha_{2k-1}) }{ \sqrt{\mu^{\alpha}_M}}\Big|_{L^\infty_{x,v}} \\
            &+  \mathop{\sum}_{i+j \geq 2k-1\atop 0 \leq i,j \leq 2k-1}\varepsilon^{i+j-k} | \nabla_{x}(D_x\phi_i)|_{L^{\infty}_{x}} \Vert \frac{\nabla_vF_j^\alpha}{\left \langle v \right\rangle} \Vert_{L^{\infty}_{x,v} }+| \nabla_{x}\phi_i|_{L^{\infty}_{x}} \Vert \frac{\nabla_v(D_xF_j^\alpha)}{\left \langle v \right\rangle} \Vert_{L^{\infty}_{x,v} } \Big) \\
            \lesssim &  \, \varepsilon^{k-1} \left \langle v \right\rangle \mathcal{I}_2,
        \end{split}
    \end{gather*}
    and
    \begin{gather*}
        \begin{split}
            |D_x\mathcal{H}^\alpha_6|
            \lesssim &  \mathop{\sum}_{i+j \geq 2k-1\atop 0 \leq i,j \leq 2k-1}\varepsilon^{i+j-k} \left \langle v \right\rangle \Big(\Big\Vert\frac{\left \langle v \right\rangle^{l}}{\sqrt{\mu^{\alpha}_M}} D_x\mathbf{F}_i\Big\Vert_{L^\infty_v}  \, \Big\Vert\frac{\left \langle v \right\rangle^{l}}{\sqrt{\mu^{\beta}_M}} \mathbf{F}_j\Big\Vert_{L^\infty_{x,v}}\\
            &\hspace{6cm}+\Big\Vert\frac{\left \langle v \right\rangle^{l}}{\sqrt{\mu^{\alpha}_M}} \mathbf{F}_i\Big\Vert_{L^\infty_{x,v}}  \, \Big\Vert\frac{\left \langle v \right\rangle^{l}}{\sqrt{\mu^{\beta}_M}} D_x\mathbf{F}_j\Big\Vert_{L^\infty_{x,v}}\Big)  \\
            \lesssim & \, \varepsilon^{k-1} \left \langle v \right\rangle \mathcal{I}_2.
        \end{split}
    \end{gather*}
   Then, it follows that
    \begin{gather*}\label{H0DZISHI11}
        \begin{split}
            &\left| \int_{0}^{t}\mathcal{F}^\alpha_0(t,s) (D_x\mathcal{H}^\alpha_5+D_x\mathcal{H}^\alpha_6)(s, X^{\alpha}(s),V^{\alpha}(s))ds\right|
            \lesssim \, \varepsilon^k \mathcal{I}_2.
        \end{split}
    \end{gather*}

    Next, recalling the expressions in Lemma \ref{kchiestmainle} that
    \begin{gather*}
        \begin{split}
            &D_x(K_{M,\left \langle v \right\rangle^{l}}^{\alpha,\chi} \mathbf{h})=(D_xK_{M,\left \langle v \right\rangle^{l}}^{\alpha,\chi})\mathbf{h}+K_{M,\left \langle v
\right\rangle^{l}}^{\alpha,\chi}(D_x\mathbf{h}),
        \end{split}
    \end{gather*}
  with corresponding estimates
    \begin{gather*}
        \begin{split}
            | K_{M,\left \langle v \right\rangle^{l}}^{\alpha,\chi}(D_x\mathbf{h})(s, X^{\alpha}(s),V^{\alpha}(s))  | &\lesssim {\delta}^{3+1} \left \langle V^{\alpha}(s) \right\rangle e^{-c|V^{\alpha}(s)|^2} \|D_x\mathbf{\mathbf{h}} \|_{L^\infty_v},\\
            | (D_xK_{M,\left \langle v \right\rangle^{l}}^{\alpha,\chi})\mathbf{h} (s, X^{\alpha}(s),V^{\alpha}(s))| &\lesssim {\delta}^{3+1} \left \langle V^{\alpha}(s) \right\rangle e^{-c|V^{\alpha}(s)|^2} \|\mathbf{\mathbf{h}} \|_{L^\infty_v},
        \end{split}
    \end{gather*}
  which implies
    \begin{gather*}
        | D_x(K_{M,\left \langle v \right\rangle^{l}}^{\alpha,\chi} \mathbf{h})(s, X^{\alpha}(s),V^{\alpha}(s))  | \lesssim {\delta}^{3+1} \left \langle V^{\alpha}(s) \right\rangle e^{-c|V^{\alpha}(s)|^2} (\|\mathbf{\mathbf{h}} \|_{L^\infty_v}+\|D_x\mathbf{\mathbf{h}} \|_{L^\infty_v}).
    \end{gather*}
    Hence, for hard sphere collisions, we obtain
    \begin{align*}
           & \left| \int_{0}^{t}\mathcal{F}^\alpha_0(t,s) D_x(K_{M,\left \langle v \right\rangle^{l}}^{\alpha,\chi} \mathbf{h})(s, X^{\alpha}(s),V^{\alpha}(s))ds\right|\\
           &\hspace{0.4cm} \lesssim \,\, \delta^{3+1}  e^{-\frac{\nu_ot}{2\varepsilon}} \sup_{0\leq s \leq t}[e^{\frac{\nu_os}{2\varepsilon}}(\|\mathbf{h}\|_{L^{\infty}_{x,v}}+\|D_x\mathbf{h}\|_{L^{\infty}_{x,v}})].
    \end{align*}
    Finally, let us turn to the last term in \eqref{linfHDDXXXADTODXT46}. Noticing that
    \begin{equation*}
        D_x(K_{M,\left \langle v \right\rangle^{l}}^{\alpha,c} \mathbf{h})=(D_xK_{M,\left \langle v \right\rangle^{l}}^{\alpha,c})\mathbf{h}+K_{M,\left \langle v \right\rangle^{l}}^{\alpha,c}(D_x\mathbf{h}),
    \end{equation*}
    which together with Lemma \ref{LeMK2ker4444} and \eqref{ASSPDREVYVxXx} yields
    \begin{align}
            &(D_xK_{M,\left \langle v \right\rangle^{l}}^{\alpha,c}) \mathbf{h}(s, X^{\alpha}(s),V^{\alpha}(s))\notag\\
            =&\sum_{\beta=A,B}\int_{\mathbb{R}^3} (D_xk_{M,w}^{\alpha \beta}) (s,X^{\alpha}(s),V^{\alpha}(s),v_*)h^{\beta}(s,X^{\alpha}(s),v_*) d v_{*} ,\label{K2CDXJFBD}
    \end{align}
    and
    \begin{align}
            &K_{M,\left \langle v \right\rangle^{l}}^{\alpha,c} (D_x\mathbf{h})(s, X^{\alpha}(s),V^{\alpha}(s))\notag\\
            =&\sum_{\beta=A,B}\int_{\mathbb{R}^3} k_{M,w}^{\alpha \beta} (s,X^{\alpha}(s),V^{\alpha}(s),v_*)(D_xh^{\beta})(s,X^{\alpha}(s),v_*) d v_{*} .\label{K2CDXJFBD22f}
    \end{align}
   Define the integral operators
    \begin{gather*}
        \begin{split}
            & \frac{1}{\varepsilon}\int_{0}^{t}\mathcal{F}^\alpha_0(t,s) (D_xK_{M,\left \langle v \right\rangle^{l}}^{\alpha,c})\mathbf{h}(s, X^{\alpha}(s),V^{\alpha}(s))ds:=I_{1x}.\\
            & \frac{1}{\varepsilon}\int_{0}^{t}\mathcal{F}^\alpha_0(t,s) K_{M,\left \langle v \right\rangle^{l}}^{\alpha,c}(D_x\mathbf{h})(s, X^{\alpha}(s),V^{\alpha}(s))ds:=I_{2x},
        \end{split}
    \end{gather*}
    then the estimate for $I_{1x}$ follows by substituting \eqref{HDliterative1111} into \eqref{K2CDXJFBD}
    \begin{equation}\label{xxxccJOX}
        I_{1x}=\frac{1}{\varepsilon} \int_{0}^{t}\mathcal{F}_0^{\alpha}(t,s) (D_xK_{M,\left \langle v \right\rangle^{l}}^{\alpha,c})  \mathbf{h}(s,X^{\alpha}(s),V^{\alpha}(s))ds  = \sum_{i=0}^{2} \mathcal{J}^{\alpha}_{i,x}(t) + \mathcal{O}_{x}^\alpha(t).
    \end{equation}
   The proof of Lemma \ref{LeMK2ker4444} establishes that
    \begin{align*}
            &\mathcal{J}^{\alpha}_{0,x}(t)=\frac{1}{\varepsilon} \int_{0}^{t} \mathcal{F}_0^\alpha(t,s) \sum_{\beta=A,B}\int_{\mathbb{R}^3}     (D_xk_{M,\left \langle v \right\rangle^{l}}^{\alpha\beta})(s,X^{\alpha}(s),V^{\alpha}(s),v_*) \\  &\hspace{5cm}\times\mathcal{F}_1^\beta(s,0) h^\beta(0,X^{\alpha,\beta}_1(0),V^{\alpha,\beta}_1(0))  dv_*ds \\
            &\hspace{1cm}\lesssim \frac{t}{\varepsilon}e^{-\frac{\nu_ot}{\varepsilon}} \|\mathbf{h}^{in}\|_{L^{\infty}_{x,v}},\\
            &\mathcal{J}^{\alpha}_{1,x}(t)=\frac{1}{\varepsilon} \int_{0}^{t} \mathcal{F}_0^\alpha(t,s) \sum_{\beta=A,B}\int_{\mathbb{R}^3} (D_xk_{M,\left \langle v \right\rangle^{l}}^{\alpha\beta})(s,X^{\alpha}(s), V^{\alpha}(s), v_*)\\
            &\hspace{4.5cm}\times\sum_{j=1}^6 \int_{0}^s \mathcal{F}_1^\beta(s,s_1) \mathcal{H}^\beta_j(s_1,X_1^{\alpha,\beta}(s_1),V_1^{\alpha,\beta}(s_1))ds_1  dv_*ds \\
            & \hspace{0.5cm}\lesssim  \varepsilon^k e^{-\frac{\nu_ot}{\varepsilon}}(\sup_{0\leq s \leq t}e^{\frac{\nu_os}{2\varepsilon}}\Vert \mathbf{h}(s) \Vert_{L^{\infty}_{x,v}})^2+({1+\mathcal{I}_1}) \varepsilon e^{-\frac{\nu_ot}{2\varepsilon}}\sup_{0\leq s \leq t}(e^{\frac{\nu_os}{2\varepsilon}}\Vert  \mathbf{h}(s) \Vert_{L^{\infty}_{x,v}})+\mathcal{I}_2 \varepsilon^k,\\
            &\mathcal{J}^{\alpha}_{2,x}(t)=\frac{1}{\varepsilon^2} \int_{0}^{t} \mathcal{F}_0^\alpha(t,s) \sum_{\beta=A,B}\int_{\mathbb{R}^3} (D_xk_{M,\left \langle v \right\rangle^{l}}^{\alpha\beta})(s,X^{\alpha}(s), V^{\alpha}(s), v_*)\\
            &\hspace{3.5cm} \times \int_{0}^s \mathcal{F}_1^\beta(s,s_1) K_{M,\left \langle v \right\rangle^{l}}^{\alpha,\chi}  \mathbf{h} (s_1,X^{\alpha,\beta}_1(s_1),V^{\alpha,\beta}_1(s_1))ds_1 dv_* ds \\
            & \hspace{1cm} \lesssim  \delta^{3+\gamma}e^{-\frac{\nu_ot}{2\varepsilon}}\sup_{0\leq s \leq t}( e^{\frac{\nu_os}{2\varepsilon}}\Vert \mathbf{h}(s) \Vert_{L^{\infty}_{x,v}}).
        \end{align*}
    Moreover, the last term in \eqref{xxxccJOX} is expressed by
    \begin{gather*}
        \begin{split}
            \mathcal{O}_x^\alpha(t) = & \frac{1}{\varepsilon^2} \int_{0}^{t} \mathcal{F}^{\alpha}_0(t,s) \sum_{\beta=A,B}\int_{\mathbb{R}^3} (D_xk_{M,\left \langle v \right\rangle^{l}}^{\alpha\beta})(s,X^{\alpha}(s),V^{\alpha}(s),v_*)  \int_{0}^s \mathcal{F}_1^\beta(s,s_1) \\
            &\hspace{4cm} \times K_{M,\left \langle v \right\rangle^{l}}^{\beta,c} \mathbf{h} (s_1,X^{\alpha,\beta}_1(s_1),V^{\alpha,\beta}_1(s_1))ds_1 dv_* ds.
        \end{split}
    \end{gather*}
    Following the methodology established in Cases 1-3b (Section 4.1) and applying Lemma \ref{LeMK2ker4444} for  $D_x k_{M,\left \langle v \right\rangle^{l}}^{\alpha\beta}$
    estimates, we obtain
    \begin{equation*}
        \begin{split}
            \mathcal{O}_x^\alpha(t) \leq & \frac{C}{N }\, e^{-\frac{\nu_ot}{2\varepsilon}}\sup_{0\leq s \leq t}(e^{\frac{\nu_os}{2\varepsilon}}\Vert  \mathbf{h}(s) \Vert_{L^{\infty}_{x,v}})+C_{\eta}\,e^{-\frac{\eta}{8} N^2}\, e^{-\frac{\nu_ot}{2\varepsilon}}\sup_{0\leq s \leq t}(e^{\frac{\nu_os}{2\varepsilon}}\Vert  \mathbf{h}(s) \Vert_{L^{\infty}_{x,v}})\\
            &+C \lambda  e^{-\frac{\nu_ot}{2\varepsilon}}\sup_{0\leq s \leq t}(e^{\frac{\nu_os}{2\varepsilon}}\Vert  \mathbf{h}(s) \Vert_{L^{\infty}_{x,v}})+\frac{C_{N,\lambda,T_0}}{\varepsilon^{3/2}}  \sup_{0\leq s \leq t}\| \mathbf{f}_R(s)\|_{L^{2}_{x,v}}.
        \end{split}
    \end{equation*}
    Consequently, the bound for $I_{1x}$ can be established as
    \begin{gather*}
        \begin{split}
            I_{1x}  & \leq \, C\frac{t}{\varepsilon}e^{-\frac{\nu_ot}{\varepsilon}}\| \mathbf{h}^{\rm in} \|_{L^\infty_{x,v}}+C\varepsilon^ke^{-\frac{\nu_ot}{\varepsilon}} \sup_{0\leq s \leq t}(e^{\frac{\nu_os}{2\varepsilon}}\|  \mathbf{h}\|_{L^{\infty}_{x,v}})^2\\
            & \hspace{0.4cm} +[C(1+\mathcal{I}_1)\varepsilon+C{\lambda}+\frac{C_{\lambda}}{N}+C\varepsilon^{4}]e^{-\frac{\nu_ot}{2\varepsilon}} \sup_{0\leq s \leq t}(e^{\frac{\nu_os}{2\varepsilon}}\|  \mathbf{h}\|_{L^{\infty}_{x,v}})\\
            & \hspace{0.4cm} + \frac{C_{N,\lambda}}{\varepsilon^{\frac{3}{2}}}\sup_{0\leq s \leq T_0}\| \mathbf{f}_R(s)\|_{L^{2}_{x,v}}+C\varepsilon^k\mathcal{I}_2.
        \end{split}
    \end{gather*}
 The analysis of $I_{2x}$ begins by substituting $v \to v_*$ in \eqref{linfHDDXXXADTODXT46} and inserting into \eqref{K2CDXJFBD22f}, which combined with prior estimates gives
    \begin{equation*}\label{xx45xeeccJOX}
        I_{2x}=\frac{1}{\varepsilon} \int_{0}^{t}\mathcal{F}_0^{\alpha}(t,s) K_{M,\left \langle v \right\rangle^{l}}^{\alpha,c}  (D_x\mathbf{h})(s,X^{\alpha}(s),V^{\alpha}(s))ds  = \sum_{i=0}^{3} \mathcal{Z}^{\alpha}_{i,x}(t) + \sum_{i=1}^{2}\mathcal{D}_{i,x}^\alpha(t),
    \end{equation*}
    where
    \begin{align*}
            &\mathcal{Z}^{\alpha}_{0,x}(t)=\frac{1}{\varepsilon} \int_{0}^{t} \mathcal{F}_0^\alpha(t,s) \sum_{\beta=A,B}\int_{\mathbb{R}^3}     k_{M,\left \langle v \right\rangle^{l}}^{\alpha\beta}(s,X^{\alpha}(s),V^{\alpha}(s),v_*) \\  &\hspace{5cm}\times\mathcal{F}_1^\beta(s,0) D_xh^\beta(0,X^{\alpha,\beta}_1(0),V^{\alpha,\beta}_1(0))  dv_*ds \\
            &\hspace{1cm}\lesssim \frac{t}{\varepsilon}e^{-\frac{\nu_ot}{\varepsilon}} \|D_x\mathbf{h}^{in}\|_{L^{\infty}_{x,v}},\\
            &\mathcal{Z}^{\alpha}_{1,x}(t)=-\frac{1}{\varepsilon} \int_{0}^{t} \mathcal{F}_0^\alpha(t,s) \sum_{\beta=A,B}\int_{\mathbb{R}^3} k_{M,\left \langle v \right\rangle^{l}}^{\alpha\beta}(s,X^{\alpha}(s), V^{\alpha}(s), v_*)\\
            &\hspace{1.3cm}\times \int_{0}^s \mathcal{F}_1^\beta(s,s_1) [\nabla_{x}(D_x\phi^{\varepsilon})\cdot\nabla_{v}h^{\alpha}+\frac{D_x\upsilon^\alpha}{\varepsilon}](s_1,X_1^{\alpha,\beta}(s_1),V_1^{\alpha,\beta}(s_1))ds_1  dv_*ds \\
            & \hspace{1cm}\lesssim \varepsilon  e^{-\frac{\nu_ot}{2\varepsilon}} \sup_{0\leq s \leq t}(e^{\frac{\nu_os}{2\varepsilon}}\Vert \nabla_{v}\mathbf{h}\Vert_{L^{\infty}_{x,v}})+ e^{-\frac{\nu_ot}{2\varepsilon}} \sup_{0\leq s \leq t}(e^{\frac{\nu_os}{2\varepsilon}}\Vert \mathbf{h}\Vert_{L^{\infty}_{x,v}}),\\
            &\mathcal{Z}^{\alpha}_{2,x}(t)=\frac{1}{\varepsilon} \int_{0}^{t} \mathcal{F}_0^\alpha(t,s) \sum_{\beta=A,B}\int_{\mathbb{R}^3} k_{M,\left \langle v \right\rangle^{l}}^{\alpha\beta}(s,X^{\alpha}(s), V^{\alpha}(s), v_*)\\
            &\hspace{4cm}\times\sum_{j=1}^6 \int_{0}^s \mathcal{F}_1^\beta(s,s_1) D_x\mathcal{H}^\beta_j(s_1,X_1^{\alpha,\beta}(s_1),V_1^{\alpha,\beta}(s_1))ds_1  dv_*ds \\
            & \hspace{1cm}\lesssim  \varepsilon^k e^{-\frac{\nu_ot}{\varepsilon}}\sup_{0\leq s \leq t}[(e^{\frac{\nu_os}{2\varepsilon}}\Vert \mathbf{h}(s) \Vert_{L^{\infty}_{x,v}})^2+(e^{\frac{\nu_os}{2\varepsilon}}\Vert D_x\mathbf{h}(s) \Vert_{L^{\infty}_{x,v}})^2]+\mathcal{I}_2 \varepsilon^k\\
            & \hspace{1.3cm}+ ({1+\mathcal{I}_1}) \varepsilon e^{-\frac{\nu_ot}{2\varepsilon}}\sup_{0\leq s \leq t}(e^{\frac{\nu_os}{2\varepsilon}}\Vert  \mathbf{h}(s) \Vert_{L^{\infty}_{x,v}}+e^{\frac{\nu_os}{2\varepsilon}}\Vert  D_x\mathbf{h}(s) \Vert_{L^{\infty}_{x,v}}),\\
            &\mathcal{Z}^{\alpha}_{3,x}(t)=\frac{1}{\varepsilon^2} \int_{0}^{t} \mathcal{F}_0^\alpha(t,s) \sum_{\beta=A,B}\int_{\mathbb{R}^3} k_{M,\left \langle v \right\rangle^{l}}^{\alpha\beta}(s,X^{\alpha}(s), V^{\alpha}(s), v_*)\\
            &\hspace{3.5cm} \times \int_{0}^s \mathcal{F}_1^\beta(s,s_1) D_x(K_{M,\left \langle v \right\rangle^{l}}^{\alpha,\chi}  \mathbf{h}) (s_1,X^{\alpha,\beta}_1(s_1),V^{\alpha,\beta}_1(s_1))ds_1 dv_* ds \\
            & \hspace{1.2cm} \lesssim  \delta^{3+1}e^{-\frac{\nu_ot}{2\varepsilon}}\sup_{0\leq s \leq t}(e^{\frac{\nu_os}{2\varepsilon}}\Vert \mathbf{h}(s) \Vert_{L^{\infty}_{x,v}}+e^{\frac{\nu_os}{2\varepsilon}}\Vert D_x\mathbf{h}(s) \Vert_{L^{\infty}_{x,v}}).
        \end{align*}
    By applying Lemma \ref{LeMK2ker4444} and adapting the methodology from Section 4.1, one obtains
    \begin{gather*}\label{HSP0101DD}
        \begin{split}
            |\mathcal{D}_{1,x}^\alpha(t)|
            \leq  (\frac{C}{N }+C_{\eta}\,e^{-\frac{\eta}{8} N^2}+C \lambda)\, e^{-\frac{\nu_ot}{2\varepsilon}}\sup_{0\leq s \leq t}(e^{\frac{\nu_os}{2\varepsilon}}\Vert  \mathbf{h}(s) \Vert_{L^{\infty}_{x,v}})+\frac{C_{N,\lambda,T_0}}{\varepsilon^{3/2}}  \sup_{0\leq s \leq t}\| \mathbf{f}_R(s)\|_{L^{2}_{x,v}}.
        \end{split}
    \end{gather*}
    The estimation of $\mathcal{D}_{2,x}^\alpha(t)$ follows the $L^{\infty}$ framework in Section 4.1 until the final conversion to $L^2$ norm. Beginning from this critical transition, consider Case 3b with velocity truncations:
   $|v|\leq N,  |v_*|\leq 2N,  |v_{**}|\leq 3N, $ and $s-s_1 \geq \varepsilon \lambda $
    where $\lambda$ is sufficiently small. The differential structure yields
    \begin{gather*}
        \begin{split}
            &\frac{1}{\varepsilon^2}\int_{0}^{t} \mathcal{F}^{\alpha}_0(t,s) \int_{|v^*|\leq 2N,  |v^{**}| \leq 3N } \sum_{\beta = A,B}\int_{0}^{s-\varepsilon\lambda}\mathcal{F}^{\beta}_1(s,s_1) k_{M,N}^{\alpha\beta}(s,X^{\alpha}(s),V^{\alpha}(s),v_*)\\
            & \times \sum_{\beta' = A,B}  k_{M,N}^{\beta \beta'}(s_1,X^{\alpha,\beta}_1(s_1),V^{\alpha,\beta}_1(s_1),v_{**}) (D_x\mathbf{h}) (s_1,X^{\alpha,\beta}_1(s_1),v_{**})ds_1 dv_{**}dv_* ds .
        \end{split}
    \end{gather*}
   Moreover, noting that $\frac{e^\alpha}{m^\alpha}$ remains constant, the variable substitution from $v_*$ to
    \begin{equation*}\label{CGOFVBL2}
        \zeta_{\alpha\beta}=X^{\alpha,\beta}_1(s_1)=X^{\beta}(s_1;s,X^{\alpha}(s;t,x,v),v_*),
    \end{equation*}
    such that
    \begin{equation*}
        |\zeta_{\beta\alpha}-X^{\alpha}(s)|=|X^{\beta,\alpha}_1(s_1)-X^{\alpha}(s)|\leq C (s-s_1).
    \end{equation*}
    Applying Lemma \ref{lemofcharacter} to $X^{\alpha,\beta}_1(s_1)=X^{\beta}(s_1;s,X^{\alpha}(s;t,x,v),v_*)$, with
    \begin{equation*}
        x=X^{\alpha}(s;t,x,v),\quad \tau=s_1, \quad t=s,
    \end{equation*}
   and integrating by parts, one obtains
    \begin{align}
            &\frac{C}{\varepsilon^2}\int_{0}^{t} \mathcal{F}^{\alpha}_0(t,s) \int_{D} \sum_{\beta = A,B}\int_{0}^{s-\varepsilon\lambda}\mathcal{F}^{\beta}_1(s,s_1) k_{M,N}^{\alpha\beta}(s,X^{\alpha}(s),V^{\alpha}(s),v_*) \notag\\
            &\hspace{1cm}\times  \sum_{\beta' = A,B}k_{M,N}^{\beta \beta'}(s_1,\zeta_{\alpha\beta},V^{\alpha,\beta}_1(s_1),v_{**})  D_x\mathbf{h} (s_1,\zeta_{\alpha\beta},v_{**}) \frac{dv_*}{d\zeta_{\alpha\beta}} ds_1 dv_{**}d\zeta_{\alpha\beta} ds \notag\\
            \leq    & -\frac{C}{\varepsilon^2}\int_{0}^{t} \mathcal{F}^{\alpha}_0(t,s) \int_{D} \sum_{\beta = A,B}\int_{0}^{s-\varepsilon\lambda}\mathcal{F}^{\beta}_1(s,s_1) (D_xk_{M,N}^{\alpha\beta})(s,X^{\alpha}(s),V^{\alpha}(s),v_*)\,\label{55d3h}\\
            &\hspace{1cm}\times \sum_{\beta' = A,B} k_{M,N}^{\beta \beta'}(s_1,\zeta_{\alpha\beta},V^{\alpha,\beta}_1(s_1),v_{**})   \mathbf{h} (s_1,\zeta_{\alpha\beta},v_{**}) \frac{dv_*}{d\zeta_{\alpha\beta}} ds_1 dv_{**}d\zeta_{\alpha\beta} ds \notag\\
            & -\frac{C}{\varepsilon^2}\int_{0}^{t} \mathcal{F}^{\alpha}_0(t,s) \int_{D} \sum_{\beta = A,B}\int_{0}^{s-\varepsilon\lambda}\mathcal{F}^{\beta}_1(s,s_1) k_{M,N}^{\alpha\beta}(s,X^{\alpha}(s),V^{\alpha}(s),v_*)\,\label{55d5h}\\
            &\hspace{1cm}\times \sum_{\beta' = A,B} (D_xk_{M,N}^{\beta \beta'})(s_1,\zeta_{\alpha\beta},V^{\alpha,\beta}_1(s_1),v_{**})  \mathbf{h} (s_1,\zeta_{\alpha\beta},v_{**}) \frac{dv_*}{d\zeta_{\alpha\beta}} ds_1 dv_{**}d\zeta_{\alpha\beta} ds \notag\\
            & -\frac{C}{\varepsilon^2}\int_{0}^{t} \mathcal{F}^{\alpha}_0(t,s) \int_{D} \sum_{\beta = A,B}\int_{0}^{s-\varepsilon\lambda}\mathcal{F}^{\beta}_1(s,s_1) k_{M,N}^{\alpha\beta}(s,X^{\alpha}(s),V^{\alpha}(s),v_*)\, \notag\\
            &\hspace{1cm}\times \sum_{\beta' = A,B} k_{M,N}^{\beta \beta'}(s_1,\zeta_{\alpha\beta},V^{\alpha,\beta}_1(s_1),v_{**})   \mathbf{h} (s_1,\zeta_{\alpha\beta},v_{**}) D_x(\frac{dv_*}{d\zeta_{\alpha\beta}}) ds_1 dv_{**}d\zeta_{\alpha\beta} ds\notag\\
            & -\frac{C}{\varepsilon^2}\int_{0}^{t} \mathcal{F}^{\alpha}_0(t,s) \int_{D} \sum_{\beta = A,B}\int_{0}^{s-\varepsilon\lambda}(D_x\mathcal{F}^{\beta}_1)(s,s_1) k_{M,N}^{\alpha\beta}(s,X^{\alpha}(s),V^{\alpha}(s),v_*)\,\label{55d9h}\\
            &\hspace{1cm}\times \sum_{\beta' = A,B} k_{M,N}^{\beta \beta'}(s_1,\zeta_{\alpha\beta},V^{\alpha,\beta}_1(s_1),v_{**})   \mathbf{h} (s_1,\zeta_{\alpha\beta},v_{**}) \frac{dv_*}{d\zeta_{\alpha\beta}} ds_1 dv_{**}d\zeta_{\alpha\beta} ds\notag\\
            &+\frac{C_{N,\lambda,T_0}}{\varepsilon^3}e^{-\frac{\nu_ot}{2\varepsilon}}\sup_{0\leq s \leq  T_0}(e^{\frac{\nu_os}{2\varepsilon}}\Vert \mathbf{h}\Vert_{L^{\infty}_{x,v}})\notag,
        \end{align}
    where
    $ D=\{|v_*|\leq 2N, |v_{**}|\leq 3N\}
    $, $\hat{D}=\{|\zeta-X(s)|\leq C (s-s_1) N, |v_{**}|\leq 3N\}$.\\
    \quad \\
    Then, following
    the same argument of $L^{\infty}$ bounds in Section 4.1, \eqref{55d3h},\eqref{55d5h} and \eqref{55d9h} are bounded by $\frac{C_{N,\lambda}}{\varepsilon^3}\sup_{0\leq s \leq T_0}\| \mathbf{f}_R(s)\|_{L^{2}_{x,v}}$. Next we estimate $|D_x(\frac{dv_*}{\zeta_{\alpha\beta}})|$ to bound the other part. Noting that
    \begin{equation*}
        D_x[\text{det}(\frac{dv_*}{d\zeta_{\alpha\beta}})]=D_x[\frac{1}{\text{det}(\frac{d\zeta_{\alpha\beta}}{dv_*})}]=-\frac{1}{[\text{det}(\frac{d\zeta_{\alpha\beta}}{dv_*})]^2}D_x[\text{det}(\frac{d\zeta_{\alpha\beta}}{dv_*})].
    \end{equation*}
    Lemma \ref{lemofcharacter} implies
    \begin{equation*}
        \frac{1}{4(s-s_1)^6}\leq \frac{1}{\text{det}(\frac{d\zeta_{\alpha\beta}}{dv_*})} \leq \frac{4}{(s-s_1)^6}.
    \end{equation*}
    Let us set
    \begin{equation*}
        X^{\alpha,\beta}_1(s_1)=\Big(X^{\alpha,\beta}_{1,1}(s_1),X^{\alpha,\beta}_{1,2}(s_1),X^{\alpha,\beta}_{1,3}(s_1)\Big), \quad v_*=(v_{*1},v_{*2},v_{*3}),
    \end{equation*}
    then the derivative matrix can be specifically represented as
    \begin{equation*}
        (\frac{d\zeta_{\alpha\beta}}{dv_*}) =   \begin{pmatrix}
            & \partial_{v_{*1}}X^{\alpha,\beta}_{1,1}(s_1) & \partial_{v_{*2}}X^{\alpha,\beta}_{1,1}(s_1)  & \partial_{v_{*3}}X^{\alpha,\beta}_{1,1}(s_1) \\
            & \partial_{v_{*1}}X^{\alpha,\beta}_{1,2}(s_1) & \partial_{v_{*2}}X^{\alpha,\beta}_{1,2}(s_1) & \partial_{v_{*3}}X^{\alpha,\beta}_{1,2}(s_1) \\
            & \partial_{v_{*1}}X^{\alpha,\beta}_{1,3}(s_1) & \partial_{v_{*2}}X^{\alpha,\beta}_{1,3}(s_1)  & \partial_{v_{*3}}X^{\alpha,\beta}_{1,3}(s_1)
        \end{pmatrix}.
    \end{equation*}
    By \eqref{ASSPDREVYVxXx}, the determinant is bounded by
    \begin{equation*}
        |D_x[\text{det}(\frac{d\zeta_{\alpha\beta}}{dv_*})]|\lesssim |\partial_{v_*}X_1^{\alpha,\beta}(s_1)|^2 \, |\partial_x\partial_{v_*}X_1^{\alpha,\beta}(s_1)|\lesssim (s-s_1)^2|\partial_x\partial_{v_*}X_1^{\alpha,\beta}(s_1)|.
    \end{equation*}
     Since $s_1-s\geq \lambda \varepsilon$, it yields that
    \begin{equation*}
        D_x[\text{det}(\frac{dv_*}{d\zeta_{\alpha\beta}})]=-\frac{1}{[\text{det}(\frac{d\zeta_{\alpha\beta}}{dv_*})]^2}D_x[\text{det}(\frac{d\zeta_{\alpha\beta}}{dv_*})]\lesssim \frac{|\partial_x\partial_{v_*}X_1^{\alpha,\beta}(s_1)|}{(s-s_1)^4}\lesssim \frac{|\partial_x\partial_{v_*}X_1^{\alpha,\beta}(s_1)|}{(\lambda\varepsilon)^4}.
    \end{equation*}
    From the above analysis, we conclude that
    \begin{gather*}
        \begin{split}
            &\|D_x[\text{det}(\frac{d\zeta_{\alpha\beta}}{dv_*})]\|^2_{L^{2}_{x,v}(\hat{D})}\leq \frac{C_{\lambda}}{\varepsilon^4}\|\partial_x\partial_{v_*}X_1^{\alpha,\beta}(s_1)\|_{L^{2}_{x,v}(\hat{D})}\\
            &=\frac{C_{\lambda}}{\varepsilon^4}\int_{\hat{D}}|\partial_x\partial_{v_*}X^{\beta}(s_1;s,z,v_*)|^2\text{det}\Big(\frac{\partial X^{\beta}(s_1;s,X^{\alpha}(s,x,v),v_{*})}{\partial X^{\alpha}(s,x,v)}\Big)dz dv_{**}dv_*ds_1ds\\
            &\leq \frac{C_{\lambda,N}}{\varepsilon^4}.
        \end{split}
    \end{gather*}
    Thus, the main integral admits the bound
    \begin{gather*}
        \begin{split}
            &\frac{C}{\varepsilon^2}\Big|\int_{0}^{t} \mathcal{F}^{\alpha}_0(t,s) \int_{D} \sum_{\beta = A,B}\int_{0}^{s-\varepsilon\lambda}\mathcal{F}^{\beta}_1(s,s_1) k_{M,N}^{\alpha\beta}(s,X^{\alpha}(s),V^{\alpha}(s),v_*)\\
            &\hspace{0.5cm}\times  \sum_{\beta' = A,B} k_{M,N}^{\beta \beta'}(s_1,\zeta_{\alpha\beta},V^{\alpha,\beta}_1(s_1),v_{**}) \mathbf{h} (s_1,X^{\alpha,\beta}_1(s_1),v_{**})D_x(\frac{dv_*}{\zeta_{\alpha\beta}})ds_1 dv_{**}d\zeta_{\alpha\beta} ds\Big|\\
            &\leq \frac{C_{\lambda,T_0,N}}{\varepsilon^4}\sup_{0\leq s \leq T_0}\| \mathbf{f}_R(s)\|_{L^{2}_{x,v}}.
        \end{split}
    \end{gather*}
    Consequently, it holds that
    \begin{gather*}\label{HDDD22E4}
        \begin{split}
            \mathcal{D}_{2,x}^\alpha(t)
            \leq & (\frac{C}{N }+C_{\eta}\,e^{-\frac{\eta}{8} N^2}+C \lambda)\, e^{-\frac{\nu_ot}{2\varepsilon}}\sup_{0\leq s \leq t}(e^{\frac{\nu_os}{2\varepsilon}}\Vert D_x \mathbf{h}(s) \Vert_{L^{\infty}_{x,v}})+\frac{C_{N,\lambda,T_0}}{\varepsilon^{4}}  \sup_{0\leq s \leq t}\| \mathbf{f}_R(s)\|_{L^{2}_{x,v}}.
        \end{split}
    \end{gather*}
     In addition, $I_{2x}$ admits the following bound
    \begin{gather*}
        \begin{split}
            I_{2x}  & \leq \, C\frac{t}{\varepsilon}e^{-\frac{\nu_ot}{\varepsilon}}\| D_x\mathbf{h}^{\rm in} \|_{L^\infty_{x,v}}+C\varepsilon^ke^{-\frac{\nu_ot}{\varepsilon}} \sup_{0\leq s \leq t}[(e^{\frac{\nu_os}{2\varepsilon}}\|  \mathbf{h}\|_{L^{\infty}_{x,v}})^2+(e^{\frac{\nu_os}{2\varepsilon}}\|  D_x\mathbf{h}\|_{L^{\infty}_{x,v}})^2]\\
            & \hspace{0.4cm} +[C(1+\mathcal{I}_1)\varepsilon+C{\lambda}+\frac{C_{\lambda}}{N}+C\delta^{4}]e^{-\frac{\nu_ot}{2\varepsilon}} \sup_{0\leq s \leq t}(e^{\frac{\nu_os}{2\varepsilon}}\|  \mathbf{h}\|_{L^{\infty}_{x,v}}+e^{\frac{\nu_os}{2\varepsilon}}\|  D_x\mathbf{h}\|_{L^{\infty}_{x,v}})\\
            & \hspace{0.4cm} + \frac{C_{\lambda,T_0,N}}{\varepsilon^4}\sup_{0\leq s \leq T_0}\| \mathbf{f}_R(s)\|_{L^{2}_{x,v}}+\frac{C_{N,\lambda,T_0}}{\varepsilon^3}e^{-\frac{\nu_ot}{2\varepsilon}}\sup_{0\leq s \leq  T_0}(e^{\frac{\nu_os}{2\varepsilon}}\Vert \mathbf{h}\Vert_{L^{\infty}_{x,v}})\\
            & \hspace{0.4cm}+\varepsilon  e^{-\frac{\nu_ot}{2\varepsilon}} \sup_{0\leq s \leq t}(e^{\frac{\nu_os}{2\varepsilon}}\Vert \nabla_{v}\mathbf{h}\Vert_{L^{\infty}_{x,v}})+C\varepsilon^k\mathcal{I}_2.
        \end{split}
    \end{gather*}

For fixed $\lambda>0$, selecting $N>0$ sufficiently large, we show that
    \begin{align}
            \sup_{0\leq s \leq  T_0}(e^{\frac{\nu_os}{2\varepsilon}}&\Vert D_x\mathbf{h}\Vert_{L^{\infty}_{x,v}}) \leq C (\Vert  \mathbf{h}^{\rm in}\Vert^2_{L^{\infty}_{x,v}}+\Vert D_x\mathbf{h}^{\rm in}\Vert_{L^{\infty}_{x,v}})+C\varepsilon \sup_{0\leq s \leq T_0}(e^{\frac{\nu_os}{2\varepsilon}}\Vert \nabla_{v}\mathbf{h}\Vert_{L^{\infty}_{x,v}})\notag\\
            &+(\frac{C_{\lambda}}{N}+C\lambda+C\varepsilon+\delta^4)\sup_{0\leq s \leq  T_0}(e^{\frac{\nu_os}{2\varepsilon}}\Vert D_x\mathbf{h}\Vert_{L^{\infty}_{x,v}}+e^{\frac{\nu_os}{2\varepsilon}}\Vert \mathbf{h}\Vert_{L^{\infty}_{x,v}})\label{44W1INFXXXCL}\\
            &+\frac{C}{\varepsilon^3}\sup_{0\leq s \leq  T_0}(e^{\frac{\nu_os}{2\varepsilon}}\Vert \mathbf{h}\Vert_{L^{\infty}_{x,v}})+\frac{C_{\lambda,N}}{\varepsilon^4}e^{\frac{\nu_oT_0}{2\varepsilon}}\sup_{0\leq s \leq T_0}\| \mathbf{f}_R(s)\|_{L^{2}_{x,v}}\notag\\
            &+C\varepsilon^k \sup_{0\leq s \leq T_0}[(e^{\frac{\nu_os}{2\varepsilon}}\|  \mathbf{h}\|_{L^{\infty}_{x,v}})^2+(e^{\frac{\nu_os}{2\varepsilon}}\| D_x \mathbf{h}\|_{L^{\infty}_{x,v}})^2]+C\varepsilon^{k-1}e^{\frac{\nu_oT_0}{2\varepsilon}}\notag.
    \end{align}
    \begin{remark}\label{GYKGET6RE}
        From the above estimate, it can be seen that the process of transforming $L^{\infty}$
        into $L^2$ through integration by parts will generate the term $\frac{C_{\lambda,N}}{\varepsilon^4} e^{\frac{\nu_0 T_0}{2\varepsilon}}$. Due to the coefficient $C_{\lambda,N}$ in front of it, we need an additional $\varepsilon$ to multiply with it in order to make it smaller. Hence, we require $\varepsilon^5$. Therefore, the index $k$ of the expansion term must be greater than or equal to $6$.
    \end{remark}
    We note that \eqref{44W1INFXXXCL} contains the term $C\varepsilon \displaystyle{\sup_{0\leq s \leq T_0}}(e^{\frac{\nu_os}{2\varepsilon}}\Vert \nabla_{v}\mathbf{h}\Vert_{L^{\infty}_{x,v}})$. As a result, we need to estimate $\| D_v \mathbf{h}\|_{L^{\infty}_{x,v}}$ to close the energy.

    \subsection{$W_v^{1,\infty}$ estimate } Taking
    $D_v$ on both sides of \eqref{HDSTLINFEQMAIN}, one obtains
    \begin{eqnarray}\label{linfVfHADTVVODXT}
        \begin{split}
            &\hspace{0.5cm}\partial_{t}(D_vh^{\alpha})+v\cdot\nabla_{x}(D_vh^{\alpha})+\frac{e^{\alpha}}{m^{\alpha}} \nabla_{x}\phi^{\varepsilon}\cdot\nabla_{v}  (D_vh^{\alpha}) + \frac{\upsilon^\alpha}{\varepsilon}  (D_vh^{\alpha}) \\
            &=-D_xh^{\alpha}-\frac{D_v\upsilon^\alpha}{\varepsilon}h^{\alpha}-D_v[ \frac{1}{\varepsilon}(K_{M,\left \langle v \right\rangle^{l}}^{\alpha,\chi}+K_{M,\left \langle v \right\rangle^{l}}^{\alpha,c})\mathbf{h}] + \sum_{i=1}^{6}D_v, (\mathcal{H}^\alpha_i),
        \end{split}
    \end{eqnarray}
    where $\mathcal{H}^\alpha_i$ is described in \eqref{HD00HI6}. Along the trajectory, the solution $D_vh^{\alpha}$ of the equation \eqref{linfVfHADTVVODXT} can be expressed as
        \begin{align}
            D_vh^\alpha(t,x,v) =\, &\mathcal{F}^\alpha_0(t,0)\,(D_vh^\alpha)(0,X^{\alpha}(0;t,x,v),V^{\alpha}(0;t,x,v)) \notag\\
            &-\int_{0}^{t}\mathcal{F}^\alpha_0(t,s) (D_xh^\alpha) (s, X^{\alpha}(s),V^{\alpha}(s))ds\notag\\
            &- \frac{1}{\varepsilon} \int_{0}^{t}\mathcal{F}^\alpha_0(t,s) (D_v\upsilon^\alpha) \, h^{\alpha} (s, X^{\alpha}(s),V^{\alpha}(s))ds\notag\\
            & + \int_{0}^{t}\mathcal{F}^\alpha_0(t,s) \sum_{i=1}^{6} (D_v\mathcal{H}^\alpha_j) (s, X^{\alpha}(s),V^{\alpha}(s))ds  \label{linfVHADTVVODXT}\\
            &- \frac{1}{\varepsilon} \int_{0}^{t}\mathcal{F}^\alpha_0(t,s)  D_v (K_{M,\left \langle v \right\rangle^{l}}^{\alpha,\chi} \mathbf{h}) (s, X^{\alpha}(s),V^{\alpha}(s))ds\notag\\
            &- \frac{1}{\varepsilon} \int_{0}^{t}\mathcal{F}^\alpha_0(t,s)  D_v (K_{M,\left \langle v \right\rangle^{l}}^{\alpha,c} \mathbf{h}) (s, X^{\alpha}(s),V^{\alpha}(s))ds\notag.
        \end{align}
    As in the spatial derivative case, the first line and the second line in \eqref{linfVHADTVVODXT} can be estimated
    \begin{equation*}
        \left|\mathcal{F}_0^{\alpha}(t,0) (D_vh^\alpha) (0,X^{\alpha}(0;t,x,v),V^{\alpha}(0;t,x,v))\right| \lesssim \varepsilon  e^{-\frac{\nu_ot}{\varepsilon}} \|D_v\mathbf{h}^{in}\|_{L^\infty_{x,v}},
    \end{equation*}
     and
    \begin{gather*}
        \begin{split}
            &\left|\int_{0}^{t}\mathcal{F}^\alpha_0(t,s) (D_xh^\alpha) (s, X^{\alpha}(s),V^{\alpha}(s))ds\right|
            \lesssim \, \varepsilon  e^{-\frac{\nu_ot}{\varepsilon}} \sup_{0\leq s \leq t}(e^{\frac{\nu_os}{2\varepsilon}}\Vert D_xh^{\alpha}\Vert_{L^{\infty}_{x,v}}).
        \end{split}
    \end{gather*}
    Moreover, from the expression in \eqref{nuKsuanzidef} for hard sphere case ($\gamma=1$), we have
    \begin{gather*}
        \begin{split}
            |D_v\upsilon^\alpha|\leq &\sum_{\beta=A,B} \int_{\mathbb{R}^3\times \mathbb{S}^2}
            |D_vB^{ \alpha \beta}|(|v-v_*|, \cos \theta) \mu^{\beta}(v_*) d\sigma dv_*
             \lesssim  \left \langle v \right\rangle,
        \end{split}
    \end{gather*}
    then the third line in \eqref{linfVHADTVVODXT} is bounded by
    \begin{gather*}
        \begin{split}
            &\frac{1}{\varepsilon}\left|\int_{0}^{t}\mathcal{F}^\alpha_0(t,s) (D_v\upsilon^\alpha) h^{\alpha}(s, X^{\alpha}(s),V^{\alpha}(s))ds\right|
            \lesssim \,  e^{-\frac{\nu_ot}{2\varepsilon}} \sup_{0\leq s \leq t}(e^{\frac{\nu_os}{2\varepsilon}}\Vert h^{\alpha}\Vert_{L^{\infty}_{x,v}}).
        \end{split}
    \end{gather*}
    Now, let us turn to the fourth line in \eqref{linfVHADTVVODXT}. First, from lemma \ref{DDFXXEST}, one gets
    \begin{equation*}
        \sum_{\beta=A,B}\Big|D_v\Big[\frac{ \left \langle v \right\rangle^{l}}{\sqrt{\mu^{\alpha}_M}} Q^{ \alpha\beta }(\frac{\sqrt{\mu^{\alpha}_M}}{\left \langle v \right\rangle^{l}}h^\alpha, \frac{\sqrt{\mu^{\beta}_M}}{\left \langle v \right\rangle^{l}} h^\beta)\Big]\Big|\lesssim \, \left \langle v \right\rangle \Vert \mathbf{h}\Vert_{L^{\infty}_{x,v}} (\Vert \left \langle v \right\rangle\mathbf{h}\Vert_{L^{\infty}_{x,v}}+\Vert D_v\mathbf{h}\Vert_{L^{\infty}_{x,v}}),
    \end{equation*}
    which implies
    \begin{align*}
            &\left| \int_{0}^{t}\mathcal{F}^\alpha_0(t,s) (D_v\mathcal{H}^\alpha_1) (s, X^{\alpha}(s),V^{\alpha}(s))ds\right| \\
            &\hspace{0.4cm}\lesssim \,\varepsilon^k  e^{-\frac{\nu_ot}{\varepsilon}} \sup_{0\leq s \leq t}\{(e^{\frac{\nu_os}{2\varepsilon}}\|  \mathbf{h}\|_{L^{\infty}_{x,v}})[(e^{\frac{\nu_os}{2\varepsilon}}\| \left \langle v \right\rangle  \mathbf{h}\|_{L^{\infty}_{x,v}})+(e^{\frac{\nu_os}{2\varepsilon}}\|  D_v\mathbf{h}\|_{L^{\infty}_{x,v}})]\}.
    \end{align*}
   Next, by the exponential decay property of $F_i$, the following estimate holds:
    \begin{gather*}
        \begin{split}
            |D_v\mathcal{H}_2^{\alpha}|
            \lesssim   \, \left \langle v \right\rangle \mathcal{I}_1 \,(\Vert \left \langle v \right\rangle
            \mathbf{h} \Vert_{L^\infty_{x,v}}+\Vert D_v\mathbf{h} \Vert_{L^\infty_{x,v}}),
        \end{split}
    \end{gather*}
    and
    \begin{align*}
            &\left| \int_{0}^{t}\mathcal{F}^\alpha_0(t,s) (D_v\mathcal{H}^\alpha_2) (s, X^{\alpha}(s),V^{\alpha}(s))ds\right|\\
            &\hspace{0.4cm}\lesssim \,\varepsilon \mathcal{I}_1 e^{-\frac{\nu_ot}{2\varepsilon}} \sup_{0\leq s \leq t}(e^{\frac{\nu_os}{2\varepsilon}}\| \left \langle v \right\rangle  \mathbf{h}\|_{L^{\infty}_{x,v}}+e^{\frac{\nu_os}{2\varepsilon}}\| D_v \mathbf{h}\|_{L^{\infty}_{x,v}}).
    \end{align*}
    Since $D_v[\frac{\left \langle v \right\rangle^{l}}{\sqrt{\mu^{\alpha}_M}} \nabla_{v}(\frac{\sqrt{\mu^{\alpha}_M}}{\left \langle v \right\rangle^{l}})]\leq C \left \langle v \right\rangle^{2}$, it satisfies
    \begin{eqnarray*}
        \begin{split}
            |D_v\mathcal{H}^\alpha_3|
            \lesssim \left \langle v \right\rangle (\Vert\left \langle v \right\rangle \mathbf{h} \Vert_{L^{\infty}_{x,v}}+\Vert D_v\mathbf{h} \Vert_{L^{\infty}_{x,v}}).
        \end{split}
    \end{eqnarray*}
    By \eqref{decayLxvFf} and the assumption \eqref{SUPPOSELINFGXV}, one gets
    \begin{eqnarray*}
        \begin{split}
            |D_v\mathcal{H}^\alpha_4|
            \lesssim (1+\varepsilon\mathcal{I}_1)\left \langle v \right\rangle (\Vert\mathbf{h} \Vert_{L^{\infty}_{x,v}}+\Vert D_x\mathbf{h} \Vert_{L^{\infty}_{x,v}}),
        \end{split}
    \end{eqnarray*}
    and
    \begin{gather*}
        \begin{split}
            &\left| \int_{0}^{t}\mathcal{F}^\alpha_0(t,s) (D_v\mathcal{H}^\alpha_3+D_v\mathcal{H}^\alpha_4)(s, X^{\alpha}(s),V^{\alpha}(s))ds\right| \\
            &\hspace{0.3cm}\lesssim \,\, \varepsilon (1+\varepsilon\mathcal{I}_1) e^{-\frac{\nu_ot}{2\varepsilon}} \sup_{0\leq s \leq t}(e^{\frac{\nu_os}{2\varepsilon}}\| \left \langle v \right\rangle  \mathbf{h}\|_{L^{\infty}_{x,v}}+e^{\frac{\nu_os}{2\varepsilon}}\| D_v \mathbf{h}\|_{L^{\infty}_{x,v}}).
        \end{split}
    \end{gather*}
 Finally, Lemma \ref{FI2KM1EST} provides the estimate of
    $\mathbf{F}_i$, then we have
    \begin{align*}
            &\hspace{1.5cm}|D_v\mathcal{H}^\alpha_5| +|D_v\mathcal{H}^\alpha_6| \lesssim   \, \varepsilon^{k-1} \left\langle v \right\rangle
                \mathcal{I}_2,\\
            &\left| \int_{0}^{t}\mathcal{F}^\alpha_0(t,s) (D_v\mathcal{H}^\alpha_5+D_v\mathcal{H}^\alpha_6)(s, X^{\alpha}(s),V^{\alpha}(s))ds \right|
            \lesssim \, \varepsilon^k \mathcal{I}_2.
     \end{align*}
    For the singular part in the fifth line, we first split
    \begin{equation*}
        (D_vK_{M,\left \langle v \right\rangle^{l}}^{\alpha,\chi} \mathbf{h})=(D_vK_{M,\left \langle v \right\rangle^{l}}^{\alpha,\chi})\mathbf{h}+K_{M,\left \langle v \right\rangle^{l}}^{\alpha,\chi}(D_v\mathbf{h}).
    \end{equation*}
    Employing Lemma \ref{kchiestmainle}\,($\gamma=1$), one has
    \begin{gather*}
        \begin{split}
            |K_{M,\left \langle v \right\rangle^{l}}^{\alpha,\chi}(D_v\mathbf{h})(v) | &\lesssim \, {\delta}^{3+1} \left \langle v \right\rangle e^{-c|v|^2} \| D_v\mathbf{h} \|_{L^\infty_{x,v}}, \\
            |(D_vK_{M,\left \langle v \right\rangle^{l}}^{\alpha,\chi})\mathbf{h}(v) | &\lesssim \, {\delta}^{2+1} \left \langle v \right\rangle e^{-c|v|^2} \| \mathbf{h} \|_{L^\infty_{x,v}}.
        \end{split}
    \end{gather*}
    Under hard sphere conditions, one obtains
    \begin{align*}
            &\frac{1}{\varepsilon}\left| \int_{0}^{t}\mathcal{F}^\alpha_0(t,s) D_v(K_{M,\left \langle v \right\rangle^{l}}^{\alpha,\chi} \mathbf{h})(s, X^{\alpha}(s),V^{\alpha}(s))ds\right|\\
            &\hspace{0.4cm}\lesssim  \,\, \delta^{3}  e^{-\frac{\nu_ot}{2\varepsilon}} \sup_{0\leq s \leq t}(e^{\frac{\nu_os}{2\varepsilon}}\|  \mathbf{h}\|_{L^{\infty}_{x,v}}+e^{\frac{\nu_os}{2\varepsilon}}\|D_v \mathbf{h}\|_{L^{\infty}_{x,v}}).
    \end{align*}

    Finally, the analysis now concentrates on the sixth line, where the regular part admits the decomposition
    \begin{equation*}
        D_v(K_{M,\left \langle v \right\rangle^{l}}^{\alpha,c} \mathbf{h})=(D_vK_{M,\left \langle v \right\rangle^{l}}^{\alpha,c})\mathbf{h}+K_{M,\left \langle v \right\rangle^{l}}^{\alpha,c}(D_v\mathbf{h}),
    \end{equation*}
    which satisfies
    \begin{eqnarray}\label{HDVVVK2CDXJFBD}
        \begin{split}
            &(D_vK_{M,\left \langle v \right\rangle^{l}}^{\alpha,c}) \mathbf{h}(s, X^{\alpha}(s),V^{\alpha}(s))\\
            =&\sum_{\beta=A,B}\int_{\mathbb{R}^3} (D_vk_{M,w}^{\alpha \beta}) (s,X^{\alpha}(s),V^{\alpha}(s),v_*)h^{\beta}(s,X^{\alpha}(s),v_*) d v_{*},
        \end{split}
    \end{eqnarray}
    and
    \begin{eqnarray}\label{DHVVVK2CDXJFBD}
        \begin{split}
            &K_{M,\left \langle v \right\rangle^{l}}^{\alpha,c} (D_v\mathbf{h})(s, X^{\alpha}(s),V^{\alpha}(s))\\
            =&\sum_{\beta=A,B}\int_{\mathbb{R}^3} k_{M,w}^{\alpha \beta} (s,X^{\alpha}(s),V^{\alpha}(s),v_*)(D_vh^{\beta})(s,X^{\alpha}(s),v_*) d v_{*}.
        \end{split}
    \end{eqnarray}
    We introduce
    \begin{gather*}
        \begin{split}
            & \frac{1}{\varepsilon}\int_{0}^{t}\mathcal{F}^\alpha_0(t,s) (D_vK_{M,\left \langle v \right\rangle^{l}}^{\alpha,c})\mathbf{h}(s, X^{\alpha}(s),V^{\alpha}(s))ds:=I_{1v},\\
            & \frac{1}{\varepsilon}\int_{0}^{t}\mathcal{F}^\alpha_0(t,s) K_{M,\left \langle v \right\rangle^{l}}^{\alpha,c}(D_v\mathbf{h})(s, X^{\alpha}(s),V^{\alpha}(s))ds:=I_{2v}.
        \end{split}
    \end{gather*}
  Then, substituting \eqref{HDliterative1111} into \eqref{HDVVVK2CDXJFBD}, leads to
    \begin{equation}\label{xxxdiccJOX}
        I_{1v}=\frac{1}{\varepsilon} \int_{0}^{t}\mathcal{F}_0^{\alpha}(t,s) (D_vK_{M,\left \langle v \right\rangle^{l}}^{\alpha,c})  \mathbf{h}(s,X^{\alpha}(s),V^{\alpha}(s))ds  = \sum_{i=0}^{2} \mathcal{J}^{\alpha}_{i,v}(t) + \mathcal{O}_{v}^\alpha(t).
    \end{equation}
   The expression \eqref{K222XVE} implies
    \begin{equation*}
        |(D_v  K_{M,\left \langle v \right\rangle^{l}}^{\alpha,c} )\mathbf{h} (s,X^{\alpha}(s),V^{\alpha}(s)) |\leq C  \left \langle V^{\alpha}(s) \right\rangle^{\gamma}  | \left \langle v_* \right\rangle\mathbf{h} |_{L^\infty_v}.
    \end{equation*}
    Note that
    \begin{eqnarray*}
        | K_{M,\left \langle v \right\rangle^{l}}^{\alpha,c}\mathbf{h}(v) |+| (D_x  K_{M,\left \langle v \right\rangle^{l}}^{\alpha,c})\mathbf{h}(v) | \leq C  \left \langle v \right\rangle^{\gamma-1}  |\mathbf{h} |_{L^\infty_v},
    \end{eqnarray*}
    which means that $D_v$  introduces an additional polynomial weight $\left \langle v_* \right\rangle$ to the operator $K_{M,\left \langle v \right\rangle^{l}}^{\alpha,c}$. For $|v_*|\leq 1$ and $T_0$ small enough, it holds that
    \begin{equation*}\label{les1estTZX}
        \sup_{s\in [0,T_0]}|V^{\alpha,\beta}_1(s_1)|\leq C.
    \end{equation*}
    When $|v_*|> 1$,  for $0\leq s\leq t \leq T_0$, we have
    \begin{equation*}\label{ges1estTZX}
        \frac{|v_*|}{2} \leq    |V^{\alpha,\beta}_1(s_1)|\leq 2 |v_*|.
    \end{equation*}
   The conversion from $v_*$ to $V^{\alpha,\beta}_1(s_1)$ leads to
    \begin{eqnarray*}
        \begin{split}
            \mathcal{J}^{\alpha}_{0,v}(t)=&\frac{1}{\varepsilon} \int_{0}^{t} \mathcal{F}_0^\alpha(t,s) \sum_{\beta=A,B}\int_{\mathbb{R}^3}     (D_vk_{M,w}^{\alpha\beta})(s,X^{\alpha}(s),V^{\alpha}(s),v_*) \\  &\hspace{1.9cm}\times\mathcal{F}_1^\beta(s,0) h^\beta(0,X^{\alpha,\beta}_1(0),V^{\alpha,\beta}_1(0))  dv_*ds \\
            \lesssim &\frac{t}{\varepsilon}e^{-\frac{\nu_ot}{\varepsilon}} \|\left \langle v \right\rangle\mathbf{h}^{in}\|_{L^{\infty}_{x,v}},\\
            \mathcal{J}^{\alpha}_{1,v}(t)=&\frac{1}{\varepsilon} \int_{0}^{t} \mathcal{F}_0^\alpha(t,s) \sum_{\beta=A,B}\int_{\mathbb{R}^3} (D_vk_{M,w}^{\alpha\beta})(s,X^{\alpha}(s), V^{\alpha}(s), v_*)\\
            &\hspace{0.5cm}\times\sum_{j=1}^6 \int_{0}^s \mathcal{F}_1^\beta(s,s_1) \mathcal{H}^\beta_j(s_1,X_1^{\alpha,\beta}(s_1),V_1^{\alpha,\beta}(s_1))ds_1  dv_*ds \\
            \lesssim & ({1+\mathcal{I}_1}) \varepsilon e^{-\frac{\nu_ot}{2\varepsilon}}\sup_{0\leq s \leq t}(e^{\frac{\nu_os}{2\varepsilon}}\Vert  \left \langle v \right\rangle\mathbf{h}(s) \Vert_{L^{\infty}_{x,v}})+\mathcal{I}_2 \varepsilon^k \\
            &\hspace{2.7cm}+\varepsilon^k e^{-\frac{\nu_ot}{\varepsilon}} \sup_{0\leq s \leq t}[(e^{\frac{\nu_os}{2\varepsilon}}\Vert\left \langle v \right\rangle \mathbf{h}(s) \Vert_{L^{\infty}_{x,v}})^2].
        \end{split}
    \end{eqnarray*}
   The equation \eqref{ZZZzhuanghuagezV} further reveals
    \begin{eqnarray*}
        \begin{split}
            \frac{V^{\alpha}(s)}{v_*}\frac{w_{\gamma}(V^{\alpha}(s))}{w_{\gamma}(v_*)}|k_{M,1}^{\alpha \beta}(V^{\alpha}(s),v_*)|\leq C |v-v_*|^{\gamma}\,\,e^{-c(|V^{\alpha}(s)|^2+|v_*|^2)},
        \end{split}
    \end{eqnarray*}
    and
    \begin{align*}
            &\frac{V^{\alpha}(s)}{v_*}\frac{w_{\gamma}(V^{\alpha}(s))}{w_{\gamma}(v_*)}|k_{M,2}^{\alpha \beta}(V^{\alpha}(s),v_*)|\\
            &\hspace{0.4cm}\leq C \frac{(1+|V^{\alpha}(s)|+|v_*|)^{\gamma-1}}{|V^{\alpha}(s)-v_*|} \, e^{-c(|V^{\alpha}(s)-v_*|^2+\frac{||V^{\alpha}(s)|^2-|v_*|^2|^2}{|V^{\alpha}(s)-v_*|^2})}.
    \end{align*}
    Through conversion chain $v_* \to V^{\alpha,\beta}_1(s_1) \to v_{**}$ and \eqref{kchiestmain}, the final term satisfies
    \begin{eqnarray*}
        \begin{split}
            &\mathcal{J}^{\alpha}_{2,v}(t)=\frac{1}{\varepsilon^2} \int_{0}^{t} \mathcal{F}_0^\alpha(t,s) \sum_{\beta=A,B}\int_{\mathbb{R}^3} (D_vk_{M,w}^{\alpha\beta})(s,X^{\alpha}(s), V^{\alpha}(s), v_*)\\
            &\hspace{2cm} \times \int_{0}^s \mathcal{F}_1^\beta(s,s_1) K_{M,\left \langle v \right\rangle^{l}}^{\alpha,\chi}  \mathbf{h} (s_1,X^{\alpha,\beta}_1(s_1),V^{\alpha,\beta}_1(s_1))ds_1 dv_* ds \\
            & \hspace{1cm} \lesssim \, \delta^{2+\gamma}e^{-\frac{\nu_ot}{2\varepsilon}}\sup_{0\leq s \leq t} (e^{\frac{\nu_os}{2\varepsilon}}\Vert \mathbf{h}(s) \Vert_{L^{\infty}_{x,v}}).
        \end{split}
    \end{eqnarray*}
    The last term in \eqref{xxxdiccJOX} is expressed by
    \begin{gather*}
        \begin{split}
            \mathcal{O}_v^\alpha(t) = & \frac{1}{\varepsilon^2} \int_{0}^{t} \mathcal{F}^{\alpha}_0(t,s) \sum_{\beta=A,B}\int_{\mathbb{R}^3} (D_vk_{M,\left \langle v \right\rangle^{l}}^{\alpha\beta}) (s,X^{\alpha}(s),V^{\alpha}(s),v_*)  \int_{0}^s \mathcal{F}_1^\beta(s,s_1) \\
            &\hspace{4cm} \times (K_{M,\left \langle v \right\rangle^{l}}^{\beta,c} \mathbf{h}) (s_1,X^{\alpha,\beta}_1(s_1),V^{\alpha,\beta}_1(s_1))ds_1 dv_* ds\\
            \leq& \frac{1}{\varepsilon^2} \int_{0}^{t} \mathcal{F}^{\alpha}_0(t,s) \sum_{\beta=A,B}\int_{\mathbb{R}^3} |D_v(k_{M,\left \langle v \right\rangle^{l}}^{\alpha\beta})|(s,X^{\alpha}(s),V^{\alpha}(s),v_*) \sum_{\beta'=A,B}\int_{0}^s \mathcal{F}_1^\beta(s,s_1) \\
            &\hspace{0.5cm} \times \int_{\mathbb{R}^3}  k_{M,\left \langle v \right\rangle^{l}}^{\beta\beta^{'}}(s_1,X^{\alpha,\beta}_1(s_1),V^{\alpha,\beta}_1(s_1),v_{**}) |h^{\beta'}| (s_1,X^{\alpha,\beta}_1(s_1),v_{**}) dv_{**}ds_1 dv_* ds.
        \end{split}
    \end{gather*}
    Building upon the transformation from $v_{*}$ to $v_{**}$, Cases 1-3b in Section 4.1 are reapplied with Lemma~\ref{LeMK2ker4444}'s estimates for $D_v(k_{M,\langle v \rangle^{l}}^{\alpha\beta})$, that is
    \begin{equation*}
        \begin{split}
            |\mathcal{O}_v^\alpha(t)| \leq & \frac{C}{N }\, e^{-\frac{\nu_ot}{2\varepsilon}}\sup_{0\leq s \leq t}(e^{\frac{\nu_os}{2\varepsilon}}\Vert \left \langle v \right \rangle \mathbf{h}(s) \Vert_{L^{\infty}_{x,v}})+C_{\eta}\,e^{-\frac{\eta}{8} N^2}\, e^{-\frac{\nu_ot}{2\varepsilon}}\sup_{0\leq s \leq t}(e^{\frac{\nu_os}{2\varepsilon}}\Vert \left \langle v \right \rangle \mathbf{h}(s) \Vert_{L^{\infty}_{x,v}})\\
            &+C \lambda  e^{-\frac{\nu_ot}{2\varepsilon}}\sup_{0\leq s \leq t}(e^{\frac{\nu_os}{2\varepsilon}}\Vert \left \langle v \right \rangle \mathbf{h}(s) \Vert_{L^{\infty}_{x,v}})+\frac{C_{N,\lambda,T_0}}{\varepsilon^{3/2}}  \sup_{0\leq s \leq t}\| \mathbf{f}_R(s)\|_{L^{2}_{x,v}}.
        \end{split}
    \end{equation*}
    Hence, the $I_{1v}$ bound emerges as:
    \begin{gather*}
        \begin{split}
            I_{1v}  & \leq \, C\frac{t}{\varepsilon}e^{-\frac{\nu_ot}{\varepsilon}}\| \left \langle v \right \rangle \mathbf{h}^{\rm in} \|_{L^\infty_{x,v}}+C\varepsilon^ke^{-\frac{\nu_ot}{\varepsilon}} \sup_{0\leq s \leq t}(e^{\frac{\nu_os}{2\varepsilon}}\| \left \langle v \right \rangle \mathbf{h}\|_{L^{\infty}_{x,v}})^2\\
            & \hspace{0.4cm} +[C(1+\mathcal{I}_1)\varepsilon+C{\lambda}+\frac{C_{\lambda}}{N}+C\delta^{2+1}]e^{-\frac{\nu_ot}{2\varepsilon}} \sup_{0\leq s \leq t}(e^{\frac{\nu_os}{2\varepsilon}}\|  \left \langle v \right \rangle\mathbf{h}\|_{L^{\infty}_{x,v}})\\
            & \hspace{0.4cm} + \frac{C_{N,\lambda}}{\varepsilon^{\frac{3}{2}}}\sup_{0\leq s \leq T_0}\| \mathbf{f}_R(s)\|_{L^{2}_{x,v}}+C\varepsilon^k\mathcal{I}_2.
        \end{split}
    \end{gather*}

    Next, we turn to $I_{2v}$, and
     substitute \eqref{linfVHADTVVODXT} into \eqref{DHVVVK2CDXJFBD} to obtain
    \begin{equation}\label{xx45xccJOX}
        I_{2v}=\frac{1}{\varepsilon} \int_{0}^{t}\mathcal{F}_0^{\alpha}(t,s) K_{M,\left \langle v \right\rangle^{l}}^{\alpha,c}  (D_v\mathbf{h})(s,X^{\alpha}(s),V^{\alpha}(s))ds  = \sum_{i=0}^{3} \mathcal{Z}^{\alpha}_{i,v}(t) + \sum_{i=1}^{2}\mathcal{D}_{i,v}^\alpha(t),
    \end{equation}
    where
    \begin{align*}
            &\mathcal{Z}^{\alpha}_{0,v}(t)=\frac{1}{\varepsilon} \int_{0}^{t} \mathcal{F}_0^\alpha(t,s) \sum_{\beta=A,B}\int_{\mathbb{R}^3}     k_{M,\left \langle v \right\rangle^{l}}^{\alpha\beta}(s,X^{\alpha}(s),V^{\alpha}(s),v_*) \\  &\hspace{5cm}\times\mathcal{F}_1^\beta(s,0) (D_vh^\beta)(0,X^{\alpha,\beta}_1(0),V^{\alpha,\beta}_1(0))  dv_*ds \\
            &\hspace{1cm}\lesssim \frac{t}{\varepsilon}e^{-\frac{\nu_ot}{\varepsilon}} \|D_v\mathbf{h}^{in}\|_{L^{\infty}_{x,v}},\\
            &\mathcal{Z}^{\alpha}_{1,v}(t)=-\frac{1}{\varepsilon} \int_{0}^{t} \mathcal{F}_0^\alpha(t,s) \sum_{\beta=A,B}\int_{\mathbb{R}^3} k_{M,\left \langle v \right\rangle^{l}}^{\alpha\beta}(s,X^{\alpha}(s), V^{\alpha}(s), v_*)\\
            &\hspace{2cm}\times \int_{0}^s \mathcal{F}_1^\beta(s,s_1) (D_xh^\alpha+\frac{D_v\upsilon^\alpha}{\varepsilon}h^\alpha)(s_1,X_1^{\alpha,\beta}(s_1),V_1^{\alpha,\beta}(s_1))ds_1  dv_*ds \\
            & \hspace{1cm}\lesssim  \varepsilon  e^{-\frac{\nu_ot}{\varepsilon}} \sup_{0\leq s \leq t}(e^{\frac{\nu_os}{2\varepsilon}}\Vert D_xh^{\alpha}\Vert_{L^{\infty}_{x,v}}) + e^{-\frac{\nu_ot}{2\varepsilon}} \sup_{0\leq s \leq t}(e^{\frac{\nu_os}{2\varepsilon}}\Vert h^{\alpha}\Vert_{L^{\infty}_{x,v}}),\\
            &\mathcal{Z}^{\alpha}_{2,v}(t)=\frac{1}{\varepsilon} \int_{0}^{t} \mathcal{F}_0^\alpha(t,s) \sum_{\beta=A,B}\int_{\mathbb{R}^3} k_{M,w}^{\alpha\beta}(s,X^{\alpha}(s), V^{\alpha}(s), v_*)\\
            &\hspace{4cm}\times\sum_{j=1}^6 \int_{0}^s \mathcal{F}_1^\beta(s,s_1) (D_v\mathcal{H}^\beta_j) (s_1,X_1^{\alpha,\beta}(s_1),V_1^{\alpha,\beta}(s_1))ds_1  dv_*ds \\
            & \hspace{0.8cm}\lesssim  \varepsilon^k e^{-\frac{\nu_ot}{\varepsilon}}\sup_{0\leq s \leq t}[(e^{\frac{\nu_os}{2\varepsilon}}\Vert \left \langle v \right \rangle \mathbf{h}(s) \Vert_{L^{\infty}_{x,v}})^2+(e^{\frac{\nu_os}{2\varepsilon}}\Vert D_v\mathbf{h}(s) \Vert_{L^{\infty}_{x,v}})^2]+\mathcal{I}_2 \varepsilon^k\\
            & \hspace{1cm}+ ({1+\mathcal{I}_1}) \varepsilon e^{-\frac{\nu_ot}{2\varepsilon}}\sup_{0\leq s \leq t}[e^{\frac{\nu_os}{2\varepsilon}}(\Vert \left \langle v \right \rangle \mathbf{h}(s) \Vert_{L^{\infty}_{x,v}}+\Vert  D_x\mathbf{h}(s) \Vert_{L^{\infty}_{x,v}}+\Vert  D_v\mathbf{h}(s) \Vert_{L^{\infty}_{x,v}})],\\
            &\mathcal{Z}^{\alpha}_{3,v}(t)=\frac{1}{\varepsilon^2} \int_{0}^{t} \mathcal{F}_0^\alpha(t,s) \sum_{\beta=A,B}\int_{\mathbb{R}^3} k_{M,w}^{\alpha\beta}(s,X^{\alpha}(s), V^{\alpha}(s), v_*)\\
            &\hspace{3.5cm} \times \int_{0}^s \mathcal{F}_1^\beta(s,s_1) D_v(K_{M,\left \langle v \right\rangle^{l}}^{\alpha,\chi}  \mathbf{h}) (s_1,X^{\alpha,\beta}_1(s_1),V^{\alpha,\beta}_1(s_1))ds_1 dv_* ds \\
            & \hspace{1cm} \lesssim  \delta^{3+1}e^{-\frac{\nu_ot}{2\varepsilon}}\sup_{0\leq s \leq t} (e^{\frac{\nu_os}{2\varepsilon}}\Vert \mathbf{h}(s) \Vert_{L^{\infty}_{x,v}}+e^{\frac{\nu_os}{2\varepsilon}}\Vert D_v\mathbf{h}(s) \Vert_{L^{\infty}_{x,v}}).
        \end{align*}
   Employing Lemma~\ref{LeMK2ker4444}'s estimates for $D_vk_{M,\langle v \rangle^{l}}^{\alpha\beta}$, similar procedures yield
    \begin{gather}\label{estDsss4444H}
        \begin{split}
            \mathcal{D}_{1,v}^\alpha(t) = & \frac{1}{\varepsilon^2} \int_{0}^{t} \mathcal{F}^{\alpha}_0(t,s) \sum_{\beta=A,B}\int_{\mathbb{R}^3} k_{M,\left \langle v \right\rangle^{l}}^{\alpha\beta}(s,X^{\alpha}(s),V^{\alpha}(s),v_*)  \int_{0}^s \mathcal{F}_1^\beta(s,s_1) \\
            &\hspace{4cm} \times (D_vK_{M,\left \langle v \right\rangle^{l}}^{\beta,c}) \mathbf{h} (s_1,X^{\alpha,\beta}_1(s_1),V^{\alpha,\beta}_1(s_1))ds_1 dv_* ds\\
            \leq & (\frac{C}{N }+C_{\eta}\,e^{-\frac{\eta}{8} N^2}+C \lambda)\, e^{-\frac{\nu_ot}{2\varepsilon}}\sup_{0\leq s \leq t}(e^{\frac{\nu_os}{2\varepsilon}}\Vert \left \langle v \right \rangle \mathbf{h}(s) \Vert_{L^{\infty}_{x,v}})+\frac{C_{N,\lambda,T_0}}{\varepsilon^{3/2}}  \sup_{0\leq s \leq t}\| \mathbf{f}_R(s)\|_{L^{2}_{x,v}}.
        \end{split}
    \end{gather}
    For Case 3b ($|v|\leq N$, $|v_*|\leq 2N$, $|v_{**}|\leq 3N$, $s-s_1 \geq \varepsilon\lambda$), the variable change $v_*\to\zeta_{\alpha\beta}$ with integration by parts gives:
        \begin{align*}
            &\frac{C}{\varepsilon^2}\int_{0}^{t} \mathcal{F}^{\alpha}_0(t,s) \int_{D} \sum_{\beta = A,B}\int_{0}^{s-\varepsilon\lambda}\mathcal{F}^{\beta}_1(s,s_1) k_{M,N}^{\alpha\beta}(s,X^{\alpha}(s),V^{\alpha}(s),v_*) \,\\
            &\hspace{1cm}\times  \sum_{\beta' = A,B}k_{M,N}^{\beta \beta'}(s_1,\zeta_{\alpha\beta},V^{\alpha,\beta}_1(s_1),v_{**})  D_v\mathbf{h} (s_1,\zeta_{\alpha\beta},v_{**}) (\frac{dv_*}{d\zeta_{\alpha\beta}}) ds_1 dv_{**}d\zeta_{\alpha\beta} ds \\
            \leq & -\frac{C}{\varepsilon^2}\int_{0}^{t} \mathcal{F}^{\alpha}_0(t,s) \int_{D} \sum_{\beta = A,B}\int_{0}^{s-\varepsilon\lambda}\mathcal{F}^{\beta}_1(s,s_1) k_{M,N}^{\alpha\beta}(s,X^{\alpha}(s),V^{\alpha}(s),v_*)\,\\
            &\hspace{1cm}\times \sum_{\beta' = A,B} (D_vk_{M,N}^{\beta \beta'})(s_1,\zeta_{\alpha\beta},V^{\alpha,\beta}_1(s_1),v_{**})  \mathbf{h} (s_1,\zeta_{\alpha\beta},v_{**}) (\frac{dv_*}{d\zeta_{\alpha\beta}}) ds_1 dv_{**}d\zeta_{\alpha\beta} ds \\
            &+\frac{C_{N,\lambda,T_0}}{\varepsilon^3}e^{-\frac{\nu_ot}{2\varepsilon}}\sup_{0\leq s \leq  T_0}(e^{\frac{\nu_os}{2\varepsilon}}\Vert \mathbf{h}\Vert_{L^{\infty}_{x,v}})\\
            \leq & \frac{C_{N,\lambda,T_0}}{\varepsilon^3}e^{-\frac{\nu_ot}{2\varepsilon}}\sup_{0\leq s \leq  T_0}(e^{\frac{\nu_os}{2\varepsilon}}\Vert \mathbf{h}\Vert_{L^{\infty}_{x,v}})+\frac{C_{N,\lambda,T_0}}{\varepsilon^{3}}  \sup_{0\leq s \leq t}\| \mathbf{f}_R(s)\|_{L^{2}_{x,v}}.
        \end{align*}
    where $(\frac{dv_*}{d\zeta_{\alpha\beta}})$ is independent of $v_{**}$, and
    \begin{gather*}
        \begin{split}
            \mathcal{D}_{2,v}^\alpha(t)     \leq & (\frac{C}{N }+C_{\eta}\,e^{-\frac{\eta}{8} N^2}+C \lambda)\, e^{-\frac{\nu_ot}{2\varepsilon}}\sup_{0\leq s \leq t}(e^{\frac{\nu_os}{2\varepsilon}}\Vert  D_v\mathbf{h}(s) \Vert_{L^{\infty}_{x,v}})\\
            &+ \frac{C_{N,\lambda,T_0}}{\varepsilon^{3}}  \sup_{0\leq s \leq t}\| \mathbf{f}_R(s)\|_{L^{2}_{x,v}}
            +\frac{C_{N,\lambda,T_0}}{\varepsilon^3}e^{-\frac{\nu_ot}{2\varepsilon}}\sup_{0\leq s \leq  T_0}(e^{\frac{\nu_os}{2\varepsilon}}\Vert \mathbf{h}\Vert_{L^{\infty}_{x,v}}).
        \end{split}
    \end{gather*}
    Therefore, we have
    \begin{align}
            I_{2v}  & \leq \, C\frac{t}{\varepsilon}e^{-\frac{\nu_ot}{\varepsilon}}(\| \left \langle v \right \rangle \mathbf{h}^{\rm in} \|_{L^\infty_{x,v}}+\| D_v \mathbf{h}^{\rm in} \|_{L^\infty_{x,v}})\notag\\
            & \hspace{0.3cm} +(C\varepsilon+C{\lambda}+\frac{C_{\lambda}}{N}+C\delta^{3})e^{-\frac{\nu_ot}{2\varepsilon}} \sup_{0\leq s \leq t}[e^{\frac{\nu_os}{2\varepsilon}}(\|  \left \langle v \right \rangle\mathbf{h}\|_{L^{\infty}_{x,v}}+\|  D_{x,v}\mathbf{h}\|_{L^{\infty}_{x,v}})]\notag\\
            &  \hspace{0.3cm}+ \frac{C_{N,\lambda,T_0}}{\varepsilon^3}e^{-\frac{\nu_ot}{2\varepsilon}}\sup_{0\leq s \leq  T_0}(e^{\frac{\nu_os}{2\varepsilon}}\Vert \mathbf{h}\Vert_{L^{\infty}_{x,v}})+\frac{C_{N,\lambda,T_0}}{\varepsilon^{3}}  \sup_{0\leq s \leq t}\| \mathbf{f}_R(s)\|_{L^{2}_{x,v}}+C\varepsilon^k\mathcal{I}_2 \label{HI2CLT}\\
            &\hspace{0.3cm}+C\varepsilon^ke^{-\frac{\nu_ot}{\varepsilon}} \sup_{0\leq s \leq t}[(e^{\frac{\nu_os}{2\varepsilon}}\| \left \langle v \right \rangle \mathbf{h}\|_{L^{\infty}_{x,v}})^2+(e^{\frac{\nu_os}{2\varepsilon}}\| D_v \mathbf{h}\|_{L^{\infty}_{x,v}})^2].\notag
    \end{align}

    Then, we obtain the $W^{1,\infty}_{x,v}$ estimate as follows
    \begin{gather}\label{44HDW1INFXV}
        \begin{split}
            \sup_{0\leq s \leq  T_0}(e^{\frac{\nu_os}{2\varepsilon}}&\Vert \nabla_{x,v}\mathbf{h}\Vert_{L^{\infty}_{x,v}}) \leq C (\Vert \left \langle v \right\rangle \mathbf{h}^{\rm in}\Vert^2_{L^{\infty}_{x,v}}+\Vert \nabla_{x,v}\mathbf{h}^{\rm in}\Vert_{L^{\infty}_{x,v}})\\
            &+(\frac{C_{\lambda}}{N}+C\lambda+C\varepsilon+\delta^3)\sup_{0\leq s \leq  T_0}(e^{\frac{\nu_os}{2\varepsilon}}\Vert \nabla_{x,v}\mathbf{h}\Vert_{L^{\infty}_{x,v}}+e^{\frac{\nu_os}{2\varepsilon}}\Vert \left \langle v \right \rangle\mathbf{h}\Vert_{L^{\infty}_{x,v}})\\
            &+\frac{C_{N,\lambda,T_0}}{\varepsilon^3}\sup_{0\leq s \leq  T_0}(e^{\frac{\nu_os}{2\varepsilon}}\Vert \mathbf{h}\Vert_{L^{\infty}_{x,v}})+\frac{C_{\lambda,N}}{\varepsilon^4}e^{\frac{\nu_oT_0}{2\varepsilon}}\sup_{0\leq s \leq T_0}\| \mathbf{f}_R(s)\|_{L^{2}_{x,v}}\\
            &+C\varepsilon^k \sup_{0\leq s \leq T_0}[(e^{\frac{\nu_os}{2\varepsilon}}\| \left \langle v \right \rangle \mathbf{h}\|_{L^{\infty}_{x,v}})^2+(e^{\frac{\nu_os}{2\varepsilon}}\| \nabla_{x,v} \mathbf{h}\|_{L^{\infty}_{x,v}})^2]+\varepsilon^{k-1}e^{\frac{\nu_oT_0}{2\varepsilon}}\mathcal{I}_2.
        \end{split}
    \end{gather}
   Multiplying \eqref{44HDW1INFXV} by $\varepsilon^5 e^{-\frac{\nu_oT_0}{2\varepsilon}}$ leads to:
    \begin{gather}\label{44FsinaldT0s0}
        \begin{split}
            \varepsilon^{5}\Vert  \nabla_{x,v}\mathbf{h}(T_0) \Vert_{L^{\infty}_{x,v}}\leq& \frac{1}{4}(\varepsilon^{5}\Vert \left \langle v \right\rangle \mathbf{h}^{\rm in} \Vert_{L^{\infty}_{x,v}}+ \varepsilon^{5}\Vert \nabla_{x,v} \mathbf{h}^{\rm in} \Vert_{L^{\infty}_{x,v}}+\varepsilon^{\frac{3}{2}}\Vert \left \langle v \right\rangle \mathbf{h} \Vert_{L^{\infty}_{x,v}})\\
            &+C(\varepsilon^{\frac{1}{2}}\sup_{0\leq s \leq T_0}\| \mathbf{f}_R(s)\|_{L^{2}_{x,v}}+\varepsilon^{k+1}).
        \end{split}
    \end{gather}
    Analogous to the $\displaystyle{\sup_{0\leq s \leq T_0}\varepsilon^{\frac{3}{2}}\Vert  \mathbf{h}(s) \Vert_{L^{\infty}_{x,v}}}$ estimate, we obtain
    \begin{equation*}
        \varepsilon^{\frac{3}{2}}\Vert \left \langle v \right\rangle \mathbf{h}(T_0) \Vert_{L^{\infty}_{x,v}}\leq \frac{1}{4}\varepsilon^{\frac{3}{2}}\Vert \left \langle v \right\rangle  \mathbf{h}^{\rm in} \Vert_{L^{\infty}_{x,v}}+C \sup_{0\leq s \leq T_0}\| \mathbf{f}_R(s)\|_{L^{2}_{x,v}}+C\varepsilon^{\frac{2k+3}{2}}.
    \end{equation*}
   These estimates collectively establish that
    \begin{gather*}
        \begin{split}
            &\varepsilon^{\frac{3}{2}}\Vert \left \langle v \right\rangle \mathbf{h}(T_0) \Vert_{L^{\infty}_{x,v}}+\varepsilon^{5}\Vert  \nabla_{x,v}\mathbf{h}(T_0) \Vert_{L^{\infty}_{x,v}}\\
            \leq & \, \frac{1}{2}(\varepsilon^{\frac{3}{2}}\Vert \left \langle v \right\rangle \mathbf{h}^{\rm in} \Vert_{L^{\infty}_{x,v}}+\varepsilon^{5}\Vert  \nabla_{x,v}\mathbf{h}^{\rm in} \Vert_{L^{\infty}_{x,v}})+C(\sup_{0\leq s \leq T_0}\| \mathbf{f}_R(s)\|_{L^{2}_{x,v}}+\varepsilon^{k+\frac{1}{2}}).
        \end{split}
    \end{gather*}
    Then, for time $t$ satisfies $T_0< t \leq T_L = \varepsilon^{-\frac{2k-3}{2(2k-1)}}$, there exists a positive integer $n_t$ and $0\leq t_s< T_0$, such that
    $t=n_tT_0+t_s$. Then we can deduce that
    \begin{align}
            &\varepsilon^{\frac{3}{2}}\Vert \left \langle v \right\rangle \mathbf{h}(t) \Vert_{L^{\infty}_{x,v}}+\varepsilon^{5}\Vert  \nabla_{x,v}\mathbf{h}(t) \Vert_{L^{\infty}_{x,v}}\notag\\
            \leq & \, \frac{1}{2}(\varepsilon^{\frac{3}{2}}\Vert \left \langle v \right\rangle \mathbf{h}((n-1)T_0+t_s) \Vert_{L^{\infty}_{x,v}}+\varepsilon^{5}\Vert  \nabla_{x,v}\mathbf{h}((n-1)T_0+t_s) \Vert_{L^{\infty}_{x,v}})\notag\\
            &+C(\sup_{0\leq s \leq T_1}\| \mathbf{f}_R(s)\|_{L^{2}_{x,v}}+\varepsilon^{k+\frac{1}{2}}) \notag\\
            \leq & \, \frac{1}{4}(\varepsilon^{\frac{3}{2}}\Vert \left \langle v \right\rangle \mathbf{h}((n-2)T_0+t_s) \Vert_{L^{\infty}_{x,v}}+\varepsilon^{5}\Vert  \nabla_{x,v}\mathbf{h}((n-2)T_0+t_s) \Vert_{L^{\infty}_{x,v}})\notag\\
            &+(C+\frac{C}{2})(\sup_{0\leq s \leq T_1}\| \mathbf{f}_R(s)\|_{L^{2}_{x,v}}+\varepsilon^{k+\frac{1}{2}})\notag\\
            \leq & \, ...\notag\\
            \leq & \, \frac{1}{2^n}(\varepsilon^{\frac{3}{2}}\Vert \left \langle v \right\rangle \mathbf{h}(t_s) \Vert_{L^{\infty}_{x,v}}+\varepsilon^{5}\Vert  \nabla_{x,v}\mathbf{h}(t_s) \Vert_{L^{\infty}_{x,v}})\label{dddHDdgui0H1}\\
            &+(C+\frac{C}{2}+...+\frac{C}{2^n})(\sup_{0\leq s \leq T_1}\| \mathbf{f}_R(s)\|_{L^{2}_{x,v}}+\varepsilon^{k+\frac{1}{2}})\notag\\
            \lesssim & \, \varepsilon^{\frac{3}{2}}\Vert \left \langle v \right\rangle \mathbf{h}^{\rm in} \Vert_{L^{\infty}_{x,v}}+\varepsilon^{5}\Vert  \nabla_{x,v}\mathbf{h}^{\rm in} \Vert_{L^{\infty}_{x,v}}+\sup_{0\leq s \leq T_L}\| \mathbf{f}_R(s)\|_{L^{2}_{x,v}}+\varepsilon^{k+\frac{1}{2}}\notag.
        \end{align}
    Consequently, the following estimate holds:
    \begin{gather}\label{H444lastHSw1p}
        \begin{split}
            &\sup_{0\leq t \leq T_L}(\varepsilon^{\frac{3}{2}}\Vert \left \langle v \right\rangle \mathbf{h}(t) \Vert_{L^{\infty}_{x,v}}+\varepsilon^{5}\Vert  \nabla_{x,v}\mathbf{h}(t) \Vert_{L^{\infty}_{x,v}})\\
            \lesssim & \, \varepsilon^{\frac{3}{2}}\Vert \left \langle v \right\rangle \mathbf{h}^{\rm in} \Vert_{L^{\infty}_{x,v}}+\varepsilon^{5}\Vert  \nabla_{x,v}\mathbf{h}^{\rm in} \Vert_{L^{\infty}_{x,v}}+\sup_{0\leq t \leq T_L}\| \mathbf{f}_R(t)\|_{L^{2}_{x,v}}+\varepsilon^{k+\frac{1}{2}}.
        \end{split}
    \end{gather}
    By the same iterative process, we also obtain that
    \begin{gather}\label{H44jjDm4HSLinf}
        \begin{split}
            \sup_{0\leq t \leq T_L}(\varepsilon^{\frac{3}{2}}\Vert  \mathbf{h}(t) \Vert_{L^{\infty}_{x,v}})
            \lesssim  \, \varepsilon^{\frac{3}{2}}\Vert  \mathbf{h}^{\rm in} \Vert_{L^{\infty}_{x,v}}+\sup_{0\leq t \leq T_L}\| \mathbf{f}_R(t)\|_{L^{2}_{x,v}}+\varepsilon^{k+\frac{1}{2}}.
        \end{split}
    \end{gather}
    \begin{remark}
        The estimate \eqref{H444lastHSw1p} reveals that for hard sphere interactions $(\gamma = 1)$, the initial data incurs a polynomial weight loss of $\langle v \rangle^{2-1}$. Subsequent analysis in Section~5 demonstrates this generalizes to $\langle v \rangle^{2-\gamma}$ for $-3 < \gamma < 1$, originating from the $D_v(\mathcal{H}^\alpha_3)$ term.
    \end{remark}
    \subsection{The proof of Theorem \ref{maintheorem}}[hard sphere part] Plugging   \eqref{H444lastHSw1p} and \eqref{H44jjDm4HSLinf} into \eqref{MainresultL22}, we derive that
    \begin{align}
            &\frac{d}{dt}(\sum_{\alpha=A,B}\Vert \sqrt{\theta}f^{\alpha}_R\Vert_{L^{2}_{x,v} }^{2}+| \nabla_x\mathbf{\phi}_R|_{L^{2}_{x} }^{2})+\frac{c_0}{2\varepsilon}\Vert(\mathbf{I}-\mathcal{P})\mathbf{f}_R\Vert_{\nu}^2 \notag\\
            &\hspace{0.4cm}\leq  C
            \sqrt{\varepsilon}\Big(\Vert\varepsilon^{\frac{3}{2}}\mathbf{h}^{\rm in}\Vert_{{L^{\infty}_{x,v} }}
		+\sup_{0\leq t \leq T_L}\|
		\mathbf{f}_R(t)\|_{L^{2}_{x,v}}+\varepsilon^{\frac{2k+1}{2}}\Big)\notag\\
		&\hspace{4cm}\times\Big(
            \Vert \mathbf{f}_R\Vert_{{L^{2}_{x,v} }}
            +\varepsilon^{k-3}
            \Vert \mathbf{f}_R\Vert^2_{{L^{2}_{x,v} }}+\varepsilon^{k-2}
            \Vert \mathbf{f}_R\Vert_{{L^{2}_{x,v} }}| \nabla_x\mathbf{\phi}_R|_{{L^{2}_{x} }}\Big)\notag\\
            &\hspace{0.8cm}+C\Big[\frac{1}{(1+t)^q}+\varepsilon\mathcal{I}_1\Big](\Vert \mathbf{f}_R\Vert^2_{{L^{2}_{x,v} }}
			+| \nabla_x\mathbf{\phi}_R|^2_{{L^{2}_{x}
				}})+C\varepsilon^{k-1}\mathcal{I}_2\Vert
				\mathbf{f}_R\Vert_{{L^{2}_{x,v} }}.\label{HDL2LINFZZM}
    \end{align}
    Moreover, applying the Cauchy-Schwarz inequality, we have
    \begin{gather*}
        \begin{split}
            \Vert \mathbf{f}_R\Vert_{{L^{2}_{x,v} }}
            +\varepsilon^{k-3}
            \Vert \mathbf{f}_R\Vert^2_{{L^{2}_{x,v} }}+\varepsilon^{k-2}
            \Vert \mathbf{f}_R\Vert_{{L^{2}_{x,v} }}| \nabla_x\mathbf{\phi}_R|_{{L^{2}_{x} }} \lesssim  \Vert \mathbf{f}_R\Vert^2_{{L^{2}_{x,v} }} +| \nabla_x\mathbf{\phi}_R|^2_{{L^{2}_{x} }}+1,\\
        \end{split}
    \end{gather*}
   with the subsequent estimate
    \begin{gather*}
        \begin{split}
            &C\Big[\frac{1}{(1+t)^q}+\varepsilon\mathcal{I}_1\Big](\Vert \mathbf{f}_R\Vert^2_{{L^{2}_{x,v} }}+| \nabla_x\mathbf{\phi}_R|^2_{{L^{2}_{x} }})+C\varepsilon^{k-1}\mathcal{I}_2\Vert \mathbf{f}_R\Vert_{{L^{2}_{x,v} }}\\
            \lesssim  & \, \Big[\frac{1}{(1+t)^q}+\varepsilon\mathcal{I}_1+\varepsilon^{k-1}\mathcal{I}_2\Big](\Vert \mathbf{f}_R\Vert^2_{{L^{2}_{x,v} }} +| \nabla_x\mathbf{\phi}_R|^2_{{L^{2}_{x} }}+1).
        \end{split}
    \end{gather*}
    Due to $\frac{1}{C}<\theta<C$, it is easy to see
    \begin{equation}\label{ddddjjnl}
        \displaystyle{\sum_{\alpha=A,B}}\Vert \sqrt{\theta}f^{\alpha}_R\Vert_{L^{2}_{x,v} }^{2}\lesssim \Vert \mathbf{f}_R\Vert^2_{{L^{2}_{x,v} }}\lesssim \displaystyle{\sum_{\alpha=A,B}}\Vert \sqrt{\theta}f^{\alpha}_R\Vert_{L^{2}_{x,v} }^{2}.
    \end{equation}
   Consequently, \eqref{HDL2LINFZZM} reduces to
    \begin{equation*}
        \begin{split}
            &\hspace{1cm}\frac{d}{dt}\Big(\sum_{\alpha=A,B}\Vert \sqrt{\theta}f^{\alpha}_R\Vert_{L^{2}_{x,v} }^{2}+| \nabla_x\mathbf{\phi}_R|_{L^{2}_{x} }^{2}+1\Big) \\
            \leq & C
            \sqrt{\varepsilon}\Big(\Vert\varepsilon^{\frac{3}{2}}\mathbf{h}^{\rm in}\Vert_{{L^{\infty}_{x,v} }}+\sup_{0\leq t \leq T_L}\| \mathbf{f}_R(t)\|_{L^{2}_{x,v}}
+\varepsilon^{\frac{2k+1}{2}}\Big)\Big(\sum_{\alpha=A,B}\Vert \sqrt{\theta}f^{\alpha}_R\Vert_{L^{2}_{x,v} }^{2}+| \nabla_x\mathbf{\phi}_R|_{L^{2}_{x} }^{2}+1\Big)\\
            &\,\,\,+C\Big[\frac{1}{(1+t)^q}+\varepsilon\mathcal{I}_1+\varepsilon^{k-1}\mathcal{I}_2\Big]\Big(\sum_{\alpha=A,B}\Vert \sqrt{\theta}f^{\alpha}_R\Vert_{L^{2}_{x,v} }^{2}
+| \nabla_x\mathbf{\phi}_R|_{L^{2}_{x} }^{2}+1\Big).
        \end{split}
    \end{equation*}
Combining with $\int_{0}^{t}(1+s)^{-q}ds <\infty$ leads to
    \begin{equation*}
        \begin{split}
            &\frac{d}{dt}\Big(\sum_{\alpha=A,B}\Vert \sqrt{\theta}f^{\alpha}_R\Vert_{L^{2}_{x,v} }^{2}+| \nabla_x\mathbf{\phi}_R|_{L^{2}_{x} }^{2}+1\Big)
 \leq \Big(\sum_{\alpha=A,B}\Vert \sqrt{\theta}f^{\alpha}_R(0)\Vert_{L^{2}_{x,v} }^{2}+| \nabla_x\mathbf{\phi}_R(0)|_{L^{2}_{x} }^{2}+1\Big)\\
            &\, \, \, \times \exp\Big\{\int_{0}^{t}C\Big[\sqrt{\varepsilon}(\Vert\varepsilon^{\frac{3}{2}}\mathbf{h}^{\rm in}\Vert_{{L^{\infty}_{x,v} }}
+\sup_{0\leq t \leq T_L}\|
\mathbf{f}_R(t)\|_{L^{2}_{x,v}})+\frac{1}{(1+t)^q}+\varepsilon\mathcal{I}_1+\varepsilon^{k-1}\mathcal{I}_2\Big]ds\Big\}\\
            &\leq \Big(\sum_{\alpha=A,B}\Vert \sqrt{\theta}f^{\alpha}_R(0)\Vert_{L^{2}_{x,v} }^{2}+| \nabla_x\mathbf{\phi}_R(0)|_{L^{2}_{x} }^{2}+1\Big)\\
            &\,\, \times \exp[C+Ct\sqrt{\varepsilon}(\Vert\varepsilon^{\frac{3}{2}}\mathbf{h}^{\rm in}\Vert_{{L^{\infty}_{x,v} }}
+\sup_{0\leq t \leq T_L}\| \mathbf{f}_R(t)\|_{L^{2}_{x,v}})+Ct(\varepsilon\mathcal{I}_1+\varepsilon^{k-1}\mathcal{I}_2)].
        \end{split}
    \end{equation*}
Then for the time scale $0\leq t\leq T_L=\varepsilon^{-m}$ ($0<m\leq\frac{2k-3}{4k-4}$), the integrals satisfy
    \begin{eqnarray}\label{Otime11111111}
        \begin{split}
            &\mathcal{I}_1\leq 2\sum_{i=1}^{2k-1}\varepsilon^{i-1}(1+t)^{i-1}\leq C \sum_{i=1}^{2k-1}(\varepsilon+\varepsilon^{1-m})^{i-1}\leq C, \\
            &\mathcal{I}_2 = \sum_{2k\leq i+j \leq 4k-2}\varepsilon^{i+j-2k}(1+t)^{i+j-2} \leq  C (1+\varepsilon^{-m})^{2k-2} \leq C \varepsilon^{-m(2k-2)},
        \end{split}
    \end{eqnarray}
implying the bound
    \begin{equation}\label{OtimeZZZZZ1}
        \mathcal{I}_1t\varepsilon+\mathcal{I}_2 t\varepsilon^{k-1} \leq C(\varepsilon^{1-m}+\varepsilon^{k-1-m(2k-1)})\leq C\varepsilon^{\frac{1}{2}-m}.
    \end{equation}
  For sufficiently small $\varepsilon$, this establishes
    \begin{eqnarray*}
        \begin{split}
            &\sum_{\alpha=A,B}\Vert \sqrt{\theta}f^{\alpha}_R\Vert_{L^{2}_{x,v} }^{2}+| \nabla_x\mathbf{\phi}_R|_{L^{2}_{x} }^{2}
\lesssim \Big(\sum_{\alpha=A,B}\Vert \sqrt{\theta}f^{\alpha}_R(0)\Vert_{L^{2}_{x,v} }^{2}+| \nabla_x\mathbf{\phi}_R(0)|_{L^{2}_{x} }^{2}+1\Big)\\
            & \hspace{5cm}\times \Big(1+\varepsilon^{\frac{1}{2}-m}\Vert\varepsilon^{\frac{3}{2}}\mathbf{h}^{\rm in}\Vert_{{L^{\infty}_{x,v}}}
+\varepsilon^{\frac{1}{2}-m}\sup_{0\leq t \leq T_L}\|
\mathbf{f}_R(t)\|_{L^{2}_{x,v}}\Big).
        \end{split}
    \end{eqnarray*}
The norm equivalence \eqref{ddddjjnl} consequently gives
    \begin{equation}\label{HDFIANL}
        \sup_{0\leq t \leq T_L}(\| \mathbf{f}_R\|_{L^{2}_{x,v}}+| \nabla_x\mathbf{\phi}_R|_{L^{2}_{x} }^{2})
 \lesssim \, 1+\| \mathbf{f}_R(0)\|_{L^{2}_{x,v}}+| \nabla_x\mathbf{\phi}_R(0)|_{L^{2}_{x} }^{2}
+\Vert\varepsilon^{\frac{3}{2}}\mathbf{h}^{\rm
in}\Vert_{{L^{\infty}_{x,v}}}.
    \end{equation}
Therefore, combined with \eqref{H444lastHSw1p} and \eqref{HDFIANL}, this completes the proof for hard sphere case in Theorem~\ref{maintheorem} .
    \begin{remark}\label{FXPINGSHIJIAN}
        The estimate \eqref{OtimeZZZZZ1} indicates the validity time scaling as $O(\varepsilon^{-\frac{2k-3}{2k-1}})$, determined by the coefficients of the truncated expansion's remainder terms. Considering the ansatz
        \begin{equation}\label{NRNRTIME}
            F_{\varepsilon}^\alpha= \sum_{i=0}^{n}\varepsilon^i F^\alpha_i + \varepsilon^{r} F_R^{\alpha},\quad n\geq r\geq 1,
        \end{equation}
        with time scale $T_L=\varepsilon^{-m}$ $(0<m<1)$, substitution into \eqref{Meqeps} yields through comparison with \eqref{Otime11111111} and \eqref{OtimeZZZZZ1}:
        \begin{eqnarray*}
            \begin{split}
                t\varepsilon\mathcal{I}_1 &\leq Ct\varepsilon \sum_{i=1}^{n}(\varepsilon+\varepsilon^{1-m})^{i-1}\leq C\varepsilon^{1-m},\\
                t\varepsilon^{r-1}\mathcal{I}_2 &\leq C(1+\varepsilon^{n-r-m(n-1)}),
            \end{split}
        \end{eqnarray*}
        requiring the constraint $0<m<\frac{n-r}{n-1}$. The case $n=2k-1$, $r=k$ corresponds to $0<m<\frac{1}{2}$.
    \end{remark}

    \section{The  $W^{1,\infty}_{x,v}$ estimate of the remainder term for potential range $-1\leq\gamma<1$}
    Before presenting the proof, the key difficulty for $\gamma \neq 1$ cases is outlined. From \eqref{phiimpus} and \eqref{psideddddg}, the following estimates emerge:
    \begin{itemize}\label{aafxH3}
        \item  $\mathcal{H}^\alpha_3 = -\frac{e^{\alpha}}{m^{\alpha}}\nabla_{x}\phi^{\varepsilon} \cdot  \frac{\left \langle v \right\rangle^{l}}{\sqrt{\mu^{\alpha}_M}} \nabla_{v}(\frac{\sqrt{\mu^{\alpha}_M}}{\left \langle v \right\rangle^{l}})h^\alpha_R$, \, \quad $ |\mathcal{H}^\alpha_3|  \thicksim \left \langle v \right\rangle \Vert\mathbf{h}_R \Vert_{L^{\infty}_{x,v} }$,\\
        \item $\left|\int_{0}^{t}\mathcal{F}_0^{\alpha}(t,s) \mathcal{H}^\alpha_3(s,X(s),V(s))ds \right|
        \lesssim \varepsilon(1+\varepsilon\mathcal{I}_1) e^{-\frac{\nu_ot}{2\varepsilon}} \displaystyle{\sup_{0\leq s \leq t}}(e^{\frac{\nu_os}{2\varepsilon}}\Vert \left \langle v \right\rangle^{1-\gamma}\mathbf{h}_R (s)\Vert_{L^{\infty}_{x,v}})$.
    \end{itemize}
    For $-3 < \gamma < 1$, the preceding analysis shows this term cannot be absorbed by the left-hand side expression. The term fundamentally belongs to the linear part, though under the hard sphere model ($\gamma = 1$), the dominant influence of $\mathcal{F}_0^{\alpha}$ permits its treatment as a nonlinear term. For $\gamma < 1$ cases, inclusion in the linear part becomes necessary for proper analysis. This motivates the introduction of a new weight function $w(t,x,v)$ for the remainder terms, leading to consideration of the linear part operator:
    \begin{equation}\label{ftftyuia}
        \frac{\upsilon^\alpha}{\varepsilon} +\frac{e^{\alpha}}{m^{\alpha}}\nabla_{x}\phi^{\varepsilon} \cdot  \frac{w(t,x,v)}{\sqrt{\mu^{\alpha}_M}} \nabla_{v}(\frac{\sqrt{\mu^{\alpha}_M}}{w(t,x,v)})-\frac{(\partial_t + v\cdot\nabla_{x})w(t,x,v)}{w(t,x,v)}.
    \end{equation}
   The second term in \eqref{ftftyuia} exhibits $\langle v \rangle=(1+|v|)$ velocity growth, presenting two key challenges for the linear component analysis:
   \begin{itemize}
    \item Indefinite sign behavior complicating direct estimates,
    \item Stronger velocity growth compared to the first term.
   \end{itemize}
  This necessitates the equation transformations to generate dominant negative terms with accelerated velocity decay. While the weight functions depending on spatial derivatives may introduce structural complications, a $(t,v)$-dependent weighting strategy offers distinct advantages. The natural exponential function $e^{x}$ proves particularly suitable for this construction due to its differentiation properties:
    \begin{equation*}
        \frac{d}{dt} (e^{g(t,v)}) = e^{g(t,v)} \cdot \frac{d}{dt} g(t,v).
    \end{equation*}
To ensure that $g(t,v)$ yields a sufficiently strong polynomial decay in $v$ for controlling $\mathcal{N}^\alpha_3$, we impose the following conditions on parameters $\kappa_0, \kappa_1 > 0$ and $\kappa_2 \in (1,2]$
    \begin{equation*}
        \omega_{\kappa}=\exp\{{\kappa_0[1+(1+t)^{-\kappa_1}]\left \langle v \right\rangle^{\kappa_2}}\}, \,\, \quad \text{and} \quad  \,\,\,S^\alpha_{R,\kappa}=  \frac{\omega_{\kappa} }{\sqrt{\mu^\alpha_M}} F_R^\alpha.
    \end{equation*}
   Then the remainder equation transforms to:
    \begin{gather}\label{HDPTLINANALFEQMAIN}
        \begin{split}
            \partial_t  S^\alpha_{R,\kappa} & + v\cdot\nabla_{x} S^\alpha_{R,\kappa} + \frac{e^{\alpha}}{m^{\alpha}} \nabla_{x}\phi^{\varepsilon}\cdot\nabla_{v}  S^\alpha_{R,\kappa} + \frac{\upsilon^\alpha}{\varepsilon}  S^\alpha_{R,\kappa}+\frac{e^{\alpha}}{m^{\alpha}}\nabla_{x}\phi^{\varepsilon} \cdot  \frac{\omega_{\kappa}}{\sqrt{\mu^{\alpha}_M}} \nabla_{v}(\frac{\sqrt{\mu^{\alpha}_M}}{\omega_{\kappa}})S^\alpha_{R,\kappa}-\frac{\partial_t\omega_{\kappa}}{\omega_{\kappa}}\\
            &\hspace{1cm}= \sum_{i=1}^{5} \mathcal{N}^\alpha_j+ \frac{1}{\varepsilon}(K_{M,\omega_{\kappa}}^{\alpha,\chi}+K_{M,\omega_{\kappa}}^{\alpha,c})\mathbf{S}_{R,\kappa},
        \end{split}
    \end{gather}
and the main linear part in \eqref{HDPTLINANALFEQMAIN} satisfies
    \begin{align}
            \hat{\upsilon_{\varepsilon}^\alpha}(t,x,v):&=   \frac{\upsilon^\alpha}{\varepsilon} +\frac{e^{\alpha}}{m^{\alpha}}\nabla_{x}\phi^{\varepsilon} \cdot  \frac{\omega_{\kappa}(t,v)}{\sqrt{\mu^{\alpha}_M}} \nabla_{v}(\frac{\sqrt{\mu^{\alpha}_M}}{\omega_{\kappa}(t,v)})-\frac{\partial_t\omega_{\kappa}(t,v)}{\omega_{\kappa}(t,v)}\notag\\
            & \geq \frac{\upsilon^\alpha}{\varepsilon}-\frac{e^{\alpha}}{m^{\alpha}}\Big(\frac{1}{2\theta_M}+2\kappa_0\Big)\left \langle v \right\rangle \Vert\nabla_{x}\phi^{\varepsilon} \Vert_{L^{\infty}_{x,v} }+\kappa_0\kappa_1\frac{\left \langle v \right\rangle^{\kappa_2}}{(1+t)^{1+\kappa_1}}\label{xxbfANLmainlower1}\\
            &\geq \frac{\left \langle v \right\rangle^{\gamma}}{C\varepsilon}- C\left \langle v \right\rangle \Vert\nabla_{x}\phi^{\varepsilon} \Vert_{L^{\infty}_{x,v} } +\frac{\kappa_0\kappa_1}{C}\frac{\left \langle v \right\rangle^{\kappa_2}}{(1+t)^{1+\kappa_1}}\notag.
    \end{align}
    For parameter $0<\iota<1$, Young's inequality gives
    \begin{equation}\label{ANLYYYEQIMP}
        \begin{split}
            \left \langle v \right\rangle &= \Big(\left \langle v \right\rangle^{\gamma}\varepsilon^{\iota-1}(1+t)^{\frac{(1+\kappa_1)(1-\gamma)}{\kappa_2-1}}\Big)^{\frac{\kappa_2-1}{\kappa_2-\gamma}}    \Big(\left \langle v \right\rangle^{\kappa_2}\varepsilon^{\frac{(1-\iota)(\kappa_2-1)}{1-\gamma}}(1+t)^{-(1+\kappa_1)}\Big)^{\frac{1-\gamma}{\kappa_2-\gamma}}\\
            & \leq \frac{1}{\varepsilon}\left \langle v \right\rangle^{\gamma}\varepsilon^{\iota}(1+t)^{\frac{(1+\kappa_1)(1-\gamma)}{\kappa_2-1}} + \left \langle v \right\rangle^{\kappa_2}\varepsilon^{\frac{(1-\iota)(\kappa_2-1)}{1-\gamma}}(1+t)^{-(1+\kappa_1)}.
        \end{split}
    \end{equation}
    Plunging \eqref{ANLYYYEQIMP} into \eqref{xxbfANLmainlower1}, it holds that
    \begin{equation*}
        \hat{\upsilon_{\varepsilon}^\alpha}\geq (\frac{1}{C}-C\varepsilon^{\iota}(1+t)^{\frac{(1+\kappa_1)(1-\gamma)}{\kappa_2-1}}\Vert\nabla_{x}\phi^{\varepsilon} \Vert_{L^{\infty}_{x,v} })\frac{\left \langle v \right\rangle^{\gamma}}{\varepsilon}+(\frac{\kappa_0\kappa_1}{C}-C\varepsilon^{\frac{(1-\iota)(\kappa_2-1)}{1-\gamma}})\frac{\left \langle v \right\rangle^{\kappa_2}}{(1+t)^{1+\kappa_1}}.
    \end{equation*}
  Due to $0<\kappa_0\ll1$, we need to find suitable $\kappa_1$ and $\kappa_2$, such that
    \begin{equation}\label{4455FPFPFPKZ}
        \frac{1}{C}-C\varepsilon^{\iota}(1+t)^{\frac{(1+\kappa_1)(1-\gamma)}{\kappa_2-1}}\Vert\nabla_{x}\phi^{\varepsilon} \Vert_{L^{\infty}_{x,v} }>\frac{2}{3C}.
    \end{equation}

   In fact, for $0<t\leq \varepsilon^{-m}$, $0<m\leq \frac{1}{2} \frac{2k-3}{2k-1}$ and $k\geq 6$, we have
    \begin{equation*}
        C\varepsilon^{\iota}(1+t)^{\frac{(1+\kappa_1)(1-\gamma)}{\kappa_2-1}}\Vert\nabla_{x}\phi^{\varepsilon} \Vert_{L^{\infty}_{x,v} } \leq 2C \varepsilon^{\iota-\frac{m(1+\kappa_1)(1-\gamma)}{\kappa_2-1}} \Vert\nabla_{x}\phi^{\varepsilon} \Vert_{L^{\infty}_{x,v} }.
    \end{equation*}
Then we can choose $1+\kappa_1=\frac{2k+1}{2k-1}$,
\,$\frac{1-\gamma}{\kappa_2-1}=2,\,\,\,$i.e.
$\,\kappa_1=\frac{2}{2k-1}$, $\kappa_2=\frac{3-\gamma}{2}$.
Then for $-1\leq\gamma<1$, it yields
    \begin{equation*}
        \frac{m(1+\kappa_1)(1-\gamma)}{\kappa_2-1}\leq \frac{(2k-3)(2k+1)}{(2k-1)^2}.
    \end{equation*}
 Setting $\iota=\frac{(2k-2)(2k+2)}{4k^2}<1$ gives
    \begin{equation*}
        \iota-\frac{m(1+\kappa_1)(1-\gamma)}{\kappa_2-1}\geq \frac{(2k-2)(2k+2)}{4k^2}-\frac{(2k-3)(2k+1)}{(2k-1)^2}>\frac{1}{16k^4}>0.
    \end{equation*}
 For sufficiently small $\varepsilon$
    \begin{equation*}
        C\varepsilon^{\iota}(1+t)^{\frac{(1+\kappa_1)(1-\gamma)}{\kappa_2-1}}\Vert\nabla_{x}\phi^{\varepsilon} \Vert_{L^{\infty}_{x,v} }
\leq 2C \varepsilon^{\iota-\frac{m(1+\kappa_1)(1-\gamma)}{\kappa_2-1}} \Vert\nabla_{x}\phi^{\varepsilon} \Vert_{L^{\infty}_{x,v} } < \frac{1}{3C},
    \end{equation*}
which implies \eqref{4455FPFPFPKZ}. Furthermore, with $\frac{(1-\iota)(\kappa_2-1)}{1-\gamma}>\frac{1}{4}$ for $-1\leq\gamma<1$:
    \begin{equation*}
        \Big(\frac{\kappa_0\kappa_1}{C}-C\varepsilon^{\frac{(1-\iota)(\kappa_2-1)}{1-\gamma}}\Big)\frac{\left \langle v \right\rangle^{\kappa_2}}{(1+t)^{1+\kappa_1}}
>\frac{1}{4Ck}\frac{\left \langle v \right\rangle^{\kappa_2}}{(1+t)^{1+\kappa_1}}, \quad
k\geq6.
    \end{equation*}

Finally, the weight function
\begin{equation}\label{gghk5sctm}
    \omega_{\gamma} := \kappa_0\exp\Big[(1+(1+t)^{-\frac{2}{2k-1}}) \langle v \rangle^{\frac{3-\gamma}{2}}\Big], \quad -1\leq\gamma<1,
\end{equation}
leads to
    \begin{equation}\label{xxbfdmaisnlower1}
        \begin{split}
            \hat{\upsilon_{\varepsilon}^\alpha}:&=  \frac{\upsilon^\alpha}{\varepsilon} +\frac{e^{\alpha}}{m^{\alpha}}\nabla_{x}\phi^{\varepsilon} \cdot  \frac{\omega_{\gamma}(t,v)}{\sqrt{\mu^{\alpha}_M}} \nabla_{v}(\frac{\sqrt{\mu^{\alpha}_M}}{\omega_{\gamma}(t,v)})-\frac{\partial_t\omega_{\gamma}(t,v)}{\omega_{\gamma}(t,v)}\\
            &\geq \frac{2\left \langle v \right\rangle^{\gamma}}{3C\varepsilon} +\frac{1}{4Ck}\frac{\left \langle v \right\rangle^{\frac{3-\gamma}{2}}}{(1+t)^{\frac{2k+1}{2k-1}}}> \frac{2\left \langle v \right\rangle^{\gamma}}{3C\varepsilon}.
        \end{split}
    \end{equation}
    \begin{remark}
        For the following estimates in this section, we actually replace $\hat{\upsilon_{\varepsilon}^\alpha}$
        with $\frac{\left \langle v \right\rangle^{\gamma}}{C\varepsilon}$, where the coefficient $\frac{1}{\varepsilon}$
        in $\frac{\left \langle v \right\rangle^{\gamma}}{C\varepsilon}$
        is critical, as it can eliminate some singularities in these terms, such as $\frac{1}{\varepsilon}(K_{M,\omega_{\kappa}}^{\alpha,\chi}+K_{M,\omega_{\kappa}}^{\alpha,c})\mathbf{S}_{R,\kappa}$
        as well as the terms $\frac{D_x\upsilon^\alpha}{\varepsilon}$
        and $\frac{D_v\upsilon^\alpha}{\varepsilon}$. It should be emphasized that $\frac{1}{\varepsilon}$ cannot be replaced by any $\frac{1}{\varepsilon^{l_d}}$, even if $\frac{10^{10}}{10^{10}+1} < l_d < 1$, due to the mathematical structure that
        \begin{align*}
           & \left|\int_{0}^{t} \exp \left(-\frac{1}{\varepsilon^{l_d}}\int_{s}^{t}\upsilon^{\alpha}(\tau,  X^{\alpha}(\tau),V^{\alpha}(\tau)) d\tau\right) \left \langle v \right\rangle^{\gamma}  ds \right|\\
             &\hspace{0.3cm} \lesssim  \int_{0}^{t}\exp \left(-\frac{ C\left \langle v \right\rangle^{\gamma}  (t-s)}{ \varepsilon^{l_d}}\right)\left \langle v \right\rangle^{\gamma}  ds\,\, \lesssim \,\varepsilon^{l_d}.
        \end{align*}
    \end{remark}
    \begin{remark}\label{WLIGHTERANALYSIS}
        The weight function $\omega_{\gamma} := \kappa_0 \exp [(1 + (1 + t)^{-\frac{2}{2k-1}}) \left\langle v \right\rangle^{\frac{3 - \gamma}{2}}]$
        can help us eliminate the tricky term $\frac{e^{\alpha}}{m^{\alpha}} \nabla_{x} \phi^{\varepsilon} \cdot \frac{w(t,x,v)}{\sqrt{\mu^{\alpha}_M}} \nabla_{v}(\frac{\sqrt{\mu^{\alpha}_M}}{w(t,x,v)})$.  However, the velocity weighting index $\frac{3 - \gamma}{2}$
        cannot exceed 2, because of the joint equilibrium state $\mathbf{F}_0$, obtained from the conservation of momentum and energy. Otherwise, terms such as $\mathcal{N}^\alpha_i$
        (for $i = 2, 4, 5$) cannot be controlled.
    \end{remark}

    \subsection*{Mathematical Obstacles in Soft Potential Case}
   The term $\langle v \rangle^\gamma$ lacks a positive lower bound as $v \to +\infty$, resulting in some mathematical difficulties.
   \subsubsection*{Local Existence Analysis}
    We use $f(t,x,v)$ to denote a nonlinear term.
     The control of the weighted norm $\displaystyle{\sup_{0\leq s \leq t} }\|\left \langle v \right\rangle^{|\gamma|}f(s)\|_{L^{\infty}_{x,v}}$ is particularly crucial for terms requiring $\varepsilon$-balancing with inherent $\frac{1}{\varepsilon}$ coefficients.
    We denote
    the characteristic structure as
    \begin{gather*}
        \mathcal{T}^\alpha_{0}(t,s) =: \exp \left(-\frac{1}{C\varepsilon}\int_{s}^{t} \left \langle v \right\rangle^{\gamma} d\tau\right).
    \end{gather*}
    In hard potential case, it implies that
    \begin{gather*}
        \left|\int_{0}^{t} \mathcal{T}_{0}^{\alpha}(t,s) f(t,x,v) ds \right|\lesssim  \left|\int_{0}^{t} \mathcal{T}_{0}^{\alpha}(t,s)  ds \right| \sup_{0\leq s \leq t} \|f(s)\|_{L^{\infty}_{x,v}}\lesssim \varepsilon \sup_{0\leq s \leq t} \|f(s)\|_{L^{\infty}_{x,v}}.
    \end{gather*}
    While in soft potential case, since $\left \langle v \right\rangle^{\gamma}$ lacks a positive lower bound, one could only get
    \begin{gather*}
        \left|\int_{0}^{t} \mathcal{T}_{0}^{\alpha}(t,s) f(t,x,v) ds \right|\lesssim  \left|\int_{0}^{t} \mathcal{T}_{0}^{\alpha}(t,s)  ds \right| \sup_{0\leq s \leq t} \|f(s)\|_{L^{\infty}_{x,v}}\lesssim  \sup_{0\leq s \leq t} \|f(s)\|_{L^{\infty}_{x,v}}.
    \end{gather*}
    To extract the small parameter $\varepsilon$, the following estimate is considered
    \begin{align}
        \left|\int_{0}^{t} \mathcal{T}_{0}^{\alpha}(t,s) f(t,x,v) ds \right|
         &\lesssim  \left|\int_{0}^{t} \left \langle v \right\rangle^{\gamma}  \mathcal{T}_{0}^{\alpha} (t,s)  ds \right| \sup_{0\leq s \leq t} \|\frac{f(s)}{\left \langle v \right\rangle^{\gamma}}\|_{L^{\infty}_{x,v}} \notag'\\
         &\lesssim  \varepsilon \sup_{0\leq s \leq t} \|\left \langle v \right\rangle^{|\gamma|}f(s)\|_{L^{\infty}_{x,v}}.\label{resollowerbbd}
    \end{align}
    \subsubsection*{Long-Time Existence Analysis}
    For large time scales $t$, the solution estimates via temporal iterations (see \eqref{dddHDdgui0H1}) are obstructed in soft potential cases by the absence of constant $\nu_0$ satisfying \eqref{nudetjH}. But if the initial
data of the remainder  has a compact support, there is a hope
for solving this problem. We use $supp_{t}\{v|f(t,x,v)\}$ to denote
the support of the function $f(t,x,v)$ with respect to the variable
$v$ at time $t$. The radius of the support is defined as follows:
    \begin{equation}
        R_s(t|f(t,x,v)) =   \rm{sup}\{|v|:v\in \rm{supp}_{t}[f(t,x,v)]\}.
    \end{equation}
Given the finite initial support radius $R_s(0|S_{\gamma}(t,x,v)) < +\infty$, the temporal evolution of the support radius requires careful analysis. Recall the characteristic equations
    \begin{eqnarray*}
        \begin{split}
            &\frac{d}{d\tau}V^{\alpha}(\tau;t,x,v)=\frac{e^{\alpha}}{m^{\alpha}}\phi^{\varepsilon}(\tau,X^{\alpha}(\tau;t,x,v)), \quad V^{\alpha}(\tau;t,x,v)=v,
        \end{split}
    \end{eqnarray*}
      By Lemma \ref{lemofcharacter} and \eqref{psideddddg}, it yields
    \begin{eqnarray}\label{SOFSUPGT59}
        R_s(t|f(t,x,v)) \leq  R_s(0|f(t,x,v))+C t,
    \end{eqnarray}
    which imply bounded propagation speed for $R_s(t|f(t,x,v))$. Because of \eqref{44HDW1INFXV}-\eqref{44FsinaldT0s0}, the exponential decay term $e^{-\langle v \rangle^\gamma T_0/(2\varepsilon)}$ must be small enough, which restricts the velocity domain to $v \sim O(\varepsilon^{-\iota_d/|\gamma|})$. Here the parameter $\iota_d$ is selectable within $[\frac{10^8}{1+10^8},1)$, and for example, we can  take
    $
    \iota_d := \max\left\{\frac{10^8}{1+10^8}, \frac{2k-3}{2k-1}\right\}.
    $
    Consequently, for all $t \in [0, T_\varepsilon]$ with 
    \begin{equation*}
    	T_\varepsilon \sim
    	\begin{cases}
    		& \varepsilon^{-\frac{2k-3}{2(2k-1)}} \,\,\,\,\qquad -1\leq \gamma \leq 1, \\
    		& \varepsilon^{-\frac{2k-3}{(1-\gamma)(2k-1)}}\quad  -3<\gamma<-1.
    	\end{cases}
    \end{equation*}
     In both cases by \eqref{SOFSUPGT59}, it holds that:
    \begin{equation}\label{SOFSUPGT510}
      \varepsilon \lesssim \varepsilon^{\iota_d} \lesssim |R_s(t|f)|^{|\gamma|} .
    \end{equation}
    Therefore, we have
    \begin{equation*}
    	\frac{\langle v \rangle^\gamma T_0}{2\varepsilon} \rightarrow +\infty, \quad e^{-\langle v \rangle^\gamma T_0/(2\varepsilon)} \rightarrow 0^{+}, \quad as \quad \varepsilon \rightarrow 0^{+}.
    \end{equation*}
    which yields an iterable mathematical structure.

    \subsection{$L_{x,v}^{\infty}$ estimates for $\mathbf{S}_{\gamma}$.}
From the notation in \eqref{xxbfdmaisnlower1} and
\eqref{Threedefweight1}, the solution $S^{\alpha}_{\gamma}$ of the
original nonlinear equation \eqref{reeqmain} can be written as
    \begin{gather}\label{SSS5555MAINEQ}
        \begin{split}
            \partial_t  S^\alpha_{\gamma} & + v\cdot\nabla_{x} S^\alpha_{\gamma} + \frac{e^{\alpha}}{m^{\alpha}} \nabla_{x}\phi^{\varepsilon}\cdot\nabla_{v}  S^\alpha_{\gamma} + \hat{\upsilon_{\varepsilon}^\alpha}\,S^\alpha_{\gamma}
            = \sum_{i=1}^{5} \mathcal{N}^\alpha_{i}+ \frac{1}{\varepsilon}(K_{M,\omega_{\gamma}}^{\alpha,\chi}+K_{M,\omega_{\gamma}}^{\alpha,c})\mathbf{S}_{\gamma},
        \end{split}
    \end{gather}
    where   \begin{align}
            &\hat{\upsilon_{\varepsilon}^\alpha}(t,x,v)=    \frac{\upsilon^\alpha}{\varepsilon} +\frac{e^{\alpha}}{m^{\alpha}}\nabla_{x}\phi^{\varepsilon} \cdot  \frac{\omega_{\kappa}(t,v)}{\sqrt{\mu^{\alpha}_M}} \nabla_{v}(\frac{\sqrt{\mu^{\alpha}_M}}{\omega_{\kappa}(t,v)})-\frac{\partial_t\omega_{\kappa}(t,v)}{\omega_{\kappa}(t,v)},\notag\\
            &\mathcal{N}^\alpha_{1}= \varepsilon^{k-1}  \frac{ \omega_{\gamma}}{\sqrt{\mu^{\alpha}_M}}\sum_{\beta=A,B} Q^{ \alpha\beta }(\frac{\sqrt{\mu^{\alpha}_M}}{\omega_{\gamma}}S^\alpha_{\gamma}, \frac{\sqrt{\mu^{\beta}_M}}{\omega_{\gamma}} S^\beta_{\gamma}),\notag\\
            &\mathcal{N}_{2}^\alpha=  \frac{\omega_{\gamma}}{\sqrt{\mu^{\alpha}_M}} \sum_{\beta=A,B}\sum_{i=1}^{2k-1} \varepsilon^{i-1}\Big[ Q^{ \alpha \beta}(F_i^{\alpha},\frac{\sqrt{\mu^{\beta}_M}}{\omega_{\gamma}}S^\beta_\gamma)+Q^{\alpha \beta}(\frac{\sqrt{\mu^{\alpha}_M}}{\omega_{\gamma}}S^\alpha_\gamma, F_i^{\beta})\Big], \notag\\
            &\mathcal{N}^\alpha_{3} =  -\frac{e^{\alpha}}{m^{\alpha}}\nabla_{x}\phi_R \cdot  \frac{\omega_{\gamma}}{\sqrt{\mu^{\alpha}_M}} \nabla_{v}(\mu^{\alpha}+\sum_{i=1}^{2k-1}\varepsilon^iF_i^{\alpha}),\label{NLINEANNM1M2}\\
            &\mathcal{N}^\alpha_{4} = - \frac{\omega_{\gamma}}{\sqrt{\mu^{\alpha}_M}} \Big[\varepsilon^{k-1}( \partial_t +v\cdot \nabla_{x} ) F^\alpha_{2k-1}+\mathop{\sum}_{i+j \geq 2k-1\atop 0 \leq i,j \leq 2k-1}\varepsilon^{i+j-k}\nabla_x\phi_i\cdot\nabla_vF_j^\alpha\Big],\notag\\
            &\mathcal{N}^\alpha_{5} =  \frac{\omega_{\gamma}}{\sqrt{\mu^{\alpha}_M}}  \Big\{\sum_{\beta=A,B} \mathop{\sum}_{i+j \geq 2k\atop 1 \leq i,j \leq 2k-1} \varepsilon^{i+j-k-1}\Big[Q^{\alpha \beta }(F_i^{\alpha},F_j^{\beta})+Q^{\alpha\beta }(F_j^{\alpha},F_i^{\beta})\Big] \Big\}\notag.
            \end{align}
Making use of Duhamel's principle,  one has
    \begin{gather}\label{SGMZONG512}
        \begin{split}
            S^\alpha_{\gamma}(t,x,v) = & \exp \left(-\int_{0}^{t}\hat{\upsilon_{\varepsilon}^\alpha}(\tau,  X^{\alpha}(\tau),V^{\alpha}(\tau)) d\tau\right)S^\alpha_{\gamma}(0,X^{\alpha}(0;t,x,v),V^{\alpha}(0;t,x,v)) \\
            & + \int_{0}^{t}\exp \left(-\int_{s}^{t}\hat{\upsilon_{\varepsilon}^\alpha}(\tau,  X^{\alpha}(\tau),V^{\alpha}(\tau))   d\tau \right) \sum_{i=1}^{5}\mathcal{N}^\alpha_i(s, X^{\alpha}(s),V^{\alpha}(s))ds\\
            &+ \frac{1}{\varepsilon} \int_{0}^{t}\exp \left(-\int_{s}^{t}\hat{\upsilon_{\varepsilon}^\alpha}(\tau,  X^{\alpha}(\tau),V^{\alpha}(\tau))  d\tau \right)  K_{M,\omega_{\gamma}}^{\alpha,\chi} \mathbf{S}_{\gamma} (s, X^{\alpha}(s),V^{\alpha}(s))ds\\
            &+ \frac{1}{\varepsilon} \int_{0}^{t}\exp \left(-\int_{s}^{t}\hat{\upsilon_{\varepsilon}^\alpha}(\tau,  X^{\alpha}(\tau),V^{\alpha}(\tau))   d\tau \right)  K_{M,\omega_{\gamma}}^{\alpha,c} \mathbf{S}_{\gamma} (s, X^{\alpha}(s),V^{\alpha}(s))ds.
        \end{split}
    \end{gather}
    First, from \eqref{xxbfdmaisnlower1}, the inequality
        $\hat{\upsilon}_{\varepsilon}^\alpha > \frac{\langle v \rangle^{\gamma}}{C\varepsilon}$
    holds, then we have
    \begin{equation*}
        \exp\left(-\int_{0}^{t}\hat{\upsilon}_{\varepsilon}^\alpha(\tau, X^{\alpha}(\tau),V^{\alpha}(\tau)) d\tau\right)
        < \exp\left(-\frac{1}{C\varepsilon}\int_{s}^{t}\langle V^{\alpha}(\tau) \rangle^{\gamma} d\tau\right).
    \end{equation*}
Moreover, setting
    \begin{align*}
        \hat{\mathcal{F}^\alpha_{0}}(t,s)&=:\exp \left(-\int_{s}^{t}\hat{\upsilon_{\varepsilon}^\alpha}(\tau,  X^{\alpha}(\tau),V^{\alpha}(\tau)) d\tau\right),\\
        \mathcal{T}^\alpha_{0}(t,s) &=: \exp \left(-\frac{1}{C\varepsilon}\int_{s}^{t}\left \langle V^{\alpha}(\tau) \right\rangle^{\gamma}
d\tau\right).
    \end{align*}
 Then, \eqref{SGMZONG512} is estimated  by
    \begin{gather}\label{SGMZONG512TT}
        \begin{split}
            |S^\alpha_{\gamma}|(t,x,v) \leq & \mathcal{T}^\alpha_{0}(t,0)|S^\alpha_{\gamma}|(0,X^{\alpha}(0;t,x,v),V^{\alpha}(0;t,x,v)) \\
            & + \int_{0}^{t}\mathcal{T}^\alpha_{0}(t,s) \sum_{i=1}^{5}|\mathcal{N}^\alpha_i|(s, X^{\alpha}(s),V^{\alpha}(s))ds\\
            &+ \frac{1}{\varepsilon} \int_{0}^{t}\mathcal{T}^\alpha_{0}(t,s)  |K_{M,\omega_{\gamma}}^{\alpha,\chi} \mathbf{S}_{\gamma} |(s, X^{\alpha}(s),V^{\alpha}(s))ds\\
            &+ \frac{1}{\varepsilon} \int_{0}^{t}\mathcal{T}^\alpha_{0}(t,s) |K_{M,\omega_{\gamma}}^{\alpha,c} \mathbf{S}_{\gamma}| (s, X^{\alpha}(s),V^{\alpha}(s))ds.
        \end{split}
    \end{gather}
  Now let us control the terms on the right-hand side of \eqref{SGMZONG512TT}. First, the estimates \eqref{les1estTZX} and \eqref{ges1estTZX} hold for sufficiently small $T_0$. Consequently, when $0\leq\gamma<1$, there exists a constant $\nu_o>0$ satisfying
    \begin{equation*}
        \left \langle v \right\rangle^{\gamma} \sim \nu_o \gtrsim 1.
    \end{equation*}
    Then for $0\leq s < t \leq T_0$,  the following estimate holds:
    \begin{gather*}
        \left|\int_{0}^{t} \mathcal{T}_{0}^{\alpha}(t,s) \left \langle V^{\alpha}(s) \right\rangle^{\gamma} e^{-\frac{\nu_os}{\varepsilon}} ds \right|\lesssim e^{-\frac{\nu_ot}{\varepsilon}} \int_{0}^{t}\exp \left(-\frac{ \left \langle v \right\rangle^{\gamma}(t-s)}{ C\varepsilon}\right)\left \langle v \right\rangle^{\gamma}  ds \lesssim \varepsilon e^{-\frac{\nu_ot}{\varepsilon}}.
    \end{gather*}
    For soft potentials with compactly supported initial data, the support propagation analysis \eqref{SOFSUPGT59} and \eqref{SOFSUPGT510} implies that when
    $-3 < \gamma < 0$, there exists an $\varepsilon$-dependent parameter $\nu_{\varepsilon}>0$ such that
    \begin{equation}\label{SSP516CZ}
        \left \langle v \right\rangle^{\gamma} \sim \nu_{\varepsilon}>0.
    \end{equation}
    This inequality holds for all $v>0$ within the velocity support set and $t>0$ in the maximal existence time interval  $O(\varepsilon^{-\frac{1}{2}\frac{2k-3}{2k-1}})$, leading to
    \begin{gather*}
        \left|\int_{0}^{t} \mathcal{T}_{0}^{\alpha}(t,s) \left \langle v \right\rangle^{\gamma}e^{-\frac{\nu_{\varepsilon}s}{\varepsilon}} ds \right|\lesssim e^{-\frac{\nu_{\varepsilon}t}{\varepsilon}} \int_{0}^{t}\exp \left(-\frac{ \left \langle v \right\rangle^{\gamma}(t-s)}{ C\varepsilon}\right)\left \langle v \right\rangle^{\gamma}  ds \lesssim \varepsilon e^{-\frac{\nu_{\varepsilon}t}{\varepsilon}}.
    \end{gather*}
    To streamline subsequent expressions, the following notation is introduced :
    \begin{equation*}
        \nu_{\gamma}=
        \begin{cases}
            & \nu_o\,\,\,\,\,\,\,\,\,\,\quad 0\leq \gamma<1, \\
            & \nu_{\varepsilon} \,\,\,\quad -3<\gamma<0 .
        \end{cases}
    \end{equation*}
    Estimates for \eqref{linfHADTOT}'s right-hand side follow from
    \begin{gather*}\label{4.181}
        \left|\mathcal{T}_0^{\alpha}(t,0) S^\alpha_{\gamma}(0,\,X^{\alpha}(0),\,V^{\alpha}(0))\right| \lesssim \varepsilon  e^{-\frac{\nu_{\gamma}t}{\varepsilon}} \|\mathbf{S}^{in}_{\gamma}\|_{L^\infty_{x,v}}.
    \end{gather*}

   Next, we choose the weight function
    \begin{equation*}
        \omega_{\gamma}=e^{\tilde{\kappa}\left \langle v \right\rangle^{\frac{3-\gamma}{2}}},\quad \tilde{\kappa}=\kappa_0 (1+(1+t)^{-\frac{2}{2k-1}}), \quad -1\leq\gamma<1,
    \end{equation*}
    where
    $\kappa_0$
    is a sufficiently small constant. Substituting the weighting function
   $\left \langle v \right\rangle^{l}$ with $\omega_{\gamma}$
    preserves the estimation validity. Following the approach in \eqref{HHHD111}-\eqref{HHHDDB}, one obtains
    \begin{align*}
            |\mathcal{N}_1^\alpha| = & \varepsilon^{k-1} \left| \sum_{\beta=A,B}\frac{\omega_{\gamma}}{\sqrt{\mu^{\alpha}_M}} Q^{\alpha\beta }(\sqrt{\mu^{\alpha}_M}\frac{S^\alpha_{\gamma}}{\omega_{\gamma}}, \sqrt{\mu^{\beta}_M} \frac{S^\beta_{\gamma}}{\omega_{\gamma}})\right|\\
            \lesssim & \varepsilon^{k-1} \left \langle v \right\rangle^{\gamma} \, \Vert \mathbf{S}_{\gamma} \Vert_{L^2_{x,v}}^2,\\
            |\mathcal{N}_2^{\alpha}|= & \Big| \sum_{\beta=A,B} \sum_{i=1}^{2k-1} \varepsilon^{i-1} \frac{\omega_{\gamma}}{\sqrt{\mu^{\alpha}_M}} \Big[Q^{\alpha\beta }(F_i^{\alpha}, \sqrt{\mu^{\beta}_M}\frac{S^\beta_{\gamma}}{\omega_{\gamma}})+Q^{\alpha\beta }(\sqrt{\mu^{\alpha}_M}\frac{S^\alpha_{\gamma}}{\omega_{\gamma}},F_i^{\beta})\Big] \Big| \\
            \lesssim  & \, \left \langle v \right\rangle^{\gamma}  \mathcal{I}_1 \,\Vert\mathbf{S}_{\gamma} \Vert_{L^\infty_{x,v}},\\
            |\mathcal{N}^\alpha_3| =& |\frac{e^{\alpha}}{m^{\alpha}}\nabla_{x}\phi_R \cdot  \frac{\omega_{\gamma}}{\sqrt{\mu^{\alpha}_M}} \nabla_{v}(\mu^{\alpha}+\sum_{i=1}^{2k-1}\varepsilon^iF_i^{\alpha})|\\
            \lesssim &(1+\varepsilon\mathcal{I}_1)\left \langle v \right\rangle^{\gamma} \Vert\mathbf{S}_{\gamma} \Vert_{L^{\infty}_{x,v} },\\
            |\mathcal{N}^\alpha_4| = & \Big|\frac{\omega_{\gamma}}{\sqrt{\mu^{\alpha}_M}} \Big[\varepsilon^{k-1}( \partial_t +v\cdot \nabla_{x} ) F^\alpha_{2k-1}+\mathop{\sum}_{i+j \geq 2k-1\atop 0 \leq i,j \leq 2k-1}\varepsilon^{i+j-k}\nabla_x\phi_i\cdot\nabla_vF_j^\alpha\Big] \Big|\\
            \lesssim  & \varepsilon^{k-1} \left \langle v \right\rangle^{\gamma} \mathcal{I}_2,\\
            |\mathcal{N}^\alpha_5| = & \Big| \sum_{\beta=A,B} \mathop{\sum}_{i+j \geq 2k\atop 1 \leq i,j \leq 2k-1} \varepsilon^{i+j-k-1}\frac{\omega_{\gamma}}{\sqrt{\mu^{\alpha}_M}} \Big[Q^{\alpha \beta }(F_i^{\alpha},F_j^{\beta})+Q^{\alpha\beta }(F_j^{\alpha},F_i^{\beta})\Big] \Big| \\
            \lesssim  &\,\varepsilon^{k-1} \left \langle v \right\rangle^{\gamma} \mathcal{I}_2.
        \end{align*}
   Consequently, it holds that
    \begin{align*}
            &\left|\int_{0}^{t}\mathcal{T}_0^{\alpha}(t,s)\sum_{i=1}^{5}\mathcal{N}^\alpha_i(s,X^{\alpha}(s),V^{\alpha}(s))ds \right| \\
            &\hspace{0.4cm}\lesssim  \,\varepsilon(1+\mathcal{I}_1+\varepsilon\mathcal{I}_1) e^{-\frac{\nu_{\gamma}t}{2\varepsilon}} \sup_{0\leq s \leq t}(e^{\frac{\nu_{\gamma}s}{2\varepsilon}}\Vert \mathbf{S}_{\gamma} (s)\Vert_{L^{\infty}_{x,v}}) \\
            &\hspace{0.8cm}+\varepsilon^k  e^{-\frac{\nu_{\gamma} t}{\varepsilon}} \sup_{0\leq s \leq t}(e^{\frac{\nu_{\gamma} s}{2\varepsilon}}\Vert \mathbf{S}_{\gamma} (s)\Vert_{L^{\infty}_{x,v}})^2+\varepsilon^k\mathcal{I}_2.
    \end{align*}
    Moreover, from Lemma \ref{kchiestmainle}, one obtains
    \begin{gather*}
| K_{M,\omega_{\gamma}}^{\alpha,\chi}\mathbf{S}_{\gamma}
(s,X^{\alpha}(s),V^{\alpha}(s)) | \leq C {\delta}^{3+\gamma} \left
\langle V^{\alpha}(s) \right\rangle^{\gamma} e^{-c|V^{\alpha}(s)|^2}
|\mathbf{S}_{\gamma} |_{L^\infty_v},
    \end{gather*}
   which leads to
    \begin{gather*}
        \begin{split}
            \frac{1}{\varepsilon} \left|\int_{0}^{t}\mathcal{T}_0^{\alpha}(t,s) K_{M,\omega_{\gamma}}^{\alpha,\chi}\mathbf{S}_{\gamma}  (s,X^{\alpha}(s),V^{\alpha}(s))ds \right|
            \lesssim \delta^{3+\gamma}e^{-\frac{\nu_{\gamma}t}{2\varepsilon}} \sup_{0\leq s \leq t}(e^{\frac{\nu_{\gamma}s}{2\varepsilon}}\Vert \mathbf{S}_{\gamma} (s)\Vert_{L^{\infty}_{x,v}}).
        \end{split}
    \end{gather*}
Finally, the regular part $K_{M,w}^{\alpha,c} \mathbf{S}_{\gamma}$ is expressed by
    \begin{eqnarray}\label{SPKC00}
        \begin{split}
            &K_{M,w}^{\alpha,c} \mathbf{S}_{\gamma}(s,X^{\alpha}(s),V^{\alpha}(s))\\
            =&\sum_{\beta=A,B}\int_{\mathbb{R}^3} [k_{M,1}^{\alpha \beta}+k_{M,2}^{\alpha \beta}] (s,X^{\alpha}(s),V^{\alpha}(s),v_*)\frac{w(V^{\alpha}(s))}{w(v_*)}S_{\gamma}^{\beta}(s,X^{\alpha}(s),v_*) d v_{*}\\
            =&\sum_{\beta=A,B}\int_{\mathbb{R}^3} k_{M,\omega_{\gamma}}^{\alpha \beta}(s,X^{\alpha}(s),V^{\alpha}(s),v_*)S_{\gamma}^{\beta}(s,X^{\alpha}(s),v_*) d v_{*}.
        \end{split}
    \end{eqnarray}
   For clarity, we define the following operators
    \begin{gather*}
        \begin{split}
            \hat{\mathcal{F}}^\beta_1 (s,s_1)  =:\exp \left(-\int_{s_1}^{s}\hat{\upsilon_{\varepsilon}^\alpha}(\tau,  X^{\alpha}(\tau),V^{\alpha}(\tau)) d\tau\right),
~~\mathcal{T}^{\beta}_{1}(s,s_1) =: \exp
\left(-\frac{1}{C\varepsilon}\int_{s_1}^{s}\left \langle
V^{\beta}(\tau) \right\rangle^{\gamma} d\tau\right).
        \end{split}
    \end{gather*}
    The bound for $S^\beta_\gamma(s,X(s),v_*)$ is derived as
    \begin{gather}\label{SP00expr}
        \begin{split}
            |S^\beta_{\gamma}|(s,X^{\alpha}(s),v_*)
            \leq & \mathcal{T}^\beta_1(s, 0) |S^\beta_{\gamma}|(0,X^{\alpha,\beta}_1(0),V^{\alpha,\beta}_1(0)) \\
            &+ \sum_{j=1}^5 \int_{0}^s \mathcal{T}_1^\beta(s,s_1) |\mathcal{N}^\beta_j|(s_1,X^{\alpha,\beta}_1(s_1),V^{\alpha,\beta}_1(s_1))ds_1\\
            & + \frac{1}{\varepsilon} \int_{0}^s \mathcal{T}_1^\beta(s,s_1)  |K_{M,\omega_{\gamma}}^{\alpha,\chi}  \mathbf{S}_{\gamma}|(s_1,X^{\alpha,\beta}_1(s_1),V^{\alpha,\beta}_1(s_1))ds_1\\
            & + \frac{1}{\varepsilon} \int_{0}^s \mathcal{T}_1^\beta(s,s_1)  |K_{M,\omega_{\gamma}}^{\alpha,c}
\mathbf{S}_{\gamma}|(s_1,X^{\alpha,\beta}_1(s_1),V^{\alpha,\beta}_1(s_1))ds_1,
        \end{split}
    \end{gather}
which together with \eqref{SPKC00} yields
    \begin{equation}\label{SPJJO00}
        \frac{1}{\varepsilon} \left|\int_{0}^{t}\mathcal{T}_0^{\alpha}(t,s) K_{M,\omega_{\gamma}}^{\alpha,c}  \mathbf{S}_{\gamma}(s,X^{\alpha}(s),V^{\alpha}(s))ds \right| \leq \sum_{i=0}^{2} \hat{\mathcal{J}}_i(t) + \hat{\mathcal{O}}^\alpha(t).
    \end{equation}
    Moreover, applying Lemma \ref{LeMK2ker4444} with $-1 \leq \gamma < 1$, we obtain the kernel estimate:
    \begin{equation*}
        \begin{split}
            &\int_{\mathbb{R}^3} \int_{\mathbb{R}^3} |k_{M,\omega_{\gamma}}^{\alpha\beta}(s,X^{\alpha}(s),V^{\alpha}(s),v_*)k_{M,\omega_{\gamma}}^{\beta\beta^{'}}(s_1,X^{\alpha,\beta}_1(s_1),V^{\alpha,\beta}_1(s_1),v_{**})|dv_{**}\, dv_*\\
            &\hspace{2cm}\leq \frac{C\left \langle V(s) \right\rangle^{\gamma}}{1+|V(s)|}  \times \left \langle V_1(s_1) \right\rangle^{\gamma} .
        \end{split}
    \end{equation*}
   The estimates in \eqref{SPJJO00} follow from the same methodology developed in Section 4, that is, we can show
    \begin{gather*}
        \begin{split}
            \sup_{0\leq s \leq T_0}(e^{\frac{\nu_{\gamma}s}{2\varepsilon}}\Vert  S_{\gamma}^{\alpha}(s) &\Vert_{L^{\infty}_{x,v}})
            \leq C \sup_{0\leq s \leq T_0}[(1+\frac{s}{\varepsilon})e^{-\frac{\nu_{\gamma}s}{2\varepsilon}}]\|\mathbf{S}^{in}_{\gamma}\|_{L^\infty_{x,v}}+C\varepsilon^k\mathcal{I}_2(T_0)e^{\frac{\nu_{\gamma}T_0}{2\varepsilon}}\\
            &+[C(1+\mathcal{I}_1(T_0))\varepsilon+C\lambda+\frac{C_\lambda}{N}+\delta^{3+\gamma}]\sup_{0\leq s \leq T_0}(e^{\frac{\nu_{\gamma}s}{2\varepsilon}}\Vert  \mathbf{S}_{\gamma}(s) \Vert_{L^{\infty}_{x,v}})\\
            &+C\varepsilon^k \sup_{0\leq s \leq T_0}(e^{\frac{\nu_{\gamma}s}{2\varepsilon}}\Vert  \mathbf{S}_{\gamma}(s) \Vert_{L^{\infty}_{x,v}})^2
            +\frac{C_{N,\lambda}}{\varepsilon^{\frac{3}{2}}}e^{\frac{\nu_{\gamma}T_0}{2\varepsilon}}\sup_{0\leq s \leq T_0}\|
\mathbf{f}_R(s)\|_{L^{2}_{x,v}},
        \end{split}
    \end{gather*}
    which leads to
    \begin{gather}\label{CCCCFU0}
        \begin{split}
            \sup_{0\leq s \leq T_0}(e^{\frac{\nu_{\gamma}s}{2\varepsilon}}\Vert  S_{\gamma}^{\alpha}(s) \Vert_{L^{\infty}_{x,v}})
            \leq & C \|\mathbf{S}^{in}_{\gamma}\|_{L^\infty_{x,v}}+\frac{C_{N,\lambda}}{\varepsilon^{\frac{3}{2}}}e^{\frac{\nu_{\gamma}T_0}{2\varepsilon}}\sup_{0\leq s \leq T_0}\| \mathbf{f}_R(s)\|_{L^{2}_{x,v}}+C\varepsilon^k\mathcal{I}_2(T_0)e^{\frac{\nu_{\gamma}T_0}{2\varepsilon}}.
        \end{split}
    \end{gather}
    For the hard potential case, the analysis directly follows from \eqref{HSC465}-\eqref{HSC467}. In contrast, the soft potential case requires more careful consideration. Based on \eqref{SOFSUPGT59}, \eqref{SOFSUPGT510} and \eqref{SSP516CZ}, we observe the asymptotic behavior
    \begin{eqnarray}\label{CSJCHUFAXXM}
        e^{-\frac{\nu_{\gamma}T_0}{2\varepsilon}} \rightarrow 0, \qquad as \quad \varepsilon \rightarrow 0.
    \end{eqnarray}
    Multiplying \eqref{CCCCFU0} by the scaling factor
    $\varepsilon^{\frac{3}{2}}e^{-\frac{\nu_oT_0}{2\varepsilon}}$
    , yields that
    \begin{equation}\label{44L000ses}
        \varepsilon^{\frac{3}{2}}\Vert  \mathbf{S}_{\gamma}(T_0) \Vert_{L^{\infty}_{x,v}}\leq \frac{1}{4}\Vert \varepsilon^{\frac{3}{2}} \mathbf{S}^{\rm in}_{\gamma} \Vert_{L^{\infty}_{x,v}}+C \sup_{0\leq s \leq T_0}\| \mathbf{f}_R(s)\|_{L^{2}_{x,v}}+C\varepsilon^{\frac{2k+3}{2}}.
    \end{equation}

    \subsection{$W_x^{1,\infty}$ estimate }To confirm the crucial assumption \eqref{SUPPOSELINFGXV}, we have to
    consider $W_x^{1,\infty}$ estimates to close the energy. We take $D_x$ of \eqref{SSS5555MAINEQ} to get
    \begin{eqnarray}\label{DXHPSP540}
        \begin{split}
            &\hspace{0.5cm}\partial_{t}(D_xh_R^{\alpha})+v\cdot\nabla_{x}(D_xS_{\gamma}^{\alpha})+\frac{e^{\alpha}}{m^{\alpha}} \nabla_{x}\phi^{\varepsilon}\cdot\nabla_{v}  (D_xS_{\gamma}^{\alpha}) + \hat{\upsilon_{\varepsilon}^\alpha}  (D_xS_{\gamma}^{\alpha}) \\
            &=-\frac{e^{\alpha}}{m^{\alpha}}\nabla_{x}(D_x\phi^{\varepsilon})\cdot\nabla_{v}S_{\gamma}^{\alpha}-(D_x\hat{\upsilon_{\varepsilon}^\alpha}) S_{\gamma}^{\alpha}-D_x[
\frac{1}{\varepsilon}(K_{M,\omega_{\gamma}}^{\alpha,\chi}+K_{M,\omega_{\gamma}}^{\alpha,c})\mathbf{S}_{\gamma}]
        + \sum_{i=1}^{5}D_x (\mathcal{N}^\alpha_i),
        \end{split}
    \end{eqnarray}
    where $\mathcal{N}^\alpha_i$ is described in \eqref{NLINEANNM1M2}. Along the trajectory, the solution $D_xS_{\gamma}^{\alpha}$ of the equation \eqref{DXHPSP540} is bounded by
    \begin{gather}\label{linfHADTOSPDXT}
        \begin{split}
            |D_xS^\alpha_{\gamma}|(t,x,v) \leq\, &\mathcal{T}^\alpha_0(t,0)\,|D_xS^\alpha_{\gamma}|(0,X^{\alpha}(0;t,x,v),V^{\alpha}(0;t,x,v)) \\
            &+\frac{|e^{\alpha}|}{m^{\alpha}}\int_{0}^{t}\mathcal{T}^\alpha_0(t,s) |\nabla_{x}(D_x\phi^{\varepsilon})\cdot\nabla_{v}S_{\gamma}^{\alpha}|(s, X^{\alpha}(s),V^{\alpha}(s))ds\\
            &+ \int_{0}^{t}\mathcal{T}^\alpha_0(t,s)|(D_x\hat{\upsilon_{\varepsilon}^\alpha})\, S_{\gamma}^{\alpha}| (s, X^{\alpha}(s),V^{\alpha}(s))ds\\
            & + \int_{0}^{t}\mathcal{T}^\alpha_0(t,s) \sum_{i=1}^{5}|D_x\mathcal{N}^\alpha_j|(s, X^{\alpha}(s),V^{\alpha}(s))ds\\
            &+ \frac{1}{\varepsilon} \int_{0}^{t}\mathcal{T}^\alpha_0(t,s)  |D_x(K_{M,\left \langle v \right\rangle^{l}}^{\alpha,\chi} \mathbf{S}_{\gamma})| (s, X^{\alpha}(s),V^{\alpha}(s))ds\\
            &+\frac{1}{\varepsilon} \int_{0}^{t}\mathcal{T}^\alpha_0(t,s)  |D_x(K_{M,\left \langle v \right\rangle^{l}}^{\alpha,c} \mathbf{S}_{\gamma})| (s, X^{\alpha}(s),V^{\alpha}(s))ds.
        \end{split}
    \end{gather}
    The first line in right hand side in \eqref{linfHADTOSPDXT} is bounded by
    \begin{equation}\label{SDXK}
        \left|\mathcal{T}_0^{\alpha}(t,0) (D_xS^\alpha_{\gamma})(0,\,X(0),\,v)\right| \lesssim \varepsilon  e^{-\frac{\nu_{\gamma}t}{\varepsilon}} \|D_x\mathbf{S}^{in}_{\gamma}\|_{L^\infty_{x,v}}.
    \end{equation}
   Following the approach in \eqref{psideddddg}, the gradient estimates
    \begin{equation}
        | \nabla_{x}\phi_R|_{C^{1,\alpha_{0}}_{x} } \lesssim \Vert\mathbf{S}_{\gamma} \Vert_{W^{1,\infty}_{x} L^{\infty}_{v}}, \qquad | \nabla_{x}\phi^{\varepsilon}|_{C^{1,\alpha_{0}}_{x} }   \lesssim 1+\varepsilon \mathcal{I}_1+\varepsilon^k\Vert\mathbf{S}_{\gamma} \Vert_{W^{1,\infty}_{x} L^{\infty}_{v}},
    \end{equation}
    can be established. Under assumption \eqref{SUPPOSELINFGXV}, the second line of \eqref{linfHADTOSPDXT} can be controlled as follows
    \begin{gather*}
        \begin{split}
            \left|\int_{0}^{t}\mathcal{T}^\alpha_0(t,s) [\nabla_{x}(D_x\phi^{\varepsilon})\cdot\nabla_{v}S_{\gamma}^{\alpha}](s, X^{\alpha}(s),V^{\alpha}(s))ds\right|
            \lesssim \, \varepsilon  e^{-\frac{\nu_{\gamma}t}{2\varepsilon}} \sup_{0\leq s \leq t}(e^{\frac{\nu_{\gamma}s}{2\varepsilon}}\Vert \nabla_{v}\mathbf{S}_{\gamma}\Vert_{L^{\infty}_{x,v}}).
        \end{split}
    \end{gather*}
    Moreover, the expression from \eqref{NLINEANNM1M2}
    \begin{gather*}
        \begin{split}
  &\hat{\upsilon_{\varepsilon}^\alpha}(t,x,v)=    \frac{\upsilon^\alpha}{\varepsilon}
+\frac{e^{\alpha}}{m^{\alpha}}\nabla_{x}\phi^{\varepsilon} \cdot
\frac{\omega_{\kappa}(t,v)}{\sqrt{\mu^{\alpha}_M}}
\nabla_{v}(\frac{\sqrt{\mu^{\alpha}_M}}{\omega_{\kappa}(t,v)})-\frac{\partial_t\omega_{\kappa}(t,v)}{\omega_{\kappa}(t,v)},
        \end{split}
    \end{gather*}
  leads to the term estimate
    \begin{align*}
            |(D_x\hat{\upsilon_{\varepsilon}^\alpha})\, S_{\gamma}^{\alpha}|(t,x,v)
            \leq &  \frac{|(D_x\upsilon^\alpha)|}{\varepsilon} |S_{\gamma}^{\alpha}|+|\frac{e^{\alpha}}{m^{\alpha}}\nabla_{x}(D_x\phi^{\varepsilon}) \cdot  \frac{\omega_{\kappa}(t,v)}{\sqrt{\mu^{\alpha}_M}} \nabla_{v}(\frac{\sqrt{\mu^{\alpha}_M}}{\omega_{\kappa}(t,v)})S_{\gamma}^{\alpha}|\\
            \lesssim & \frac{\left \langle v \right\rangle^{\gamma}}{\varepsilon}\|  \mathbf{S}_{\gamma}\|_{L^{\infty}_{x,v}}
+\left \langle v \right\rangle^{\gamma}\Big[1+\varepsilon\mathcal{I}_1+\varepsilon^k(\|  \mathbf{S}_{\gamma}\|_{L^{\infty}_{x,v}}
+\|D_x\mathbf{S}_{\gamma}\|_{L^{\infty}_{x,v}})\Big]\\
&\hspace{7.5cm}\times\| \left\langle v \right\rangle^{1-\gamma} \mathbf{S}_{\gamma}\|_{L^{\infty}_{x,v}}\\
            \lesssim & \frac{\left \langle v \right\rangle^{\gamma}}{\varepsilon}\|  \mathbf{S}_{\gamma}\|_{L^{\infty}_{x,v}}+\left \langle v \right\rangle^{\gamma}\| \left \langle v \right\rangle^{1-\gamma} \mathbf{S}_{\gamma}\|_{L^{\infty}_{x,v}}.
    \end{align*}
    Then, the third line in \eqref{linfHADTOSPDXT} is bounded by
    \begin{gather*}
        \begin{split}
             \varepsilon e^{-\frac{\nu_{\gamma}t}{2\varepsilon}} \sup_{0\leq s \leq t}(e^{\frac{\nu_{\gamma}s}{2\varepsilon}}\Vert \left \langle v \right\rangle^{1-\gamma} \mathbf{S}_{\gamma}\Vert_{L^{\infty}_{x,v}})
            +\, e^{-\frac{\nu_{\gamma}t}{2\varepsilon}}  \sup_{0\leq s \leq t}(e^{\frac{\nu_{\gamma}s}{2\varepsilon}}\Vert  \mathbf{S}_{\gamma}\Vert_{L^{\infty}_{x,v}}).
        \end{split}
    \end{gather*}
    Following the methodology in \eqref{HHHD111}-\eqref{HHHDDB} (Section 4), the integral estimate
    \begin{gather*}
        \begin{split}
            &\left| \int_{0}^{t}\mathcal{T}^\alpha_0(t,s) \sum_{i=1}^{5}(D_x\mathcal{N}^\alpha_1) (s, X^{\alpha}(s),V^{\alpha}(s))ds\right| \\
            \lesssim &\,\, \varepsilon (1+\mathcal{I}_1) e^{-\frac{\nu_ot}{2\varepsilon}} \sup_{0\leq s \leq t}[e^{\frac{\nu_os}{2\varepsilon}}(\|  \mathbf{S}_{\gamma}\|_{L^{\infty}_{x,v}}+\|D_x\mathbf{S}_{\gamma}\|_{L^{\infty}_{x,v}})]+\varepsilon^k \mathcal{I}_2\\
            &+\varepsilon^k  e^{-\frac{\nu_{\gamma}t}{\varepsilon}} \sup_{0\leq s \leq t}[(e^{\frac{\nu_{\gamma}s}{2\varepsilon}}\|   \mathbf{S}_{\gamma}\|_{L^{\infty}_{x,v}})^2+(e^{\frac{\nu_{\gamma}s}{2\varepsilon}}\|  D_x\mathbf{S}_{\gamma}\|_{L^{\infty}_{x,v}})^2],
        \end{split}
    \end{gather*}
    is obtained. Lemma \ref{kchiestmainle} yields the bound
    \begin{gather}\label{SDXMO}
        \begin{split}
            &\left| \int_{0}^{t}\mathcal{T}^\alpha_0(t,s) D_x (K_{M,\omega_{\gamma}}^{\alpha,\chi} \mathbf{S}_{\gamma}) (s, X^{\alpha}(s),V^{\alpha}(s))ds\right|
            \lesssim \,\, \delta^{3+\gamma}  e^{-\frac{\nu_{\gamma}t}{2\varepsilon}} \sup_{0\leq s \leq t}(e^{\frac{\nu_{\gamma}s}{2\varepsilon}}\|  \mathbf{S}_{\gamma}\|_{L^{\infty}_{x,v}}).
        \end{split}
    \end{gather}
Finally, proceeding with the differentiation, we apply the product rule to obtain
    \begin{equation*}
        D_x (K_{M,\omega_{\gamma}}^{\alpha,c} \mathbf{S}_{\gamma})= (D_xK_{M,\omega_{\gamma}}^{\alpha,c}) \mathbf{S}_{\gamma}+K_{M,\omega_{\gamma}}^{\alpha,c} (D_x\mathbf{S}_{\gamma}),
    \end{equation*}
  whereupon we define the decomposed integrals by
    \begin{gather*}
        \begin{split}
            & \frac{1}{\varepsilon}\int_{0}^{t}\mathcal{T}^\alpha_0(t,s) (D_xK_{M,\omega_{\gamma}}^{\alpha,c}) \mathbf{S}_{\gamma}(s, X^{\alpha}(s),V^{\alpha}(s))ds:=\hat{I}_{1x},\\
            & \frac{1}{\varepsilon}\int_{0}^{t}\mathcal{T}^\alpha_0(t,s) K_{M,\omega_{\gamma}}^{\alpha,c} (D_x\mathbf{S}_{\gamma})(s, X^{\alpha}(s),V^{\alpha}(s))ds
:=\hat{I}_{2x}.
        \end{split}
    \end{gather*}
   The final line in \eqref{linfHADTOSPDXT} can be expressed in the form of an integral kernel that
    \begin{eqnarray}\label{XXXK2CDXJFBD}
        \begin{split}
            &(D_xK_{M,\omega_{\gamma}}^{\alpha,c}) \mathbf{S}_{\gamma}(s, X^{\alpha}(s),V^{\alpha}(s))\\
            =&\sum_{\beta=A,B}\int_{\mathbb{R}^3} (D_xk_{M,\omega_{\gamma}}^{\alpha \beta}) (s,X^{\alpha}(s),V^{\alpha}(s),v_*)S_{\gamma}^{\beta}(s,X^{\alpha}(s),v_*) d v_{*},
        \end{split}
    \end{eqnarray}
    and
    \begin{eqnarray}\label{DXXXK2CDXJFBD}
        \begin{split}
            &K_{M,\omega_{\gamma}}^{\alpha,c} (D_x\mathbf{S}_{\gamma})(s, X^{\alpha}(s),V^{\alpha}(s))\\
            =&\sum_{\beta=A,B}\int_{\mathbb{R}^3} k_{M,\omega_{\gamma}}^{\alpha \beta} (s,X^{\alpha}(s),V^{\alpha}(s),v_*)(D_xS_{\gamma}^{\beta})(s,X^{\alpha}(s),v_*) d v_{*}.
        \end{split}
    \end{eqnarray}
 Substituting $v_*$ for $v$ in \eqref{SGMZONG512} and inserting into \eqref{XXXK2CDXJFBD} yields
    \begin{equation*}
        \hat{I}_{1x}=\frac{1}{\varepsilon}\int_{0}^{t}\mathcal{T}_0^{\alpha}(t,s) (D_xK_{M,\omega_{\gamma}}^{\alpha,c})  \mathbf{S}_{\gamma}(s,X^{\alpha}(s),V^{\alpha}(s))ds  = \sum_{i=0}^{2} \hat{\mathcal{J}}_{i,x}(t) + \hat{\mathcal{O}}_{x}^\alpha(t).
    \end{equation*}
The expressions for $\hat{\mathcal{J}}_{i,x}$ and $\hat{\mathcal{O}}_{x}^\alpha$ mirror those in \eqref{SPJJO00}, with $K_{M,\omega_{\gamma}}^{\alpha,c}$ replaced by $D_xK_{M,\omega_{\gamma}}^{\alpha,c}$. Applying Lemma \ref{LeMK2ker4444} and the methodology from Section 5.1 gives the estimate
    \begin{gather*}
        \begin{split}
            \hat{I}_{1x}    & \leq \, C\frac{t}{\varepsilon}e^{-\frac{\nu_{\gamma}t}{\varepsilon}}\| \mathbf{S}^{\rm in}_{\gamma} \|_{L^\infty_{x,v}}+C\varepsilon^ke^{-\frac{\nu_{\gamma}t}{\varepsilon}} \sup_{0\leq s \leq t}(e^{\frac{\nu_{\gamma}s}{2\varepsilon}}\|  \mathbf{S}_{\gamma}\|_{L^{\infty}_{x,v}})^2\\
            & \hspace{0.4cm} +[C(1+\mathcal{I}_1)\varepsilon+C{\lambda}+\frac{C_{\lambda}}{N}+C\delta^{3+\gamma}]e^{-\frac{\nu_{\gamma}t}{2\varepsilon}} \sup_{0\leq s \leq t}(e^{\frac{\nu_{\gamma}s}{2\varepsilon}}\|  \mathbf{S}_{\gamma}\|_{L^{\infty}_{x,v}})\\
            & \hspace{0.4cm} + \frac{C_{N,\lambda}}{\varepsilon^{\frac{3}{2}}}\sup_{0\leq s \leq T_0}\| \mathbf{f}_R(s)\|_{L^{2}_{x,v}}+C\varepsilon^k\mathcal{I}_2.
        \end{split}
    \end{gather*}
 Analogously, replacing $v$ with $v_*$ in \eqref{linfHADTOSPDXT} and substituting into \eqref{DXXXK2CDXJFBD} produces
    \begin{equation}\label{xxxXccJOX}
        \hat{I}_{2x}=\frac{1}{\varepsilon}\int_{0}^{t}\mathcal{T}_0^{\alpha}(t,s) (D_xK_{M,\omega_{\gamma}}^{\alpha,c})  \mathbf{S}_{\gamma}(s,X^{\alpha}(s),V^{\alpha}(s))ds  = \sum_{i=0}^{3} \hat{\mathcal{Z}}^{\alpha}_{i,x}(t) + \sum_{i=1}^{2}\hat{\mathcal{D}}_{x}^\alpha(t).
    \end{equation}
    The estimation procedure in \eqref{xxxccJOX} leads to
    \begin{gather*}
        \begin{split}
            \hat{I}_{2x}    & \leq \, C\frac{t}{\varepsilon}e^{-\frac{\nu_{\gamma}t}{\varepsilon}} \| D_v \mathbf{S}^{\rm in}_{\gamma} \|_{L^\infty_{x,v}}+\varepsilon  e^{-\frac{\nu_{\gamma}t}{2\varepsilon}} \sup_{0\leq s \leq t}[e^{\frac{\nu_{\gamma}s}{2\varepsilon}}(\Vert  \nabla_{v}\mathbf{S}_{\gamma}\Vert_{L^{\infty}_{x,v}}+\Vert \left \langle v \right\rangle^{1-\gamma} \mathbf{S}_{\gamma}\Vert_{L^{\infty}_{x,v}})]\\
            & \hspace{0.3cm} +(C\varepsilon+C{\lambda}+\frac{C_{\lambda}}{N}+C\delta^{3+\gamma})e^{-\frac{\nu_{\gamma}t}{2\varepsilon}} \sup_{0\leq s \leq t}[e^{\frac{\nu_{\gamma}s}{2\varepsilon}}(\|  \mathbf{S}_{\gamma}\|_{L^{\infty}_{x,v}}+\|  D_{x}\mathbf{S}_{\gamma}\|_{L^{\infty}_{x,v}})]\\
            & \hspace{0.3cm} + \frac{C_{N,\lambda,T_0}}{\varepsilon^3}e^{-\frac{\nu_{\gamma}t}{2\varepsilon}}\sup_{0\leq s \leq  T_0}(e^{\frac{\nu_{\gamma}s}{2\varepsilon}}\Vert \mathbf{S}_{\gamma}\Vert_{L^{\infty}_{x,v}})+\frac{C_{N,\lambda,T_0}}{\varepsilon^{4}}  \sup_{0\leq s \leq t}\| \mathbf{f}_R(s)\|_{L^{2}_{x,v}}+C\varepsilon^k\mathcal{I}_2\\
            &\hspace{0.3cm}+C\varepsilon^ke^{-\frac{\nu_{\gamma}t}{\varepsilon}} \sup_{0\leq s \leq t}[(e^{\frac{\nu_{\gamma}s}{2\varepsilon}}\|  \mathbf{S}_{\gamma}\|_{L^{\infty}_{x,v}})^2+(e^{\frac{\nu_{\gamma}s}{2\varepsilon}}\| D_x \mathbf{S}_{\gamma}\|_{L^{\infty}_{x,v}})^2].
        \end{split}
    \end{gather*}
    Therefore, the combination of \eqref{SDXK}-\eqref{SDXMO} with the estimates for $\hat{I}_{x1}$ and $\hat{I}_{x2}$ establishes that
    \begin{gather}\label{55W1INFXXX}
        \begin{split}
            \sup_{0\leq s \leq  T_0}(e^{\frac{\nu_{\gamma}s}{2\varepsilon}}&\Vert D_x\mathbf{S}_{\gamma}\Vert_{L^{\infty}_{x,v}}) \leq C (\Vert  \mathbf{S}^{\rm in}_{\gamma}\Vert^2_{L^{\infty}_{x,v}}+\Vert D_x\mathbf{S}^{\rm in}_{\gamma}\Vert_{L^{\infty}_{x,v}})\\
            &+(\frac{C_{\lambda}}{N}+C\lambda+C\varepsilon+\delta^{3+\gamma})\sup_{0\leq s \leq  T_0}(e^{\frac{\nu_{\gamma}s}{2\varepsilon}}\Vert D_x\mathbf{S}_{\gamma}\Vert_{L^{\infty}_{x,v}}+e^{\frac{\nu_{\gamma}s}{2\varepsilon}}\Vert \mathbf{S}_{\gamma}\Vert_{L^{\infty}_{x,v}})\\
            &+\frac{C}{\varepsilon^3}\sup_{0\leq s \leq  T_0}(e^{\frac{\nu_{\gamma}s}{2\varepsilon}}\Vert \mathbf{S}_{\gamma}\Vert_{L^{\infty}_{x,v}})+\frac{C_{\lambda,N}}{\varepsilon^4}e^{\frac{\nu_{\gamma}T_0}{2\varepsilon}}\sup_{0\leq s \leq T_0}\| \mathbf{f}_R(s)\|_{L^{2}_{x,v}}\\
            &+C\varepsilon^k \sup_{0\leq s \leq T_0}[(e^{\frac{\nu_{\gamma}s}{2\varepsilon}}\| \mathbf{S}_{\gamma}\|_{L^{\infty}_{x,v}})^2+(e^{\frac{\nu_{\gamma}s}{2\varepsilon}}\| D_x \mathbf{S}_{\gamma}\|_{L^{\infty}_{x,v}})^2]+C\varepsilon^{k-1}e^{\frac{\nu_{\gamma}T_0}{2\varepsilon}}\\
            &+\varepsilon  e^{-\frac{\nu_{\gamma}t}{2\varepsilon}} \sup_{0\leq s \leq t}[e^{\frac{\nu_{\gamma}s}{2\varepsilon}}(\Vert  \nabla_{v}\mathbf{S}_{\gamma}\Vert_{L^{\infty}_{x,v}}+\Vert \left \langle v \right\rangle^{1-\gamma} \mathbf{S}_{\gamma}\Vert_{L^{\infty}_{x,v}})].
        \end{split}
    \end{gather}

    \subsection{$W_v^{1,\infty}$ estimate }Applying the differential operator $D_v$ to both sides of \eqref{SSS5555MAINEQ} yields:
    \begin{eqnarray}\label{VVDXLIVNFT4V}
        \begin{split}
            &\hspace{0.5cm}\partial_{t}(D_vS_{\gamma}^{\alpha})+v\cdot\nabla_{x}(D_vS_{\gamma}^{\alpha})+\frac{e^{\alpha}}{m^{\alpha}} \nabla_{x}\phi^{\varepsilon}\cdot\nabla_{v}  (D_vS_{\gamma}^{\alpha}) + \hat{\upsilon_{\varepsilon}^\alpha}  (D_vS_{\gamma}^{\alpha}) \\
            &=-D_xS_{\gamma}^{\alpha}-(D_v\hat{\upsilon_{\varepsilon}^\alpha})S_{\gamma}^{\alpha}-D_v[ \frac{1}{\varepsilon}(K_{M,\left \langle v \right\rangle^{l}}^{\alpha,\chi}+K_{M,\left \langle v \right\rangle^{l}}^{\alpha,c})\mathbf{S}_{\gamma}]+ \sum_{i=1}^{5}D_v, (\mathcal{N}^\alpha_i),
        \end{split}
    \end{eqnarray}
    where $\mathcal{N}^\alpha_i$ is described in \eqref{NLINEANNM1M2}. Along the trajectory, the solution $D_vS_{\gamma}^{\alpha}$ of the equation \eqref{VVDXLIVNFT4V} is expressed as
        \begin{align}
            |D_vS^\alpha_{\gamma}|(t,x,v) \leq\, &\mathcal{T}^\alpha_0(t,0)\,|D_vS^\alpha_{\gamma}|(0,X^{\alpha}(0;t,x,v),V^{\alpha}(0;t,x,v)) \notag\\
            &+\int_{0}^{t}\mathcal{T}^\alpha_0(t,s) |D_xS_{\gamma}^{\alpha}|(s, X^{\alpha}(s),V^{\alpha}(s))ds\notag\\
            & + \int_{0}^{t}\mathcal{T}^\alpha_0(t,s) \sum_{i=1}^{5}|D_v\mathcal{N}^\alpha_j|(s, X^{\alpha}(s),V^{\alpha}(s))ds\label{SlinfHSADTODXT}\\
            &+ \int_{0}^{t}\mathcal{T}^\alpha_0(t,s)|(D_v\hat{\upsilon_{\varepsilon}^\alpha})\, S_{\gamma}^{\alpha}| (s, X^{\alpha}(s),V^{\alpha}(s))ds\notag\\
            &+ \frac{1}{\varepsilon} \int_{0}^{t}\mathcal{T}^\alpha_0(t,s)  |D_v(K_{M,\left \langle v \right\rangle^{l}}^{\alpha,\chi} \mathbf{S}_{\gamma})| (s, X^{\alpha}(s),V^{\alpha}(s))ds\notag\\
            &+\frac{1}{\varepsilon} \int_{0}^{t}\mathcal{T}^\alpha_0(t,s)  |D_v(K_{M,\left \langle v \right\rangle^{l}}^{\alpha,c} \mathbf{S}_{\gamma})| (s, X^{\alpha}(s),V^{\alpha}(s))ds \notag.
        \end{align}
    The first line in right hand side in \eqref{SlinfHSADTODXT} is bounded by
    \begin{equation*}
        \left|\mathcal{T}_0^{\alpha}(t,0) D_vS^\alpha_{\gamma}(0,\,X^{\alpha}(0),\,V^{\alpha}(0))\right| \lesssim \varepsilon  e^{-\frac{\nu_{\gamma}t}{\varepsilon}} \|D_v\mathbf{S}^{in}_{\gamma}\|_{L^\infty_{x,v}}.
    \end{equation*}
    Following Subsection 4.2's derivation, we estimate the second and third terms as:
    \begin{gather}\label{ktDVV555}
        \begin{split}
            &\left|\int_{0}^{t}\mathcal{T}_0^{\alpha}(t,s) [-D_xS_{\gamma}^{\alpha}+\sum_{i=1}^{5}(D_v\mathcal{N}^\alpha_i)](s,X^{\alpha}(s),V^{\alpha}(s))ds \right|  \\
            & \hspace{0.4cm}\lesssim  \,\varepsilon(1+\mathcal{I}_1+\varepsilon\mathcal{I}_1) e^{-\frac{\nu_{\gamma}t}{2\varepsilon}} \sup_{0\leq s \leq t}[e^{\frac{\nu_{\gamma}s}{2\varepsilon}}(\Vert \left \langle v \right\rangle\mathbf{S}_{\gamma} (s)\Vert_{L^{\infty}_{x,v}}+\Vert D_{x,v}\mathbf{S}_{\gamma} (s)\Vert_{L^{\infty}_{x,v}})] \\
            &\hspace{0.8cm}+\varepsilon^k  e^{-\frac{\nu_{\gamma} t}{\varepsilon}} \sup_{0\leq s \leq t}[(e^{\frac{\nu_{\gamma} s}{2\varepsilon}}\Vert \left \langle v \right\rangle \mathbf{S}_{\gamma} (s)\Vert_{L^{\infty}_{x,v}})^2+(e^{\frac{\nu_{\gamma} s}{2\varepsilon}}\Vert D_v\mathbf{S}_{\gamma} (s)\Vert_{L^{\infty}_{x,v}})^2]+\varepsilon^k\mathcal{I}_2.
        \end{split}
    \end{gather}
   Next, for $-1\leq\gamma<1$, recalling \eqref{xxbfdmaisnlower1}, we have
    \begin{equation*}
        \begin{split}
            D_v(\hat{\upsilon_{\varepsilon}^\alpha}) &= \frac{D_v\upsilon^\alpha}{\varepsilon} +\frac{e^{\alpha}}{m^{\alpha}}\nabla_{x}\phi^{\varepsilon} \cdot  D_v[\frac{\omega_{\gamma}(t,v)}{\sqrt{\mu^{\alpha}_M}} \nabla_{v}(\frac{\sqrt{\mu^{\alpha}_M}}{\omega_{\gamma}(t,v)})]-D_v[\frac{\partial_t\omega_{\gamma}(t,v)}{\omega_{\gamma}(t,v)}]\\
            & \lesssim \frac{\left \langle v \right\rangle^{\gamma}}{\varepsilon} + \left \langle v \right\rangle^{2}+ \frac{\left \langle v \right\rangle^{\frac{3-\gamma}{2}-1}}{(1+t)^{1+\kappa_1}} \lesssim \frac{\left \langle v \right\rangle^{\gamma}}{\varepsilon} + \left \langle v \right\rangle^{2},
        \end{split}
    \end{equation*}
    where \eqref{SUPPOSELINFGXV} has been used, then one gets
    \begin{gather*}
        \begin{split}
            |(D_x\hat{\upsilon_{\varepsilon}^\alpha})\, S_{\gamma}^{\alpha}|(t,x,v)
            \leq &  \frac{\left \langle v \right\rangle^{\gamma}}{\varepsilon}\|  \mathbf{S}_{\gamma}\|_{L^{\infty}_{x,v}}+\left \langle v \right\rangle^{\gamma}\| \left \langle v \right\rangle^{2-\gamma} \mathbf{S}_{\gamma}\|_{L^{\infty}_{x,v}}.
        \end{split}
    \end{gather*}
    Hence the fourth line in \eqref{SlinfHSADTODXT} is bounded by
    \begin{gather*}
        \begin{split}
            &\left| \int_{0}^{t}\mathcal{T}^\alpha_0(t,s)(D_v\hat{\upsilon_{\varepsilon}^\alpha}) \, S_{\gamma}^{\alpha} (s, X^{\alpha}(s),V^{\alpha}(s))ds\right| \\
            &\hspace{0.4cm}\lesssim \, \varepsilon e^{-\frac{\nu_{\gamma}t}{2\varepsilon}} \sup_{0\leq s \leq t}(e^{\frac{\nu_{\gamma}s}{2\varepsilon}}\Vert \left \langle v \right\rangle^{2-\gamma} \mathbf{S}_{\gamma}\Vert_{L^{\infty}_{x,v}})
            +\, e^{-\frac{\nu_{\gamma}t}{2\varepsilon}}  \sup_{0\leq s \leq t}(e^{\frac{\nu_{\gamma}s}{2\varepsilon}}\Vert
\mathbf{S}_{\gamma}\Vert_{L^{\infty}_{x,v}}),
        \end{split}
    \end{gather*}
 which together with Lemma \ref{kchiestmainle} yields
    \begin{gather}\label{mwDVV111}
        \begin{split}
            &\left| \int_{0}^{t}\mathcal{T}^\alpha_0(t,s) D_v (K_{M,\left \langle v \right\rangle^{l}}^{\alpha,\chi} \mathbf{S}_{\gamma}) (s, X^{\alpha}(s),V^{\alpha}(s))ds\right|
            \lesssim \,\, \delta^{2+\gamma}  e^{-\frac{\nu_{\gamma}t}{2\varepsilon}} \sup_{0\leq s \leq t}(e^{\frac{\nu_os}{2\varepsilon}}\|  \mathbf{S}_{\gamma}\|_{L^{\infty}_{x,v}}).
        \end{split}
    \end{gather}
 Finally, applying the product rule to the derivative operator yields
    \begin{equation*}
        D_v (K_{M,\omega_{\gamma}}^{\alpha,c} \mathbf{S}_{\gamma})= (D_vK_{M,\omega_{\gamma}}^{\alpha,c}) \mathbf{S}_{\gamma}+K_{M,\omega_{\gamma}}^{\alpha,c} (D_v\mathbf{S}_{\gamma}),
    \end{equation*}
   which naturally decomposes into two integral terms:
    \begin{gather*}
        \begin{split}
            & \frac{1}{\varepsilon}\int_{0}^{t}\mathcal{T}^\alpha_0(t,s) (D_vK_{M,\omega_{\gamma}}^{\alpha,c}) \mathbf{S}_{\gamma}(s, X^{\alpha}(s),V^{\alpha}(s))ds:=\hat{I}_{1v},\\
            & \frac{1}{\varepsilon}\int_{0}^{t}\mathcal{T}^\alpha_0(t,s) K_{M,\omega_{\gamma}}^{\alpha,c} (D_v\mathbf{S}_{\gamma})(s, X^{\alpha}(s),V^{\alpha}(s))ds
:=\hat{I}_{2v},
        \end{split}
    \end{gather*}
   and the expression above can be rewritten as the form of integral kernels:
    \begin{eqnarray}\label{VVVK2CDXJFBD2}
        \begin{split}
            &(D_vK_{M,\omega_{\gamma}}^{\alpha,c}) \mathbf{S}_{\gamma}(s, X^{\alpha}(s),V^{\alpha}(s))\\
            =&\sum_{\beta=A,B}\int_{\mathbb{R}^3} (D_vk_{M,\omega_{\gamma}}^{\alpha \beta}) (s,X^{\alpha}(s),V^{\alpha}(s),v_*)S_{\gamma}^{\beta}(s,X^{\alpha}(s),v_*) d v_{*},
        \end{split}
    \end{eqnarray}
    and
    \begin{eqnarray}\label{DVVVK2553CDXJFBD2}
        \begin{split}
            &K_{M,\omega_{\gamma}}^{\alpha,c} (D_v\mathbf{S}_{\gamma})(s, X^{\alpha}(s),V^{\alpha}(s))\\
            =&\sum_{\beta=A,B}\int_{\mathbb{R}^3} k_{M,\omega_{\gamma}}^{\alpha \beta} (s,X^{\alpha}(s),V^{\alpha}(s),v_*)(D_vS_{\gamma}^{\beta})(s,X^{\alpha}(s),v_*) d v_{*}.
        \end{split}
    \end{eqnarray}
    Plunging \eqref{SGMZONG512} into \eqref{VVVK2CDXJFBD2}, we get
    \begin{equation}\label{vvvccJOX}
        \hat{I}_{1v}=\frac{1}{\varepsilon}\int_{0}^{t}\mathcal{T}_0^{\alpha}(t,s) (D_vK_{M,\omega_{\gamma}}^{\alpha,c})  \mathbf{S}_{\gamma}(s,X^{\alpha}(s),V^{\alpha}(s))ds  = \sum_{i=0}^{2} \hat{\mathcal{J}}_{i,v}(t) + \hat{\mathcal{O}}_{v}^\alpha(t).
    \end{equation}
    The operators $\hat{\mathcal{J}}_{i,v}$ and $\hat{\mathcal{O}}_{v}^\alpha$ maintain the form in \eqref{SPJJO00} with the replacement:
    $
        K_{M,\omega_{\gamma}}^{\alpha,c} \to D_vK_{M,\omega_{\gamma}}^{\alpha,c},
    $
    then it holds that
    \begin{align*}
            \hat{I}_{1v}    & \leq \, C\frac{t}{\varepsilon}e^{-\frac{\nu_{\gamma}t}{\varepsilon}}\|\left \langle v \right\rangle \mathbf{S}^{\rm in}_{\gamma} \|_{L^\infty_{x,v}}+C\varepsilon^ke^{-\frac{\nu_{\gamma}t}{\varepsilon}} \sup_{0\leq s \leq t}[(e^{\frac{\nu_{\gamma}s}{2\varepsilon}}\| \left \langle v \right\rangle \mathbf{S}_{\gamma}\|_{L^{\infty}_{x,v}})^2]\\
            & \hspace{0.4cm} +[C(1+\mathcal{I}_1)\varepsilon+C{\lambda}+\frac{C_{\lambda}}{N}+C\delta^{3+\gamma}]e^{-\frac{\nu_{\gamma}t}{2\varepsilon}} \sup_{0\leq s \leq t}(e^{\frac{\nu_{\gamma}s}{2\varepsilon}}\| \left \langle v \right\rangle \mathbf{S}_{\gamma}\|_{L^{\infty}_{x,v}})\\
            & \hspace{0.4cm} + \frac{C_{N,\lambda}}{\varepsilon^{\frac{3}{2}}}\sup_{0\leq s \leq T_0}\| \mathbf{f}_R(s)\|_{L^{2}_{x,v}}+C\varepsilon^k\mathcal{I}_2.
    \end{align*}
   We perform the substitution $v \mapsto v_*$ in \eqref{SlinfHSADTODXT} and insert it into \eqref{DVVVK2553CDXJFBD2}, to obtain
    \begin{equation}
        \hat{I}_{2v}=\frac{1}{\varepsilon}\int_{0}^{t}\mathcal{T}_0^{\alpha}(t,s) (D_vK_{M,\omega_{\gamma}}^{\alpha,c})  \mathbf{S}_{\gamma}(s,X^{\alpha}(s),V^{\alpha}(s))ds  = \sum_{i=0}^{3} \hat{\mathcal{Z}}_{i,v}(t) + \sum_{i=1}^{2}\hat{\mathcal{D}}_{v}^\alpha(t).
    \end{equation}
   Following the procedure from \eqref{xx45xccJOX}--\eqref{HI2CLT} gives
    \begin{gather*}
        \begin{split}
            I_{2v}  & \leq \, C\frac{t}{\varepsilon}e^{-\frac{\nu_{\gamma}t}{\varepsilon}} \| D_v \mathbf{S}^{\rm in}_{\gamma} \|_{L^\infty_{x,v}}\\
            & \hspace{0.3cm} +(C\varepsilon+C{\lambda}+\frac{C_{\lambda}}{N}+C\delta^{2+\gamma})e^{-\frac{\nu_{\gamma}t}{2\varepsilon}} \sup_{0\leq s \leq t}[e^{\frac{\nu_{\gamma}s}{2\varepsilon}}(\|  \left \langle v \right \rangle^{2-\gamma} \mathbf{S}_{\gamma}\|_{L^{\infty}_{x,v}}+\|  D_{x,v}\mathbf{S}_{\gamma}\|_{L^{\infty}_{x,v}})]\\
            &  \hspace{0.3cm}+ \frac{C_{N,\lambda,T_0}}{\varepsilon^3}e^{-\frac{\nu_{\gamma}t}{2\varepsilon}}\sup_{0\leq s \leq  T_0}(e^{\frac{\nu_{\gamma}s}{2\varepsilon}}\Vert \mathbf{S}_{\gamma}\Vert_{L^{\infty}_{x,v}})+\frac{C_{N,\lambda,T_0}}{\varepsilon^{3}}  \sup_{0\leq s \leq t}\| \mathbf{f}_R(s)\|_{L^{2}_{x,v}}+C\varepsilon^k\mathcal{I}_2\\
           & \hspace{0.3cm}+C\varepsilon^ke^{-\frac{\nu_{\gamma}t}{\varepsilon}} \sup_{0\leq s \leq t}[(e^{\frac{\nu_{\gamma}s}{2\varepsilon}}\| \left \langle v \right \rangle \mathbf{S}_{\gamma}\|_{L^{\infty}_{x,v}})^2+(e^{\frac{\nu_{\gamma}s}{2\varepsilon}}\| D_v \mathbf{S}_{\gamma}\|_{L^{\infty}_{x,v}})^2].
        \end{split}
    \end{gather*}
     Therefore, combining \eqref{ktDVV555}--\eqref{mwDVV111} with the estimates of $\hat{I}_{1v}$ and $\hat{I}_{2v}$ leads to:
    \begin{gather}\label{55W1INFvvv}
        \begin{split}
            \sup_{0\leq s \leq  T_0}(e^{\frac{\nu_{\gamma}s}{2\varepsilon}}&\Vert D_v\mathbf{S}_{\gamma}\Vert_{L^{\infty}_{x,v}}) \leq C (\Vert \left \langle v \right\rangle \mathbf{S}^{\rm in}_{\gamma}\Vert^2_{L^{\infty}_{x,v}}+\Vert D_v\mathbf{S}^{\rm in}_{\gamma}\Vert_{L^{\infty}_{x,v}})+C\varepsilon \sup_{0\leq s \leq T_0}(e^{\frac{\nu_{\gamma}s}{2\varepsilon}}\Vert \nabla_{v}\mathbf{S}_{\gamma}\Vert_{L^{\infty}_{x,v}})\\
            &+(\frac{C_{\lambda}}{N}+C\lambda+C\varepsilon+\delta^{\gamma+2})\sup_{0\leq s \leq  T_0}(e^{\frac{\nu_{\gamma}s}{2\varepsilon}}\Vert D_{x,v}\mathbf{S}_{\gamma}\Vert_{L^{\infty}_{x,v}}+e^{\frac{\nu_{\gamma}s}{2\varepsilon}}\Vert \left \langle v \right\rangle^{2-\gamma}\mathbf{S}_{\gamma}\Vert_{L^{\infty}_{x,v}})\\
            &+\frac{C}{\varepsilon^3}\sup_{0\leq s \leq  T_0}(e^{\frac{\nu_{\gamma}s}{2\varepsilon}}\Vert \mathbf{S}_{\gamma}\Vert_{L^{\infty}_{x,v}})+\frac{C_{\lambda,N}}{\varepsilon^3}e^{\frac{\nu_{\gamma}T_0}{2\varepsilon}}\sup_{0\leq s \leq T_0}\| \mathbf{f}_R(s)\|_{L^{2}_{x,v}}\\
            &+C\varepsilon^k \sup_{0\leq s \leq T_0}[(e^{\frac{\nu_{\gamma}s}{2\varepsilon}}\| \left \langle v \right \rangle \mathbf{S}_{\gamma}\|_{L^{\infty}_{x,v}})^2+(e^{\frac{\nu_{\gamma}s}{2\varepsilon}}\| D_v \mathbf{S}_{\gamma}\|_{L^{\infty}_{x,v}})^2]+C\varepsilon^{k-1}e^{\frac{\nu_{\gamma}T_0}{2\varepsilon}}.
        \end{split}
    \end{gather}

	In summary, combining \eqref{55W1INFvvv} with \eqref{55W1INFXXX} establishes the following estimates for $W^{1,\infty}_{x,v}$:
    \begin{gather}\label{44W1INFXV}
        \begin{split}
            \sup_{0\leq s \leq  T_0}(e^{\frac{\nu_{\gamma}s}{2\varepsilon}}&\Vert \nabla_{x,v}\mathbf{S}_{\gamma}\Vert_{L^{\infty}_{x,v}}) \leq C (\Vert \left \langle v \right\rangle \mathbf{S}^{\rm in}_{\gamma}\Vert^2_{L^{\infty}_{x,v}}+\Vert \nabla_{x,v}\mathbf{S}^{\rm in}_{\gamma}\Vert_{L^{\infty}_{x,v}})\\
            &+(\frac{C_{\lambda}}{N}+C\lambda+C\varepsilon+\delta^{2+\gamma})\sup_{0\leq s \leq  T_0}(e^{\frac{\nu_{\gamma}s}{2\varepsilon}}\Vert \nabla_{x,v}\mathbf{S}_{\gamma}\Vert_{L^{\infty}_{x,v}}+e^{\frac{\nu_{\gamma}s}{2\varepsilon}}\Vert \left \langle v \right \rangle^{2-\gamma} \mathbf{S}_{\gamma}\Vert_{L^{\infty}_{x,v}})\\
            &+\frac{C}{\varepsilon^3}\sup_{0\leq s \leq  T_0}(e^{\frac{\nu_{\gamma}s}{2\varepsilon}}\Vert \mathbf{S}_{\gamma}\Vert_{L^{\infty}_{x,v}})+\frac{C_{\lambda,N}}{\varepsilon^4}e^{\frac{\nu_{\gamma}T_0}{2\varepsilon}}\sup_{0\leq s \leq T_0}\| \mathbf{f}_R(s)\|_{L^{2}_{x,v}}\\
            &+C\varepsilon^k \sup_{0\leq s \leq T_0}[(e^{\frac{\nu_{\gamma}s}{2\varepsilon}}\| \left \langle v \right \rangle \mathbf{S}_{\gamma}\|_{L^{\infty}_{x,v}})^2+(e^{\frac{\nu_{\gamma}s}{2\varepsilon}}\| \nabla_{x,v} \mathbf{S}_{\gamma}\|_{L^{\infty}_{x,v}})^2]+C\varepsilon^{k-1}e^{\frac{\nu_{\gamma}T_0}{2\varepsilon}}.
        \end{split}
    \end{gather}
Further, multiplying \eqref{44W1INFXV} by $\varepsilon^5$ and applying \eqref{CSJCHUFAXXM} yields:
    \begin{align*}
            \varepsilon^{5}\Vert  \nabla_{x,v}\mathbf{S}_{\gamma}(T_0) \Vert_{L^{\infty}_{x,v}}\leq& \frac{1}{4}\Big[\varepsilon^{5}\Vert \left \langle v \right\rangle \mathbf{S}^{\rm in}_{\gamma} \Vert_{L^{\infty}_{x,v}}+ \varepsilon^{5}\Vert \nabla_{x,v} \mathbf{S}^{\rm in}_{\gamma} \Vert_{L^{\infty}_{x,v}}\\
            &\hspace{3.5cm}+\varepsilon^{\frac{3}{2}}(\Vert  \mathbf{S}_{\gamma} \Vert_{L^{\infty}_{x,v}}+\Vert \left \langle v \right \rangle^{2-\gamma} \mathbf{S}_{\gamma}\Vert_{L^{\infty}_{x,v}})\Big]\\
            &+\varepsilon^{\frac{1}{2}}\sup_{0\leq s \leq T_0}\| \mathbf{f}_R(s)\|_{L^{2}_{x,v}}+\varepsilon^{k+1}.
    \end{align*}
Following the estimation procedure for $\varepsilon^{\frac{3}{2}}\Vert \mathbf{S}_{\gamma}(T_0) \Vert_{L^{\infty}_{x,v}}$ in Section 5.1 gives:
    \begin{eqnarray*}
        \varepsilon^{\frac{3}{2}}\Vert \left \langle v \right \rangle^{2-\gamma} \mathbf{S}_{\gamma}(T_0) \Vert_{L^{\infty}_{x,v}}\leq \frac{1}{4} \varepsilon^{\frac{3}{2}} \Vert \left \langle v \right \rangle^{2-\gamma} \mathbf{S}^{\rm in}_{\gamma} \Vert_{L^{\infty}_{x,v}}+C \sup_{0\leq s \leq T_0}\| \mathbf{f}_R(s)\|_{L^{2}_{x,v}}+C\varepsilon^{\frac{2k+3}{2}}.
    \end{eqnarray*}
Consequently, the combined estimate becomes
    \begin{gather*}
        \begin{split}
            &\varepsilon^{\frac{3}{2}}\Vert \left \langle v \right\rangle^{2-\gamma} \mathbf{S}_{\gamma}(T_0) \Vert_{L^{\infty}_{x,v}}+\varepsilon^{5}\Vert  \nabla_{x,v}\mathbf{S}_{\gamma}(T_0) \Vert_{L^{\infty}_{x,v}}\\
            & \hspace{0.3cm}\leq  \, \frac{1}{2}(\varepsilon^{\frac{3}{2}}\Vert \left \langle v \right\rangle^{2-\gamma} \mathbf{S}^{\rm in}_{\gamma} \Vert_{L^{\infty}_{x,v}}+\varepsilon^{5}\Vert  \nabla_{x,v}\mathbf{S}^{\rm in}_{\gamma} \Vert_{L^{\infty}_{x,v}})+C(\sup_{0\leq s \leq T_0}\| \mathbf{f}_R(s)\|_{L^{2}_{x,v}}+\varepsilon^{k+\frac{1}{2}}).
        \end{split}
    \end{gather*}
For $t\in(T_0,T_S]$, there exist positive integer $n_t$ and $t_s\in[0,T_0)$ such that $t=n_tT_0+t_s$. Applying the iterative scheme analogous to \eqref{dddHDdgui0H1} leads to:
    \begin{gather*}\label{444HSw1p}
        \begin{split}
            &\sup_{0\leq t \leq T_S}(\varepsilon^{\frac{3}{2}}\Vert \left \langle v \right\rangle^{2-\gamma} \mathbf{S}_{\gamma}(t) \Vert_{L^{\infty}_{x,v}}+\varepsilon^{5}\Vert  \nabla_{x,v}\mathbf{S}_{\gamma}(t) \Vert_{L^{\infty}_{x,v}})\\
            &\hspace{0.4cm}\lesssim  \, \varepsilon^{\frac{3}{2}}\Vert \left \langle v \right\rangle^{2-\gamma} \mathbf{S}^{\rm in}_{\gamma} \Vert_{L^{\infty}_{x,v}}+\varepsilon^{5}\Vert  \nabla_{x,v}\mathbf{S}^{\rm in}_{\gamma} \Vert_{L^{\infty}_{x,v}}+\sup_{0\leq t \leq T_L}\| \mathbf{f}_R(t)\|_{L^{2}_{x,v}}+\varepsilon^{k+\frac{1}{2}}.
        \end{split}
    \end{gather*}

    \section{Shortening of the validity time for potential range $-3<\gamma<-1$}
        This section considers the solution's validity time for $-3 < \gamma < -1$. The existence proof follows the methodology in Section 5. As established in \eqref{WWWEIGHTFUC}, the weight function becomes $\gamma$-independent and takes the form
        \begin{equation*}
            \omega_{-1} = e^{\tilde{\kappa}\left \langle v \right\rangle^{2}}, \quad
            \tilde{\kappa} = \kappa_0 [1 + (1+t)^{-\frac{2}{2k-1}}], \quad
            0 < \kappa_0 = \kappa_0(m^A,m^B) \ll 1.
        \end{equation*}
        Remark \eqref{WLIGHTERANALYSIS} indicates that the weight's exponent $\frac{3-\gamma}{2}$ cannot exceed 2, which makes it impossible to maintain the $O(\varepsilon^{-\frac{1}{2}\frac{2k-3}{2k-1}})$ validity time for $k\geq 6$. Substituting $\omega_{-1} = \exp(\tilde{\kappa}\left \langle v \right\rangle^{2})$, we have
        \begin{align*}
            \hat{\upsilon_{\varepsilon}^\alpha} &:= \frac{\upsilon^\alpha}{\varepsilon} + \frac{e^{\alpha}}{m^{\alpha}}\nabla_{x}\phi^{\varepsilon} \cdot \frac{\omega_{-1}(t,x)}{\sqrt{\mu^{\alpha}_M}} \nabla_{v}[\frac{\sqrt{\mu^{\alpha}_M}}{\omega_{-1}(t,x)}] - \frac{\partial_t\omega_{-1}(t,v)}{\omega_{-1}(t,v)} \\
            &\geq \frac{\upsilon^\alpha}{\varepsilon} - \frac{e^{\alpha}}{m^{\alpha}}(\frac{1}{2\theta_M} + 2\kappa_0)\left \langle v \right\rangle \|\nabla_{x}\phi^{\varepsilon}\|_{L^{\infty}_{x,v}} + \frac{2\kappa_0}{2k-1}\frac{\left \langle v \right\rangle^{2}}{(1+t)^{\frac{2k+1}{2k-1}}} \\
            &\geq \frac{\left \langle v \right\rangle^{\gamma}}{C\varepsilon} - C\left \langle v \right\rangle \|\nabla_{x}\phi^{\varepsilon}\|_{L^{\infty}_{x,v}} + \frac{2\kappa_0}{C(2k-1)}\frac{\left \langle v \right\rangle^{2}}{(1+t)^{\frac{2k+1}{2k-1}}}.
        \end{align*}
   For the parameter $0<\iota<1$, we apply Young's inequality to obtain
    \begin{equation*}\label{YYYEQIMP}
        \begin{split}
            \left \langle v \right\rangle &= [\left \langle v \right\rangle^{\gamma}\varepsilon^{\iota-1}(1+t)^{\frac{(2k+1)(1-\gamma)}{2k-1}}]^{\frac{1}{2-\gamma}} [\left \langle v \right\rangle^{2}\varepsilon^{\frac{1-\iota}{1-\gamma}}(1+t)^{-\frac{2k+1}{2k-1}}]^{\frac{1-\gamma}{2-\gamma}}\\
            & \leq \frac{1}{\varepsilon}\left \langle v \right\rangle^{\gamma}\varepsilon^{\iota}(1+t)^{\frac{(2k+1)(1-\gamma)}{2k-1}} + \left \langle v \right\rangle^{2}\varepsilon^{\frac{1-\iota}{1-\gamma}}(1+t)^{-\frac{2k+1}{2k-1}}.
        \end{split}
    \end{equation*}
   Consequently, we have
    \begin{equation*}
        \hat{\upsilon_{\varepsilon}^\alpha}\geq [\frac{1}{C}-C\varepsilon^{\iota}(1+t)^{\frac{(2k+1)(1-\gamma)}{2k-1}}\Vert\nabla_{x}\phi^{\varepsilon} \Vert_{L^{\infty}_{x,v} }]\frac{\left \langle v \right\rangle^{\gamma}}{\varepsilon}+[\frac{2\kappa_0}{C(2k-1)}-C\varepsilon^{\frac{1-\iota}{1-\gamma}}]\frac{\left \langle v \right\rangle^{2}}{(1+t)^{\frac{2k+1}{2k-1}}}.
    \end{equation*}
   For sufficiently small $\varepsilon$, it follows that
    \begin{equation*}
        \frac{2\kappa_0}{C(2k-1)}-C\varepsilon^{\frac{1-\iota}{1-\gamma}}\geq \frac{\kappa_0}{C(2k-1)}>0.
    \end{equation*}
   The validity time scales as $O(\varepsilon^{-m_{\gamma}})$ for $-3 < \gamma < -1$, abbreviated as $\varepsilon^{-m_{\gamma}}$, with the exponent satisfying $ 0 \leq m_\gamma \leq \frac{2k-3}{2(2k-1)}$. This leads to the derivation
    \begin{equation*}
        C\varepsilon^{\iota}(1+t)^{\frac{(2k+1)(1-\gamma)}{2k-1}}\Vert\nabla_{x}\phi^{\varepsilon} \Vert_{L^{\infty}_{x,v} } \leq 2C \varepsilon^{\iota-\frac{m_{\gamma}(2k+1)(1-\gamma)}{2k-1}} \Vert\nabla_{x}\phi^{\varepsilon} \Vert_{L^{\infty}_{x,v} } .
    \end{equation*}
    For $\iota=\frac{(2k-2)(2k+2)}{4k^2}<1$ and $m_\gamma = \frac{2k-3}{(1-\gamma)(2k-1)}$, a direct computation shows
    \begin{equation*}
        \iota-\frac{m_{\gamma}(2k+1)(1-\gamma)}{2k-1}=\frac{(2k-2)(2k+2)}{4k^2}-\frac{(2k-3)(2k+1)}{(2k-1)^2}\geq \frac{1}{16k^4}, \quad k \geq 6,
    \end{equation*}
    which implies
    \begin{align*}
        T &= O(\varepsilon^{-y}), \quad y =
        \begin{cases}
            \frac{2k-3}{2(2k-1)}, & \gamma \in [-1,1], \\
            \frac{2k-3}{(1-\gamma)(2k-1)}, & \gamma \in (-3,-1),
        \end{cases}
    \end{align*}
    for arbitrary $k\geq 6$ in \eqref{MAINEXPFPHIKKK}, thus establishing Theorem \ref{maintheorem}.

    \begin{remark}
        When we have chosen the expansion form of \eqref{MAINEXPFPHIKKK}, it can be seen from the above analysis that as $k$ increases, the validity time of the solution is always monotonically increasing. Furthermore, as $k\rightarrow +\infty$, the validity time will approach the upper bound, as shown in Figure2 at the beginning of the article. In addition, the argument process in this article can be used to analyze other expansions similarly, such as
        \begin{equation}
            F_{\varepsilon}^\alpha= \sum_{i=0}^{n}\varepsilon^i F^\alpha_i + \varepsilon^{r} F_R^{\alpha},\quad n\geq r\geq 1.
        \end{equation}
        The validity time may be different. However, once the ratio of $\frac{r}{n}$ is fixed, the validity time is determined, which always increase monotonically as $k$ increases.
    \end{remark}

 \subsection*{Acknowledgements}
This research was supported by the National Natural Science
Foundation of China (Grant No. 12271356 and 12331007).

\subsection*{Conflict of interest}
The authors have no conflicts to disclose.

\subsection*{Data availablity statement}
Data sharing is not applicable to this article as no new data were created or analyzed in this study.

\end{document}